\def\thesection{\arabic{section}}
\def\theequation{\thesection.\arabic{equation}}
\def\R{\mathbb{R}}
\DeclareMathOperator*{\esssup}{ess\,sup}
\DeclareMathOperator*{\essinf}{ess\,inf}
\DeclareMathOperator*{\essliminf}{ess\,lim\,inf}
\DeclareMathOperator*{\esslimsup}{ess\,lim\,sup}
\DeclareMathOperator*{\supp}{supp}
\newcommand{\Om} {\Omega}
\newcommand{\noi} {\noindent}
\newcommand{\Tail} {\mathrm{Tail}}
\newcommand{\loc} {\mathrm{loc}}
\markboth{\small } {\small Qualitative and quantitative properties of mixed anisotropic and nonlocal equations}
\def\theequation{\@arabic{\c@section}.\@arabic{\c@equation}}
\def\N{{I\!\!N}}
\newtheorem{Theorem}{Theorem}[section]
\newtheorem{Lemma}[Theorem]{Lemma}
\newtheorem{prop}[Theorem]{Proposition}
\newtheorem{Corollary}[Theorem]{Corollary}
\newtheorem{Remark}[Theorem]{Remark}
\newtheorem{Definition}[Theorem]{Definition}
\begin{document}

{\vspace{0.01in}}

\title{Some qualitative and quantitative properties of weak solutions to mixed anisotropic and nonlocal quasilinear elliptic and doubly nonlinear parabolic equations}

\author{Prashanta Garain}

\maketitle

\begin{abstract}\noindent
This article is divided into two parts. In the first part, we examine the Brezis-Oswald problem involving a mixed anisotropic and nonlocal $p$-Laplace operator. We establish results on existence, uniqueness, boundedness, and the strong maximum principle. Additionally, for certain mixed anisotropic and nonlocal $p$-Laplace equations, we prove a Sturmian comparison theorem, establish comparison and nonexistence results, derive a weighted Hardy-type inequality, and analyze a system of singular mixed anisotropic and nonlocal $p$-Laplace equations. A key component of our approach is the use of the Picone identity, which we adapt from the local and nonlocal cases. In the second part of the article, we focus on regularity estimates. In the elliptic setting, we establish a weak Harnack inequality and semicontinuity results. We also consider a class of doubly nonlinear mixed anisotropic and nonlocal parabolic equations, proving semicontinuity results and analyzing the pointwise behavior of solutions. These results rely on appropriate energy estimates, De Giorgi-type lemmas, and positivity expansions. Finally, we derive various energy estimates, which may be of independent interest.
\end{abstract}

\maketitle

\noi {Keywords: Elliptic and doubly nonlinear parabolic mixed anisotropic and nonlocal quasilinear equation, system of mixed singular equations, existence, regularity, weak Harnack inequality, semicontinuity, pointwise behavior, energy estimates, De Giorgi theory.}

\noi{\textit{2020 Mathematics Subject Classification: 35A01, 35B45, 35B50, 35B51, 35B65, 35J62, 35K59, 35M10, 35P30, 35R11.}

\bigskip

\tableofcontents

\section{Introduction}
This article is twofold. In the first part of the article, we establish several qualitative properties of solutions of the following mixed anisotropic and nonlocal quasilinear elliptic equations
\begin{equation}\label{meqn}
    -H_p u+(-\Delta_p)^s u=f(x,u)\text{ in }\Omega,
\end{equation}
for various type of nonlinearities $f$. We also study singular system of mixed equations of the form
\begin{equation}\label{smeqn}
\begin{split}
    -H_p u+(-\Delta_p)^s u&=F(x,u,v)\text{ in }\Omega\\
    -H_ p v+(-\Delta_p)^s v&=G(x,u,v)\text{ in }\Omega,
    \end{split}
\end{equation}
under some hypothesis on $F$ and $G$ having singularity to be made precise later on. We show that one of the key ingredient to obtain these results is the validity of the Picone identity, which has been already seen to be useful in the separate local and nonlocal setting.\\
Further, we prove comparison principles which are valid for the more general mixed elliptic operator
\begin{equation}\label{mo}
-H_p +(-\Delta_p)^s -H_q +(-\Delta_q)^s
\end{equation}
and mixed parabolic operator
\begin{equation}\label{mp}
\partial_t-H_p+(-\Delta_p)^s-H_q+(-\Delta_q)^s.
\end{equation}
In the second part of the article, we establish energy estimates for subsolutions of the equation \eqref{meqn} and also for the following doubly nonlinear mixed equation
\begin{equation}\label{mp1}
\partial_t(|u|^{\alpha-1}u)-H_p u+(-\Delta_p)^s u=g(x)|u|^{p-2}u\text{ in }\Omega
\end{equation}
under some hypothesis on $p,\alpha$ and $g$ to be made precise later on. We will observe that, Picone identity is useful to obtain these subsolution estimates. Further, we establish energy estimates for supersolutions of \eqref{meqn} and \eqref{mp1}.\\
Moreover, for the elliptic problem \eqref{meqn}, we establish weak Harnack inequality for supersolutions and semicontinuity results. Finally, for the parabolic problem \eqref{mp1}, we obtain semicontinuity and pointwise behavior of solutions.\\
Here $(-\Delta_p)^s$ is the fractional $p$-Laplace operator defined by
$$
(-\Delta_p)^s u(x)=\text{P.V.}\int_{\mathbb{R}^N}|u(x)-u(y)|^{p-2}(u(x)-u(y))\,d\mu,
$$
where $d\mu=|x-y|^{-N-ps}\,dx dy$ and P.V. denote the principal value. For more details on fractional operator, see \cite{Hitchhikersguide} and the references therein. Moreover, $H_p$ is the anisotropic $p$-Laplace operator defined by 
$$
H_p u=\text{div}(H(\nabla u)^{p-1}\nabla H(\nabla u)),
$$
where $H$ is a Finsler-Minkowski norm, that is $H:\mathbb{R}^N\to[0,\infty)$ is strictly convex such that $H\in C^1(\mathbb{R}^N\setminus\{0\})$,  and satisfy the following hypothesis:
\begin{enumerate}
\item[(H1)] $H(x)=0$ iff $x=0$.

    \item[(H2)] $H(tx)=|t|H(x)$ for every $x\in\mathbb{R}^N$, $t\in\mathbb{R}$,
    
    \item[(H3)] there exist constants $C_1,C_2>0$ such that $C_1|x|\leq H(x)\leq C_2|x|$ for every $x\in\mathbb{R}^N$.
\end{enumerate}
We present some examples now to give a little more insight on the anisotropic p-Laplace operator, refer to \cite{BFKzamp, MV, Xiathesis} and the references therein.\\
\noi \textbf{Examples:} Let $x=(x_1,x_2,\ldots,x_N)\in\mathbb{R}^N$,
\begin{enumerate}
\item[(i)] then, for $q>1$, we define 
\begin{equation}\label{ex11}
H_q(x):=\Big(\sum_{i=1}^{N}|x_i|^q\Big)^\frac{1}{q};
\end{equation}
\item[(ii)] for $\lambda,\mu>0$, we define
\begin{equation}\label{ex2}
H_{\lambda,\mu}(x):=\sqrt{\lambda\sqrt{\sum_{i=1}^{N}x_i^{4}}+\mu\sum_{i=1}^{N}x_i^{2}}.
\end{equation}
\end{enumerate}
Then, one may check from \cite{MV} that the functions $H_q, H_{\lambda,\mu}:\mathbb{R}^N\to[0,\infty)$ given by \eqref{ex11} and \eqref{ex2} are Finsler-Minkowski norm.

\begin{Remark}\label{exrmk1}
For $i=1,2$ if $\lambda_i,\mu_i$ are positive real numbers such that $\frac{\lambda_1}{\mu_1}\neq\frac{\lambda_2}{\mu_2}$, then $H_{\lambda_1,\mu_1}$ and $H_{\lambda_2,\mu_2}$ given by \eqref{ex2} defines two non-isometric norms in $\mathbb{R}^N$, see \cite{MV}.
\end{Remark}

\begin{Remark}\label{exrmk2}
Moreover, for $H=H_q$ given by \eqref{ex11} we have
\begin{equation}\label{ex}
H_{p} u=\sum_{i=1}^{N}\frac{\partial}{\partial x_i}\bigg(\Big(\sum_{k=1}^{N}\Big|\frac{\partial u}{\partial x_k}\Big|^{q}\Big)^\frac{p-q}{q}\Big|\frac{\partial u}{\partial x_i}\Big|^{q-2}\frac{\partial u}{\partial x_i}\bigg).
\end{equation}
Therefore, $H_p$ become the $p$-Laplace operator $\Delta_p$, when $q=2$ and pseudo $p$-Laplace operator $S_p$, when $q=p$ as given below: 
\begin{equation}\label{plap}
H_{p} u=
\begin{cases}
\Delta_p u:=\text{div}(|\nabla u|^{p-2}\nabla u),\,\text{if }q=2,\,1<p<\infty,\\
S_p u:=\sum_{i=1}^{N}\frac{\partial}{\partial x_i}\Big(|u_i|^{p-2}u_i\Big),\,\text{if }q=p\in(1,\infty),
\end{cases}
\end{equation}
where $u_i:=\frac{\partial u}{\partial x_i}$, for $i=1,2,\ldots,N$.
\end{Remark}
In particular, the above examples suggest that, our results hold for the following general class of mixed operators
$$
\sum_{i=1}^{N}\frac{\partial}{\partial x_i}\bigg(\Big(\sum_{k=1}^{N}\Big|\frac{\partial u}{\partial x_k}\Big|^{q}\Big)^\frac{p-q}{q}\Big|\frac{\partial u}{\partial x_i}\Big|^{q-2}\frac{\partial u}{\partial x_i}\bigg)+(-\Delta_p)^s u,
$$
which reduces to the well-known mixed operator $-\Delta u+(-\Delta)^s$. Equations related to this mixed operator has first been studied in \cite{Foondun} followed by \cite{Chen} using probability techniques. Recently mixed problems has been a topic of considerable attention and there is a colossal amount of literature available in this direction using purely analytic approach as well, see for example \cite{Biagicpde, Biagiped, Vecchihong, Valaa} and the references therein. We also refer to \cite{Valap} regarding physical interpretation of mixed local and nonlocal problems. To the best of our knowledge, most of our results are valid even for $p=2$, that is for the operator $-\Delta u+(-\Delta)^s$. Regarding the anisotropic mixed equation, as far we are aware, some regularity results are recently developed in \cite{GKansp, GKK, Kenta1, Kenta2} and also related eigenvalue and extremal problems are studied, see \cite{Gjms, GU}. We also refer to \cite{Cozzi, GK} and the references therein.\\
\textbf{Notations and assumptions:} Throughout the rest of the paper, we shall assume the following conditions and use the following notations unless otherwise mentioned.
\begin{itemize}
    \item $\Omega\subset\mathbb{R}^N$ will denote a bounded $C^1$ domain with $N\geq 2$.

    \item We assume that $1<p<\infty$,\, $0<s<1$ and $\alpha>0$.

    \item We denote by $d\mu=|x-y|^{-N-ps}\,dx dy$.

    \item For $f\in\mathbb{R}$, we define $f_\pm=\max\{\pm f,0\}$.

    \item We write $\sup\,u$ and $\inf\,u$ to denote the essential supremum and essential infimum of $u$ respectively.

    \item $C$ will denote a generic constant whose value may change from line to line or even in the same line. If $C$ depends on the parameters $l_1,l_2,\ldots,l_r$, then we write $C=C(l_1,l_2,\ldots,l_r)$ to mean the dependency of $C$ with these parameters.

    \item For a Lebesgue measurable set $S$, we denote $|S|$ to mean the Lebesgue measure of $S$.

    \item $B_r(x_0)$ denote a ball of radius centered at $x_0$.

    \item For three functions $f,g,h$, by the inequality $f\leq g\leq h$ on some Lebesgue measurable set $S$, we mean $f(x)\leq g(x)\leq h(x)$ for almost every $x\in S$.

    \item Throughout the paper $C_1,C_2$ will denote the constants considered in (H3).
\end{itemize}
\textbf{Functional Setting:} For the rest of the paper, let $\Omega\subset\mathbb{R}^N$ with $N\geq 2$ be a bounded $C^1$ domain unless otherwise mentioned. The Sobolev space $W^{1,p}(\Omega)$, $1<p<\infty$, is defined as the Banach space of locally integrable weakly differentiable functions
$u:\Omega\to\mathbb{R}$ equipped with the norm
\[
\|u\|_{W^{1,p}(\Omega)}=\| u\|_{L^p(\Omega)}+\|\nabla u\|_{L^p(\Omega)}.
\]
The space $W^{1,p}(\mathbb{R}^N)$ is defined analogously. The fractional Sobolev space $W^{s,p}(\Omega)$, $0<s<1<p<\infty$, is defined by
$$
W^{s,p}(\Omega)=\Big\{u\in L^p(\Omega):\frac{|u(x)-u(y)|}{|x-y|^{\frac{N}{p}+s}}\in L^p(\Omega\times \Omega)\Big\}
$$
and endowed with the norm
$$
\|u\|_{W^{s,p}(\Omega)}=\left(\int_{\Omega}|u(x)|^p\,dx+\int_{\Omega}\int_{\Omega}\frac{|u(x)-u(y)|^p}{|x-y|^{N+ps}}\,dx\,dy\right)^\frac{1}{p}.
$$
The space $W^{s,p}_{\mathrm{loc}}(\Omega)$ is defined analogously. We refer to \cite{Hitchhikersguide} and the references therein for more details on fractional Sobolev spaces. Due to the mixed behavior of our equations, following  \cite{VecchiBO, Vecchihong, Vecchihenon} and taking into account the definition of $H$, we consider the solution space
$$
W_0^{1,p}(\Omega)=\{u\in W^{1,p}(\mathbb{R}^N):u=0\text{ in }\mathbb{R}^N\setminus\Omega\}
$$
under the norm
$$
\|u\|_{W_0^{1,p}(\Omega)}=\left(\int_{\Omega}H(\nabla u)^p\,dx+\iint_{\mathbb{R}^{2N}}|u(x)-u(y)|^p\,d\mu\right)^\frac{1}{p}.
$$
The next result asserts that the standard Sobolev space is continuously embedded in
the fractional Sobolev space, see \cite[Proposition 2.2]{Hitchhikersguide}.
\begin{Lemma}\label{l1}
Let $\Omega$ be a smooth bounded domain in $\mathbb{R}^N$, $1<p<\infty,\,0<s<1$. Then there
exists a positive constant $C=C(N,p,s)$ such that $$
\|u\|_{W^{s,p}(\Omega)}\leq C\|u\|_{W^{1,p}(\Omega)}
$$
for every $u\in W^{1,p}(\Omega)$.
\end{Lemma}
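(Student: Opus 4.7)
The plan is to reduce the problem on $\Omega$ to one on $\mathbb{R}^N$ via extension, and then to split the Gagliardo integrand according to the scale of $|x-y|$, using the $W^{1,p}$ bound on short distances and the $L^p$ bound on long distances.

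First I would invoke the $C^1$ extension theorem: since $\Omega$ is a bounded $C^1$ domain, there exists a bounded linear extension operator $E \colon W^{1,p}(\Omega)\to W^{1,p}(\mathbb{R}^N)$ with $\|Eu\|_{W^{1,p}(\mathbb{R}^N)}\leq C(N,p,\Omega)\,\|u\|_{W^{1,p}(\Omega)}$. Setting $\tilde u = Eu$ and noting that the Gagliardo double integral over $\Omega\times\Omega$ is dominated by the same integral over $\mathbb{R}^N\times\mathbb{R}^N$, it is enough to control
\[
[\tilde u]_{s,p,\mathbb{R}^N}^p = \int_{\mathbb{R}^N}\!\!\int_{\mathbb{R}^N} \frac{|\tilde u(x+z)-\tilde u(x)|^p}{|z|^{N+ps}}\,dx\,dz
\]
by $\|\tilde u\|_{W^{1,p}(\mathbb{R}^N)}^p$, where I have used the change of variables $z=y-x$.

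Next I would split the $z$-integral into the regions $|z|>1$ and $|z|\leq 1$. For the far region, the elementary bound $|\tilde u(x+z)-\tilde u(x)|^p\leq 2^{p-1}(|\tilde u(x+z)|^p+|\tilde u(x)|^p)$ together with Fubini and translation invariance yields
\[
\int_{|z|>1}\!\!\int_{\mathbb{R}^N}\frac{|\tilde u(x+z)-\tilde u(x)|^p}{|z|^{N+ps}}\,dx\,dz \le 2^{p}\|\tilde u\|_{L^p(\mathbb{R}^N)}^p\int_{|z|>1}\frac{dz}{|z|^{N+ps}},
\]
and the last integral is finite because $ps>0$. For the near region, on smooth $\tilde u$ the fundamental theorem of calculus along segments gives $\tilde u(x+z)-\tilde u(x)=\int_0^1 \nabla \tilde u(x+tz)\cdot z\,dt$, so Jensen's inequality yields $|\tilde u(x+z)-\tilde u(x)|^p\leq |z|^p\int_0^1 |\nabla \tilde u(x+tz)|^p\,dt$; integrating in $x$, using Fubini and the translation invariance of Lebesgue measure produces $\|\nabla\tilde u\|_{L^p(\mathbb{R}^N)}^p$, and the remaining $z$-integral is $\int_{|z|\le 1}|z|^{p-N-ps}\,dz$, which converges precisely because $s<1$ implies $p-ps>0$.

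Adding the two estimates gives $[\tilde u]_{s,p,\mathbb{R}^N}^p\leq C(N,p,s)\,\|\tilde u\|_{W^{1,p}(\mathbb{R}^N)}^p$, and combined with the $L^p$ part of the $W^{s,p}$ norm (trivially bounded by $\|u\|_{L^p(\Omega)}$) and the extension bound, this completes the proof. The only genuine subtlety is the pointwise identity used in the near region, which a priori requires smoothness; I would resolve this either by invoking density of $C^\infty_c(\mathbb{R}^N)$ in $W^{1,p}(\mathbb{R}^N)$ and passing to the limit by Fatou on the left side and strong $W^{1,p}$-convergence on the right, or equivalently by using the ACL characterization of Sobolev functions to justify the segment formula for a.e. pair $(x,z)$. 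This density/ACL step is the only point that requires care; the rest is bookkeeping.
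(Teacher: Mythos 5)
Your proof is correct and is essentially the argument behind the result the paper simply cites rather than proves (\cite[Proposition 2.2]{Hitchhikersguide}): extend to $W^{1,p}(\mathbb{R}^N)$ using the $C^1$ boundary, then split the Gagliardo seminorm at $|z|=1$, controlling the near part by $\|\nabla \tilde u\|_{L^p}$ via the segment/difference-quotient estimate and the far part by $\|\tilde u\|_{L^p}$, with the density/ACL justification you indicate being the standard way to legitimize the pointwise identity. The only cosmetic remark is that your constant (rightly) inherits a dependence on $\Omega$ through the extension operator, whereas the statement writes $C=C(N,p,s)$ --- a harmless imprecision carried over from the cited reference.
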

Next, we have the following result from \cite[Lemma $2.1$]{Silva}.
\begin{Lemma}\label{locnon1}
There exists a constant $c=c(N,p,s,\Omega)$ such that
\begin{equation}\label{locnonsem}
\int_{\mathbb{R}^N}\int_{\mathbb{R}^N}\frac{|u(x)-u(y)|^p}{|x-y|^{N+ps}}\,dx\,dy\leq c\int_{\Omega}|\nabla u|^p\,dx
\end{equation}
for every $u\in W_0^{1,p}(\Omega)$. Here $u$ is extended by zero outside $\Omega$.
\end{Lemma}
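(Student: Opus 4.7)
The plan is to split the double integral over $\mathbb{R}^N\times\mathbb{R}^N$ according to whether the points lie inside or outside $\Omega$, and then handle each piece using known inequalities. Using the symmetry of the integrand under $(x,y)\leftrightarrow(y,x)$ and the fact that $u$ vanishes on $\mathbb{R}^N\setminus\Omega$, I would write
\begin{equation*}
\int_{\mathbb{R}^N}\!\int_{\mathbb{R}^N}\frac{|u(x)-u(y)|^p}{|x-y|^{N+ps}}\,dx\,dy
= I_1+2\,I_2,
\end{equation*}
where $I_1$ is the integral over $\Omega\times\Omega$ and $I_2$ is the integral over $\Omega\times(\mathbb{R}^N\setminus\Omega)$; the fourth piece over $(\mathbb{R}^N\setminus\Omega)^2$ is zero.

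For $I_1$, I would directly apply Lemma \ref{l1} to obtain $I_1\leq C\|u\|_{W^{1,p}(\Omega)}^p$, and then invoke the standard Poincar\'e inequality for $W_0^{1,p}(\Omega)$ to absorb $\|u\|_{L^p(\Omega)}^p$ into $\|\nabla u\|_{L^p(\Omega)}^p$, giving $I_1\leq C\|\nabla u\|_{L^p(\Omega)}^p$.

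For $I_2$, since $u(y)=0$ for $y\notin\Omega$,
\begin{equation*}
I_2=\int_\Omega|u(x)|^p\!\left(\int_{\mathbb{R}^N\setminus\Omega}\frac{dy}{|x-y|^{N+ps}}\right)dx.
\end{equation*}
For each $x\in\Omega$, $\mathbb{R}^N\setminus\Omega\subset\{y:|x-y|\geq d(x)\}$ with $d(x)=\mathrm{dist}(x,\partial\Omega)$, so a direct computation in polar coordinates gives $\int_{\mathbb{R}^N\setminus\Omega}|x-y|^{-N-ps}\,dy\leq C\,d(x)^{-ps}$. Thus $I_2\leq C\int_\Omega |u(x)|^p\,d(x)^{-ps}\,dx$. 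Since $\Omega$ is bounded, $d(x)\leq\mathrm{diam}(\Omega)$, and as $p(1-s)>0$ we have $d(x)^{-ps}\leq \mathrm{diam}(\Omega)^{p(1-s)}d(x)^{-p}$. Hence
\begin{equation*}
I_2\leq C\int_\Omega\frac{|u(x)|^p}{d(x)^p}\,dx.
\end{equation*}
The main ingredient now is the classical Hardy inequality on $W_0^{1,p}(\Omega)$ for $C^1$ domains, which yields $\int_\Omega|u|^p/d(x)^p\,dx\leq C\|\nabla u\|_{L^p(\Omega)}^p$; combining this with the estimates for $I_1$ gives the result.

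The only genuinely nontrivial step is the estimate of $I_2$: one needs both the pointwise bound on the tail integral and Hardy's inequality to convert weighted boundary-singular terms into $\|\nabla u\|_{L^p}$. Everything else is either the already-cited embedding Lemma \ref{l1} or the standard Poincar\'e inequality, so the proof is essentially a reduction of the problem to these three classical facts via the region decomposition.
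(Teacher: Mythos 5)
Your argument is correct, but it takes a different route from the paper, which does not prove the lemma at all and simply imports it from \cite[Lemma 2.1]{Silva}. Your decomposition into $I_1+2I_2$, the embedding Lemma \ref{l1} plus Poincar\'e for $I_1$, the polar-coordinate bound $\int_{\mathbb{R}^N\setminus\Omega}|x-y|^{-N-ps}\,dy\leq C\,d(x)^{-ps}$, and the passage $d(x)^{-ps}\leq \mathrm{diam}(\Omega)^{p(1-s)}d(x)^{-p}$ are all sound; the final step then rests on the Hardy inequality $\int_\Omega |u|^p d(x)^{-p}\,dx\leq C\int_\Omega|\nabla u|^p\,dx$, which is legitimate here because the standing assumptions give $1<p<\infty$ and $\Omega$ bounded and $C^1$ (hence Lipschitz), and the paper's zero-extension space coincides with the trace-zero space on such domains. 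The difference with the cited route is one of machinery: the standard proof behind the reference works directly with the extension $u\in W^{1,p}(\mathbb{R}^N)$ (with $\nabla u=0$ a.e.\ off $\Omega$) and splits the off-diagonal region into $|x-y|\leq 1$ and $|x-y|>1$, using the translation estimate $\|u(\cdot+h)-u\|_{L^p}\leq |h|\,\|\nabla u\|_{L^p}$ for the near part and a crude $L^p$ bound plus Poincar\'e for the far part; that argument needs neither the fractional embedding nor Hardy's inequality and is insensitive to boundary regularity beyond what Poincar\'e requires. Your version buys a self-contained treatment of the cross term with an explicit boundary-distance weight, at the price of invoking the heavier Hardy inequality (which would genuinely fail for $p=1$ or for irregular domains, so the hypotheses are doing real work in your proof, whereas the near/far argument avoids that dependence).
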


In most of the results of Part-I, we deal with solutions in $W_0^{1,p}(\Omega)$ which is defined below.
\begin{Definition}\label{p1sol}
We say that $u\in W_{0}^{1,p}(\Omega)$ is a weak subsolution (or supersolution) of the problem \eqref{meqn} if for every nonnegative $\phi\in W_0^{1,p}(\Omega)$, we have
\begin{equation}\label{p1wksol}
\begin{split}
&\int_{\Omega}H(\nabla u)^{p-1}\nabla H(\nabla u)\nabla\phi\,dx+\iint_{\mathbb{R}^{2N}}|u(x)-u(y)|^{p-2}(u(x)-u(y))(\phi(x)-\phi(y))\,d\mu\\
&\quad-\int_{\Omega}f(x,u)\phi\,dx\leq (\text{ or }\,\geq)\,0,
\end{split}
\end{equation}
provided that the integral $\int_{\Omega}f(x,u)\phi\,dx$ above is finite. Further, we say that $u\in W_0^{1,p}(\Omega)$ is a weak solution of \eqref{meqn} if the equality in \eqref{p1wksol} holds for every $\phi\in W_0^{1,p}(\Omega)$ without any sign restriction. 

Analogously, the notion of weak subsolutions, supersolutions and solutions for the system \eqref{smeqn} and equations related to the non-homogeneous operator \eqref{mo} is defined.
\end{Definition}

\begin{Remark}\label{p1rmk}
We observe that if $u\in W_0^{1,p}(\Omega)$, the local and nonlocal integrals are finite. Indeed, due to Lemma \ref{Happ}-(C), we have
$$
\Big|\int_{\Omega}H(\nabla u)^{p-1}\nabla H(\nabla u)\nabla\phi\,dx\Big|\leq  C\int_{\Omega}H(\nabla u)^{p-1}|\nabla\phi|\,dx<\infty,
$$
using H\"older's inequality and that $u,\phi\in W_0^{1,p}(\Omega)$. Further, for the nonlocal integral, we observe that
\begin{equation*}
\begin{split}
&\iint_{\mathbb{R}^{2N}}|u(x)-u(y)|^{p-2}(u(x)-u(y))(\phi(x)-\phi(y))\,d\mu\\
&\leq \iint_{\mathbb{R}^{2N}}|u(x)-u(y)|^{p-1}|\phi(x)-\phi(y)|\,d\mu<\infty
\end{split}
\end{equation*}
using H\"older's inequality, that $u,\phi\in W_0^{1,p}(\Omega)$ and Lemma \ref{locnon1}.
\end{Remark}
In Part-II, we deal with local weak solutions and therefore, we define them later in Part-II in separate subsections.\\
\textbf{Auxiliary results:} Now, we state some useful results. The following results are taken \cite[Proposition 2.1]{PF20}, \cite[Proposition 1.2]{Xiathesis} and \cite[Lemma 5.9]{Heinonen}.
\begin{Lemma}\label{Happ}
For every $x\in\mathbb{R}^N\setminus\{0\}$ and $t\in\mathbb{R}\setminus\{0\}$ we have
\begin{enumerate}
\item[(A)] $x\cdot\nabla H(x)=H(x)$. 
\item[(B)] $\nabla H(tx)=\text{sign}(t)\nabla H(x)$.
\item[(C)] $|\nabla H(x)|\leq C$, for some fixed positive constant $C$.
\item[(D)] for every $x,y\in\mathbb{R}^N$ such that $x\neq y$
\begin{equation}\label{Falg}
\big(H(x)^{p-1}\nabla H(x)-H(y)^{p-1}\nabla H(y)\big)\cdot(x-y)>0.
\end{equation}
\end{enumerate}
\end{Lemma}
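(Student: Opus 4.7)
All four claims are classical properties of Finsler--Minkowski norms, and my plan is to derive each directly from (H1)--(H3), the strict convexity of $H$, and the $C^1$ regularity of $H$ away from the origin.

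For (A) I would invoke Euler's identity: since (H2) gives $H(tx)=tH(x)$ for $t>0$, differentiating both sides in $t$ and evaluating at $t=1$ yields $x\cdot \nabla H(x)=H(x)$.

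For (B) I would differentiate $H(tx)=|t|H(x)$ in $x$ for fixed $t\neq 0$. Splitting according to the sign of $t$ produces $t\nabla H(tx)=t\nabla H(x)$ when $t>0$ and $t\nabla H(tx)=-t\nabla H(x)$ when $t<0$, which combine to $\nabla H(tx)=\operatorname{sign}(t)\nabla H(x)$.

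For (C) the key observation is that (B), applied with $t=1/|x|$, exhibits $\nabla H$ as $0$-homogeneous, so $\nabla H(x)=\nabla H(x/|x|)$ for every $x\neq 0$. Since $\nabla H\in C(\R^N\setminus\{0\})$ by hypothesis and the Euclidean unit sphere is compact, the maximum of $|\nabla H|$ on it supplies the desired constant.

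The main work is (D). My plan is to recognize the left-hand side of \eqref{Falg} as $(\nabla \Phi(x)-\nabla \Phi(y))\cdot(x-y)$ for the auxiliary function $\Phi(x):=H(x)^p/p$. Using (A), a direct computation on $\R^N\setminus\{0\}$ gives $\nabla \Phi=H^{p-1}\nabla H$, with continuous extension to the origin by (H1) and part (C). Then \eqref{Falg} will follow from strict convexity of $\Phi$ on $\R^N$ via the standard characterization of $C^1$ strictly convex functions. The hard part will be the strict convexity itself: if $y$ is not a positive multiple of $x$, strict convexity of $H$ combined with monotonicity and convexity of $t\mapsto t^p$ transfers to $H^p$; if instead $y=\lambda x$ with $\lambda>0$, $\lambda\neq 1$, then $H$ is merely linear along the segment $[x,y]$, and I would have to rely on the strict convexity of $t\mapsto t^p$ on $[0,\infty)$ for $p>1$ together with the positivity $H(x)>0$ from (H1). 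This case split is the only non-routine step and is where the assumption $p>1$ plays its role.
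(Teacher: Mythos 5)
Your outline is correct, but it is worth knowing that the paper itself does not prove Lemma \ref{Happ} at all: it simply imports the four statements from the cited sources (\cite[Proposition 2.1]{PF20}, \cite[Proposition 1.2]{Xiathesis} for (A)--(C) and the homogeneity/boundedness of $\nabla H$, and \cite[Lemma 5.9]{Heinonen} for the monotonicity (D)). So your proposal is a genuinely different route in the sense that it is self-contained: (A) via Euler's identity, (B) by differentiating $H(tx)=|t|H(x)$, (C) from the $0$-homogeneity of $\nabla H$ and compactness of the unit sphere, and (D) by recognizing the left-hand side of \eqref{Falg} as the gradient monotonicity of $\Phi=H^p/p$ and proving strict convexity of $\Phi$ -- this is essentially the argument behind the cited Heinonen--Kilpel\"ainen--Martio lemma, so nothing is lost and the reader gains a transparent proof from (H1)--(H3). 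One caveat you should make explicit: since $H$ is positively $1$-homogeneous it cannot be strictly convex as a function on $\mathbb{R}^N$ (it is linear along rays), so the hypothesis ``$H$ strictly convex'' must be read as strict convexity of the unit ball (equivalently, $H(\lambda x+(1-\lambda)y)<\lambda H(x)+(1-\lambda)H(y)$ unless $x,y$ lie on a common ray from the origin); your case split for (D) is exactly calibrated to this reading, with the collinear cases ($y=\lambda x$, $\lambda\ge 0$, including $y=0$, where $H$ is affine on the segment) handled by strict convexity of $t\mapsto t^p$, $p>1$, together with $H(x)>0$, and the antiparallel case $\lambda<0$ needing only the strict triangle inequality for oppositely directed vectors. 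With that clarification, and the observation that $\nabla\Phi=H^{p-1}\nabla H$ extends continuously by $0$ to the origin (so the inequality makes sense and holds for all $x\neq y$ in $\mathbb{R}^N$), your plan is complete and correct.
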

For $f,k\in\mathbb{R}$ and $\alpha>0$, we define
\begin{equation}\label{zeta}
\zeta_\pm(f,k)=\pm\alpha\int_{k}^{f}|s|^{\alpha-1}(s-k)_{\pm}\,ds.
\end{equation}
From \cite[Lemma 2.2]{LiaoJFA}, it follows that
\begin{Lemma}\label{Liaoin}
There exists a constant $\gamma=\gamma(\alpha)>0$ such that for any $f,k\in\mathbb{R}$, it holds that
\begin{equation}\label{Liaoin1}
\frac{1}{\gamma}(|f|+|k|)^{\alpha-1}(f-k)_\pm^{2}\leq \zeta_\pm(f,k)\leq\gamma(|f|+|k|)^{\alpha-1}(f-k)_\pm^{2}.
\end{equation}
\end{Lemma}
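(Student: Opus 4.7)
My approach is to reduce the estimate to a scale-invariant statement on a compact set and conclude by continuity and compactness.

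First I would reduce to the $+$ case with $f > k$. The substitution $s \mapsto -s$ in the defining integral produces the identity $\zeta_-(f,k) = \zeta_+(-f,-k)$, while one checks directly that $(|f|+|k|)^{\al-1}(f-k)_-^2 = (|{-f}|+|{-k}|)^{\al-1}((-f)-(-k))_+^2$, so the minus case reduces to the plus case. If $f \leq k$, then $\zeta_+(f,k) = 0 = (f-k)_+^2$, so one may assume $f > k$. Next, for $(f,k)\neq (0,0)$ set $M := |f|+|k| > 0$; the substitution $s = M\sigma$ in the defining integral gives $\zeta_+(f,k) = M^{\al+1}\zeta_+(f/M, k/M)$, while $(|f|+|k|)^{\al-1}(f-k)^2 = M^{\al+1}(f/M - k/M)^2$. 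Hence it suffices to prove the inequality on the compact normalized slice $\mathcal K := \{(f,k)\in\R^2 : |f|+|k|=1,\ f \geq k\}$.

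The change of variable $s = k + t(f-k)$, $t \in [0,1]$, then transforms the integral into
$$
\zeta_+(f,k) = \al(f-k)^2 \int_0^1 t\,|(1-t)k + tf|^{\al-1}\,dt,
$$
so the task further reduces to showing that $I(f,k) := \int_0^1 t\,|(1-t)k+tf|^{\al-1}\,dt$ is bounded above and below by strictly positive constants depending only on $\al$, uniformly on $\mathcal K$. On $\mathcal K$ the integrand is pointwise continuous in $(f,k)$; its only possible singularity in $t$ is of order $|t-t_0|^{\al-1}$ at the unique zero $t_0 = |k|/(|f|+|k|)$ of the affine function (arising when $k,f$ have opposite signs), which is integrable for every $\al > 0$. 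The integrand is moreover strictly positive a.e.\ on $[0,1]$ for each $(f,k) \in \mathcal K$ (noting that $|f|+|k|=1$ rules out $(0,0)$). A dominated-convergence argument with a local $L^1$-majorant near the singularity then shows that $I$ is continuous and strictly positive on the compact set $\mathcal K$, hence attains finite positive extrema; choosing $\gamma$ to absorb these extrema and the factor $\al$ completes the proof.

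The main technical obstacle is verifying the continuity of $I$ in the sub-regime $0<\al<1$ with $k,f$ of opposite signs, where $|(1-t)k+tf|^{\al-1}$ is locally unbounded near $t_0$. This is handled by splitting $I$ into a small neighborhood of $t_0$—controlled uniformly by the $L^1$ bound $\int |t-t_0'|^{\al-1}\,dt$, finite for $\al > 0$—and its complement, on which the integrand is bounded and jointly continuous. An alternative that avoids compactness is to compute $\zeta_+$ by explicit integration in the three sign sub-cases ($k\geq 0$, $k<0\leq f$, $f\leq 0$) and reduce to elementary asymptotic comparisons of the resulting one-variable functions; for example, in the first sub-case one obtains $\zeta_+(f,k) = \frac{f^{\al+1}}{\al+1}\phi(k/f)$ with $\phi(x) = \al-(\al+1)x + x^{\al+1}$ on $[0,1]$, and a Taylor expansion at $x=1$ together with $\phi(0)=\al>0$ shows $\phi(x) \asymp (1+x)^{\al-1}(1-x)^2$ with constants depending only on $\al$.
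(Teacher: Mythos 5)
Your argument is correct, but note that the paper itself does not prove this lemma at all: it imports it verbatim from \cite[Lemma 2.2]{LiaoJFA}, so there is no in-paper argument to compare against, and what you have written is a genuine self-contained substitute. Your route — reducing the $-$ case to the $+$ case via $\zeta_-(f,k)=\zeta_+(-f,-k)$, factoring out the homogeneity $\zeta_+(f,k)=M^{\alpha+1}\zeta_+(f/M,k/M)$ with $M=|f|+|k|$, rewriting $\zeta_+(f,k)=\alpha(f-k)^2\int_0^1 t\,|(1-t)k+tf|^{\alpha-1}\,dt$, and then bounding the normalized integral above and below on the compact slice $\mathcal{K}$ by continuity and positivity — is a soft compactness proof that avoids explicit case computations; its only genuinely delicate point is exactly the one you flag, namely local uniform integrability of $|(1-t)k+tf|^{\alpha-1}$ near the zero of the affine function when $0<\alpha<1$, and your majorant argument handles it (observe that on $\mathcal{K}$ with $k\le 0\le f$ one has $|f-k|=1$, so the integrand is literally $t\,|t-t_0|^{\alpha-1}$; the borderline configurations $k=0$ or $f=0$, where the zero sits at an endpoint, are covered by the same splitting). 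The price of the soft argument is that the constant $\gamma(\alpha)$ is not explicit, whereas the explicit-integration alternative you sketch (e.g.\ $\zeta_+(f,k)=\frac{f^{\alpha+1}}{\alpha+1}\phi(k/f)$ with $\phi(x)=\alpha-(\alpha+1)x+x^{\alpha+1}$, $\phi(1)=\phi'(1)=0$, $\phi''(1)=\alpha(\alpha+1)>0$, done in the three sign regimes) is closer in spirit to the elementary case-by-case estimates one finds in the cited literature and yields computable constants; either completes the proof.
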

The following result follows from \cite[Proposition $3.1$ and Proposition $3.2$]{Dibe}.
\begin{Lemma}\label{Sobo}
Let $p,m\in[1,\infty)$ and $q=p(1+\frac{m}{N})$.
Assume that $\Om$ is a bounded smooth domain in $\mathbb{R}^N$. 
\begin{enumerate}
\item[(a)] If $u\in L^p\big(0,T;W^{1,p}(\Om)\big)\cap L^\infty\big(0,T;L^m(\Om)\big)$, then $u\in L^q\big(0,T;L^q(\Om)\big)$.
\item[(b)] Moreover, if $u\in L^p\big(0,T;W^{1,p}_{0}(\Om)\big)\cap L^\infty\big(0,T;L^m(\Om)\big)$, then there exists a constant $C=C(p,m,N)>0$ such that
\begin{equation}\label{Soboine}
\int_{0}^{T}\int_{\Om}|u(x,t)|^q\,dx\,dt\leq C\bigg(\int_{0}^{T}\int_{\Om}|\nabla u(x,t)|^p\,dx\,dt\bigg)\bigg(\sup_{0<t<T}\int_{\Om}|u(x,t)|^m\,dx\bigg)^\frac{p}{N}.
\end{equation}
\end{enumerate}
\end{Lemma}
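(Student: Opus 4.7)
The plan is to reduce this parabolic Sobolev–type inequality to a purely spatial interpolation applied for almost every time slice, then integrate in $t$. This is essentially the Gagliardo–Nirenberg interpolation inequality.

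\textbf{Step 1: pointwise-in-time interpolation.} Fix $t\in(0,T)$ and write $v=u(\cdot,t)\in W^{1,p}(\Omega)$. I claim there exists $C=C(p,m,N)$ such that
\begin{equation*}
\|v\|_{L^q(\Omega)}^{q}\leq C\,\|v\|_{W^{1,p}(\Omega)}^{p}\,\|v\|_{L^m(\Omega)}^{pm/N}.
\end{equation*}
For $1\le p<N$, set $p^\ast=Np/(N-p)$ and $\theta=N/(N+m)\in(0,1)$. A short computation using $q=p(N+m)/N$ gives the Hölder identity $1/q=\theta/p^\ast+(1-\theta)/m$, so
$$
\|v\|_{L^q}\le\|v\|_{L^{p^\ast}}^{\theta}\,\|v\|_{L^m}^{1-\theta}.
$$
The Sobolev embedding $\|v\|_{L^{p^\ast}}\le C\|v\|_{W^{1,p}(\Omega)}$ (valid on the bounded smooth domain $\Omega$) and raising to the $q$-th power produces the claim, after checking that $\theta q = p N/(N+m)\cdot (N+m)/N\cdot \text{...}=p$ and $(1-\theta)q=pm/N$. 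For $p=N$ the same scheme works with any $p^\ast>q$, since $W^{1,N}(\Omega)\hookrightarrow L^{p^\ast}$; for $p>N$ one can either pick any finite $p^\ast\ge q$ with $W^{1,p}\hookrightarrow L^{p^\ast}$ or use the embedding into $L^\infty$ directly.

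\textbf{Step 2: proof of (b).} In the $W^{1,p}_0$ setting, the Sobolev constant can be taken to depend only on $N,p$, and the embedding simplifies to $\|v\|_{L^{p^\ast}}\le C\|\nabla v\|_{L^p(\Omega)}$. Thus the inequality from Step 1 improves to
$$
\int_\Omega |u(x,t)|^q\,dx\;\le\;C\Big(\int_\Omega |\nabla u(x,t)|^p\,dx\Big)\Big(\int_\Omega |u(x,t)|^m\,dx\Big)^{p/N}
$$
for a.e. $t$. Estimating the second factor by $\bigl(\sup_{0<\tau<T}\int_\Omega |u(x,\tau)|^m\,dx\bigr)^{p/N}$ and integrating over $t\in(0,T)$ gives exactly \eqref{Soboine}.

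\textbf{Step 3: proof of (a).} For the general $W^{1,p}$ case, the interpolation inequality of Step~1 still holds (now using the full $W^{1,p}$ norm on the right). Since the $L^m$-norm is uniformly bounded in $t$ by hypothesis, integration in time yields
$$
\int_0^T\int_\Omega |u|^q\,dx\,dt\;\le\;C\Bigl(\esssup_{0<t<T}\|u(\cdot,t)\|_{L^m(\Omega)}\Bigr)^{pm/N}\int_0^T\|u(\cdot,t)\|_{W^{1,p}(\Omega)}^{p}\,dt\;<\;\infty,
$$
which is the desired membership $u\in L^q(0,T;L^q(\Omega))$.

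\textbf{Main obstacle.} The only technical point is locating the correct interpolation exponent $\theta=N/(N+m)$ and verifying the exponent identity $1/q=\theta/p^\ast+(1-\theta)/m$; the dependence of $C$ on $\Omega$ only through the Sobolev constant (which in (b) is dimensional) is then automatic. The borderline cases $p\ge N$ need a separate, but trivial, adjustment of the embedding exponent.
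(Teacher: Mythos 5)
Your Steps 1--2 are correct for $1\le p<N$, and this is essentially the standard argument behind the cited result of DiBenedetto (the paper itself only quotes \cite{Dibe}; the proof there is likewise a slicewise Gagliardo--Nirenberg estimate followed by integration in $t$, and your exponent bookkeeping $\theta q=p$, $(1-\theta)q=pm/N$ checks out). The genuine gap is your final claim that the cases $p\ge N$ require only "a trivial adjustment of the embedding exponent." They do not: the scheme "Sobolev embedding into some $L^{r}$ plus Lebesgue interpolation" cannot produce \eqref{Soboine} once $p\ge N$. Indeed, if you interpolate $L^q$ between $L^{r}$ and $L^{m}$ with $1/q=\theta/r+(1-\theta)/m$, then solving $\theta q=p$ forces
\begin{equation*}
\frac1r=\frac{1}{m}-\frac{q-m}{pm}=\frac{N-p}{Np},
\end{equation*}
i.e.\ $r=Np/(N-p)$, which is admissible only for $p<N$. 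For $p\ge N$ every finite choice of $r$ (and also $r=\infty$ when $p>N$) yields $\theta q>p$ (for instance $N=1$, $p=2$, $m=1$ gives $q=4$ and, using $L^\infty$, $\theta q=q-m=3>2$). Consequently the gradient factor appears to a power strictly larger than $p$: part (b) then comes out in the wrong form (and with domain-dependent constants, since the embeddings $W^{1,p}\hookrightarrow L^{r}$, $p\ge N$, are not scale invariant), and even part (a) breaks down, because $\int_0^T\|u(\cdot,t)\|_{W^{1,p}}^{\theta q}\,dt$ with $\theta q>p$ is not controlled by $u\in L^p(0,T;W^{1,p}(\Omega))$.

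The fix is to replace "Sobolev embedding + H\"older interpolation" by the genuine multiplicative Gagliardo--Nirenberg inequality
\begin{equation*}
\|v\|_{L^q(\Omega)}\le C(N,p,m)\,\|\nabla v\|_{L^p(\Omega)}^{\frac{N}{N+m}}\,\|v\|_{L^m(\Omega)}^{\frac{m}{N+m}},\qquad v\in W_0^{1,p}(\Omega),
\end{equation*}
which corresponds to $1/q=\theta(1/p-1/N)+(1-\theta)/m$ with $\theta=N/(N+m)<1$ and is valid for \emph{all} $1\le p<\infty$ (extend by zero to $\mathbb{R}^N$; this is exactly what \cite{Dibe} invokes). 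For $p<N$ it coincides with your argument, but for $p\ge N$ it is not a consequence of a Sobolev embedding plus interpolation, so it must be quoted or proved separately; the analogous inequality with the full $W^{1,p}$ norm on an extension domain then gives part (a) for $p\ge N$ as well. With that replacement your Steps 2 and 3 go through verbatim.
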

For the following inequality, see \cite[Lemma 2.9]{BGK}.
\begin{Lemma}\label{Inequality1}
Let $a,b>0$, $\tau_1,\tau_2\geq 0$. Then for any $p>1$, there exists a constant $C=C(p)>1$ large enough such that
\begin{equation}\label{Ineeqn}
\begin{split}
&|b-a|^{p-2}(b-a)(\tau_1^{p}a^{-\epsilon}-\tau_2^{p}b^{-\epsilon})
\geq\frac{\zeta(\epsilon)}{C(p)}\Big|\tau_2 b^\frac{p-\epsilon-1}{p}-\tau_1 a^\frac{p-\epsilon-1}{p}\Big|^p\\
&\qquad-\Big(\zeta(\epsilon)+1+\frac{1}{\epsilon^{p-1}}\Big)\big|\tau_2-\tau_1\big|^p\big(b^{p-\epsilon-1}+a^{p-\epsilon-1}\big),
\end{split}
\end{equation}
where $0<\epsilon<p-1$ and $\zeta(\epsilon)=\epsilon(\frac{p}{p-\epsilon-1})^p$. If $0<p-\epsilon-1<1$, we may choose $\zeta(\epsilon)=\frac{\epsilon p^p}{p-\epsilon-1}$ in \eqref{Ineeqn}.
\end{Lemma}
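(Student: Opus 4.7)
The plan is to establish the estimate by first proving its unweighted counterpart ($\tau_1=\tau_2=1$) via a scalar Picone-type inequality, and then absorbing the discrepancy $|\tau_2-\tau_1|$ through Young's inequality. The natural reduction is via the substitution $t:=b/a$, which after dividing by $a^{p-\epsilon-1}$ turns both sides into one-dimensional expressions in $t$.

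For the model case $\tau_1=\tau_2=1$, I would prove the scalar inequality
$$|t-1|^{p-2}(t-1)\bigl(1-t^{-\epsilon}\bigr)\ \geq\ \zeta(\epsilon)\,\bigl|t^{(p-\epsilon-1)/p}-1\bigr|^{p},\qquad t>0,$$
with $\zeta(\epsilon)=\epsilon(p/(p-\epsilon-1))^{p}$. Both sides vanish at $t=1$ and are nonnegative because $(t-1)$ and $(1-t^{-\epsilon})$ share the same sign. Using the integral representations
$$t^{(p-\epsilon-1)/p}-1 = \tfrac{p-\epsilon-1}{p}\int_{1}^{t}s^{-(\epsilon+1)/p}\,ds,\qquad 1-t^{-\epsilon}=\epsilon\int_{1}^{t}s^{-\epsilon-1}\,ds,$$
and applying H\"older's inequality with exponents $p$ and $p/(p-1)$ to the first integral, one obtains $\bigl(\int_1^t s^{-(\epsilon+1)/p}\,ds\bigr)^p \leq (t-1)^{p-1}\int_1^t s^{-\epsilon-1}\,ds$, which combines with the representations above to give exactly the stated bound. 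The case $t<1$ is handled analogously after reversing the limits of integration. In the degenerate regime $0<p-\epsilon-1<1$ the exponent $(p-\epsilon-1)/p$ falls below $1/p$, and a direct sub-additivity argument replaces H\"older, producing the alternative constant $\zeta(\epsilon)=\epsilon p^p/(p-\epsilon-1)$ stated in the lemma.

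For general $\tau_1,\tau_2\geq 0$ I would decompose
$$\tau_1^{p}a^{-\epsilon}-\tau_2^{p}b^{-\epsilon} = \tau_1^{p}\bigl(a^{-\epsilon}-b^{-\epsilon}\bigr) + \bigl(\tau_1^{p}-\tau_2^{p}\bigr)b^{-\epsilon},$$
multiply by $|b-a|^{p-2}(b-a)$, and estimate each piece separately. The first piece yields the Picone-type main term $\zeta(\epsilon)\tau_1^{p}|b^{(p-\epsilon-1)/p}-a^{(p-\epsilon-1)/p}|^p$ via the model case. The second piece is controlled by $|\tau_1^p-\tau_2^p|\lesssim p(\tau_1+\tau_2)^{p-1}|\tau_1-\tau_2|$ combined with Young's inequality at exponents $(p,p/(p-1))$, splitting $b^{-\epsilon}$ so that one factor is absorbed into the main term with a small parameter $\delta\sim\epsilon$ and the other produces the remainder $(\zeta(\epsilon)+1+1/\epsilon^{p-1})|\tau_1-\tau_2|^p(a^{p-\epsilon-1}+b^{p-\epsilon-1})$. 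A final application of the elementary bound $|X+Y|^p\leq(1+\delta)|X|^p+C_\delta|Y|^p$ replaces $\tau_1^p|b^{(p-\epsilon-1)/p}-a^{(p-\epsilon-1)/p}|^p$ by the symmetric quantity $|\tau_2 b^{(p-\epsilon-1)/p}-\tau_1 a^{(p-\epsilon-1)/p}|^p$ at the cost of the universal factor $C(p)>1$ in the denominator and an additional error of the same form.

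The main obstacle is constant bookkeeping: the coefficient $1/\epsilon^{p-1}$ in the error term arises precisely from the Young parameter $\delta$ chosen as a small multiple of $\epsilon$, and the factor $\zeta(\epsilon)$ must be preserved on the positive side after all splittings. The regime $0<p-\epsilon-1<1$ requires the alternative $\zeta(\epsilon)$ and a separate H\"older-free computation. Since the statement is quoted from \cite[Lemma 2.9]{BGK}, the argument is essentially a careful reproduction of theirs, with no new ideas needed beyond scalar calculus and weighted Young's inequality.
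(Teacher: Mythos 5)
The paper itself offers no proof of this lemma: it is imported verbatim from \cite[Lemma 2.9]{BGK}, whose argument (a refinement of Di Castro--Kuusi--Palatucci) is exactly the route you outline, so your sketch is measured against that standard proof. Your model case is correct and is the heart of the matter: the integral representations together with H\"older's inequality with exponents $p$ and $p/(p-1)$ give $(t-1)^{p-1}(1-t^{-\epsilon})\geq\zeta(\epsilon)\,|t^{(p-\epsilon-1)/p}-1|^{p}$ for all $t>0$, which rescales to the unweighted two-point inequality, and the final symmetrization $\tau_1^p|b^{\beta}-a^{\beta}|^p\geq 2^{1-p}|\tau_2 b^{\beta}-\tau_1 a^{\beta}|^p-|\tau_2-\tau_1|^p b^{p\beta}$, with $\beta=\frac{p-\epsilon-1}{p}$, correctly accounts for the $\zeta(\epsilon)/C(p)$ main term and for the $\zeta(\epsilon)$ portion of the error coefficient.

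Two points in the weighted step are gaps as written. First, your fixed decomposition $\tau_1^{p}a^{-\epsilon}-\tau_2^{p}b^{-\epsilon}=\tau_1^{p}(a^{-\epsilon}-b^{-\epsilon})+(\tau_1^{p}-\tau_2^{p})b^{-\epsilon}$ only closes when $b\geq a$: the bound $|b-a|^{p-1}b^{-\epsilon}\leq b^{p-\epsilon-1}$ and the absorption of $\tau_1^{p}|b-a|^{p}b^{-\epsilon-1}$ into the main term both rely on $|b-a|\leq b$ and on the elementary estimate $|b^{\beta}-a^{\beta}|\geq\beta|b-a|\,b^{\beta-1}$ (this, together with $\zeta(\epsilon)\beta^{p}=\epsilon$, is what legitimizes your choice $\delta\sim\epsilon$ and should be stated explicitly, since the main term does not control $|b-a|^{p-1}b^{-\epsilon}$ directly), and all of these fail when $a\gg b$. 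The repair is standard --- reduce to $b\geq a$ via the symmetry of both sides under $(a,\tau_1)\leftrightarrow(b,\tau_2)$, or switch to the mirrored decomposition $\tau_2^{p}(a^{-\epsilon}-b^{-\epsilon})+(\tau_1^{p}-\tau_2^{p})a^{-\epsilon}$ when $a>b$ --- but it must be made. Second, the bookkeeping you describe ($|\tau_1^p-\tau_2^p|\lesssim p(\tau_1+\tau_2)^{p-1}|\tau_1-\tau_2|$ followed by Young's inequality) naturally yields an error coefficient of the form $C(p)\big(\zeta(\epsilon)+1+\epsilon^{1-p}\big)$ rather than the literal $\zeta(\epsilon)+1+\epsilon^{1-p}$ of the statement; this is immaterial for every application in the paper, but your claim that the stated constant ``arises precisely'' from the Young parameter is asserted, not shown, and matching it requires the more careful accounting of \cite{BGK}. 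Finally, your remark on the regime $0<p-\epsilon-1<1$ is off: H\"older already produces the larger $\zeta(\epsilon)=\epsilon(p/(p-\epsilon-1))^{p}$ there; the alternative $\zeta(\epsilon)=\epsilon p^{p}/(p-\epsilon-1)$ is smaller, and since $\zeta(\epsilon)$ also multiplies the error term, that version is a separate (easy) check rather than a consequence of sub-additivity.
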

The following real analysis lemma can be found in \cite[Lemma 4.1]{Dibe}.
\begin{Lemma}\label{iteration}
Let $(Y_j)_{j=0}^{\infty}$ be a sequence of positive real numbers such that
$Y_0\leq c_{0}^{-\frac{1}{\beta}}b^{-\frac{1}{\beta^2}}$ and $Y_{j+1}\leq c_0 b^{j} Y_j^{1+\beta}$,
$j=0,1,2,\dots$, for some constants $c_0,b>1$ and $\beta>0$. Then $\lim_{j\to\infty}\,Y_j=0$.
\end{Lemma}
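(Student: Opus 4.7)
The plan is to identify the right geometric decay rate by a scaling ansatz and then verify it by induction. Given the structure $Y_{j+1}\le c_0 b^j Y_j^{1+\beta}$, I expect $Y_j$ to decay like $Y_0\,r^j$ for some $r=r(b,\beta)\in(0,1)$. Substituting such an ansatz into the recursion and matching the resulting powers of $b^j$ on both sides forces $r=b^{-1/\beta}$. The smallness condition $Y_0\le c_0^{-1/\beta} b^{-1/\beta^2}$ is then precisely the threshold that makes this choice self-consistent, as one sees by checking when the constant prefactor produced by one step of iteration is at most the desired decay ratio $b^{-1/\beta}$.

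Concretely, I would prove by induction on $j$ that
\begin{equation*}
Y_j \le Y_0\, b^{-j/\beta}, \qquad j=0,1,2,\dots .
\end{equation*}
The base case $j=0$ is immediate. For the inductive step, assume the bound holds at level $j$. Then the recursion together with the inductive hypothesis gives
\begin{equation*}
Y_{j+1} \le c_0 b^j Y_j^{1+\beta} \le c_0 b^j Y_0^{1+\beta}\, b^{-j(1+\beta)/\beta} = \bigl(c_0 Y_0^{\beta}\bigr)\, Y_0\, b^{-j/\beta},
\end{equation*}
since the exponent of $b$ simplifies as $j-j(1+\beta)/\beta=-j/\beta$. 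The hypothesis on $Y_0$ rearranges to $c_0 Y_0^{\beta}\le b^{-1/\beta}$, so
\begin{equation*}
Y_{j+1} \le b^{-1/\beta}\, Y_0\, b^{-j/\beta} = Y_0\, b^{-(j+1)/\beta},
\end{equation*}
which closes the induction.

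Since $b>1$ and $\beta>0$, the factor $b^{-j/\beta}$ tends to zero as $j\to\infty$, and therefore $Y_j\to 0$, as required. The argument is purely computational and presents no real obstacle; the only conceptual point is that the seemingly mysterious exponents $1/\beta$ and $1/\beta^2$ in the smallness assumption on $Y_0$ are exactly the ones needed for the ansatz $Y_j\le Y_0 b^{-j/\beta}$ to propagate, and any weakening of this assumption would prevent the induction from closing.
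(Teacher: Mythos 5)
Your induction $Y_j\le Y_0\,b^{-j/\beta}$ is correct, and the computation closing the inductive step (the hypothesis on $Y_0$ being exactly $c_0Y_0^{\beta}\le b^{-1/\beta}$) is exactly the standard argument for this fast-geometric-convergence lemma; the paper itself does not prove it but cites \cite[Lemma 4.1]{Dibe}, whose proof is the same induction. Nothing is missing.
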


The following version of the Gagliardo-Nirenberg-Sobolev inequality will be useful for us,
see \cite[Corollary 1.57]{Maly}.
\begin{Lemma}\label{Sobine}
Let $1<p<\infty$, $\Omega$ be an open set in $\R^n$ with $\lvert\Omega\rvert<\infty$ and
\begin{equation}\label{kappa}
\kappa=
\begin{cases}
\frac{n}{n-p},&\text{if}\quad 1<p<n,\\
2,&\text{if}\quad p\geq n.
\end{cases}
\end{equation}
There exists a positive constant $C=C(n,p)$ such that
\begin{equation}\label{e.friedrich}
\biggl(\int_\Omega \lvert u(x)\rvert^{\kappa p}\,dx\biggr)^{\frac{1}{\kappa p}}
\le C \lvert\Omega\rvert^{\frac{1}{n}-\frac{1}{p}+\frac{1}{\kappa p}} \biggl(\int_\Omega \lvert \nabla u(x)\rvert^{p}\,dx\biggr)^{\frac{1}{p}}
\end{equation}
for every $u\in W_0^{1,p}(\Omega)$.
\end{Lemma}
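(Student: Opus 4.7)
The plan is to deduce \eqref{e.friedrich} from the classical Gagliardo--Nirenberg--Sobolev embedding by splitting into the cases $1<p<n$ and $p\ge n$ and, in the second case, downshifting to an auxiliary subcritical exponent and recovering the missing integrability via H\"older's inequality on the bounded set $\Omega$. Throughout I would exploit that any $u\in W_0^{1,p}(\Omega)$ may be extended by zero to a function in $W^{1,p}(\mathbb{R}^n)$ with the gradient norm preserved.

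First I would dispatch the subcritical range $1<p<n$. In this case $\kappa p = np/(n-p)=p^\ast$ is the Sobolev conjugate and a direct computation gives
$$
\frac{1}{n}-\frac{1}{p}+\frac{1}{\kappa p}=\frac{1}{n}-\frac{1}{p}+\frac{n-p}{np}=0,
$$
so the prefactor $|\Omega|^{1/n-1/p+1/(\kappa p)}$ equals $1$. Hence \eqref{e.friedrich} is nothing but the standard Sobolev inequality $\|u\|_{L^{p^\ast}(\mathbb{R}^n)}\le C(n,p)\|\nabla u\|_{L^p(\mathbb{R}^n)}$ applied to the zero-extension.

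The substantive case is $p\ge n$, where the Sobolev conjugate is unavailable. My strategy is to pick an auxiliary exponent $q\in(1,n)$ whose Sobolev conjugate matches $\kappa p=2p$, apply Case~1 at level $q$, and then use H\"older on $\Omega$ to pay a power of $|\Omega|$ in exchange for upgrading the $L^q$ gradient norm to the desired $L^p$ one. The correct choice, obtained by solving $nq/(n-q)=2p$, is
$$
q:=\frac{2pn}{n+2p}.
$$
Using $n\ge 2$ and $p\ge n$ one verifies $1<q<n$, and by construction $q^\ast=2p$. The subcritical case therefore gives
$$
\Big(\int_\Omega |u|^{2p}\,dx\Big)^{1/(2p)}\le C(n,q)\Big(\int_\Omega |\nabla u|^{q}\,dx\Big)^{1/q},
$$
and then H\"older's inequality with exponent $p/q>1$ yields
$$
\Big(\int_\Omega |\nabla u|^{q}\,dx\Big)^{1/q}\le |\Omega|^{1/q-1/p}\Big(\int_\Omega |\nabla u|^{p}\,dx\Big)^{1/p}.
$$
A short computation shows $1/q-1/p=1/n-1/(2p)=1/n-1/p+1/(\kappa p)$, matching the exponent claimed in \eqref{e.friedrich}; and since $q$ is determined by $n$ and $p$, the final constant depends only on those parameters.

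The main obstacle, such as it is, is entirely cosmetic: checking that the chosen auxiliary $q$ genuinely lies in $(1,n)$ for all admissible $p\ge n$ (which does use the standing hypothesis $N\ge 2$), and keeping careful track of the Sobolev and H\"older exponents so that the power of $|\Omega|$ comes out exactly as stated. No deeper analytic input beyond the classical Sobolev embedding and H\"older's inequality is required.
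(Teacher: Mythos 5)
Your argument is correct, and it differs from the paper only in that the paper offers no proof at all: it simply cites \cite[Corollary 1.57]{Maly}, so your contribution is a self-contained derivation from the classical Gagliardo--Nirenberg--Sobolev inequality plus H\"older. The arithmetic checks out in both cases: for $1<p<n$ one has $\kappa p=p^\ast$ and $\frac{1}{n}-\frac{1}{p}+\frac{1}{\kappa p}=0$, so the claim is exactly the scale-invariant Sobolev inequality for the zero extension; for $p\ge n$ your choice $q=\frac{2pn}{n+2p}$ indeed satisfies $q<n$, $q^\ast=2p$, and $\frac{1}{q}-\frac{1}{p}=\frac{1}{n}-\frac{1}{p}+\frac{1}{2p}$, so the H\"older step produces precisely the stated power of $\lvert\Omega\rvert$, with a constant depending only on $(n,p)$ through $q$. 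The only caveat is the one you flag yourself: $q>1$ requires $p>\frac{n}{2(n-1)}$, which is automatic when $n\ge 2$ (the paper's standing assumption $N\ge 2$), but your construction does not cover $n=1$ even though the lemma is stated for a general open set in $\R^n$; if one wants that case it follows trivially from $\lVert u\rVert_{L^\infty}\le\lVert u'\rVert_{L^1}\le\lvert\Omega\rvert^{1-\frac1p}\lVert u'\rVert_{L^p}$ together with $\lVert u\rVert_{L^{2p}}\le\lvert\Omega\rvert^{\frac{1}{2p}}\lVert u\rVert_{L^\infty}$, which gives exactly the exponent $\frac{1}{n}-\frac{1}{p}+\frac{1}{\kappa p}$ when $n=1$. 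With that one-line remark added, your proof fully establishes the lemma without appealing to the cited reference.
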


The following anisotropic Picone inequality follows from \cite[Lemma 2.2]{Jaros}. We also refer to \cite{BGM, Br}. 
\begin{Theorem}\label{BrPI}(\textbf{Anisotropic Picone inequality})
Let $H$ be a Finsler-Minkowski norm. Then for any differentiable functions $u,v$ in $\Omega$ with $u>0,\,v\geq 0$ in $\Omega$, we have
\begin{equation}\label{BrPIeqn}
\begin{split}
0&\leq H(\nabla v)^p-H(\nabla u)^{p-1}\nabla H(\nabla u)\nabla \Big(\frac{v^p}{u^{p-1}}\Big)\\
&=H(\nabla v)^p+(p-1)\Big(\frac{v}{u}\Big)^pH(\nabla u)^p-p\Big(\frac{v}{u}\Big)^{p-1}H(\nabla u)^{p-1}\nabla H(\nabla u)\nabla v\text{ in }\Omega
\end{split}
\end{equation}
Moreover, the equality in \eqref{BrPIeqn} holds in $\Omega$ iff $u=kv$ in $\Omega$ for some constant $k>0$.
\end{Theorem}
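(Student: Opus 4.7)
The plan is to first verify the stated equality by direct computation, then use two key properties of a Finsler--Minkowski norm (the convex subgradient inequality and Young's inequality) to get the nonnegativity, and finally trace through the two equality cases to obtain $u=kv$.

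For the first equality, I would apply the product and chain rule to compute
$$
\nabla\!\left(\frac{v^p}{u^{p-1}}\right) = p\,\frac{v^{p-1}}{u^{p-1}}\,\nabla v - (p-1)\,\frac{v^p}{u^p}\,\nabla u,
$$
dot with $H(\nabla u)^{p-1}\nabla H(\nabla u)$, and use Lemma \ref{Happ}(A), namely $\nabla u\cdot\nabla H(\nabla u)=H(\nabla u)$, to convert the second term into $(p-1)(v/u)^pH(\nabla u)^p$. Subtracting from $H(\nabla v)^p$ reproduces the right--hand side in the statement, which confirms the identity.

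For the inequality, the two ingredients I would invoke are the following. Since $H$ is convex and positively $1$--homogeneous on $\R^N$, Euler's identity and the subgradient inequality together yield
$$
\nabla H(x)\cdot\xi \leq H(\xi)\qquad\text{for every }x\in\R^N\setminus\{0\},\ \xi\in\R^N,
$$
with equality, by strict convexity of $H$, only when $\xi=\lambda x$ for some $\lambda\geq 0$. Applying this with $x=\nabla u$ and $\xi=\nabla v$ bounds the cross--term:
$$
p\left(\frac{v}{u}\right)^{p-1}\!H(\nabla u)^{p-1}\,\nabla H(\nabla u)\cdot\nabla v \;\leq\; p\left(\frac{v\,H(\nabla u)}{u}\right)^{\!p-1}\!H(\nabla v).
$$
Then Young's inequality $p\,a^{p-1}b\leq (p-1)a^p+b^p$ with $a=(v/u)H(\nabla u)$ and $b=H(\nabla v)$ gives exactly
$$
p\left(\frac{v\,H(\nabla u)}{u}\right)^{\!p-1}\!H(\nabla v) \leq (p-1)\!\left(\frac{v}{u}\right)^{\!p}\!H(\nabla u)^p + H(\nabla v)^p,
$$
and rearranging produces the claimed nonnegativity.

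For the equality case, if the full expression vanishes then both inequalities above must be equalities pointwise. Young's equality forces $(v/u)H(\nabla u)=H(\nabla v)$, while the subgradient equality, together with strict convexity of $H$, forces $\nabla v$ to be a nonnegative scalar multiple of $\nabla u$; combining these two facts yields $\nabla v=(v/u)\nabla u$, i.e.\ $\nabla(v/u)=0$, so $v/u$ is a positive constant on each connected component, giving $u=kv$. Conversely, if $u=kv$ one checks by direct substitution that both sides of \eqref{BrPIeqn} vanish. The main subtlety I anticipate is handling the possible set where $\nabla u=0$ (so that $\nabla H(\nabla u)$ is not \emph{a priori} defined): there the identity $x\cdot\nabla H(x)=H(x)$ is used in the sense of the $1$--homogeneous extension, and one must argue that the terms containing $\nabla H(\nabla u)$ are harmless there because $H(\nabla u)^{p-1}$ already vanishes, provided $p>1$.
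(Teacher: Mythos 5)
Your proposal is correct. The paper does not prove Theorem \ref{BrPI} itself but imports it from the cited reference (Jaro\v{s}, Lemma 2.2), and your argument is essentially the standard proof behind that result: expand $\nabla(v^p/u^{p-1})$, use Euler's identity $x\cdot\nabla H(x)=H(x)$ (Lemma \ref{Happ}(A)) to obtain the stated algebraic identity, then bound the cross term by the generalized Cauchy--Schwarz inequality $\nabla H(x)\cdot\xi\le H(\xi)$ (convexity plus homogeneity) followed by Young's inequality $p\,a^{p-1}b\le(p-1)a^p+b^p$, with the equality analysis tracing back through both steps.

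Two small points worth tightening, neither of which is a real gap. First, in the equality case the subgradient step only forces $\nabla v=\lambda\nabla u$ at points where the prefactor $p(v/u)^{p-1}H(\nabla u)^{p-1}$ is positive; at points where $v=0$ or $\nabla u=0$ that step is vacuous, but there Young's equality $(v/u)H(\nabla u)=H(\nabla v)$ (together with $v\ge 0$ having an interior minimum where $v=0$) still yields $\nabla v=0$ and hence $u\nabla v-v\nabla u=0$, so $\nabla(v/u)=0$ holds throughout and the conclusion is unaffected; you should state this explicitly rather than only in the nondegenerate case. Second, the conclusion $u=kv$ with $k>0$ tacitly uses that $\Omega$ is connected (true here, as $\Omega$ is a domain) and excludes the trivial case $v\equiv 0$, for which equality in \eqref{BrPIeqn} holds but no such $k$ exists; this is an imprecision already present in the statement as quoted from the literature, not in your argument. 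Your handling of the set $\{\nabla u=0\}$, interpreting $H(\nabla u)^{p-1}\nabla H(\nabla u)$ as $0$ there via the bound $|\nabla H|\le C$ of Lemma \ref{Happ}(C) and $p>1$, is exactly the convention the paper uses for the operator $H_p$.
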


The following discrete Picone inequality follows from \cite[Theorem 2.4]{Jac}. We also refer to \cite{DP, Br}.
\begin{Theorem}\label{JacPI}(\textbf{Discrete Picone inequality})
Let $1<p<\infty$ and $\Omega$ be a domain in $\mathbb{R}^N$. Let $u,v$ be two nonnegative Lebesgue measurable functions in $\Omega$, with $u>0$ in $\Omega$ and non-constant. Then
\begin{equation}\label{JacPIeqn}
|u(x)-u(y)|^{p-2}\Big(\frac{v^p}{u^{p-1}}(x)-\frac{v^p}{u^{p-1}}(y)\Big)\leq |v(x)-v(y)|^p.
\end{equation}
Moreover, the equality in \eqref{JacPIeqn} holds in $\Omega$ iff $v=ku$ for some constant $k>0$.
\end{Theorem}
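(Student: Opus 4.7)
The plan is to reduce the statement to a pointwise algebraic inequality on four nonnegative real numbers and then verify it by a convexity / Young-type argument that mimics the continuous Picone identity of Theorem \ref{BrPI}. Fix $x,y\in\Omega$ and set $a:=u(x),\,b:=u(y)>0$ and $c:=v(x),\,d:=v(y)\ge 0$. Interpreting $|a-b|^{p-2}(a-b)$ in the standard way, swapping $x\leftrightarrow y$ preserves both sides (up to the appropriate sign), so I may assume $a\ge b>0$. The boundary subcases $a=b$ and $d=0$ are handled directly (the first yields $0$ on both sides, the second reduces to $(1-b/a)^{p-1}\le 1$). Using the fact that the inequality is invariant under $(a,b)\mapsto(\lambda a,\lambda b)$ (the factor $|a-b|^{p-1}$ cancels with $1/a^{p-1},\,1/b^{p-1}$) and homogeneous of degree $p$ in $(c,d)$, I would normalize $b=d=1$, write $a=t\ge 1$ and $c=s\ge 0$, reducing to the two-variable inequality
\[
(t-1)^{p-1}\left(\frac{s^p}{t^{p-1}}-1\right)\le |s-1|^p,\qquad t\ge 1,\; s\ge 0.
\]

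To prove this algebraic inequality, note first that if $s^p\le t^{p-1}$ then the left-hand side is $\le 0$ and there is nothing to do. In the non-trivial regime $s^p>t^{p-1}$, I would mimic the continuous Picone proof by pivoting around the linearization $\eta:=(s/t)(t-1)$, which plays the role of $(v/u)\,u'$ in the differential identity. Applying the global convexity inequality
\[
|\xi|^p\ge |\eta|^p+p|\eta|^{p-2}\eta(\xi-\eta),\qquad \xi,\eta\in\mathbb{R},
\]
with $\xi=s-1$, followed by Young's inequality $pA^{p-1}B\le (p-1)A^p+B^p$ applied with $A=s/t$ and $B$ proportional to $|s-1|$, absorbs the cross term $p|\eta|^{p-2}\eta(\xi-\eta)$ and rearranges into the desired upper bound. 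Undoing the homogeneity normalisation recovers the full inequality \eqref{JacPIeqn}.

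For the equality case, equality in the convexity/Young step forces simultaneously $\xi=\eta$ and $A=B$, which translates in the original variables to the proportionality $c/a=d/b$, i.e.\ $v(x)/u(x)=v(y)/u(y)$ on the pair $\{x,y\}$. Imposing this for (almost) every pair in $\Omega$ and using that $u$ is non-constant and strictly positive forces $v/u\equiv k$ for some constant $k>0$, exactly as stated. The main obstacle I anticipate is Step 2: unlike the continuous case where the product rule lets one expand $(v^p/u^{p-1})'$ cleanly before appealing to Young's inequality, the discrete finite difference $v(x)^p/u(x)^{p-1}-v(y)^p/u(y)^{p-1}$ admits no such expansion, so the correct choice of the pivot $\eta$ and of the exponents in Young's inequality must be engineered so that the leftover cross terms cancel exactly; verifying this cancellation is the delicate part of the argument.
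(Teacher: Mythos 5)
The paper itself offers no proof of this statement: it is imported verbatim from \cite[Theorem 2.4]{Jac} (see also \cite{DP,Br}), so there is no internal argument to compare against; what you have written is a self-contained elementary proof in the spirit of those references. Two remarks on your reduction. First, the inequality as printed in \eqref{JacPIeqn} is missing the factor $(u(x)-u(y))$; your reading of the left-hand side as $|a-b|^{p-2}(a-b)\bigl(\tfrac{c^p}{a^{p-1}}-\tfrac{d^p}{b^{p-1}}\bigr)$ is the intended (and dimensionally consistent) statement, and with it the expression is symmetric under swapping $x\leftrightarrow y$, so the normalization $a\ge b>0$, $b=d=1$ (after disposing of $a=b$ and $d=0$ exactly as you indicate) is legitimate, using the degree-$0$ homogeneity in $(a,b)$ and degree-$p$ homogeneity in $(c,d)$ separately. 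This correctly reduces everything to $(t-1)^{p-1}\bigl(s^pt^{1-p}-1\bigr)\le |s-1|^p$ for $t\ge 1$, $s\ge 0$.

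On the step you flagged as delicate: your pivot $\eta=(s/t)(t-1)$ is exactly the right one, but the absorption is not a Young inequality with $B$ proportional to $|s-1|$. Since $\xi-\eta=s/t-1$, the tangent-line (convexity) inequality gives $|s-1|^p\ge (s/t)^p(t-1)^p+p(s/t)^{p-1}(t-1)^{p-1}\bigl(\tfrac{s}{t}-1\bigr)$, and after subtracting the target left-hand side and dividing by $(t-1)^{p-1}$ (the case $t=1$ being trivial), the whole inequality collapses, with $w=s/t$, to $(p-1)w^p-pw^{p-1}+1\ge 0$, i.e.\ Young's inequality $pw^{p-1}\cdot 1\le (p-1)w^p+1^p$ with $A=s/t$ and $B=1$. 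With that correction the cancellation you anticipated is exact and the proof closes; note also that in the nontrivial regime one has $s>0$ and may take $t>1$, so $\eta>0$ and the tangent-line inequality is valid even for $1<p<2$. The equality analysis then works as you describe: strict convexity forces $\xi=\eta$ and the Young step forces $w=1$, both equivalent to $s=t$, i.e.\ $v(x)u(y)=u(x)v(y)$, and imposing this for a.e.\ pair gives $v=ku$ (the borderline case $v\equiv 0$, where $k>0$ fails, is an imprecision already present in the quoted statement, not in your argument).
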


\section{Part-I}
\subsection{Brezis-Oswald problem}
In the celebrated work \cite{BOna}, Brezis and Oswald studied the Laplace equation
\begin{equation}\label{BOeqncel}
\begin{split}
-\Delta u&=f(x,u)\text{ in }\Omega\\
u&\gneqq 0\text{ in }\Omega,\quad
u=0\text{ in }\mathbb{R}^N\setminus\Omega,
\end{split}
\end{equation}
with some source function $g$. This has been extended to the $p$-Laplace operator $-\Delta_p$ in \cite{DiazSaa, Dm}, for the nonlocal operator $(-\Delta_p)^s$ in \cite{BOnonloc}. Recently, it has been extended to the mixed local and nonlocal operator $-\Delta_p u+(-\Delta_p)^s u$ in \cite{VecchiBO2, VecchiBO} and the authors proved existence, uniqueness, boundedness result and strong maximum principle. We also refer to \cite{Zhangsmp} for strong maximum principle in the mixed case. Here, in the mixed anisotropic setting, we establish strong maximum principle, boundedness, uniqueness and existence result for the mixed anisotropic and nonlocal Brezis-Oswald problem \eqref{BOeqn} given by
\begin{equation}\label{BOeqn}
\begin{split}
-H_p u+(-\Delta_p)^s u&=f(x,u)\text{ in }\Omega\\
u&\gneqq 0\text{ in }\Omega,\quad
u=0\text{ in }\mathbb{R}^N\setminus\Omega,
\end{split}
\end{equation}
where $\Omega$ is a bounded $C^1$ domain in $\mathbb{R}^N$ with $N\geq 2$ and $f$ satisfy the following hypothesis:
\begin{enumerate}
\item[(f1)] $f:\Omega\times[0,\infty)\to\mathbb{R}$ is a {\color{blue}Carath\'eodory} function.

\item[(f2)] $f(\cdot,t)\in L^\infty(\Omega)$ for every $t\geq 0$.

\item[(f3)] there exists a constant $c_p>0$ such that
$$
|f(x,t)|\leq c_p(1+t^{p-1})\text{ for a.e. }x\in\Omega\text{ and every }t\geq 0.
$$

\item[(f4)] For a.e. $x\in\Omega$, the function $t\mapsto \frac{f(x,t)}{t^{p-1}}$ is strictly decreasing in $(0,\infty)$.

\item[(f5)] there exists $\rho_f>0$ such that $f(x,t)>0$ for a.e. $x\in\Omega$ and every $0<t<\rho_f$.
\end{enumerate}
An example of a class of function $f$ satisfying $(f1)-(f5)$ is $f(x,u)=u^q$ for $0\leq q\leq p-1$. Following Definition 2.1 of \cite{VecchiBO}, we define the notion of solutions of the problem \eqref{BOeqn} as follows:
\begin{Definition}\label{BOeqnwksol}
We say that $u\in W_0^{1,p}(\Omega)$ is a weak solution of \eqref{BOeqn} if 
\begin{enumerate}
\item[(i)]
\begin{equation}\label{BOwksoleqn}
\begin{split}
&\int_{\Omega}H(\nabla u)^{p-1}\nabla H(\nabla u)\nabla\phi\,dx+\iint_{\mathbb{R}^{2N}}|u(x)-u(y)|^{p-2}(u(x)-u(y))(\phi(x)-\phi(y))\,d\mu\\
&\quad\quad=\int_{\Omega}f(x,u)\phi\,dx,
\end{split}
\end{equation}
holds for every $\phi\in W_0^{1,p}(\Omega)$ and 
\item[(ii)] $u\geq 0$ in $\Omega$ and $|x\in\Omega:u(x)>0|>0$.
\end{enumerate}
\end{Definition}

First we obtain the following strong maximum principle for the problem \eqref{BOeqn} below.

\begin{Theorem}\label{smp}(\textbf{Strong maximum principle})
 Let $f$ satisfy $(f1)-(f3)$ above and $u\in W_0^{1,p}(\Omega)$ such that $u\geq 0$ in $\Omega$ satisfy \eqref{BOwksoleqn} for every $\phi\in W_0^{1,p}(\Omega)$. Then either $u\equiv 0$ or $u>0$ in $\Omega$.  
 \end{Theorem}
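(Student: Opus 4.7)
The plan is to show that, under the assumption $u \not\equiv 0$, we have $u > 0$ a.e.~in $\Omega$. The strategy combines a supersolution reduction with a logarithmic Caccioppoli estimate, mirroring the approach used in the mixed local–nonlocal setting (cf.~\cite{VecchiBO, Zhangsmp}) adapted to the present anisotropic framework.

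First, I would rewrite the weak formulation as a supersolution inequality with bounded perturbation. Using (f3) to bound $f(x,u) \geq -c_p(1+u^{p-1})$ a.e.~in $\Omega$, testing \eqref{BOwksoleqn} against any nonnegative $\phi \in W_0^{1,p}(\Omega)$ yields
\begin{equation*}
\int_\Omega H(\nabla u)^{p-1}\nabla H(\nabla u)\nabla\phi\,dx + \iint_{\mathbb{R}^{2N}}|u(x)-u(y)|^{p-2}(u(x)-u(y))(\phi(x)-\phi(y))\,d\mu \geq -c_p\int_\Omega(1+u^{p-1})\phi\,dx.
\end{equation*}
Since $u \in W_0^{1,p}(\Omega)$ is extended by zero outside $\Omega$, we have $u \geq 0$ on all of $\mathbb{R}^N$, which will play a crucial role in handling the nonlocal term.

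Next, I would test this supersolution inequality with $\phi = \eta^p/(u+\delta)^{p-1}$ for a cutoff $\eta \in C_c^\infty(B_{2r})$ satisfying $\eta \equiv 1$ on $B_r$, $0 \leq \eta \leq 1$, where $\overline{B_{2r}} \subset \Omega$ and $\delta > 0$ is small. Expanding $\nabla\phi$ and using Lemma~\ref{Happ}(A) (in particular $\nabla u \cdot \nabla H(\nabla u) = H(\nabla u)$) together with Young's inequality, one controls the local term, yielding a lower bound proportional to $\int_{B_r}(H(\nabla u)/(u+\delta))^p\,dx$ minus $O(r^{-p})$. For the nonlocal term, the discrete Picone inequality (Theorem~\ref{JacPI}) applied to $u + \delta$ and $\eta$ produces a pointwise bound reducing the integrand essentially to $|\eta(x)-\eta(y)|^p$ plus controllable error. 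Combining these gives a logarithmic Caccioppoli estimate for $\log(u+\delta)$, in both the local anisotropic and the fractional $(s,p)$ seminorms on $B_r$, with constants independent of $\delta$.

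Finally, if $|\{u = 0\} \cap B_r| > 0$, then $\log(u+\delta) \to -\infty$ on this set as $\delta \to 0^+$, forcing the Caccioppoli integral to blow up and contradicting the uniform-in-$\delta$ bound. Hence $u > 0$ a.e.~in every ball $B_r$ with $\overline{B_{2r}} \subset \Omega$, and a covering argument combined with the connectedness of $\Omega$ propagates this to $u > 0$ a.e.~in all of $\Omega$. The most delicate step is the bookkeeping for the nonlocal Caccioppoli estimate: the tail contributions from the region where $\eta$ vanishes but $u$ need not, must be absorbed into lower-order terms. This relies on the global nonnegativity $u \geq 0$ on $\mathbb{R}^N$, the symmetry of the kernel $d\mu$, and an a priori $L^\infty$ bound on $u$ obtainable from (f3) via a Moser-type iteration (which the paper establishes elsewhere for the Brezis--Oswald setting).
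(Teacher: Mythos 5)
Your skeleton---testing with $\eta^p/(u+\delta)^{p-1}$, expanding the anisotropic term via Lemma \ref{Happ}, handling the nonlocal term by a Picone/logarithmic estimate, and letting $\delta\to 0^+$---is the same route the paper takes (it tests with $\phi^p/(u+\epsilon)^{p-1}$, treats the anisotropic term with (H3) and Lemma \ref{Happ}, and defers the rest to \cite[Theorem 3.1]{VecchiBO}). The genuine gap is in your treatment of the source term. After replacing $f(x,u)$ by the lower bound $-c_p(1+u^{p-1})$ from (f3), testing against $\eta^p/(u+\delta)^{p-1}$ produces, besides the harmless piece $c_p\int u^{p-1}\eta^p(u+\delta)^{1-p}\,dx\le c_p\int\eta^p\,dx$, the piece $c_p\int\eta^p(u+\delta)^{1-p}\,dx$, which on $\{u=0\}\cap\supp\eta$ equals $c_p\,\delta^{-(p-1)}\int_{\{u=0\}}\eta^p\,dx$. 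This is exactly the quantity you are trying to show vanishes, it is not uniformly bounded in $\delta$, and its rate $\delta^{-(p-1)}$ dominates the $|\log\delta|^p$ growth of the logarithmic seminorm between the zero set and a set where $u\ge\sigma>0$; so your claimed ``uniform-in-$\delta$ logarithmic Caccioppoli estimate'' fails and no contradiction can be extracted from (f3) alone. In fact no argument using only (f1)--(f3) can close: for $p=2$, $H(x)=|x|$, take a smooth nonnegative $u$ compactly supported in $\Omega$, vanishing on an open ball but not identically zero, and let $f(x,t):=F(x)$ where $F\in L^\infty(\Omega)$ is the function for which $u$ satisfies \eqref{BOwksoleqn} (i.e.\ the mixed operator applied to $u$); all of (f1)--(f3) hold, yet $u$ vanishes on a ball. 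What makes the argument invoked by the paper close is sign information on $f$ near $t=0$---$f(x,0)\ge 0$ together with $f(x,t)\ge -c\,t^{p-1}$ for small $t$, as in Remark \ref{BOeqnrmk1}, or (f5)---which renders $\int_\Omega f(x,u)\,\eta^p(u+\delta)^{1-p}\,dx$ bounded below uniformly in $\delta$.

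Two further points in your closing step would need repair even with the source term under control. The plain discrete Picone inequality (Theorem \ref{JacPI}) only bounds the nonlocal term from above by the seminorm of $\eta$ and discards the coercive remainder; to have a quantity that blows up as $\delta\to0$ you must retain a quantitative logarithmic remainder (a logarithmic lemma in the spirit of \cite{DKPhar}), not just Picone. And the contradiction cannot come merely from $\log(u+\delta)\to-\infty$ pointwise on $\{u=0\}$: if $u\equiv 0$ on a neighbourhood, $\log(u+\delta)$ is locally constant there and no gradient or local seminorm blows up (recall $\nabla u=0$ a.e.\ on $\{u=0\}$). One must pair a positive-measure portion of $\{u=0\}$ against a positive-measure set where $u\ge\sigma>0$ (available since $u\not\equiv0$), use the cross term of the fractional seminorm, and compare its $|\log\delta|^p$ growth with the (then uniformly bounded) right-hand side.
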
  

\begin{proof}
We follow the proof of \cite[Theorem 3.1]{VecchiBO}. To this end, let $\epsilon>0$ and $\phi$ be the same function as in the proof of \cite[Theorem 3.1, page 8]{VecchiBO} and then choosing $\phi_\epsilon=\frac{\phi^p}{(u+\epsilon)^{p-1}}$ as a test function in \eqref{BOwksoleqn}, we obtain
\begin{equation}\label{BOeqnwksoltst1}
\begin{split}
&\int_{\Omega}H(\nabla u)^{p-1}\nabla H(\nabla u)\nabla\phi_\epsilon\,dx+\iint_{\mathbb{R}^{2N}}{|u(x)-u(y)|^{p-2}(u(x)-u(y))(\phi_\epsilon(x)-\phi_\epsilon(y)) }\,d\mu\\
&\quad=\int_{\Omega}f(x,u)\phi_\epsilon\,dx.
\end{split}
\end{equation}
We estimate the first integral in the L.H.S. above only, since the other integrals are estimated in the proof of \cite[Theorem 3.1]{VecchiBO}. To this end, by Lemma \ref{Happ}-(A), we observe that
\begin{equation*}
\begin{split}
H(\nabla u)^{p-1}\nabla H(\nabla u)\nabla\phi_\epsilon&=-(p-1)\Big(\frac{\phi}{u+\epsilon}\Big)^pH(\nabla u)^p+p\Big(\frac{\phi}{u+\epsilon}\Big)^{p-1}H(\nabla u)^{p-1}\nabla H(\nabla u)\nabla\phi.
\end{split}
\end{equation*}
Using the above estimate in \eqref{BOeqnwksoltst1} along with the property (H3) of $H$ and Lemma \ref{Happ}-(C), we obtain
\begin{equation}\label{BOeqnwksoltst2}
\begin{split}
C_1(p-1)\int_{\Omega}\Big(\frac{\phi}{u+\epsilon}\Big)^p|\nabla u|^p\,dx&\leq  \iint_{\mathbb{R}^{2N}}\frac{|u(x)-u(y)|^{p-2}(u(x)-u(y))(\phi_\epsilon(x)-\phi_\epsilon(y)) }{|x-y|^{N+ps}}\,dx dy\\
&+C_2\,p\Big(\frac{\phi}{u+\epsilon}\Big)^{p-1}|\nabla u|^{p-1}|\nabla\phi|\,dx-\int_{\Omega}f(x,u)\phi_\epsilon\,dx
\end{split}
\end{equation}
for some constants $C_1,C_2>0$ independent of $\epsilon$. The rest of the proof follows analogous to the proof of \cite[Theorem 3.1]{VecchiBO}.
\end{proof}

As a consequence of Theorem \ref{smp}, we have
\begin{Corollary}\label{smpcor1}
Let $f$ satisfy the hypothesis $(f1)-(f3)$ and $u\in W_0^{1,p}(\Omega)$ be a weak solution of the problem \eqref{BOeqn}. Then $u>0$ in $\Omega$.
\end{Corollary}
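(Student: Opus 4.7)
The plan is essentially a direct appeal to the strong maximum principle just established in Theorem \ref{smp}. By Definition \ref{BOeqnwksol}, a weak solution $u$ of \eqref{BOeqn} is a function $u\in W_0^{1,p}(\Omega)$ satisfying the integral identity \eqref{BOwksoleqn} against every $\phi\in W_0^{1,p}(\Omega)$ together with the sign and nontriviality conditions $u\geq 0$ in $\Omega$ and $|\{x\in\Omega:u(x)>0\}|>0$.

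First, I would verify that the hypotheses of Theorem \ref{smp} are met: $f$ satisfies $(f1)$--$(f3)$ by assumption, $u\in W_0^{1,p}(\Omega)$ with $u\geq 0$ in $\Omega$, and the identity \eqref{BOwksoleqn} holds for every admissible test function. Hence the dichotomy of Theorem \ref{smp} applies to $u$, yielding that either $u\equiv 0$ in $\Omega$ or $u>0$ in $\Omega$.

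Now I would rule out the first alternative using condition (ii) of Definition \ref{BOeqnwksol}: the set $\{x\in\Omega:u(x)>0\}$ has positive Lebesgue measure, so $u$ cannot vanish identically. Consequently only the second alternative remains, and $u>0$ in $\Omega$, which is the desired conclusion.

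There is no real obstacle here; the corollary is a bookkeeping consequence of the strong maximum principle combined with the nontriviality built into the notion of weak solution for \eqref{BOeqn}. The only point one might want to emphasize in writing is that $(f4)$ and $(f5)$ are not needed for this step—only the structural hypotheses $(f1)$--$(f3)$ used in Theorem \ref{smp}.
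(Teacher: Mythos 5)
Your argument is correct and is exactly the route the paper intends: the corollary is stated as an immediate consequence of Theorem \ref{smp}, with the nontriviality condition (ii) in Definition \ref{BOeqnwksol} excluding the alternative $u\equiv 0$. Nothing further is needed.
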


\begin{Remark}\label{BOeqnrmk1}
Further, as in \cite[Remark 3.4]{VecchiBO}, by carefully observing the proof of Theorem \ref{smp}, one has Theorem \ref{smp} valid for weak solutions of the boundary value problem
\begin{equation}\label{BOeqn1smp}
\begin{split}
-H_p u+(-\Delta_p)^s u=g(x,u)\text{ in }\Omega\\
u\gneqq 0\text{ in }\Omega,\quad
u=0\text{ in }\mathbb{R}^N\setminus\Omega,
\end{split}
\end{equation}
where $\Omega$ is a bounded $C^1$ domain in $\mathbb{R}^N$ with $N\geq 2$ and $g:\Omega\times\mathbb{R}\to\mathbb{R}$ is a Carath\'eodory function satisfying
\begin{enumerate}
    \item[(1)] $g(x,0)\geq 0$ for a.e. $x\in\Omega$

    \item[(2)] $g(x,t)\geq -c_f t^{p-1}$ for a.e. $x\in\Omega$ and every $0<t<1$

    \item[(3)] $g(x,t)\geq c_p(1+t^{p-1})$ for a.e. $x\in\Omega$ and every $t\geq 1$

    \item[(4)] there exists a constant $c_p>0$ such that
    \begin{equation}\label{gcn}
    |g(x,t)|\leq c_p(1+t^{q-1})\text{ for a.e. }x\in\Omega\text{ and every }t\geq 0, 
    \end{equation}
    where $1\leq q\leq\frac{Np}{N-p}$ if $1<p<N$ and $p\leq q<\infty$ if $p\geq N$.
\end{enumerate}
An example of a function $g$ satisfying (1)-(3) above is $f(x,t)=(-a(x)+\lambda)|t|^{p-2}t$, where $\lambda\in\mathbb{R}$ and $a\in L^\infty(\Omega)$. 
\end{Remark}

Taking into account the definition of $H$ and Lemma \ref{Happ} and proceeding along the lines of the proof of \cite[Theorem 4.1]{VecchiBO}, the following result holds.
\begin{Theorem}\label{boTHM2}(\textbf{Boundedness})
Let $f$ satisfy $(f1)-(f3)$ and suppose $u\in W_0^{1,p}(\Omega)$ is a weak solution of \eqref{BOeqn}. Then $u\in L^\infty(\Omega)$.
\end{Theorem}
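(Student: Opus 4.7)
The plan is to run a Moser-type iteration adapted to the mixed anisotropic and nonlocal setting, following the strategy of \cite[Theorem 4.1]{VecchiBO}. The key observation is that, thanks to property (H3) and Lemma \ref{Happ}(A), the anisotropic contribution
\[
\int_\Omega H(\nabla u)^{p-1}\nabla H(\nabla u)\cdot\nabla u\,dx = \int_\Omega H(\nabla u)^p\,dx
\]
is two-sided comparable to $\int_\Omega |\nabla u|^p\,dx$ up to powers of $C_1,C_2$. Consequently, all the Caccioppoli-type estimates derived for $-\Delta_p+(-\Delta_p)^s$ transfer to $-H_p+(-\Delta_p)^s$ with constants depending additionally only on $C_1,C_2,p$, and the proof of Theorem \ref{boTHM2} reduces to the corresponding isotropic one modulo these constants.

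Concretely, by Corollary \ref{smpcor1} we have $u>0$ in $\Omega$, so I would test \eqref{BOwksoleqn} with $\phi = u\,u_M^{p(\beta-1)}$, where $u_M := \min\{u,M\}$ for $M>0$ and $\beta \geq 1$ is the iteration parameter; $\phi \in W_0^{1,p}(\Omega)$ because of the truncation and the vanishing of $u$ outside $\Omega$. A chain-rule computation for the local term, combined with (H3) and Lemma \ref{Happ}(A), produces a positive quantity controlling $\int_\Omega |\nabla(u_M^{\beta-1}u)|^p\,dx$ from below. For the nonlocal term one invokes the standard discrete monotonicity inequality (see, e.g., \cite{BOnonloc}) to obtain a lower bound in terms of the Gagliardo seminorm of $u_M^{\beta-1}u$. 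On the right-hand side, the growth condition (f3) gives
\[
\int_\Omega f(x,u)\phi\,dx \leq c_p\int_\Omega (1+u^{p-1})\,u\,u_M^{p(\beta-1)}\,dx,
\]
where the $u^{p-1}$ term carries the critical exponent that drives the iteration.

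Applying the Sobolev inequality (Lemma \ref{Sobine}) to $u_M^{\beta-1}u \in W_0^{1,p}(\Omega)$ and letting $M\to\infty$ by monotone convergence yields a Moser-type recursion relating $\|u\|_{L^{\kappa p\beta}(\Omega)}$ to $\|u\|_{L^{p\beta}(\Omega)}$ (or equivalently a De Giorgi level-set recursion to which Lemma \ref{iteration} applies). Starting from $\beta = 1$, where $\|u\|_{L^{p}(\Omega)}$ is controlled by the energy $\|u\|_{W_0^{1,p}(\Omega)}$, the iteration $\beta_j=\kappa^j$ and summation of the geometric series in $\log\|u\|_{L^{p\kappa^j}(\Omega)}$ produce $u\in L^\infty(\Omega)$ with an explicit quantitative bound.

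The main technical obstacle is the discrete algebraic inequality needed to bound the nonlocal term from below by the Gagliardo seminorm of $u_M^{\beta-1}u$: the argument has to absorb the cross terms generated by the truncation $u_M$ (which vanish on $\{u\leq M\}$ but not elsewhere) and keep the constants independent of $M$, so that the $M\to\infty$ passage is legitimate. Once this algebraic step is in place, the remaining parts of the iteration are routine and the anisotropy enters only through the harmless constants $C_1,C_2$, exactly as in \cite[Theorem 4.1]{VecchiBO}.
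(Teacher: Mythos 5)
Your proposal is correct and takes essentially the same route as the paper, which proves Theorem \ref{boTHM2} simply by observing that (H3) and Lemma \ref{Happ} make the anisotropic term two-sidedly comparable to the isotropic one and then proceeding along the lines of the proof of \cite[Theorem 4.1]{VecchiBO}; your iteration sketch is a faithful expansion of exactly that argument. The only superfluous step is the appeal to Corollary \ref{smpcor1}: positivity is not needed, since $u\geq 0$ is already part of Definition \ref{BOeqnwksol} and the truncation argument works for nonnegative $u$.
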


\begin{Theorem}\label{boTHM3}(\textbf{Uniqueness})
Let $f$ satisfy $(f1)-(f5)$. Then the problem \eqref{BOeqn} admits at most one weak solution in $W_0^{1,p}(\Omega)$.
\end{Theorem}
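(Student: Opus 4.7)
The plan is to implement the Díaz–Saa style uniqueness scheme, now using both the anisotropic Picone inequality (Theorem~\ref{BrPI}) for the local operator $-H_p$ and the discrete Picone inequality (Theorem~\ref{JacPI}) for the nonlocal operator $(-\Delta_p)^s$, with the strict monotonicity hypothesis (f4) serving as the contradiction engine. Let $u,v\in W_0^{1,p}(\Omega)$ be two weak solutions of \eqref{BOeqn}. Since (f1)--(f3) hold, Corollary~\ref{smpcor1} gives $u,v>0$ in $\Omega$, and Theorem~\ref{boTHM2} gives $u,v\in L^\infty(\Omega)$; these two properties are precisely what make the Picone-type test functions admissible.

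For $\epsilon>0$ small, I would test the equation for $u$ against $\phi_\epsilon:=v^p/(u+\epsilon)^{p-1}\in W_0^{1,p}(\Omega)$ (the $L^\infty$ bound on $v$, the strict positivity of $u+\epsilon$, and $v\equiv 0$ in $\mathbb{R}^N\setminus\Omega$ ensure both admissibility and correct boundary behavior). The anisotropic Picone inequality applied to the pair $(u+\epsilon,v)$, together with $\nabla(u+\epsilon)=\nabla u$, controls the local flux term pointwise by $H(\nabla v)^p$; the discrete Picone inequality applied to the same pair controls the nonlocal bilinear integrand pointwise by $|v(x)-v(y)|^p$, up to a vanishing $\epsilon$-correction between $u(x)-u(y)$ and $(u+\epsilon)(x)-(u+\epsilon)(y)$ on cross terms. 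Passing $\epsilon\to 0$ by dominated convergence and combining with the weak formulation of the equation for $v$ tested against $v$ itself, which gives
\begin{equation*}
\int_\Omega H(\nabla v)^p\,dx+\iint_{\mathbb{R}^{2N}}|v(x)-v(y)|^p\,d\mu=\int_\Omega f(x,v)v\,dx,
\end{equation*}
produces the key inequality
\begin{equation*}
\int_\Omega\frac{f(x,u)}{u^{p-1}}v^p\,dx\le\int_\Omega\frac{f(x,v)}{v^{p-1}}v^p\,dx.
\end{equation*}

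Swapping the roles of $u$ and $v$, i.e.\ testing the equation for $v$ against $u^p/(v+\epsilon)^{p-1}$ and repeating the argument, yields the symmetric inequality with $u^p$ in place of $v^p$. Adding the two and rearranging gives
\begin{equation*}
\int_\Omega\left(\frac{f(x,u)}{u^{p-1}}-\frac{f(x,v)}{v^{p-1}}\right)(v^p-u^p)\,dx\le 0.
\end{equation*}
By hypothesis (f4), the map $t\mapsto f(x,t)/t^{p-1}$ is strictly decreasing on $(0,\infty)$, so the integrand is pointwise nonnegative and vanishes exactly on $\{u=v\}$. The above inequality therefore forces $u=v$ almost everywhere in $\Omega$, yielding uniqueness.

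The main obstacle is the nonlocal Picone step together with the limit $\epsilon\to 0$: replacing $u$ by $u+\epsilon$ in the denominator of $\phi_\epsilon$ creates a mismatch with the raw difference $u(x)-u(y)$ appearing in the weak formulation, and one has to split the double integral over $\mathbb{R}^{2N}$ into the interior piece $\Omega\times\Omega$ (where the shift cancels identically) and the two cross pieces $\Omega\times(\mathbb{R}^N\setminus\Omega)$ and $(\mathbb{R}^N\setminus\Omega)\times\Omega$ (where the shift survives), then verify via the $L^\infty$ bounds and Lemma~\ref{locnon1} that the $\epsilon$-dependent remainder vanishes in the limit. This is the technical core of the argument, carried out for the mixed $p$-Laplace case in~\cite{VecchiBO} and here to be adapted using the anisotropic structural properties (H1)--(H3) and Lemma~\ref{Happ}.
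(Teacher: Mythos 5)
Your argument is correct and is in substance the same D\'iaz--Saa/Brezis--Oswald scheme the paper uses, only organized a little differently: the paper tests the equation for $u$ with the combined function $\phi_\epsilon=\frac{v^p}{(u+\epsilon)^{p-1}}-u$ and the equation for $v$ with $\psi_\epsilon=\frac{u^p}{(v+\epsilon)^{p-1}}-v$, adds the two identities, shows via the anisotropic Picone inequality (Theorem~\ref{BrPI}) that the summed local terms are nonpositive (the nonlocal terms and the limit $\epsilon\to 0$ being handled as in \cite{VecchiBO}), and reads off $\int_\Omega\big(\frac{f(x,u)}{u^{p-1}}-\frac{f(x,v)}{v^{p-1}}\big)(v^p-u^p)\,dx\le 0$ in one stroke; you instead derive two one-sided comparisons (testing the $u$-equation with $\frac{v^p}{(u+\epsilon)^{p-1}}$ and the $v$-equation with $v$ itself, then symmetrizing) and add them, which is an algebraic rearrangement of the same computation, and both routes conclude identically from the strict monotonicity (f4). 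Two points to tighten. First, the step ``passing $\epsilon\to 0$ by dominated convergence'' is not literally available, since $v^p/u^{p-1}$ admits no a priori integrable majorant; the standard remedy is to use (f2) and (f4) to get $\frac{f(x,u)}{u^{p-1}}\ge\frac{f(x,M)}{M^{p-1}}\ge -C$ with $M=\max\{\|u\|_{L^\infty(\Omega)},\|v\|_{L^\infty(\Omega)}\}$, apply monotone convergence to the shifted nonnegative part and dominate the remainder by $C\,v^p$, which also guarantees all four integrals are finite before you add the two inequalities. Second, the ``technical core'' you flag in the nonlocal term is actually vacuous: since $u(x)-u(y)=(u+\epsilon)(x)-(u+\epsilon)(y)$ identically, the discrete Picone inequality (Theorem~\ref{JacPI}) applies to the pair $(u+\epsilon,v)$ pointwise on all of $\mathbb{R}^{2N}$, so there is no surviving cross-term correction on $\Omega\times(\mathbb{R}^N\setminus\Omega)$ to estimate.
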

\begin{proof}
We follow the proof of \cite[Theorem 4.3]{VecchiBO}. Let $u,v\in W_0^{1,p}(\Omega)$ be two weak solutions of the problem \eqref{BOeqn}. Then by Theorem \ref{boTHM2}, we have $u,v\in L^\infty(\Omega)$. We fix $\epsilon>0$ and set
$$
\phi_\epsilon=\frac{v^p}{(u+\epsilon)^{p-1}}-u,\quad \psi_\epsilon=\frac{u^p}{(v+\epsilon)^{p-1}}-v.
$$
Since $u,v\in W_0^{1,p}(\Omega)\cap L^\infty(\Omega)$ and $u,v\geq 0$, so $\phi_\epsilon,\psi_\epsilon\in W_0^{1,p}(\Omega)$ for every $\epsilon>0$. Therefore, using $\phi_\epsilon$ and $\psi_\epsilon$ as test functions in \eqref{BOwksoleqn} and adding the resulting equations, we obtain
\begin{equation}\label{BOeqn1}
\begin{split}
&\int_{\Omega}H(\nabla u)^{p-1}\nabla H(\nabla u)\nabla\phi_\epsilon+\int_{\Omega}H(\nabla u)^{p-1}\nabla H(\nabla u)\nabla\psi_\epsilon\\
&\quad+\iint_{\mathbb{R}^{2N}}{|u(x)-u(y)|^{p-2}(u(x)-u(y))(\phi_\epsilon(x)-\phi_\epsilon(y))}\,d\mu\\
&\qquad+\iint_{\mathbb{R}^{2N}}{|u(x)-u(y)|^{p-2}(u(x)-u(y))(\phi_\epsilon(x)-\phi_\epsilon(y))}\,d\mu\\
&\qquad=\int_{\Omega}(f(x,u)\phi_\epsilon+f(x,v)\psi_\epsilon)\,dx.
\end{split}
\end{equation}
Next, we compute the sign of the sum of the first two integrals above. To this end, we observe that
\begin{equation}\label{noname1}
\begin{aligned}
&H(\nabla u)^{p-1}\nabla H(\nabla u)\nabla\phi_\epsilon\\
&=-H(\nabla u)^p+p\Big(\frac{v}{u+\epsilon}\Big)^{p-1}H(\nabla u)^{p-1}\nabla H(\nabla u)\nabla v-(p-1)\Big(\frac{v}{u+\epsilon}\Big)^p H(\nabla u)^p
\end{aligned}
\end{equation}
and
\begin{equation}\label{noname2}
\begin{aligned}
&H(\nabla v)^{p-1}\nabla H(\nabla v)\nabla\psi_\epsilon\\
&=-H(\nabla v)^p+p\Big(\frac{u}{v+\epsilon}\Big)^{p-1}H(\nabla v)^{p-1}\nabla H(\nabla v)\nabla u-(p-1)\Big(\frac{u}{v+\epsilon}\Big)^p H(\nabla v)^p.
\end{aligned}
\end{equation}
Adding \eqref{noname1} and \eqref{noname2}, we obtain
\begin{equation}\label{noname3}
\begin{aligned}
&\int_{\Omega}(H(\nabla u)^{p-1}\nabla H(\nabla u)\nabla\phi_\epsilon+H(\nabla v)^{p-1}\nabla H(\nabla v)\nabla\psi_\epsilon)\,dx\\
&=\int_{\Omega}(-H(\nabla u)^p+p\Big(\frac{v}{u+\epsilon}\Big)^{p-1}H(\nabla u)^{p-1}\nabla H(\nabla u)\nabla v-(p-1)\Big(\frac{v}{u+\epsilon}\Big)^p H(\nabla u)^p)\,dx\\
&+\int_{\Omega}(-H(\nabla v)^p+p\Big(\frac{u}{v+\epsilon}\Big)^{p-1}H(\nabla v)^{p-1}\nabla H(\nabla v)\nabla u-(p-1)\Big(\frac{u}{v+\epsilon}\Big)^p H(\nabla v)^p)\,dx.\\
&=\int_{\Omega}( -H(\nabla u)^p+p\Big(\frac{u}{v+\epsilon}\Big)^{p-1}H(\nabla v)^{p-1}\nabla H(\nabla v)\nabla u-(p-1)\Big(\frac{u}{v+\epsilon})^pH(\nabla v)^p)\,dx\\
&+\int_{\Omega}(-H(\nabla v)^p+ p\Big(\frac{v}{u+\epsilon}\Big)^{p-1}H(\nabla u)^{p-1}\nabla H(\nabla u)\nabla v-(p-1)\Big(\frac{v}{u+\epsilon})^p H(\nabla u)^p)\,dx\\
&\leq 0,
\end{aligned}
\end{equation}
which follows from Theorem \ref{BrPI}. Now using the above nonpositivity in \eqref{BOeqn1}, the result follows proceeding along the lines of the proof of \cite[Theorem 4.3]{VecchiBO}.
\end{proof}
Next, we study the existence result for the problem \eqref{BOeqn}. To this end, first we study the following eigenvalue problem as in \cite{VecchiBO}.\\
\textbf{Eigenvalue problem:} Consider the eigenvalue problem
\begin{equation}\label{boevp}
\begin{split}
-H_p u+(-\Delta_p)^s u+a(x)|u|^{p-2}u&=\beta|u|^{p-2}u\text{ in }\Omega\\
u\neq 0\text{ in }\Omega,\quad u=0\text{ in }\mathbb{R}^N\setminus{\Omega},
\end{split}
\end{equation}
where $a\in L^\infty(\Omega)$ and $\beta\in\mathbb{R}$.
\begin{prop}\label{boevpprop}
Let $a\in L^\infty(\Omega)$, then the problem \eqref{boevp} admits a smallest eigenvalue $\beta_1\in\mathbb{R}$ which is simple and whose associated eigenfunctions do not change sign in $\mathbb{R}^N$. Moreover, every eigenfunction associated to the eigenvalue $\beta$ such that $\beta>\beta_1$ is nodal, that is changes sign in $\Omega$.
\end{prop}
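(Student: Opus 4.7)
The plan is to obtain $\beta_1$ variationally as
\[
\beta_1=\inf\bigl\{\mathcal{J}(u):u\in W_0^{1,p}(\Omega),\ \|u\|_{L^p(\Omega)}=1\bigr\},
\]
where
\[
\mathcal{J}(u)=\int_\Omega H(\nabla u)^p\,dx+\iint_{\mathbb{R}^{2N}}|u(x)-u(y)|^p\,d\mu+\int_\Omega a(x)|u|^p\,dx.
\]
On the constraint, $\bigl|\int_\Omega a|u|^p\,dx\bigr|\leq \|a\|_{L^\infty(\Omega)}$, so property (H3) forces any minimizing sequence to be bounded in $W_0^{1,p}(\Omega)$; up to a subsequence one has weak convergence in $W_0^{1,p}(\Omega)$ and strong convergence in $L^p(\Omega)$ by Rellich, and weak lower semicontinuity of the two convex seminorm terms yields a minimizer $u_1$. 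A standard Lagrange multiplier argument then shows that $u_1$ solves \eqref{boevp} with $\beta=\beta_1$. For the sign property, the chain rule $\nabla|u_1|=\mathrm{sign}(u_1)\nabla u_1$ together with Lemma \ref{Happ}(B) gives $H(\nabla|u_1|)=H(\nabla u_1)$ a.e., while $\bigl||u_1(x)|-|u_1(y)|\bigr|\leq|u_1(x)-u_1(y)|$ controls the nonlocal piece, so $\mathcal{J}(|u_1|)\leq\mathcal{J}(u_1)$ and we may replace $u_1$ by $|u_1|\geq 0$. Rewriting the equation as $-H_p u_1+(-\Delta_p)^s u_1=(\beta_1-a(x))u_1^{p-1}$, the strong maximum principle (Theorem \ref{smp}, in the generality of Remark \ref{BOeqnrmk1}) gives $u_1>0$ in $\Omega$.

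For simplicity, let $u_1,v_1\in W_0^{1,p}(\Omega)$ be two positive eigenfunctions associated to $\beta_1$, normalized in $L^p(\Omega)$. Inserting the regularized test function $\phi_\epsilon=v_1^p/(u_1+\epsilon)^{p-1}$ in the weak formulation for $u_1$ and combining the anisotropic Picone inequality (Theorem \ref{BrPI}) on the local divergence term with the discrete Picone inequality (Theorem \ref{JacPI}) on the nonlocal term, the limit $\epsilon\to 0^+$ (justified by Fatou on the left and dominated convergence on the right, using the boundedness of $u_1$ and $v_1$) gives
\[
\mathcal{J}(v_1)\geq \beta_1\int_\Omega v_1^p\,dx.
\]
Since $v_1$ is itself an eigenfunction with eigenvalue $\beta_1$, the reverse inequality holds as an equality, and hence equality must hold in both Picone inequalities; the rigidity parts of Theorems \ref{BrPI} and \ref{JacPI} then force $v_1=k u_1$ for some $k>0$. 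The nodal statement follows by contradiction: if an eigenfunction $v$ associated to some $\beta>\beta_1$ did not change sign, $v>0$ by the SMP, and repeating the Picone argument with $u_1^p/(v+\epsilon)^{p-1}$ in the equation for $v$ would yield $\mathcal{J}(u_1)\geq \beta\int_\Omega u_1^p\,dx$; combined with $\mathcal{J}(u_1)=\beta_1\int_\Omega u_1^p\,dx$, this delivers $\beta\leq\beta_1$, a contradiction.

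The hard part is the combined Picone step: one must simultaneously control the anisotropic divergence term (via Theorem \ref{BrPI} and the chain rule identity from Lemma \ref{Happ}(A)) and the singular nonlocal kernel (via Theorem \ref{JacPI}), legitimize $v_1^p/(u_1+\epsilon)^{p-1}$ as an admissible test function in $W_0^{1,p}(\Omega)$, and pass to the limit in both pieces while preserving the rigidity needed for simplicity. It is precisely the parallel rigidity of the two Picone inequalities that makes their combined use indispensable here.
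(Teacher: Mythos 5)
Your route is the same as the paper's: the paper proves this proposition by adapting the variational argument of \cite[Proposition 5.1]{VecchiBO} (minimization of $\gamma$ on the $L^p$-sphere, positivity via Theorem \ref{smp} in the generality of Remark \ref{BOeqnrmk1}, boundedness via Theorem \ref{boTHM2}, and simplicity/nodality via the combined Picone inequalities of Theorem \ref{BrPI} and Theorem \ref{JacPI}), which is exactly the skeleton you wrote out. The existence, positivity of the minimizer, the simplicity argument between two positive eigenfunctions, and the nodality argument for $\beta>\beta_1$ are all sound as sketched.

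There is, however, one step you have not closed. The proposition asserts that \emph{every} eigenfunction associated to $\beta_1$ has constant sign, and your simplicity argument is only run for two eigenfunctions that are already assumed positive; as written, an a priori sign-changing first eigenfunction is not covered by either claim. The missing observation is short but needed: if $v$ is any eigenfunction for $\beta_1$, testing with $v$ shows that (after normalization) $v$ minimizes $\gamma$ on $M$; since $H(\nabla|v|)=H(\nabla v)$ a.e.\ and $\bigl||v(x)|-|v(y)|\bigr|\le|v(x)-v(y)|$ with \emph{strict} inequality whenever $v(x)v(y)<0$, a genuine sign change on sets of positive measure would give $\gamma(|v|)<\gamma(v)=\beta_1$, contradicting minimality; hence $v$ has one sign, the SMP upgrades $|v|>0$ in $\Omega$, and only then does your Picone rigidity argument apply to arbitrary first eigenfunctions and yield simplicity in full. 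Three minor points: the identity $H(\nabla|u_1|)=H(\nabla u_1)$ follows from the absolute homogeneity (H2) of $H$, not from Lemma \ref{Happ}(B), which concerns $\nabla H$; the $L^\infty$ bound on eigenfunctions that legitimizes $\phi_\epsilon=v_1^p/(u_1+\epsilon)^{p-1}$ should be quoted from Theorem \ref{boTHM2} via Remark \ref{BOeqnrmk1} (with $g(x,t)=(\beta-a(x))|t|^{p-2}t$); and the passage from ``equality in the limit'' to pointwise equality in the two Picone inequalities is cleanest if you apply Fatou to the two nonnegative Picone remainders (as the paper does in Theorems \ref{monevp}, \ref{evp1} and \ref{sne}) before invoking the rigidity statements.
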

\begin{proof}
We define
$$
\beta_1:=\beta_1(-H_p+(-\Delta_p)^s+a)=\inf\{\gamma(u):u\in M\},
$$
where $\gamma:W_0^{1,p}(\Omega)\to\mathbb{R}$ is the $C^1$-functional defined as
$$
\gamma(u)=\int_{\Omega}H(\nabla u)^p\,dx+\iint_{\mathbb{R}^{2N}}|u(x)-u(y)|^p\,d\mu+\int_{\Omega}a(x)|u|^p\,dx
$$
for all $u\in W_0^{1,p}(\Omega)$ and let it be constrained on the $C^1$-Banach manifold
$$
M:=\Big\{u\in W_0^{1,p}(\Omega):\int_{\Omega}|u|^p\,dx=1\Big\}.
$$
Now taking into account the properties of $H$, Theorem \ref{smp}, Theorem \ref{boTHM2} along with Remark \ref{BOeqnrmk1}, Theorem \ref{BrPI}, Theorem \ref{JacPI} and proceeding along the lines of the proof of \cite[Proposition 5.1]{VecchiBO}, the result follows.
\end{proof}
Before stating the existence result, let us fix some notations and assumptions throughout this subsection: Let $f$ satisfy the hypothesis $(f1)-(f5)$. Using $(f4)$, we introduce the functions
\begin{equation}\label{a01}
a_0(x):=\lim_{t\to 0^+}\frac{f(x,t)}{t^{p-1}},\quad a_\infty(a):=\lim_{t\to+\infty}\frac{f(x,t)}{t^{p-1}}
\end{equation}
We observe that $a_0$ may be unbounded, but by $(f3)$, the function $a_\infty$ is bounded. Now we introduce
\begin{equation}\label{ev1}
\begin{split}
&\beta_1(-H_p+(-\Delta_p)^s-a_0)\\
&:=\inf_{u\in W_0^{1,p}(\Omega),\,\|u\|_{L^p(\Omega)}=1}\left\{\int_{\Omega}H(\nabla u)^p\,dx+\iint_{\mathbb{R}^{2N}}|u(x)-u(y)|^p\,d\mu-\int_{\{u\neq 0\}}a_0|u|^p\,dx\right\}
\end{split}
\end{equation}
and
\begin{equation}\label{ev2}
\begin{split}
&\beta_1(-H_p+(-\Delta_p)^s-a_\infty)\\
&:=\inf_{u\in W_0^{1,p}(\Omega),\,\|u\|_{L^p(\Omega)}=1}\left\{\int_{\Omega}H(\nabla u)^p\,dx+\iint_{\mathbb{R}^{2N}}|u(x)-u(y)|^p\,d\mu-\int_{\Omega}a_\infty|u|^p\,dx\right\}.
\end{split}
\end{equation}
Finally, we have the following existence result of this subsection.
\begin{Theorem}\label{evpex}(\textbf{Existence})
Let $f$ satisfy $(f1)-(f5)$. Then there exists a positive weak solution to \eqref{BOeqn} iff 
$$
\beta_1(-H_p+(-\Delta_p)^s-a_0)<0<\beta_1(-H_p+(-\Delta_p)^s-a_\infty).
$$
\end{Theorem}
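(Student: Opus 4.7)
The plan is to adopt the variational strategy of \cite[Section 5]{VecchiBO}, with the anisotropic Picone inequality (Theorem \ref{BrPI}) and the discrete Picone inequality (Theorem \ref{JacPI}) playing the role of the classical Picone identities. The equivalence splits into two implications.

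For the necessity direction, assume $u\in W_0^{1,p}(\Omega)$ is a positive weak solution of \eqref{BOeqn}; by Theorem \ref{boTHM2} and Corollary \ref{smpcor1}, $u\in L^\infty(\Omega)$ and $u>0$ in $\Omega$. After normalizing $\|u\|_{L^p(\Omega)}=1$, testing \eqref{BOwksoleqn} with $u$ itself and invoking the strict decrease of $t\mapsto f(x,t)/t^{p-1}$ from $(f4)$ yields
\begin{equation*}
\int_{\Omega} H(\nabla u)^{p}\,dx + \iint_{\mathbb{R}^{2N}}|u(x)-u(y)|^{p}\,d\mu = \int_{\Omega}\frac{f(x,u)}{u^{p-1}} u^{p}\,dx < \int_{\{u\neq 0\}} a_0\, u^{p}\,dx,
\end{equation*}
hence $\beta_1(-H_p+(-\Delta_p)^s-a_0)<0$. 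For the second inequality, pick $\phi\in W_0^{1,p}(\Omega)$ with $\|\phi\|_{L^p(\Omega)}=1$, test \eqref{BOwksoleqn} with $|\phi|^{p}/(u+\varepsilon)^{p-1}$ and let $\varepsilon\to 0^+$ (exactly as in the uniqueness proof of Theorem \ref{boTHM3}), then apply Theorem \ref{BrPI} to the anisotropic piece and Theorem \ref{JacPI} to the nonlocal piece to obtain
\begin{equation*}
\int_{\Omega} H(\nabla \phi)^{p}\,dx + \iint_{\mathbb{R}^{2N}}|\phi(x)-\phi(y)|^{p}\,d\mu \geq \int_{\Omega}\frac{f(x,u)}{u^{p-1}}|\phi|^{p}\,dx > \int_{\Omega} a_\infty|\phi|^{p}\,dx,
\end{equation*}
where the last strict inequality is again $(f4)$. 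Plugging in the first eigenfunction $\phi_1>0$ supplied by Proposition \ref{boevpprop} (applied with $a=-a_\infty\in L^\infty(\Omega)$) at which \eqref{ev2} is attained yields $\beta_1(-H_p+(-\Delta_p)^s-a_\infty)>0$.

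For the sufficiency direction, introduce the truncated energy
\begin{equation*}
J(u)=\frac{1}{p}\int_{\Omega} H(\nabla u)^{p}\,dx + \frac{1}{p}\iint_{\mathbb{R}^{2N}}|u(x)-u(y)|^{p}\,d\mu - \int_{\Omega} F(x,u_+)\,dx,\quad F(x,t)=\int_0^{t} f(x,\tau)\,d\tau,
\end{equation*}
on $W_0^{1,p}(\Omega)$. The hypothesis $\beta_1(-H_p+(-\Delta_p)^s-a_\infty)>0$ together with $(f3)$ delivers coercivity after the decomposition $F(x,u_+)\leq \frac{1}{p}a_\infty(x) u_+^{p}+c_p u_+ + C$ and absorption via the variational characterization of $\beta_1$. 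Weak lower semicontinuity follows from the convexity of $\xi\mapsto H(\xi)^{p}$ (composition of the convex norm $H$ with $t\mapsto t^{p}$), Fatou for the nonlocal term, and compact Sobolev embedding combined with $(f3)$ for the $F$-term; therefore $J$ attains its infimum at some $u^\star\in W_0^{1,p}(\Omega)$. To show $u^\star\not\equiv 0$, pick $\psi\in W_0^{1,p}(\Omega)$ with $\|\psi\|_{L^p}=1$ and $\psi\geq 0$ realizing the strict negativity of \eqref{ev1} (if $a_0$ is unbounded, first replace it by $a_0\wedge M$ with $M$ large enough that the eigenvalue remains negative), and study $t^{-p}J(t\psi)$ as $t\to 0^+$: the pointwise limit $t^{-p}F(x,t\psi)\to \frac{1}{p}a_0(x)\psi(x)^{p}$ (by L'Hopital in $t$ and the definition of $a_0$) together with $(f5)$ ensuring non-negativity of $F$ near the origin allows us to apply Fatou's lemma, giving $\liminf_{t\to 0^+}t^{-p}J(t\psi)\leq \frac{1}{p}\beta_1(-H_p+(-\Delta_p)^s-a_0)<0$, so $\inf J<0$. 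Finally, $J(u^\star_+)\leq J(u^\star)$ since $H(0)=0$ handles the local term, the pointwise bound $|a_+-b_+|\leq|a-b|$ handles the nonlocal one, and the $F$-term is unchanged; we may therefore assume $u^\star\geq 0$, the Euler--Lagrange equation reproduces \eqref{BOwksoleqn}, and Theorem \ref{smp} upgrades $u^\star$ to strictly positive in $\Omega$.

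The main technical difficulty is the possible unboundedness of $a_0$: even the definition \eqref{ev1} has to be interpreted with $\int_{\{u\neq 0\}} a_0|u|^{p}$, and the passage from $\beta_1<0$ to a genuine competitor driving $J$ below zero requires either the truncation $a_0\wedge M$ argument or a careful Fatou argument leveraging $(f4)$ and $(f5)$. Beyond that, the proof essentially parallels the Brezis--Oswald blueprint of \cite{BOna,VecchiBO}, with Lemma \ref{Happ} and property (H3) supplying the coercivity and convexity of the anisotropic term that the classical local arguments would have obtained for free.
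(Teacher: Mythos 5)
Your overall route is the same as the paper's: the paper proves Theorem \ref{evpex} by combining Proposition \ref{evpexp1} (sufficiency, by minimizing the energy \eqref{E}) with Proposition \ref{evpexp2} (necessity, via Theorem \ref{BrPI}, Theorem \ref{JacPI} and the eigenvalue problem \eqref{boevp}), both transported from the Brezis--Oswald scheme of \cite{VecchiBO, VecchiBO2}; your necessity argument is essentially that of the paper's references and is fine, with the minor caveat that the test function $|\phi|^p/(u+\varepsilon)^{p-1}$ is admissible only for bounded $\phi$ --- which suffices, since you ultimately only insert the bounded first eigenfunction of the $a_\infty$-problem supplied by Proposition \ref{boevpprop}.

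There is, however, a genuine gap in your coercivity step. The pointwise decomposition $F(x,u_+)\leq \frac{1}{p}a_\infty(x)u_+^p+c_p u_+ +C$ is false in general: by $(f4)$ the map $t\mapsto f(x,t)/t^{p-1}$ decreases strictly to $a_\infty(x)$, so $f(x,t)>a_\infty(x)t^{p-1}$ for every $t>0$ and hence $F(x,t)\geq \frac{a_\infty(x)}{p}t^p$, i.e.\ the inequality you need goes in the wrong direction. Concretely, for $f(x,t)=t^{q-1}$ with $1<q<p$ (which satisfies $(f1)$--$(f5)$) one has $a_\infty\equiv 0$ while $F(x,t)=t^q/q$ is superlinear, so no bound of the form $c_pt+C$ can hold. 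An $\varepsilon$-perturbed version $F(x,t)\leq\frac{1}{p}\big(a_\infty(x)+\varepsilon\big)t^p+C_\varepsilon(x)$ is also not a fix, because the threshold beyond which $f(x,t)/t^{p-1}\leq a_\infty(x)+\varepsilon$ is not controlled uniformly in $x$ by $(f1)$--$(f5)$, so $C_\varepsilon$ need not be integrable; in particular ``absorption via the variational characterization of $\beta_1$'' cannot be run on a pointwise bound of this type. The coercivity asserted in Proposition \ref{evpexp1} (following \cite[Proposition 6.2]{VecchiBO}, which the paper invokes) is instead obtained by contradiction: assume $E(u_n)\leq C$ with $\|u_n\|_{W_0^{1,p}(\Omega)}\to\infty$, normalize $w_n=u_n/\|u_n\|_{W_0^{1,p}(\Omega)}$, extract a weak limit $w$ using the compact embedding, and show via $(f3)$, $(f4)$ and a Fatou/generalized dominated convergence argument that $\limsup_n\|u_n\|^{-p}\int_\Omega F(x,u_n)\,dx$ is controlled by $\frac{1}{p}\int_\Omega a_\infty w_+^p\,dx$, after which the strict positivity of $\beta_1(-H_p+(-\Delta_p)^s-a_\infty)$ yields the contradiction. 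Once coercivity is repaired in this way, the rest of your sufficiency argument (weak lower semicontinuity, $\inf J<0$ via the analysis of $t^{-p}J(t\psi)$ as $t\to 0^+$ with the truncation of $a_0$, passage to a nonnegative minimizer, and strict positivity from Theorem \ref{smp}) is the same scheme the paper relies on.
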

\begin{proof}
The proof follows by the combination of Proposition \ref{evpexp1} and Proposition \ref{evpexp2} below.
\end{proof}
We set the functional $E:W_0^{1,p}(\Omega)\to\mathbb{R}$ defined by
\begin{equation}\label{E}
E(u):=\int_{\Omega}H(\nabla u)^p\,dx+\iint_{\mathbb{R}^{2N}}|u(x)-u(y)|^p\,d\mu-\int_{\Omega}F(x,u)\,dx,
\end{equation}
where
$$
F(x,u)=\int_{0}^{u}f(x,t)\,dt.
$$
Then we observe that the functional $E$ is well defined, is differentiable and its critical points are the weak solution of the problem \eqref{BOeqn}. Taking into account the properties of $H$ and following the lines of the proof of \cite[Proposition 6.2]{VecchiBO}, the following result follows.
\begin{prop}\label{evpexp1}
Let $f$ satisfy $(f1)-(f5)$ and $E$ be the functional defined in \eqref{E} above and assume that
$$
\beta_1(-H_p+(-\Delta_p)^s-a_0)<0<\beta_1(-H_p+(-\Delta_p)^s-a_\infty).
$$
Then the following hold:
\begin{enumerate}
    \item[(a)] $E$ is coercive on $W_0^{1,p}(\Omega)$.
    \item[(b)] $E$ is weakly lower semicontinuous in $W_0^{1,p}(\Omega)$, therefore, it has a minimum $v\in W_0^{1,p}(\Omega)$.
    \item[(c)] There exists $\psi\in W_0^{1,p}(\Omega)$ which satisfy $E(\psi)<0$, so that
    $$
    \min_{u\in W_0^{1,p}(\Omega)}E(u)<0,
    $$
    and
    $u=|v|$ is a weak solution of \eqref{BOeqn}.
\end{enumerate}
\end{prop}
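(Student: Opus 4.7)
The plan is to apply the direct method of the calculus of variations to $E$, following the Brezis--Oswald strategy as adapted in \cite[Proposition 6.2]{VecchiBO}. The three items (a)--(c) correspond respectively to coercivity, sequential weak lower semicontinuity, and a test-function computation that pins the infimum at a strictly negative value. The new ingredients with respect to the isotropic mixed setting are the anisotropic properties (H1)--(H3) of $H$ (in particular (H3), giving uniform comparability between $H(\nabla u)^p$ and $|\nabla u|^p$, and the symmetry $H(-x)=H(x)$ coming from (H2)), the Picone inequality of Theorem \ref{BrPI}, and the variational characterizations \eqref{ev1}--\eqref{ev2} of the mixed first eigenvalues furnished by Proposition \ref{boevpprop}.

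For (a), I would first combine (f3) and (f4) to deduce that, for every $\eta>0$, there exists $C_\eta>0$ such that $F(x,t)\leq \frac{1}{p}(a_\infty(x)+\eta)\,t^p+C_\eta$ for a.e.\ $x\in\Omega$ and all $t\geq 0$; this uses the monotonicity of $F(x,t)/t^p$ derived from (f4) and the pointwise limit $F(x,t)/t^p\to a_\infty(x)/p$ at infinity. Splitting the kinetic part of $E$ as a convex combination of itself and combining its smaller piece with $-\int_\Omega a_\infty|u|^p\,dx$, the variational characterization \eqref{ev2} produces a positive multiple of $\|u\|_{L^p(\Omega)}^p$, while the larger piece controls $\|u\|_{W_0^{1,p}(\Omega)}^p$ via (H3); choosing $\eta$ small yields the desired coercivity. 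For (b), convexity of $x\mapsto H(x)^p$ and of $t\mapsto|t|^p$ shows that both the anisotropic and the nonlocal parts of $E$ are convex and continuous on $W_0^{1,p}(\Omega)$, hence weakly lower semicontinuous; meanwhile $u\mapsto\int_\Omega F(x,u)\,dx$ is sequentially weakly continuous by Rellich--Kondrachov compactness, the growth condition (f3), and Vitali's convergence theorem. Combined with (a), the direct method delivers a minimizer $v\in W_0^{1,p}(\Omega)$.

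For (c), let $\varphi_1\geq 0$ be a normalized first eigenfunction associated with $\beta_1(-H_p+(-\Delta_p)^s-a_0)<0$ obtained in Proposition \ref{boevpprop}. Testing $\psi=t\varphi_1$ with $t>0$ small, hypothesis (f4) makes $f(x,s)/s^{p-1}$ monotone increasing as $s\searrow 0$ with pointwise limit $a_0(x)$; a rescaling $s=t\sigma$ and monotone convergence then give
\begin{equation*}
\lim_{t\to 0^+}\frac{1}{t^p}\int_\Omega F(x,t\varphi_1)\,dx=\frac{1}{p}\int_{\{\varphi_1>0\}}a_0\,\varphi_1^p\,dx,
\end{equation*}
so comparing with the kinetic part and invoking the assumption $\beta_1(-H_p+(-\Delta_p)^s-a_0)<0$ forces $E(t\varphi_1)<0$ for all sufficiently small $t>0$. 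To upgrade $v$ to a nonnegative minimizer, I would check $E(|v|)\leq E(v)$: the symmetry $H(-x)=H(x)$ from (H2) yields $H(\nabla|v|)=H(\nabla v)$ a.e., the elementary inequality $\bigl||v|(x)-|v|(y)\bigr|\leq|v(x)-v(y)|$ controls the Gagliardo term, and extending $f(x,\cdot)$ by $0$ on $(-\infty,0)$ gives $F(x,|v|)\geq F(x,v)$. Hence $u:=|v|$ is a nontrivial (because $E(u)\leq E(\psi)<0$) nonnegative minimizer, thus a critical point of $E$, and the relation $\langle E'(u),\phi\rangle=0$ for all $\phi\in W_0^{1,p}(\Omega)$ is precisely the weak formulation of \eqref{BOeqn}. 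The main technical obstacle will be the monotone-convergence step in (c), since $a_0$ need not be integrable on $\Omega$; the restriction to $\{\varphi_1>0\}$ built into \eqref{ev1} is exactly what makes the comparison with $\beta_1(-H_p+(-\Delta_p)^s-a_0)$ legitimate.
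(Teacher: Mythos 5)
Your overall strategy (direct method: coercivity, weak lower semicontinuity, a small-$t$ test to make the minimum negative, then $|v|$ as a nonnegative minimizer) is exactly the Brezis--Oswald route that the paper invokes by adapting \cite[Proposition 6.2]{VecchiBO}, and items (b) and the $E(|v|)\leq E(v)$ step are fine. However, your proof of (a) has a genuine gap: the claimed bound $F(x,t)\leq \frac{1}{p}\bigl(a_\infty(x)+\eta\bigr)t^p+C_\eta$ with a constant (or even integrable) $C_\eta$ does not follow from (f1)--(f5), because the monotone convergence $f(x,t)/t^{p-1}\searrow a_\infty(x)$ as $t\to\infty$ is not uniform in $x$. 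Concretely, for $f(x,t)=\min\{t^{p-1},b(x)\}+e^{-t}$ with $b\in L^\infty_{\mathrm{loc}}$, $b(x)\to\infty$ near a point and $b^{p/(p-1)}\notin L^1(\Omega)$, all hypotheses hold, $a_\infty\equiv 0$, yet $\sup_{t>0}\bigl(F(x,t)-\tfrac{\eta}{p}t^p\bigr)\sim \eta^{-1/(p-1)}b(x)^{p/(p-1)}$ is not integrable, so no such $C_\eta$ exists even in $L^1$. Coercivity is still true, but the correct argument (as in Brezis--Oswald and \cite[Proposition 6.2]{VecchiBO}) is by contradiction: take $\|u_n\|_{W_0^{1,p}(\Omega)}=\sigma_n\to\infty$ with $E(u_n)\leq C$, set $v_n=u_n/\sigma_n$, pass to a weak limit $v$, use the uniform bound $F(x,t)\leq C(1+t^p)$ (which \emph{does} follow from (f3)--(f4) via $f(x,s)\leq f(x,1)s^{p-1}$ for $s\geq1$), split $\Omega$ into $\{v>0\}$ and $\{v=0\}$, and pass to the limit in $\sigma_n^{-p}\int_\Omega F(x,u_n)\,dx$ by a reverse Fatou/generalized dominated convergence argument, obtaining $\limsup_n \sigma_n^{-p}\int_\Omega F(x,u_n)\,dx\leq \frac1p\int_{\{v>0\}}a_\infty v^p\,dx$; this contradicts $\beta_1(-H_p+(-\Delta_p)^s-a_\infty)>0$ via \eqref{ev2}.

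A second, smaller issue is in (c): you invoke ``a normalized first eigenfunction associated with $\beta_1(-H_p+(-\Delta_p)^s-a_0)$ obtained in Proposition \ref{boevpprop}'', but that proposition only treats weights $a\in L^\infty(\Omega)$, while $a_0$ may be unbounded and the infimum \eqref{ev1} need not be attained. This is easily repaired: since the infimum in \eqref{ev1} is strictly negative, pick any admissible $w$ (which may be taken nonnegative, since replacing $w$ by $|w|$ does not increase the quotient) whose quotient is negative, and run your rescaling argument with $\psi=tw$; the monotone-convergence step works with the integrable minorant coming from (f3), exactly as you indicate, and yields $E(tw)<0$ for small $t>0$. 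With these two repairs your argument aligns with the proof the paper refers to.
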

The result below follows by arguing similarly as in the proof of \cite[Lemma 6.3]{VecchiBO} and \cite[Theorem 1.3]{VecchiBO2}, by taking into account the properties of $H$ along with Theorem \ref{smp}, Theorem \ref{boTHM2}, Proposition \ref{boevpprop} along with Theorem \ref{BrPI} and Theorem \ref{JacPI}.
\begin{prop}\label{evpexp2}
Let $f$ satisfy $(f1)-(f5)$. If $u\in W_0^{1,p}(\Omega)$ is a weak solution of \eqref{BOeqn}, then
$$
\beta_1(-H_p+(-\Delta_p)^s-a_0)<0<\beta_1(-H_p+(-\Delta_p)^s-a_\infty).
$$
\end{prop}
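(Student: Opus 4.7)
The plan is to exploit the fact that, by Corollary~\ref{smpcor1} and Theorem~\ref{boTHM2}, any weak solution $u$ of \eqref{BOeqn} satisfies $u>0$ in $\Omega$ and $u\in L^\infty(\Omega)$, and then to feed this information into the two Rayleigh-type characterizations \eqref{ev1} and \eqref{ev2} using, respectively, $u$ itself and a general competitor $\phi$ together with the two Picone inequalities (Theorem~\ref{BrPI} and Theorem~\ref{JacPI}). The underlying principle, parallel to \cite[Lemma 6.3]{VecchiBO}, is that the strict monotonicity (f4) turns into a strict sign for the ratio $f(x,u)/u^{p-1}-a_0$ and $f(x,u)/u^{p-1}-a_\infty$ once one is away from $t=0$ and $t=+\infty$.

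For the left inequality $\beta_1(-H_p+(-\Delta_p)^s-a_0)<0$, I would simply use $\phi:=u/\|u\|_{L^p(\Omega)}$ as an admissible competitor in the infimum \eqref{ev1}. Since $u$ is itself a weak solution, testing \eqref{BOwksoleqn} with $\psi=u$ gives
\begin{equation*}
\int_{\Omega}H(\nabla u)^p\,dx+\iint_{\mathbb{R}^{2N}}|u(x)-u(y)|^p\,d\mu=\int_{\Omega}\frac{f(x,u)}{u^{p-1}}\,u^p\,dx,
\end{equation*}
and since by (f4) together with $0<u(x)<\infty$ a.e.\ in $\Omega$ one has $f(x,u(x))/u(x)^{p-1}<a_0(x)$ a.e., evaluating the Rayleigh quotient at $\phi$ yields a strictly negative value (possibly $-\infty$ if $a_0\not\in L^\infty$), hence the infimum is strictly negative.

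For the right inequality $0<\beta_1(-H_p+(-\Delta_p)^s-a_\infty)$, I would pick $\phi\in W_0^{1,p}(\Omega)$ with $\|\phi\|_{L^p(\Omega)}=1$, insert $\psi_\e:=\phi^p/(u+\e)^{p-1}$ (admissible by $u\ge 0$, $u,\phi\in L^\infty(\Omega)$) into \eqref{BOwksoleqn}, and apply the anisotropic Picone inequality of Theorem~\ref{BrPI} to the local part and the discrete Picone inequality of Theorem~\ref{JacPI} to the nonlocal part with $u$ replaced by $u+\e$. This gives
\begin{equation*}
\int_{\Omega}\frac{f(x,u)}{(u+\e)^{p-1}}\phi^p\,dx\leq\int_{\Omega}H(\nabla\phi)^p\,dx+\iint_{\mathbb{R}^{2N}}|\phi(x)-\phi(y)|^p\,d\mu.
\end{equation*}
Letting $\e\to 0^+$ via Fatou/dominated convergence (using $u>0$ in $\Omega$ together with the growth bound (f3) to control $f(x,u)/(u+\e)^{p-1}\phi^p$) and then invoking (f4) to conclude $a_\infty(x)<f(x,u(x))/u(x)^{p-1}$ a.e., I obtain the strict pointwise inequality
\begin{equation*}
\int_{\Omega}a_\infty\phi^p\,dx<\int_{\Omega}H(\nabla\phi)^p\,dx+\iint_{\mathbb{R}^{2N}}|\phi(x)-\phi(y)|^p\,d\mu
\end{equation*}
valid for every nontrivial $\phi$. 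Combining this strict inequality with the attainability of the infimum in \eqref{ev2}, which is guaranteed by Proposition~\ref{boevpprop} applied to $a=-a_\infty\in L^\infty(\Omega)$, forces $\beta_1(-H_p+(-\Delta_p)^s-a_\infty)>0$.

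The main obstacle is the passage to the limit $\e\to 0^+$ in the Picone identity when $u$ degenerates near $\partial\Omega$: one must justify the convergence of $f(x,u)\phi^p/(u+\e)^{p-1}$ and ensure that the right-hand side of Picone is not lost in the limit, exactly as in \cite[Lemma 6.3]{VecchiBO}. A secondary delicate point is converting the pointwise strict inequality into strictness of the infimum; this is handled through the minimizer from Proposition~\ref{boevpprop} rather than directly from the variational formula, since without attainment one could only infer $\beta_1\ge 0$.
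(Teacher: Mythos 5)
Your proposal is correct and coincides in substance with the paper's own (only sketched) proof, which refers to \cite[Lemma 6.3]{VecchiBO} and \cite[Theorem 1.3]{VecchiBO2}: there too the left inequality comes from testing with $u$ itself and using the strict monotonicity (f4), while the right one comes from inserting $\phi^p/(u+\epsilon)^{p-1}$, applying Theorem \ref{BrPI} and Theorem \ref{JacPI}, letting $\epsilon\to 0^+$, and evaluating at the nonnegative bounded first eigenfunction furnished by Proposition \ref{boevpprop} with $a=-a_\infty$. Just note, as in the cited lemma, that the limit passage should be done by splitting $f(x,u)=\big(f(x,u)-a_\infty u^{p-1}\big)+a_\infty u^{p-1}$ (Fatou for the nonnegative part, dominated convergence for the bounded part), since (f3) alone does not dominate $f(x,u)\phi^p/(u+\epsilon)^{p-1}$ where $u$ degenerates near $\partial\Omega$.
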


\subsection{Comparison results}
First, we prove the following Sturmian comparison theorem, which has been studied in the local case in \cite{AlP} and the nonlocal case is settled in \cite{Jac}. Mixed case is unknown to our knowledge till date. The proof is based on the use of Picone's inequality and choosing of a suitable test function.
\begin{Theorem}\label{scp}(\textbf{Sturmian comparison principle})
Let $a_1$ and $a_2$ be two continuous functions such that $a_1<a_2$ in $\Omega$. Suppose $u\in W_0^{1,p}(\Omega)\cap L^\infty(\Omega)$ is a weak solution of 
\begin{equation}\label{scpeqn1}
-H_p u+(-\Delta_p)^s u=a_1(x)u^{p-1}\text{ in }\Omega,\quad u>0\text{ in }\Omega
\end{equation}
as in Definition \ref{p1sol}. Then any nontrivial weak solution $v\in W_0^{1,p}(\Omega)$ of the equation
\begin{equation}\label{scpeqn2}
-H_p v+(-\Delta_p)^s v=a_2(x)|v|^{p-2}v\text{ in }\Omega
\end{equation}
as in Definition \ref{p1sol} must vanish in $\Omega$.
\end{Theorem}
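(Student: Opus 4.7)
The plan is to argue by contradiction along the lines of the Picone-based uniqueness proof of Theorem~\ref{boTHM3}. Suppose that $v$ never vanishes in $\Omega$. Since the operator on the left of \eqref{scpeqn2} is odd in $v$ (from Lemma~\ref{Happ}-(B) one has $H_p(-v)=-H_pv$, and the fractional $p$-Laplacian is also odd) and the right-hand side $a_2|v|^{p-2}v$ is odd, replacing $v$ by $-v$ if necessary we may assume $v\geq 0$ in $\Omega$. Because $v$ is nontrivial, the strong maximum principle (Theorem~\ref{smp} applied in the generalized form of Remark~\ref{BOeqnrmk1} to $g(x,t)=a_2(x)|t|^{p-2}t$) then promotes this to $v>0$ in $\Omega$.

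For $\epsilon>0$, use
\[
\phi_\epsilon:=\frac{u^p}{(v+\epsilon)^{p-1}}
\]
as a test function in the weak formulation of \eqref{scpeqn2}. Since $u\in W_0^{1,p}(\Omega)\cap L^\infty(\Omega)$, $v+\epsilon\geq\epsilon>0$ on $\mathbb{R}^N$, and $u\equiv 0$ outside $\Omega$, a routine check shows $\phi_\epsilon\in W_0^{1,p}(\Omega)$. Apply the anisotropic Picone inequality (Theorem~\ref{BrPI}) and the discrete Picone inequality (Theorem~\ref{JacPI}) with the roles of the positive and nonnegative functions played by $v+\epsilon$ and $u$ respectively; the identities $\nabla(v+\epsilon)=\nabla v$ and $(v(x)+\epsilon)-(v(y)+\epsilon)=v(x)-v(y)$ mean these Picone bounds control the local and nonlocal integrands on the left-hand side of the weak formulation pointwise by $H(\nabla u)^p$ and $|u(x)-u(y)|^p$, yielding
\[
\int_\Omega a_2(x)\,\frac{v^{p-1}u^p}{(v+\epsilon)^{p-1}}\,dx \leq \int_\Omega H(\nabla u)^p\,dx + \iint_{\mathbb{R}^{2N}}|u(x)-u(y)|^p\,d\mu.
\]

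Send $\epsilon\to 0^+$: since $v>0$ a.e.\ in $\Omega$, one has $v^{p-1}/(v+\epsilon)^{p-1}\to 1$ pointwise, while the integrand on the left is dominated by $\|a_2\|_\infty\,u^p\in L^1(\Omega)$, so dominated convergence produces $\int_\Omega a_2\,u^p\,dx$ in the limit. On the other hand, testing \eqref{scpeqn1} with the admissible function $u$ itself and using Lemma~\ref{Happ}-(A) in the form $H(\nabla u)^{p-1}\nabla H(\nabla u)\cdot\nabla u=H(\nabla u)^p$ gives
\[
\int_\Omega H(\nabla u)^p\,dx + \iint_{\mathbb{R}^{2N}}|u(x)-u(y)|^p\,d\mu = \int_\Omega a_1\,u^p\,dx.
\]
Combining the two relations yields $\int_\Omega (a_2-a_1)\,u^p\,dx\leq 0$, which contradicts $a_2-a_1>0$ in $\Omega$ and $u>0$ in $\Omega$. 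Hence $v$ must vanish somewhere in $\Omega$.

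The most delicate point is the manipulation of $\phi_\epsilon$: one must confirm it is an admissible element of $W_0^{1,p}(\Omega)$ (this uses the hypothesis $u\in L^\infty(\Omega)$ and the uniform lower bound $v+\epsilon\geq \epsilon$) and verify that both Picone inequalities apply globally on $\mathbb{R}^{2N}$ after the $\epsilon$-shift, so that the pointwise bounds can be integrated before passing to the limit. Once these technicalities are handled, the remainder is a direct adaptation of the Picone machinery already exploited in the Brezis--Oswald uniqueness argument of Theorem~\ref{boTHM3}.
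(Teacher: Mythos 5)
Your proposal is correct and follows essentially the same route as the paper: after the same reduction to $v>0$, you test \eqref{scpeqn2} with $\phi_\epsilon=u^p/(v+\epsilon)^{p-1}$, control the local and nonlocal terms by Theorems \ref{BrPI} and \ref{JacPI}, test \eqref{scpeqn1} with $u$, and let $\epsilon\to0^+$ to reach the contradiction $\int_\Omega(a_2-a_1)u^p\,dx\leq 0$. The only differences are cosmetic (you invoke the strong maximum principle to upgrade $v\geq0$ to $v>0$ and use dominated convergence where the paper uses Fatou's lemma), so the argument matches the paper's proof in substance.
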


\begin{proof}
Suppose $v$ does not vanish in $\Omega$. Without loss of generality, let $v>0$ in $\Omega$, since otherwise the case $v<0$ in $\Omega$ can be dealt similarly, by working with $w=-v$ in place of $v$. Let $\epsilon>0$ and define $v_\epsilon=v+\epsilon$, $\phi_\epsilon=\frac{u^p}{(v+\epsilon)^{p-1}}$. Then we observe that $\phi_\epsilon\in W_0^{1,p}(\Omega)$ for every $\epsilon>0$. By Theorem \ref{BrPI} and Theorem \ref{JacPI}, we obtain
\begin{equation}\label{scpeqn3}
\begin{split}
0&\leq \int_{\Omega}H(\nabla u)^p\,dx-\int_{\Omega}H(\nabla v_\epsilon)^{p-1}\nabla H(\nabla v_\epsilon)\nabla\phi_{\epsilon}\,dx\\
&+\iint_{\mathbb{R}^{2N}}{|u(x)-u(y)|^p}\,d\mu-\iint_{\mathbb{R}^{2N}}{|v_\epsilon(x)-v_\epsilon(y)|^{p-2}(v_\epsilon(x)-v_\epsilon(y))(\phi_\epsilon(x)-\phi_\epsilon(y))}\,d\mu\\
&=\Big(\int_{\Omega}H(\nabla u)^p\,dx+\iint_{\mathbb{R}^{2N}}{|u(x)-u(y)|^p}\,d\mu\Big)\\
&-\Big(\int_{\Omega}H(\nabla v_\epsilon)^{p-1}\nabla H(\nabla v_\epsilon)\nabla\phi_{\epsilon}\,dx+\iint_{\mathbb{R}^{2N}}{|v_\epsilon(x)-v_\epsilon(y)|^{p-2}(v_\epsilon(x)-v_\epsilon(y))(\phi_\epsilon(x)-\phi_\epsilon(y))}\,d\mu\Big)\\
&= \int_{\Omega}a_1(x)\,u^p\,dx-\int_{\Omega}a_2(x)\Big(\frac{v}{v+\epsilon}\Big)^{p-1}u^p\,dx.
\end{split}
\end{equation}
The last equality above is obtained by choosing $u$ as a test function in \eqref{scpeqn1} and $\phi_\epsilon$ in \eqref{scpeqn2} respectively. Passing to the limit as $\epsilon\to 0^+$ and using Fatou's lemma, we obtain the inequality
$
0\leq \int_{\Omega}(a_1-a_2)u^p\,dx.
$
But $a_1<a_2$ in $\Omega$ gives $\int_{\Omega}(a_1-a_2)u^p\,dx<0$, which is a contradiction to the above inequality. Hence, $v$ mush vanish in $\Omega$.
\end{proof}

The following two comparison results are proved in the mixed case $-\Delta_p-\Delta_{J,p}$ in \cite[Lemma 2.1 and Corollary 2.1]{Rossi}, where
\begin{equation}\label{djp}
\Delta_{J,p}u:=2\int_{\mathbb{R}^N}|u(x)-u(y)|^{p-2}(u(x)-u(y))J(x-y)\,dy,
\end{equation}
where the kernel $J:\mathbb{R}^N\to\mathbb{R}$ is a radially symmetric, nonnegative continuous function with compact support, $J(0)>0$ and $\int_{\mathbb{R}^N}J(\tau)\,d\tau=1$. Here, we prove such comparison results for both elliptic and parabolic mixed equations covering a more general class of nonhomogeneous operator including $-H_p+(-\Delta_p)^s$ as will be shown below. 
\begin{Theorem}\label{comp1}(\textbf{Comparison principle})
Let $u,v\in W^{1,p}(\mathbb{R}^N)$ be such that 
\begin{equation}\label{comp1eqn1}
-H_p u+(-\Delta_p)^s u\leq -H_p v+(-\Delta_p)^s v\text{ in }\Omega
\end{equation}
in the weak sense, that is
\begin{equation}\label{cpwksol1}
\begin{split}
&\int_{\Omega}H(\nabla u)^{p-1}\nabla H(\nabla u)\nabla\phi\,dx+\iint_{\mathbb{R}^{2N}}|u(x)-u(y)|^{p-2}(u(x)-u(y))(\phi(x)-\phi(y))\,d\mu\\
&\quad\leq \int_{\Omega}H(\nabla v)^{p-1}\nabla H(\nabla v)\nabla\phi\,dx+\iint_{\mathbb{R}^{2N}}|v(x)-v(y)|^{p-2}(v(x)-v(y))(\phi(x)-\phi(y))\,d\mu 
\end{split}
\end{equation}
for every $\phi\in W_0^{1,p}(\Omega)$, $\phi\geq 0$, then $u\leq v$ in $\Omega$ if $u\leq v$ in $\mathbb{R}^N\setminus\Omega$.
\end{Theorem}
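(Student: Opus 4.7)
The plan is to test the weak inequality \eqref{cpwksol1} against $\phi=(u-v)_+$. Since $u\leq v$ in $\mathbb{R}^N\setminus\Omega$, the function $(u-v)_+$ vanishes outside $\Omega$; combined with $u-v\in W^{1,p}(\mathbb{R}^N)$ this yields $(u-v)_+\in W_0^{1,p}(\Omega)$, and clearly $(u-v)_+\geq 0$, so it is admissible. Substituting and rearranging reduces the entire theorem to proving
$$
I_{\mathrm{loc}}+I_{\mathrm{nonloc}}\leq 0,
$$
where
$$
I_{\mathrm{loc}}=\int_{\Omega}\bigl(H(\nabla u)^{p-1}\nabla H(\nabla u)-H(\nabla v)^{p-1}\nabla H(\nabla v)\bigr)\cdot\nabla(u-v)_+\,dx
$$
and $I_{\mathrm{nonloc}}$ is the corresponding double integral over $\mathbb{R}^{2N}$ with respect to $d\mu$. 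The whole argument then hinges on showing $I_{\mathrm{loc}}\geq 0$ and $I_{\mathrm{nonloc}}\geq 0$, which together with the displayed inequality forces both to vanish.

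For $I_{\mathrm{loc}}$, I would invoke Lemma \ref{Happ}(D), which asserts that $\xi\mapsto H(\xi)^{p-1}\nabla H(\xi)$ is strictly monotone. Since $\nabla(u-v)_+=(\nabla u-\nabla v)\chi_{\{u>v\}}$ almost everywhere, the integrand is pointwise nonnegative on $\{u>v\}$ and vanishes elsewhere, giving $I_{\mathrm{loc}}\geq 0$ with strict inequality unless $\nabla u=\nabla v$ a.e.\ on $\{u>v\}$.

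For $I_{\mathrm{nonloc}}$, set $w=u-v$, $A=u(x)-u(y)$, $B=v(x)-v(y)$; I need to verify
$$
\bigl(|A|^{p-2}A-|B|^{p-2}B\bigr)\bigl(w_+(x)-w_+(y)\bigr)\geq 0
$$
for a.e.\ $(x,y)\in\mathbb{R}^{2N}$. A four-case split on the signs of $w(x),w(y)$ suffices: if both are nonnegative, $w_+(x)-w_+(y)=A-B$ and the claim collapses to the standard monotonicity of $t\mapsto|t|^{p-2}t$; if both are nonpositive, the factor $w_+(x)-w_+(y)$ is zero; in the mixed case $w(x)\geq 0>w(y)$ one has $w_+(x)-w_+(y)=w(x)\geq 0$ and $A-B=w(x)-w(y)>0$, so $A>B$ forces $|A|^{p-2}A\geq|B|^{p-2}B$, and the symmetric subcase is handled identically.

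Combining the two nonnegativities with $I_{\mathrm{loc}}+I_{\mathrm{nonloc}}\leq 0$ forces $I_{\mathrm{loc}}=0$. Strict monotonicity from Lemma \ref{Happ}(D) then yields $\nabla u=\nabla v$ a.e.\ on $\{u>v\}$, hence $\nabla(u-v)_+\equiv 0$ a.e.\ in $\Omega$. Since $(u-v)_+\in W_0^{1,p}(\Omega)$, the Poincar\'e inequality (or Lemma \ref{Sobine}) gives $(u-v)_+\equiv 0$ in $\Omega$, i.e., $u\leq v$ in $\Omega$. The only genuinely delicate step is the pointwise sign analysis of the nonlocal integrand in the mixed-sign regime; the remainder is a direct application of the monotonicity properties of the two ingredient operators.
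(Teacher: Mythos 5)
Your proof is correct and follows essentially the same route as the paper: test \eqref{cpwksol1} with $\phi=(u-v)_+$ (admissible since $u\leq v$ outside $\Omega$), use the strict monotonicity in Lemma \ref{Happ}(D) for the local term and the monotonicity of $t\mapsto|t|^{p-2}t$ for the nonlocal term, and conclude $(u-v)_+\equiv 0$ via Poincar\'e. The only difference is that you verify the sign of the nonlocal integrand by a direct four-case analysis, whereas the paper delegates this step to \cite[Lemma 9]{LL}; your version is simply more self-contained.
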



\begin{proof}
Let $u\leq v$ in $\mathbb{R}^N\setminus\Omega$, then we can choose $\phi=\max\{u-v,0\}$ as a test function in \eqref{cpwksol1} and obtain
\begin{equation}\label{comp1eqn2}
\begin{split}
0&\leq \int_{\Omega}\big(H(\nabla v)^{p-1}\nabla H(\nabla v)-H(\nabla u)^{p-1}\nabla H(\nabla u)\big)\nabla\phi\,dx\\
&\qquad+\iint_{\mathbb{R}^{2N}}\big(|v(x)-v(y)|^{p-2}(v(x)-v(y))-|u(x)-u(y)|^{p-2}(u(x)-u(y))\big)(\phi(x)-\phi(y))\,d\mu.
\end{split}
\end{equation}
Arguing as in \cite[Lemma 9]{LL}, we get the nonlocal integral above is $\leq 0$ that is
$$
\iint_{\mathbb{R}^{2N}}\Big(|v(x)-v(y)|^{p-2}(v(x)-v(y))-|u(x)-u(y)|^{p-2}(u(x)-u(y))\Big)(\phi(x)-\phi(y))\,d\mu\leq 0.
$$
Next, by Theorem \ref{Falg}, the local integral
$$
\int_{\Omega}\big(H(\nabla v)^{p-1}\nabla H(\nabla v)-H(\nabla u)^{p-1}\nabla H(\nabla u)\big)\nabla \phi\,dx\leq 0.
$$
Hence we get that
$$
\int_{\Omega}\big(H(\nabla v)^{p-1}\nabla H(\nabla v)-H(\nabla u)^{p-1}\nabla H(\nabla u)\big)\nabla \phi\,dx=0,
$$
which gives $\phi=0$ in $\Omega$ that is $u\leq v$  in $\Omega$.
\end{proof}
\begin{Corollary}\label{app1}
By Theorem \ref{comp1}, we obtain that if $u\in W^{1,p}(\mathbb{R}^N)$ such that 
$$
-H_p u+(-\Delta_p)^s u\geq 0\text{ in }\Omega
$$
in the weak sense, then $u\geq 0$ in $\Omega$ if $u\geq 0$ in $\mathbb{R}^N\setminus\Omega$.
\end{Corollary}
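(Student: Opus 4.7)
The plan is to deduce this corollary as a direct application of Theorem \ref{comp1} with the zero function playing the role of the subsolution. Set $w \equiv 0$ on $\mathbb{R}^N$; then $w \in W^{1,p}(\mathbb{R}^N)$ trivially. Since $\nabla w = 0$ and $w(x) - w(y) = 0$ for all $x, y \in \mathbb{R}^N$, both the anisotropic and the nonlocal integrals in \eqref{cpwksol1} vanish identically when evaluated on $w$ (using also $H(0) = 0$ by property (H1)). The weak formulation of the hypothesis $-H_p u + (-\Delta_p)^s u \geq 0$ in $\Omega$ states that, for every nonnegative test function $\phi \in W_0^{1,p}(\Omega)$,
$$
\int_{\Omega}H(\nabla u)^{p-1}\nabla H(\nabla u)\nabla\phi\,dx+\iint_{\mathbb{R}^{2N}}|u(x)-u(y)|^{p-2}(u(x)-u(y))(\phi(x)-\phi(y))\,d\mu\geq 0,
$$
which is precisely the inequality \eqref{cpwksol1} with $w$ in the role of ``$u$'' and the given $u$ in the role of ``$v$''.

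It remains to verify the exterior condition of Theorem \ref{comp1}: the hypothesis $u \geq 0$ in $\mathbb{R}^N \setminus \Omega$ is exactly $w \leq u$ there. Applying Theorem \ref{comp1} to the pair $(w,u)$ yields $w \leq u$ in $\Omega$, i.e., $u \geq 0$ in $\Omega$, which is the desired conclusion. There is no genuine obstacle here; the only verification is the trivial check that the zero function belongs to $W^{1,p}(\mathbb{R}^N)$ and satisfies the operator equation in the weak sense, both of which are immediate from the homogeneity of $H$ and the vanishing of the increments of $w$.
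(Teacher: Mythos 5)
Your proposal is correct and coincides with the paper's (implicit) argument: the corollary is stated as a direct consequence of Theorem \ref{comp1}, obtained exactly as you do by comparing $u$ with the zero function, which trivially lies in $W^{1,p}(\mathbb{R}^N)$ and annihilates both the anisotropic and nonlocal terms, so the supersolution hypothesis is precisely \eqref{cpwksol1} for the pair $(0,u)$ and the exterior condition $0\leq u$ in $\mathbb{R}^N\setminus\Omega$ yields $u\geq 0$ in $\Omega$.
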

Following the same proof as of Theorem \ref{comp1}, we obtain the following comparison results including for the nonhomogeneous $(p,q)$-elliptic and parabolic mixed problems. 
\begin{Theorem}\label{rmk1}
Let $1<p,q<\infty$, $0<s<1$ and $u,v\in W^{1,p}(\mathbb{R}^N)\cap W^{1,q}(\mathbb{R}^N)$ be such that 
\begin{equation}\label{comp1eqn32}
-H_p u+(-\Delta_p)^s u-H_q u+(-\Delta_q)^s u\leq -H_p v+(-\Delta_p)^s v-H_q v+(-\Delta_q)^s v\text{ in }\Omega
\end{equation}
in the weak sense, then $u\leq v$ in $\Omega$ if $u\leq v$ in $\mathbb{R}^N\setminus\Omega$.
\end{Theorem}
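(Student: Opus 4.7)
The plan is to follow the argument of Theorem \ref{comp1} essentially verbatim, exploiting the fact that the four pieces of the operator $-H_p+(-\Delta_p)^s-H_q+(-\Delta_q)^s$ are all monotone and their contributions can be analyzed independently. Since $u,v\in W^{1,p}(\mathbb{R}^N)\cap W^{1,q}(\mathbb{R}^N)$ and $u\leq v$ in $\mathbb{R}^N\setminus\Omega$, the function $\phi:=(u-v)_+$ is nonnegative, vanishes outside $\Omega$, and therefore lies in $W_0^{1,p}(\Omega)\cap W_0^{1,q}(\Omega)$; this is the natural class of admissible test functions for the difference inequality corresponding to \eqref{comp1eqn32}.

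First I would subtract the weak formulations for $u$ and $v$, test with this $\phi$, and rewrite the resulting inequality as a sum of four nonnegative-looking pieces: two local ones (of order $p$ and $q$) involving differences of $H(\nabla\cdot)^{r-1}\nabla H(\nabla\cdot)$, and two nonlocal ones involving differences of $|\cdot|^{r-2}(\cdot)$ evaluated on second-order increments, with $r\in\{p,q\}$ and measures $d\mu_p=|x-y|^{-N-ps}\,dx\,dy$ and $d\mu_q=|x-y|^{-N-qs}\,dx\,dy$. For each of the two local integrals, Lemma \ref{Happ}-(D) applied with exponent $p$ and with exponent $q$ yields nonpositivity after pairing with $\nabla\phi$ (the pointwise monotonicity of $\xi\mapsto H(\xi)^{r-1}\nabla H(\xi)$ combined with the fact that $\nabla\phi=\nabla(u-v)$ on $\{u>v\}$ and $\nabla\phi=0$ elsewhere). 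For each of the two nonlocal integrals, the standard monotonicity of $t\mapsto |t|^{r-2}t$, together with the $(u-v)_+$-type algebraic lemma already used in the proof of Theorem \ref{comp1} (cf.\ \cite[Lemma 9]{LL}), gives nonpositivity.

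Since the sum of these four nonpositive quantities is bounded below by zero, each of the four must vanish. Focusing just on the local $p$-term (either one suffices), the strict monotonicity in Lemma \ref{Happ}-(D) forces $\nabla u=\nabla v$ almost everywhere on $\{u>v\}$, so $\nabla\phi\equiv 0$ in $\Omega$; combined with the vanishing trace $\phi=0$ on $\partial\Omega$ (equivalently $\phi\in W_0^{1,p}(\Omega)$), this yields $\phi\equiv 0$, i.e.\ $u\leq v$ in $\Omega$.

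The only conceptual subtlety, and what I would expect to be the main (but mild) obstacle, is checking that $\phi=(u-v)_+$ is indeed an admissible test function simultaneously for both the $p$- and $q$-pieces: one needs $\phi\in W_0^{1,p}(\Omega)\cap W_0^{1,q}(\Omega)$, and finiteness of both nonlocal double integrals. The first follows since $u,v\in W^{1,p}(\mathbb{R}^N)\cap W^{1,q}(\mathbb{R}^N)$ and truncation by the positive part preserves membership in each Sobolev space; the second follows from Lemma \ref{locnon1} applied separately with the parameters $(p,s)$ and $(q,s)$, which controls the tail contributions in each of the two fractional seminorms. Once these integrability checks are in place, the argument reduces exactly to the additive sum of two copies of the proof of Theorem \ref{comp1}, one for $p$ and one for $q$.
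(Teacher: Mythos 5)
Your proposal is correct and follows essentially the same route as the paper, which proves Theorem \ref{rmk1} simply by repeating the argument of Theorem \ref{comp1}: test the difference of the weak formulations with $\phi=(u-v)_+\in W_0^{1,p}(\Omega)\cap W_0^{1,q}(\Omega)$ and use the monotonicity of each of the four pieces (Lemma \ref{Happ}-(D) for the local $p$- and $q$-terms, the argument of \cite[Lemma 9]{LL} for the two nonlocal terms) to force $\nabla\phi=0$ and hence $\phi\equiv 0$. Your extra remarks on admissibility of the test function and finiteness of both fractional seminorms via Lemma \ref{locnon1} with $(p,s)$ and $(q,s)$ are accurate and consistent with what the paper takes for granted.
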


\begin{Theorem}\label{rmkn123}
Let $u,v\in C(0,T; L^2(\Omega))\cap W^{1,p}(\mathbb{R}^N\times(0,T))$ be such that 
\begin{equation}\label{comp1eqn34}
\partial_t u-H_p u+(-\Delta_p)^s u\leq \partial_t v-H_p v+(-\Delta_p)^s v\text{ in }\Omega
\end{equation}
in the weak sense, then $u\leq v$ in $\Omega\times(0,T)$ if $u\leq v$ in $\big(\mathbb{R}^N\setminus\Omega\big)\times(0,T)$ and $u(x,0)\leq v(x,0)$ in $\Omega$.
\end{Theorem}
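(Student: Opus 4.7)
\textbf{Proof plan for Theorem \ref{rmkn123}.} The strategy is to mimic the static argument of Theorem \ref{comp1}, using $\phi=(u-v)_+$ as a test function for the difference of the two weak inequalities, but with an extra time-derivative term that we control by an $L^2$-energy identity. Set $w:=u-v$. Since $w\leq 0$ on $(\mathbb{R}^N\setminus\Omega)\times(0,T)$, for almost every $t\in(0,T)$ the function $x\mapsto w_+(x,t)$ belongs to $W_0^{1,p}(\Omega)$, so it is an admissible test function in the weak formulation.

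For a fixed $t_0\in(0,T)$, subtracting the two weak inequalities and testing against $\phi=w_+$, then integrating in time over $(0,t_0)$, yields
\begin{equation*}
\begin{split}
&\int_0^{t_0}\!\!\int_{\Omega}\partial_t w\,w_+\,dx\,dt
+\int_0^{t_0}\!\!\int_{\Omega}\bigl(H(\nabla u)^{p-1}\nabla H(\nabla u)-H(\nabla v)^{p-1}\nabla H(\nabla v)\bigr)\nabla w_+\,dx\,dt\\
&\quad+\int_0^{t_0}\!\!\iint_{\mathbb{R}^{2N}}\bigl(|u(x)-u(y)|^{p-2}(u(x)-u(y))-|v(x)-v(y)|^{p-2}(v(x)-v(y))\bigr)\\
&\qquad\qquad\qquad\qquad\times(w_+(x)-w_+(y))\,d\mu\,dt \leq 0.
\end{split}
\end{equation*}
The two spatial integrals are nonnegative: the local piece by the strict monotonicity inequality \eqref{Falg} from Lemma \ref{Happ}-(D) (applied as in the proof of Theorem \ref{comp1}), and the nonlocal piece by the monotonicity of the fractional $p$-Laplacian evaluated on $w_+$, exactly as in \cite[Lemma 9]{LL}, already invoked in the proof of Theorem \ref{comp1}.

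It remains to handle the time-derivative term. Using the $C(0,T;L^2(\Omega))$ regularity of $u$ and $v$ together with the weak differentiability of $w$ in time, one has the chain-rule identity
\begin{equation*}
\int_0^{t_0}\!\!\int_{\Omega}\partial_t w\,w_+\,dx\,dt=\frac{1}{2}\int_{\Omega}w_+(x,t_0)^2\,dx-\frac{1}{2}\int_{\Omega}w_+(x,0)^2\,dx.
\end{equation*}
The initial hypothesis $u(\cdot,0)\leq v(\cdot,0)$ on $\Omega$ makes the second term vanish, so combining with the nonnegativity of the two spatial terms gives $\int_{\Omega}w_+(x,t_0)^2\,dx\leq 0$ for every $t_0\in(0,T)$, whence $w_+\equiv 0$ in $\Omega\times(0,T)$, i.e.\ $u\leq v$ there.

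The main technical obstacle is the rigorous justification of the chain-rule identity for $\partial_t w \cdot w_+$, since the assumed regularity $w\in C(0,T;L^2(\Omega))\cap W^{1,p}(\mathbb{R}^N\times(0,T))$ makes the product $\partial_t w \cdot w_+$ only formally meaningful. The standard remedy is to work with Steklov averages $w_h(x,t):=\frac{1}{h}\int_t^{t+h}w(x,\tau)\,d\tau$: the Steklov-averaged equation is pointwise in $t$, the test function $(w_h)_+$ is admissible, the chain-rule identity holds for $w_h$ verbatim, and one passes to the limit $h\to 0^+$ using the continuity of $w$ into $L^2(\Omega)$ and standard lower semicontinuity of the local and nonlocal monotonicity brackets. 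Once this approximation step is in place, the three-term inequality above, combined with the initial condition, closes the argument.
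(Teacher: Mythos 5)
Your proposal is correct and follows essentially the same route the paper intends: the paper proves Theorem \ref{rmkn123} only by referring back to the monotonicity argument of Theorem \ref{comp1} (testing with $\phi=\max\{u-v,0\}$ and using Lemma \ref{Happ}-(D) for the local part and the argument of \cite[Lemma 9]{LL} for the nonlocal part), with Remark \ref{tstrmk} noting that the admissibility of this test function is justified by mollification in time, which is exactly your Steklov-average step. Your explicit treatment of the time term via $\tfrac12\int_\Omega w_+(\cdot,t_0)^2\,dx-\tfrac12\int_\Omega w_+(\cdot,0)^2\,dx$ together with the initial condition is the standard way to close this argument and matches the paper's sketch.
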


\begin{Theorem}\label{rmkn12}
Let $1<p,q<\infty$, $0<s<1$ and $u,v\in C(0,T; L^2(\Omega))\cap W^{1,p}(\mathbb{R}^N\times(0,T))\cap W^{1,q}(\mathbb{R}^N\times(0,T))$ be such that 
\begin{equation}\label{comp1eqn3}
\partial_t u-H_p u+(-\Delta_p)^s u- H_q u+(-\Delta_q)^s u\leq \partial_t v-H_p v+(-\Delta_p)^s v-H_q v+(-\Delta_q)^s v\text{ in }\Omega
\end{equation}
in the weak sense, then $u\leq v$ in $\Omega\times(0,T)$ if $u\leq v$ in $\big(\mathbb{R}^N\setminus\Omega\big)\times(0,T)$ and $u(x,0)\leq v(x,0)$ in $\Omega$.
\end{Theorem}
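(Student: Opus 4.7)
The plan is to adapt the argument of Theorem \ref{comp1} to the parabolic $(p,q)$-setting by combining the spatial monotonicity inequalities already in hand with a standard Steklov-averaging in time. The natural test function is $\phi := (u-v)_+ = \max\{u-v,0\}$, which is admissible at almost every time, because the exterior hypothesis $u \leq v$ in $(\mathbb{R}^N\setminus\Omega)\times(0,T)$ forces $\phi(\cdot,t)\in W_0^{1,p}(\Omega)\cap W_0^{1,q}(\Omega)$.

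Subtracting the two weak inequalities and inserting $\phi$ would yield, for a.e. $\tau\in(0,T)$,
\begin{align*}
\int_0^\tau\!\!\int_\Omega\partial_t(u-v)\,\phi\,dx\,dt
&\leq \sum_{r\in\{p,q\}}\int_0^\tau\!\!\int_\Omega\bigl(H(\nabla v)^{r-1}\nabla H(\nabla v)-H(\nabla u)^{r-1}\nabla H(\nabla u)\bigr)\cdot\nabla\phi\,dx\,dt\\
&\;+ \sum_{r\in\{p,q\}}\int_0^\tau\!\!\iint_{\mathbb{R}^{2N}}\Bigl(|v(x)-v(y)|^{r-2}(v(x)-v(y))\\
&\qquad\qquad-|u(x)-u(y)|^{r-2}(u(x)-u(y))\Bigr)(\phi(x)-\phi(y))\,\frac{dx\,dy\,dt}{|x-y|^{N+rs}}.
\end{align*}
By Lemma \ref{Happ}(D) each local integrand is pointwise nonpositive, since $x\mapsto H(x)^{r-1}\nabla H(x)$ is strictly monotone. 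By the same case analysis used in \cite[Lemma 9]{LL} and invoked in the proof of Theorem \ref{comp1}, each nonlocal integrand is nonpositive as well (both for $r=p$ and $r=q$). Consequently the entire right-hand side is $\leq 0$.

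It then remains to identify the left-hand side with $\tfrac{1}{2}\|(u-v)_+(\tau)\|_{L^2(\Omega)}^2-\tfrac{1}{2}\|(u-v)_+(0)\|_{L^2(\Omega)}^2$. The second quantity vanishes by the initial condition $u(\cdot,0)\leq v(\cdot,0)$ in $\Omega$, and so $\|(u-v)_+(\tau)\|_{L^2(\Omega)}^2\leq 0$ for every $\tau\in(0,T)$, forcing $(u-v)_+\equiv 0$, i.e., $u\leq v$ in $\Omega\times(0,T)$.

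I expect the main obstacle to be the rigorous chain-rule identification under the given regularity $u,v\in C([0,T];L^2(\Omega))\cap W^{1,p}(\mathbb{R}^N\times(0,T))\cap W^{1,q}(\mathbb{R}^N\times(0,T))$, since $(u-v)_+$ is only Lipschitz in the pair $(u,v)$ and the map $t\mapsto(u-v)_+(\cdot,t)$ is not a priori smooth. The standard remedy is Steklov averaging in $t$: replace $u$ and $v$ by their time-Steklov averages, which are absolutely continuous in $t$ so the classical chain rule applies, repeat the above monotonicity estimates for the smoothed objects (the spatial arguments pass through unchanged), and let the averaging parameter tend to zero, relying on the $L^p$- and $L^q$-integrability of $\partial_t u,\partial_t v$ together with $C([0,T];L^2)$-continuity of $u,v$ to justify the limit. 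This completes the comparison.
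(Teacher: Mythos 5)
Your proposal is correct and follows essentially the same route the paper intends: it repeats the proof of Theorem \ref{comp1} with $\phi=(u-v)_+$, using Lemma \ref{Happ}-(D) for the two local terms and the argument of \cite[Lemma 9]{LL} for the two nonlocal terms, with the admissibility of the test function and the identification of the time term handled by time mollification/Steklov averaging, exactly as indicated in Remark \ref{tstrmk}. The initial-condition argument giving $\|(u-v)_+(\tau)\|_{L^2(\Omega)}^2\leq 0$ is the standard parabolic conclusion and matches the paper's intended adaptation.
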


\begin{Remark}\label{tstrmk}
We remark that the definition of weak sense in Theorem \ref{rmkn123} and Theorem \ref{rmkn12} is analogous to the doubly nonlinear equation below in subsection \ref{dls} and the admissibility of the test function $\phi=\max\{u-v,0\}$ in Theorem \ref{rmkn123} and Theorem \ref{rmkn12} can be justified using the mollification technique, for example in \cite{Bog1, Kenta2}. 
\end{Remark}

\begin{Corollary}\label{app2}
As an application of Theorem \ref{rmk1}, we obtain that if $1<p,q<\infty$, $0<s<1$ and $u\in W^{1,p}(\mathbb{R}^N)\cap W^{1,q}(\mathbb{R}^N)$ such that 
$$
-H_p u+(-\Delta_p)^s u-H_q u+(-\Delta_q)^s u\geq 0\text{ in }\Omega,
$$
in the weak sense, then $u\geq 0$ in $\Omega$ if $u\geq 0$ in $\mathbb{R}^N\setminus\Omega$.
\end{Corollary}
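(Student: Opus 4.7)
The plan is simply to invoke Theorem \ref{rmk1} with the trivial function playing the role of the lower candidate. Concretely, I would set $\tilde u := 0$ and $\tilde v := u$ in the notation of Theorem \ref{rmk1}. Since the zero function belongs to $W^{1,p}(\mathbb{R}^N)\cap W^{1,q}(\mathbb{R}^N)$ and since each of $-H_p$, $(-\Delta_p)^s$, $-H_q$, $(-\Delta_q)^s$ annihilates $0$ pointwise (and weakly, by inspection of the weak formulations, noting $H(0)=0$ and $|0-0|^{p-2}(0-0)=0$), we have
\[
-H_p \tilde u+(-\Delta_p)^s \tilde u-H_q \tilde u+(-\Delta_q)^s \tilde u = 0 \leq -H_p \tilde v+(-\Delta_p)^s \tilde v-H_q \tilde v+(-\Delta_q)^s \tilde v
\]
in the weak sense on $\Omega$, by the assumed weak inequality on $u$. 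The exterior condition $u\geq 0$ in $\mathbb{R}^N\setminus\Omega$ translates into $\tilde u\leq \tilde v$ in $\mathbb{R}^N\setminus\Omega$.

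Having verified both hypotheses of Theorem \ref{rmk1}, its conclusion gives $\tilde u \leq \tilde v$ in $\Omega$, which is precisely $u\geq 0$ in $\Omega$, as desired.

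There is essentially no obstacle to this argument; the only micro-step that needs a sentence of justification is that $0$ is a legitimate weak subsolution (or rather, that $0$ can be inserted on the left-hand side of the weak comparison inequality), but this is immediate from the definitions since all four operators vanish on constants equal to $0$. The corollary is therefore a one-line application of Theorem \ref{rmk1}, and no further estimates or test-function manipulations are required beyond what is already encoded in that theorem.
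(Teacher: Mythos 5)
Your proposal is correct and coincides with the paper's intended argument: the corollary is stated precisely as an application of Theorem \ref{rmk1}, obtained by comparing $u$ with the zero function, which is admissible since $0\in W^{1,p}(\mathbb{R}^N)\cap W^{1,q}(\mathbb{R}^N)$ and all four operators vanish on it in the weak formulation. Your observation that the only point needing a word of justification is the admissibility of $0$ as the lower comparison function is exactly the right level of care.
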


\begin{Corollary}\label{app23}
As an application of Theorem \ref{rmkn123}, we obtain that $u\in C(0,T; L^2(\Omega))\cap W^{1,p}(\mathbb{R}^N\times(0,T))$ such that 
$$
\partial_t u-H_p u+(-\Delta_p)^s u\geq 0\text{ in }\Omega,
$$
in the weak sense, then $u\geq 0$ in $\Omega\times(0,T)$ if $u\geq 0$ in $\big(\mathbb{R}^N\setminus\Omega\big)\times(0,T)$ and $u(x,0)\geq 0$ in $\Omega$.
\end{Corollary}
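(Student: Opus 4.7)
The corollary is stated as a direct application of Theorem \ref{rmkn123}, so my plan is simply to specialize that theorem to the pair $(u_1, u_2) = (0, u)$, where $u_1$ plays the role of the ``$u$'' and $u_2$ plays the role of the ``$v$'' in the statement of Theorem \ref{rmkn123}. Under this identification, the conclusion $u_1 \leq u_2$ in $\Omega \times (0,T)$ reads as $0 \leq u$ in $\Omega \times (0,T)$, which is precisely the claim of the corollary.

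Verification of the three hypotheses of Theorem \ref{rmkn123} is essentially automatic. First, admissibility in the function space is clear: the constant function $0$ belongs trivially to $C(0,T; L^2(\Omega)) \cap W^{1,p}(\mathbb{R}^N \times (0,T))$, and $u$ is assumed to lie there. Second, inserting $u_1 \equiv 0$ into the weak formulation of the parabolic operator annihilates every term involving $u_1$ (since $\nabla u_1 \equiv 0$, $H(\nabla u_1) \equiv 0$, $u_1(x,t) - u_1(y,t) \equiv 0$ and $\partial_t u_1 \equiv 0$), so the required differential inequality
\[
\partial_t u_1 - H_p u_1 + (-\Delta_p)^s u_1 \;\leq\; \partial_t u_2 - H_p u_2 + (-\Delta_p)^s u_2 \quad \text{in } \Omega
\]
collapses to the hypothesis $\partial_t u - H_p u + (-\Delta_p)^s u \geq 0$ in $\Omega$. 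Third, the exterior inequality $u_1 \leq u_2$ on $(\mathbb{R}^N \setminus \Omega) \times (0,T)$ and the initial inequality $u_1(\cdot,0) \leq u_2(\cdot,0)$ in $\Omega$ are merely restatements of the assumed nonnegativity of $u$ on the complement and at time $t = 0$.

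With all three hypotheses in place, Theorem \ref{rmkn123} yields $0 \leq u$ in $\Omega \times (0,T)$, which is the conclusion. I do not expect any real obstacle: this is a routine reduction to the already-proven comparison principle. The only delicate point—namely, that $\phi = (u_1 - u_2)_+ = u_-$ is admissible as a test function in the parabolic weak formulation—is already subsumed in the proof of Theorem \ref{rmkn123} via the mollification argument recalled in Remark \ref{tstrmk}, and therefore need not be re-examined here.
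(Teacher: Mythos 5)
Your proposal is correct and coincides with the paper's intended argument: the corollary is stated as an immediate specialization of Theorem \ref{rmkn123} to the comparison pair $(0,u)$, with the exterior and initial conditions reducing to the assumed nonnegativity, exactly as you describe. The admissibility of the test function $\phi=(0-u)_+=u_-$ is indeed handled by the mollification argument recalled in Remark \ref{tstrmk}, so no further work is needed.
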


\begin{Corollary}\label{app3}
As an application of Theorem \ref{rmkn12}, we obtain that if $1<p,q<\infty$, $0<s<1$ and $u\in C(0,T; L^2(\Omega))\cap W^{1,p}(\mathbb{R}^N\times(0,T))\cap W^{1,q}(\mathbb{R}^N\times(0,T))$ such that 
$$
\partial_t u-H_p u+(-\Delta_p)^s u-H_q u+(-\Delta_q)^s u\geq 0\text{ in }\Omega,
$$
in the weak sense, then $u\geq 0$ in $\Omega\times(0,T)$ if $u\geq 0$ in $\big(\mathbb{R}^N\setminus\Omega\big)\times(0,T)$ and $u(x,0)\geq 0$ in $\Omega$.
\end{Corollary}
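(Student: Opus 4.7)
The plan is to derive the corollary as a direct specialization of Theorem \ref{rmkn12}, applied to the pair of functions obtained by placing the identically zero function on the left-hand side and the given $u$ on the right-hand side. Concretely, I would set $\tilde u \equiv 0$ and $\tilde v := u$, and then invoke Theorem \ref{rmkn12} to conclude $\tilde u \le \tilde v$ in $\Omega\times(0,T)$, which is exactly $u \ge 0$ there.

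The first step is to check that $\tilde u \equiv 0$ is an admissible competitor: it trivially belongs to $C(0,T;L^2(\Omega))\cap W^{1,p}(\mathbb{R}^N\times(0,T))\cap W^{1,q}(\mathbb{R}^N\times(0,T))$, and all four spatial operators $H_p$, $(-\Delta_p)^s$, $H_q$, $(-\Delta_q)^s$ as well as the time derivative $\partial_t$ annihilate the zero function, so in the weak sense
\[
\partial_t \tilde u - H_p \tilde u + (-\Delta_p)^s \tilde u - H_q \tilde u + (-\Delta_q)^s \tilde u = 0 \quad \text{in } \Omega.
\]
Combining this equality with the standing hypothesis of the corollary, which asserts that the same combination applied to $\tilde v = u$ is $\ge 0$ in $\Omega$ in the weak sense, yields precisely the differential inequality required by Theorem \ref{rmkn12}.

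The second step is to match the boundary and initial data: the assumption $u \ge 0$ on $(\mathbb{R}^N\setminus\Omega)\times(0,T)$ is exactly $\tilde u \le \tilde v$ on that set, while $u(\cdot,0) \ge 0$ in $\Omega$ is exactly $\tilde u(\cdot,0) \le \tilde v(\cdot,0)$. All hypotheses of Theorem \ref{rmkn12} are therefore met, and its conclusion delivers $0 \le u$ in $\Omega\times(0,T)$.

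There is essentially no genuine obstacle once Theorem \ref{rmkn12} is in hand; the only bookkeeping point worth flagging is the sign flip when subtracting, and the admissibility of the truncation $\phi = \max\{\tilde u - \tilde v, 0\} = \max\{-u, 0\}$ as a test function in the parabolic comparison. As observed in Remark \ref{tstrmk}, the latter is justified by the Steklov-type mollification in time employed in \cite{Bog1, Kenta2}, which applies uniformly across the local anisotropic term, the nonlocal $p$- and $q$-fractional terms, and the time derivative. With these minor verifications in place, the corollary follows without further work.
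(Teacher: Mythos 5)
Your proposal is correct and coincides with the paper's (implicit) argument: the corollary is obtained by applying Theorem \ref{rmkn12} with the zero function as the comparison subsolution and $v=u$, matching the exterior and initial data, exactly as you do. The remark on the admissibility of $\phi=\max\{-u,0\}$ via the mollification of Remark \ref{tstrmk} is also the same justification the paper relies on.
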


\subsection{Eigenvalue problem}
For $u\in W_0^{1,p}(\Omega)\setminus\{0\}$ and $\lambda\in\mathbb{R}$, we study the following mixed eigenvalue problem
\begin{equation}\label{evpeqn}
-H_p u+(-\Delta_p)^s u=\lambda |u|^{p-2}u\text{ in }\Omega.
\end{equation}
We say that $u\in W_0^{1,p}(\Omega)\setminus\{0\}$ is an eigenfunction of \eqref{evpeqn} corresponding to the eigenvalue $\lambda$, if for every $\phi\in W_0^{1,p}(\Omega)$, we have
\begin{equation}\label{evpwksol}
\begin{split}
&\int_{\Omega}H(\nabla u)^{p-1}\nabla H(\nabla u)\nabla\phi\,dx+\iint_{\mathbb{R}^{2N}}|u(x)-u(y)|^{p-2}(u(x)-u(y))(\phi(x)-\phi(y))\,d\mu\\
&=\lambda\int_{\Omega}|u|^{p-2}u\phi\,dx
\end{split}    
\end{equation}
and we say that $(\lambda,u)$ is an eigenpair of \eqref{evpeqn}.

\begin{Remark}\label{evprmk1}
Then from \cite[Theorem 2.10]{GU}, it holds that $u\in L^\infty(\Omega)$. Further, from \cite[Theorem 2.9]{GU}, the first eigenvalue $\lambda_1=\lambda_1(\Omega)$ of \eqref{evpeqn} is
$$
\lambda_1=\inf\Bigg\{\frac{\int_{\Omega}H(\nabla u)^p\,dx+\iint_{\mathbb{R}^{2N}}|u(x)-u(y)|^p\,d\mu}{\|u\|^p_{L^p(\Omega)}}:u\in W_0^{1,p}(\Omega)\setminus\{0\}\Bigg\}
$$
and the there exists $v\in W_0^{1,p}(\Omega)\setminus\{0\}\cap L^\infty(\Omega)$ such that 
$$
\lambda_1=\frac{\int_{\Omega}H(\nabla v)^p\,dx+\iint_{\mathbb{R}^{2N}}|v(x)-v(y)|^p\,d\mu}{\|v\|^p_{L^p(\Omega)}}.
$$
We observe that the eigenfunction associated to $\lambda_1$ does not change sign. Indeed, if $v$ is an eigenfunction corresponding to $\lambda_1$, so is $|v|$. So, wlog, let $v\geq 0$ in $\Omega$. Since $v\neq 0$, by \cite[Theorem 3.10]{GKK}, for every $\omega\Subset\Omega$, there exists a constant $c_\omega>0$ such that $v\geq c_\omega>0$ in $\omega$. Hence, $v>0$ in $\Omega$.
\end{Remark}
In the local case, the following strict monotonicity property is proved in \cite{AlP, Tyagi}. Here, we prove it in the mixed setting.
\begin{Theorem}\label{monevp}(\textbf{Strict monotonicity of the first eigenvalue})
Let $\Omega_1\subset\Omega_2$ be such that $\Omega_1\neq \Omega_2$. Then $\lambda_1(\Omega_1)>\lambda_1(\Omega_2)$, if both exist.
\end{Theorem}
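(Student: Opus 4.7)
The strategy combines the variational characterization of $\lambda_1$ recorded in Remark \ref{evprmk1} with the strict positivity of the associated first eigenfunction. The argument splits into two parts: first a non-strict monotonicity obtained by extension by zero, then an upgrade to strict inequality by contradiction.

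For the non-strict inequality $\lambda_1(\Omega_1) \geq \lambda_1(\Omega_2)$, I would observe that any $u \in W_0^{1,p}(\Omega_1)$ extended by zero outside $\Omega_1$, call it $\tilde u$, lies in $W_0^{1,p}(\Omega_2)$ since $\tilde u \equiv 0$ on $\mathbb{R}^N \setminus \Omega_2 \subset \mathbb{R}^N \setminus \Omega_1$. Because $\nabla \tilde u = 0$ a.e.\ on $\Omega_2 \setminus \Omega_1$ and the nonlocal seminorm is intrinsically an integral over $\mathbb{R}^{2N}$, we have
\[
\int_{\Omega_2} H(\nabla \tilde u)^p\,dx = \int_{\Omega_1} H(\nabla u)^p\,dx, \qquad \|\tilde u\|_{L^p(\Omega_2)} = \|u\|_{L^p(\Omega_1)},
\]
and the nonlocal term is unchanged. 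Hence the Rayleigh quotient is preserved under zero extension, and passing to the infimum over the smaller class yields $\lambda_1(\Omega_1) \geq \lambda_1(\Omega_2)$.

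For the strict inequality I would argue by contradiction: assume $\lambda_1(\Omega_1) = \lambda_1(\Omega_2)$ and let $v \in W_0^{1,p}(\Omega_1)$ be a minimizer for $\lambda_1(\Omega_1)$ furnished by Remark \ref{evprmk1}, with $v > 0$ on $\Omega_1$. Its zero extension $\tilde v$ lies in $W_0^{1,p}(\Omega_2)$ with the same Rayleigh quotient, so $\tilde v$ realises $\lambda_1(\Omega_2)$ and, as a minimizer on the $L^p$-unit sphere, satisfies the Euler--Lagrange equation \eqref{evpwksol} on $\Omega_2$; that is, $\tilde v$ is a first eigenfunction on $\Omega_2$. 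Applying the strict-positivity statement of Remark \ref{evprmk1} on $\Omega_2$ forces $\tilde v > 0$ throughout $\Omega_2$. However, $\tilde v \equiv 0$ on $\Omega_2 \setminus \Omega_1$, and I claim this set has positive Lebesgue measure: since $\Omega_1$ is $C^1$ and properly contained in $\Omega_2$, there exists $x_0 \in \Omega_2 \setminus \overline{\Omega_1}$ (otherwise $\Omega_2 \subset \overline{\Omega_1}$, and any point of $\partial \Omega_1$ inside $\Omega_2$ would be interior to $\overline{\Omega_1}$, contradicting the one-sided local graph structure of a $C^1$ boundary), and then a small open ball around $x_0$ is contained in $\Omega_2 \setminus \overline{\Omega_1}$. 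This produces the desired contradiction with $\tilde v > 0$ on $\Omega_2$.

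The main obstacle is purely geometric, namely the verification that $|\Omega_2 \setminus \Omega_1| > 0$, which is precisely where the $C^1$ regularity of the domains intervenes; the analytic content reduces to facts already recorded in Remark \ref{evprmk1}. An alternative route would use Picone directly: take the positive eigenfunctions $u_i$ on $\Omega_i$ and plug $\phi_\epsilon = u_1^p/(u_2+\epsilon)^{p-1}$ into the eigenvalue equation for $u_2$ on $\Omega_2$, then invoke Theorems \ref{BrPI} and \ref{JacPI} and let $\epsilon \to 0^+$; but tracking the strict case of the Picone inequalities through the limit is more delicate than the contradiction argument sketched above, so I would favour the direct route.
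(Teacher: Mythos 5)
Your argument is correct, but it is genuinely different from the paper's. The paper proves the inequality by a direct Picone computation: it approximates the first eigenfunction $u_1$ of $\Omega_1$ by nonnegative $\phi_n\in C_c^\infty(\Omega_1)$, tests the eigenvalue equation for $u_2$ on $\Omega_2$ with $\phi_n^p/u_2^{p-1}$, and uses Theorems \ref{BrPI} and \ref{JacPI} to deduce $0\le(\lambda_1(\Omega_1)-\lambda_1(\Omega_2))\int_{\Omega_1}u_1^p\,dx$; strictness then comes from the rigidity (equality) case of the Picone inequalities, which would force $u_1=cu_2$, impossible when $\Omega_1\subsetneq\Omega_2$. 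You instead use the variational characterization: zero extension gives the non-strict inequality immediately (this is cleaner than the paper's approximation-and-limit argument for that half), and in the equality case you promote the extended minimizer to a first eigenfunction on $\Omega_2$ via the Lagrange multiplier rule and invoke the strict positivity of first eigenfunctions (Remark \ref{evprmk1}, i.e.\ the weak Harnack inequality) to contradict its vanishing on $\Omega_2\setminus\Omega_1$. Two remarks. First, your route needs the energy functional to be differentiable so that a constrained minimizer satisfies \eqref{evpwksol}; this is fine here since the paper already treats $\gamma$ as a $C^1$-functional (Proposition \ref{boevpprop}), but it is an extra ingredient the paper's proof does not use, the paper trading it for the equality case of Picone. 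Second, both proofs ultimately rest on the geometric fact $|\Omega_2\setminus\Omega_1|>0$ (the paper uses it implicitly when declaring $u_1=cu_2$ impossible); you verify it explicitly from the $C^1$ boundary structure, which is a welcome addition rather than a defect. So your proposal buys a softer treatment of the rigidity step (maximum principle instead of Picone equality cases tracked through an $\epsilon\to0^+$ limit), at the price of invoking the Euler--Lagrange characterization of minimizers; the paper's proof stays entirely within the Picone framework it develops.
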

\begin{proof}
By Remark \ref{evprmk1}, let $u_i\in W_0^{1,p}(\Omega_i)\cap L^\infty(\Omega_i)$ be the positive eigenfunction associated with $\lambda_1(\Omega_i)$ for $i=1,2$, that is
\begin{equation}\label{evpeqn1}
\begin{split}
-H_p u_1+(-\Delta_p)^s u_1&=\lambda_1(\Omega_1)|u_1|^{p-2}u_1\text{ in }\Omega_1\\
u_1&>0\text{ in }\Omega,\quad u_1=0\text{ in }\mathbb{R}^N\setminus\Omega_1
\end{split}
\end{equation}
and
\begin{equation}\label{evpeqn2}
\begin{split}
-H_p u_2+(-\Delta_p)^s u_2&=\lambda_1(\Omega_2)|u_2|^{p-2}u_2\text{ in }\Omega_1\\
u_2&>0\text{ in }\Omega,\quad u_2=0\text{ in }\mathbb{R}^N\setminus\Omega_2.
\end{split}
\end{equation}
Let $\{\phi_n\}_{n\in\mathbb{N}}\subset C_c^{\infty}(\Omega_1)$ be a sequence of nonnegative functions such that $\phi_n\to u_1$ strongly in $W_0^{1,p}(\Omega_1)$. Then by Theorem \ref{BrPI} and Theorem \ref{JacPI}, we have
\begin{equation}\label{evpeqn3}
\begin{split}
0&\leq \int_{\Omega_1}H(\nabla\phi_n)^p\,dx+\iint_{\mathbb{R}^{2N}}|\phi_n(x)-\phi_n(y)|^p\,d\mu-\int_{\Omega_1}H(\nabla u_2)^{p-1}\nabla H(\nabla u_2)\nabla\Big(\frac{\phi_n^{p}}{u_2^{p-1}}\Big)\,dx\\
&\quad-\iint_{\mathbb{R}^{2N}}|u_2(x)-u_2(y)|^{p-2}(u_2(x)-u_2(y))\Big(\frac{\phi_n^{p}}{u_2^{p-1}}(x)-\frac{\phi_n^{p}}{u_2^{p-1}}(y)\Big)\,d\mu.
\end{split}
\end{equation}
Choosing $\frac{\phi_n^p}{u_2^{p-1}}$ as a test function in the weak formulation of \eqref{evpeqn2}, we obtain
\begin{equation}\label{evpeqn4}
\begin{split}
&\int_{\Omega_1}H(\nabla u_2)^{p-1}\nabla H(\nabla u_2)\nabla\Big(\frac{\phi_n^{p}}{u_2^{p-1}}\Big)\,dx\\
&\quad+\iint_{\mathbb{R}^{2N}}|u_2(x)-u_2(y)|^{p-2}(u_2(x)-u_2(y))\Big(\frac{\phi_n^{p}}{u_2^{p-1}}(x)-\frac{\phi_n^{p}}{u_2^{p-1}}(y)\Big)\,d\mu=\lambda_1(\Omega_2)\int_{\Omega_1}\phi_n^{p}\,dx,
\end{split}
\end{equation}
where we have also used that $\phi_n$ are supported inside $\Omega_1$ to make the above integrals over the domain $\Omega_1$. Plugging the estimate \eqref{evpeqn4} in \eqref{evpeqn3}, we deduce that
\begin{equation}\label{evpeqn5}
\begin{split}
0&\leq \int_{\Omega_1}H(\nabla\phi_n)^p\,dx+\iint_{\mathbb{R}^{2N}}|\phi_n(x)-\phi_n(y)|^p\,d\mu-\int_{\Omega_1}H(\nabla u_2)^{p-1}\nabla H(\nabla u_2)\nabla\Big(\frac{\phi_n^{p}}{u_2^{p-1}}\Big)\,dx\\
&\quad-\iint_{\mathbb{R}^{2N}}|u_2(x)-u_2(y)|^{p-2}(u_2(x)-u_2(y))\Big(\frac{\phi_n^{p}}{u_2^{p-1}}(x)-\frac{\phi_n^{p}}{u_2^{p-1}}(y)\Big)\,d\mu\\  
&=\int_{\Omega_1}H(\nabla\phi_n)^p\,dx+\iint_{\mathbb{R}^{2N}}|\phi_n(x)-\phi_n(y)|^p\,d\mu-\lambda_1(\Omega_2)\int_{\Omega_1}\phi_n^{p}\,dx.
\end{split}
\end{equation}
Letting $n\to\infty$ in \eqref{evpeqn5}, we arrive at
\begin{equation}\label{evpeqn6}
\begin{split}
0&\leq\int_{\Omega_1}H(\nabla u_1)^p\,dx+\iint_{\mathbb{R}^{2N}}|u_1(x)-u_1(y)|^p\,d\mu-\int_{\Omega_1}H(\nabla u_2)^{p-1}\nabla H(\nabla u_2)\nabla\Big(\frac{u_1^{p}}{u_2^{p-1}}\Big)\,dx\\
&\quad-\iint_{\mathbb{R}^{2N}}|u_2(x)-u_2(y)|^{p-2}(u_2(x)-u_2(y))\Big(\frac{u_1^{p}}{u_2^{p-1}}(x)-\frac{u_1^{p}}{u_2^{p-1}}(y)\Big)\,d\mu\\
&=\int_{\Omega_1}H(\nabla u_1)^p\,dx+\iint_{\mathbb{R}^{2N}}|u_1(x)-u_1(y)|^p\,d\mu-\lambda_1(\Omega_2)\int_{\Omega_1}u_1^{p}\,dx\\
&=(\lambda_1(\Omega_1)-\lambda_1(\Omega_2))\int_{\Omega_1}u_1^{p}\,dx,
\end{split}
\end{equation}
where the last equality above is deduced by choosing $u_1$ as a test function in the weak formulation of \eqref{evpeqn1}. Therefore, the above inequality yields that
$$
\lambda_1(\Omega_1)-\lambda_1(\Omega_2)\geq 0.
$$
But if $\lambda_1(\Omega_1)-\lambda_1(\Omega_2)=0$, then \eqref{evpeqn6} gives
\begin{equation*}
\begin{split}
&\int_{\Omega_1}H(\nabla u_1)^p\,dx+\iint_{\mathbb{R}^{2N}}|u_1(x)-u_1(y)|^p\,d\mu-\int_{\Omega_1}H(\nabla u_2)^{p-1}\nabla H(\nabla u_2)\nabla\Big(\frac{u_1^{p}}{u_2^{p-1}}\Big)\,dx\\
&\quad-\iint_{\mathbb{R}^{2N}}|u_2(x)-u_2(y)|^{p-2}(u_2(x)-u_2(y))\Big(\frac{u_1^{p}}{u_2^{p-1}}(x)-\frac{u_1^{p}}{u_2^{p-1}}(y)\Big)\,d\mu=0.
\end{split}
\end{equation*}
Applying Theorem \ref{BrPI} and Theorem \ref{JacPI} to the above estimate, we get $u_1=cu_2$ which is not possible, since $\Omega_1\subset\Omega_2$ and $\Omega_1\neq\Omega_2$. This completes the proof.
\end{proof}

The results below in this subsection concerning eigenvalue problem have been established for the operator $-\Delta_p-\Delta_{J,p}$ in \cite{Rossi} when $p>2$, where $\Delta_{J,p}$ is defined in \eqref{djp}. Here, we establish our results for any $p>1$.
\begin{Theorem}\label{evp1}
Let $u\in W_0^{1,p}(\Omega)\setminus\{0\}$ be a given eigenfunction associated to the eigenvalue $\lambda_1$. Let $(\lambda,v)$ be an eigenpair of \eqref{evpeqn} and $v$ be nonegative in $\Omega$ and $\lambda>0$. Then $\lambda=\lambda_1$ and there exists $c\in\mathbb{R}$ such that $v=cu$ in $\mathbb{R}^N$. 
\end{Theorem}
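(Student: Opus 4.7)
My plan is to adapt the Picone-based comparison argument from the proof of Theorem \ref{monevp}, comparing the energies of $u$ and $v$ via the test function $\phi_\epsilon = u^p/(v+\epsilon)^{p-1}$, and then extract the proportionality $v = cu$ from the equality case in Picone.

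First I would upgrade $v \geq 0$ to $v > 0$ in $\Omega$. Since $v$ is a nontrivial weak solution of \eqref{evpeqn} with nonlinearity $f(x,t) = \lambda t^{p-1}$, which satisfies (f1)--(f3) on $\Omega \times [0,\infty)$ (using $\lambda > 0$), Theorem \ref{smp} yields $v > 0$ in $\Omega$. By Remark \ref{evprmk1}, I may take $u > 0$ in $\Omega$ as well, and both $u, v \in L^\infty(\Omega)$.

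For $\epsilon > 0$, set $\phi_\epsilon := u^p/(v+\epsilon)^{p-1}$. Since $u, v \in W_0^{1,p}(\Omega) \cap L^\infty(\Omega)$ and $v + \epsilon \geq \epsilon > 0$ in $\Omega$, $\phi_\epsilon \in W_0^{1,p}(\Omega)$ is admissible. Testing the weak form of the eigenvalue equation for $v$ against $\phi_\epsilon$ gives
$$
\int_\Omega H(\nabla v)^{p-1} \nabla H(\nabla v) \cdot \nabla \phi_\epsilon \, dx + \iint_{\mathbb{R}^{2N}} |v(x)-v(y)|^{p-2}(v(x)-v(y))(\phi_\epsilon(x)-\phi_\epsilon(y)) \, d\mu = \lambda \int_\Omega \frac{v^{p-1} u^p}{(v+\epsilon)^{p-1}} \, dx.
$$
Applying Theorem \ref{BrPI} with $v+\epsilon$ and $u$ playing the roles of $u_{\mathrm{BrPI}}$ and $v_{\mathrm{BrPI}}$ to the local integral, together with Theorem \ref{JacPI} applied analogously to the nonlocal integral, each left-hand side term is bounded above by the corresponding $u$-energy. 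Combined with the identity from testing the equation for $u$ against $u$ itself, I obtain
$$
\lambda \int_\Omega \frac{v^{p-1} u^p}{(v+\epsilon)^{p-1}} \, dx \leq \int_\Omega H(\nabla u)^p \, dx + \iint_{\mathbb{R}^{2N}} |u(x)-u(y)|^p \, d\mu = \lambda_1 \int_\Omega u^p \, dx.
$$
Since $0 \leq v^{p-1} u^p / (v+\epsilon)^{p-1} \leq u^p \in L^1(\Omega)$ and $v > 0$ a.e.\ in $\Omega$, dominated convergence as $\epsilon \to 0^+$ yields $\lambda \leq \lambda_1$. Combined with the Rayleigh characterization in Remark \ref{evprmk1} giving $\lambda \geq \lambda_1$, this forces $\lambda = \lambda_1$.

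To conclude $v = cu$, I would exploit the equality case. Denote the pointwise local and nonlocal Picone gaps by $I_\epsilon^{\mathrm{loc}} := H(\nabla u)^p - H(\nabla v)^{p-1} \nabla H(\nabla v) \cdot \nabla \phi_\epsilon$ and, analogously, $I_\epsilon^{\mathrm{nl}}$; both are pointwise nonnegative by Theorems \ref{BrPI} and \ref{JacPI}. Since $\lambda = \lambda_1$, the chain above shows $\int_\Omega I_\epsilon^{\mathrm{loc}} \, dx + \iint I_\epsilon^{\mathrm{nl}} \, d\mu \to 0$ as $\epsilon \to 0^+$, so each nonnegative integral tends to zero. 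Using pointwise a.e.\ convergence $\phi_\epsilon \to u^p/v^{p-1}$ in $\Omega$ (valid since $v > 0$ there) and Fatou's lemma, the limiting gaps must vanish a.e. The equality case of Theorem \ref{BrPI} (or Theorem \ref{JacPI}) then forces $v = cu$ in $\Omega$ for some $c > 0$; extending by zero outside $\Omega$ gives $v = cu$ in $\mathbb{R}^N$. The main obstacle I anticipate is the $\epsilon \to 0^+$ passage in the equality case: one must justify pointwise a.e.\ convergence of $\nabla \phi_\epsilon$ and handle boundary behaviour where $v$ may decay to zero, so that Fatou applies separately to each nonnegative gap. Modulo this, the argument is parallel Picone bookkeeping to the proof of Theorem \ref{monevp}.
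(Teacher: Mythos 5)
Your proposal is correct and follows essentially the same route as the paper: positivity of $v$, testing with $u$ and $u^p/(v+\epsilon)^{p-1}$, the combination of the anisotropic and discrete Picone inequalities, a dominated convergence/Fatou passage as $\epsilon\to 0^+$ to get $\lambda=\lambda_1$, and the equality case of Picone to conclude $v=cu$. The only differences are cosmetic (you obtain $v>0$ via Theorem \ref{smp} rather than the weak Harnack inequality of \cite{GKK}, and you keep both Picone gaps in the limit instead of discarding the nonlocal one early), and the $\epsilon\to 0^+$ step you flag is handled exactly as you suggest.
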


\begin{proof}
Since $v\neq 0$ and $v\geq 0$ in $\Omega$, by the weak Harnack inequality from \cite[Theorem 3.10]{GKK}, we have $v>0$ in $\Omega$. By Remark \ref{evprmk1}, since $u\in W_0^{1,p}(\Omega)\cap L^\infty(\Omega)$ and $u>0$ in $\Omega$, for every $n\in\mathbb{N}$, we have  
$$
w_n=\frac{u^p}{(v+\frac{1}{n})^{p-1}}\in W_0^{1,p}(\Omega).
$$
Therefore, by Theorem \ref{BrPI}, we get
\begin{equation}\label{evp1eqn1}
\begin{split}
0&\leq\int_{\Omega}\big(H(\nabla u)^p-H(\nabla v)^{p-1}\nabla H(\nabla v)\nabla w_n\big)\,dx\\
&=\lambda_1\int_{\Omega}u^p\,dx-\lambda\int_{\Omega}|v|^{p-2}v w_n\,dx\\
&\qquad-\iint_{\mathbb{R}^{2N}}|u(x)-u(y)|^p\,d\mu+\iint_{\mathbb{R}^{2N}}|v(x)-v(y)|^{p-2}(v(x)-v(y))(w_n(x)-w_n(y))\,d\mu\\
&\leq \lambda_1\int_{\Omega}u^p\,dx-\lambda\int_{\Omega}|v|^{p-2}v w_n\,dx,
\end{split}
\end{equation}
where the equality above is obtained by choosing $u$ and $w_n$ as test functions in \eqref{evpwksol} satisfied by $u$ and $v$ respectively. To deduce the last inequality above, we have used the fact that the nonlocal integral 
$$
-\iint_{\mathbb{R}^{2N}}|u(x)-u(y)|^p\,d\mu+\iint_{\mathbb{R}^{2N}}|v(x)-v(y)|^{p-2}(v(x)-v(y))(w_n(x)-w_n(y))\,d\mu\leq 0,
$$
which follows from Theorem \ref{JacPI}. Therefore, using dominated convergence theorem along with Fatou's lemma in \eqref{evp1eqn1}, we arrive at
\begin{align*}
0&\leq \int_{\Omega}\Big(H(\nabla u)^p-H(\nabla v)^{p-1}\nabla H(\nabla v)\nabla\Big(\frac{u^p}{v^{p-1}}\Big)\Big)\,dx\leq (\lambda_1-\lambda)\int_{\Omega}u^p\,dx.
\end{align*}
Hence, $\lambda_1\geq \lambda$. Therefore $\lambda_1=\lambda$, since $\lambda_1$ is the smallest eigenvalue. This gives from the above inequality and Theorem \ref{JacPI} that
$$
H(\nabla u)^p-H(\nabla v)^{p-1}\nabla H(\nabla v)\nabla\Big(\frac{u^p}{v^{p-1}}\Big)=0\text{ in }\Omega.
$$
Therefore, by Theorem \ref{BrPI}, $v=cu$ in $\Omega$ for some constant $c\in\mathbb{R}$.
\end{proof}

\begin{Theorem}\label{evpthm2}
Suppose $(\lambda,u)$ is an eigenpair of \eqref{evpeqn}, where $\lambda>\lambda_1$. Then there exists a positive constant $C$ independent of $u$ such that
$$
|\{u<0\}|>C.
$$
\end{Theorem}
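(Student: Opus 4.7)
The plan is to exploit the variational characterization of $\lambda_1$ together with a judicious test function. First I would observe that since $-u$ is also an eigenfunction with eigenvalue $\lambda$ (as $H_p$, $(-\Delta_p)^s$, and $|\cdot|^{p-2}\cdot$ are all odd), if $u$ had a constant sign we could assume $u\geq 0$; then Theorem \ref{evp1} combined with $\lambda>\lambda_1>0$ would force $\lambda=\lambda_1$, a contradiction. Therefore $u$ must change sign, and by Remark \ref{evprmk1} both $u_\pm\in W_0^{1,p}(\Omega)\cap L^\infty(\Omega)$ are nontrivial admissible test functions.

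Next I would test \eqref{evpwksol} with $\phi=u_-$. Using $\nabla u=-\nabla u_-$ on $\{u<0\}$, together with Lemma \ref{Happ}(A)--(B) and $H(-x)=H(x)$ from (H2), the local term evaluates to
\begin{align*}
\int_\Omega H(\nabla u)^{p-1}\nabla H(\nabla u)\cdot\nabla u_-\,dx = -\int_\Omega H(\nabla u_-)^p\,dx.
\end{align*}
For the nonlocal term, a short case analysis on the signs of $u(x),u(y)$ gives the pointwise inequality
\begin{align*}
|u(x)-u(y)|^{p-2}(u(x)-u(y))(u_-(x)-u_-(y))\leq -|u_-(x)-u_-(y)|^p;
\end{align*}
the only non-trivial case is $u(x)<0\leq u(y)$ (and its mirror image), where the claim reduces to $|u(x)-u(y)|^{p-1}\geq (-u(x))^{p-1}$, which follows from $|u(x)-u(y)|=u(y)-u(x)\geq -u(x)$ and holds for every $p>1$. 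Since the right-hand side of the weak equation becomes $-\lambda\int_\Omega u_-^p\,dx$, rearranging yields the energy inequality
\begin{align*}
\int_\Omega H(\nabla u_-)^p\,dx+\iint_{\mathbb{R}^{2N}}|u_-(x)-u_-(y)|^p\,d\mu \leq \lambda\int_\Omega u_-^p\,dx.
\end{align*}

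Setting $A:=\{u<0\}$, so that $u_-$ is supported in $A$, I would then combine the Gagliardo--Nirenberg--Sobolev inequality of Lemma \ref{Sobine} with H\"older's inequality on the set $A$ to obtain
\begin{align*}
\int_\Omega u_-^p\,dx\leq C(N,p,\Omega)\,|A|^{\alpha}\int_\Omega |\nabla u_-|^p\,dx,
\end{align*}
with $\alpha=p/N$ when $1<p<N$ and $\alpha=1/2$ when $p\geq N$. Using $|\nabla u_-|^p\leq C_1^{-p}H(\nabla u_-)^p$ from (H3), inserting this bound into the energy inequality, and dividing by the strictly positive quantity $\int_\Omega u_-^p\,dx$, I would conclude $1\leq C(N,p,\Omega,C_1)\,\lambda\,|A|^{\alpha}$, which rearranges to $|A|\geq C_*>0$ for a constant depending only on $N,p,s,\Omega,C_1,\lambda$ and not on the specific eigenfunction $u$.

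The main obstacle is establishing the nonlocal pointwise inequality above for the full range $p>1$, since the analogous statement in \cite{Rossi} for the operator $-\Delta_p-\Delta_{J,p}$ was restricted to $p>2$; the elementary case-by-case verification outlined above removes this restriction and is what makes the whole scheme go through for every $p>1$. Once the energy inequality is in hand, the passage to the lower bound on $|\{u<0\}|$ is a routine Faber--Krahn-type argument via Sobolev embedding.
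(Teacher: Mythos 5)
Your argument is correct and follows essentially the same route as the paper: the paper likewise uses Theorem \ref{evp1} to get $u_-\not\equiv 0$ and then refers to the proof of Lemma 3.3 of the Del Pezzo--Ferreira--Rossi paper, which is exactly your scheme of testing with the negative part, deriving the energy inequality (your pointwise estimates for the anisotropic and fractional terms are the correct adaptations, valid for all $p>1$), and concluding via the Sobolev--H\"older bound on $\{u<0\}$. Your write-up simply makes explicit the details the paper delegates to that reference.
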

\begin{proof}
By Theorem \ref{evp1}, we get $u_-=\min\{u,0\}\neq 0$. Taking this into account along with the definition of $H$ and Lemma \ref{Happ}, we proceed along the lines of the proof of \cite[Lemma 3.3]{Rossi} to conclude the result.
\end{proof}

\begin{Theorem}\label{evpthm3}
The first eigenvalue $\lambda_1$ is isolated.
\end{Theorem}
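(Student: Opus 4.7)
The plan is to argue by contradiction, following the classical scheme of Anane and proceeding along the lines of \cite{Rossi, AlP}, adapted to the mixed anisotropic–nonlocal setting via the results already established in this section. Suppose $\lambda_1$ is not isolated. Then there exists a sequence of eigenvalues $\{\lambda_n\}_{n\in\mathbb{N}}$ of \eqref{evpeqn} with $\lambda_n>\lambda_1$ and $\lambda_n\to\lambda_1$. Let $u_n\in W_0^{1,p}(\Omega)\setminus\{0\}$ be an eigenfunction associated with $\lambda_n$, normalized so that $\|u_n\|_{L^p(\Omega)}=1$.

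First I would test \eqref{evpwksol} (written for $u_n$ and $\lambda_n$) with $u_n$ itself to obtain
\[
\int_{\Omega}H(\nabla u_n)^p\,dx+\iint_{\mathbb{R}^{2N}}|u_n(x)-u_n(y)|^p\,d\mu=\lambda_n\int_{\Omega}|u_n|^p\,dx=\lambda_n.
\]
Using the property (H3) of $H$, this gives a uniform bound on $\{u_n\}$ in $W_0^{1,p}(\Omega)$. Up to a subsequence (not relabeled), I may therefore assume $u_n\rightharpoonup u$ in $W_0^{1,p}(\Omega)$, $u_n\to u$ in $L^p(\Omega)$, and $u_n\to u$ almost everywhere in $\Omega$. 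In particular $\|u\|_{L^p(\Omega)}=1$, so $u\not\equiv 0$.

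Next I would pass to the limit in the weak formulation \eqref{evpwksol}: by standard monotonicity-type arguments (see Lemma \ref{Happ}-(D)) together with the weak-strong convergence for the nonlocal term, the limit $u$ satisfies \eqref{evpwksol} with eigenvalue $\lambda_1$, i.e.\ $(\lambda_1,u)$ is an eigenpair. By Remark \ref{evprmk1} together with the weak Harnack inequality cited there, any eigenfunction associated to $\lambda_1$ has constant sign; replacing $u$ by $|u|$ if necessary, I may assume $u>0$ in $\Omega$.

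Now the contradiction is immediate. On the one hand, since $\lambda_n>\lambda_1$, Theorem \ref{evpthm2} yields a constant $C>0$ independent of $n$ such that
\[
|\{x\in\Omega:u_n(x)<0\}|>C\qquad\text{for every }n\in\mathbb{N}.
\]
On the other hand, since $u_n\to u$ almost everywhere in $\Omega$ and $u>0$ almost everywhere in $\Omega$, for a.e.\ $x\in\Omega$ one has $u_n(x)>0$ for all $n$ large enough, so $\chi_{\{u_n<0\}}\to 0$ a.e.\ in $\Omega$. Dominated convergence then gives $|\{u_n<0\}|\to 0$, contradicting the uniform lower bound above. Hence $\lambda_1$ is isolated. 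The main technical step I expect to be the delicate one is the passage to the limit in the weak formulation to identify $u$ as a genuine eigenfunction for $\lambda_1$ (rather than just a weak limit), since one must handle the nonlinear local term $H(\nabla u_n)^{p-1}\nabla H(\nabla u_n)$ and the nonlocal $p$-Laplacian term simultaneously; this is carried out by testing with $u_n-u$ and exploiting Lemma \ref{Happ}-(D) to upgrade weak to strong convergence of the gradients, an argument which is by now standard in the mixed setting (cf.\ \cite{GKK, Gjms}).
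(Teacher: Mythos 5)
Your argument is correct and takes essentially the same route as the paper: the paper's proof simply invokes the positivity of the first eigenfunction (Remark \ref{evprmk1}) and the uniform measure bound of Theorem \ref{evpthm2} and then follows the proof of Theorem 3.2 in \cite{Rossi}, which is exactly the contradiction scheme you spell out (normalized eigenfunctions $u_n$ for $\lambda_n\downarrow\lambda_1$, uniform energy bound, weak/a.e.\ convergence to a first eigenfunction of constant sign, and the contradiction $|\{u_n<0\}|\to 0$). The only cosmetic point is that when the limit eigenfunction is negative one should replace the whole sequence $u_n$ by $-u_n$ (rather than replacing $u$ by $|u|$), so that the a.e.\ convergence is preserved and Theorem \ref{evpthm2} still applies.
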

\begin{proof}
Let $u_1$ be the eigenfunction associated to $\lambda_1$. Then by Remark \ref{evprmk1}, we have $u_1>0$ in $\Omega$. Taking this into account and Theorem \ref{evpthm2}, the proof follows along the lines of the proof of \cite[Theorem 3.2]{Rossi}.
\end{proof}

\subsection{Weighted Hardy type inequality}
The following Hardy type inequality is proved in the local case in \cite{AlP, BGM}, in the nonlocal case in \cite{Jac}. As far as we are aware, such inequalities are not studied till date for the mixed equation even for the linear operator $-\Delta+(-\Delta)^s$. Here, we establish the below result using the combination of Picone inequalities from Theorem \ref{BrPI} and Theorem \ref{JacPI}.
\begin{Theorem}\label{whi}
Let $v\in W_0^{1,p}(\Omega)$ be a weak supersolution of 
\begin{equation}\label{whieqn1}
-H_p v+(-\Delta_p)^s v\geq \lambda\, g \,v^{p-1}\text{ in }\Omega,\quad v>0\text{ in }\Omega
\end{equation}
as in Definition \ref{p1sol}, where $\lambda>0$ and $g$ is a nonnegative continuous function in $\Omega$. Then for every nonnegative $u\in W_0^{1,p}(\Omega)$, we have
$$
\lambda\int_{\Omega}g u^p\,dx\leq \int_{\Omega}H(\nabla u)^p\,dx+\iint_{\mathbb{R}^{2N}}{|u(x)-u(y)|^p}\,d\mu.
$$
\end{Theorem}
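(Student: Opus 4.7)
The plan is to insert a Picone-type test function into the weak supersolution inequality and then appeal to the two Picone inequalities (Theorem \ref{BrPI} and Theorem \ref{JacPI}) to convert the left-hand side into the Dirichlet-type energy of $u$. Concretely, for $u \in W_0^{1,p}(\Omega)$ nonnegative and bounded with compact support in $\Omega$ (to which we shall reduce via a truncation/approximation argument), and for $\varepsilon > 0$, I set
$$
\phi_\varepsilon := \frac{u^{p}}{(v+\varepsilon)^{p-1}}.
$$
Since $v + \varepsilon \geq \varepsilon > 0$ and $u$ is bounded with compact support, $\phi_\varepsilon \in W_0^{1,p}(\Omega)$ is an admissible nonnegative test function in the supersolution inequality \eqref{whieqn1}.

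Plugging $\phi_\varepsilon$ into the weak formulation in Definition \ref{p1sol} yields
$$
\int_{\Omega} H(\nabla v)^{p-1}\nabla H(\nabla v)\nabla\phi_\varepsilon\,dx
+ \iint_{\mathbb{R}^{2N}} |v(x)-v(y)|^{p-2}(v(x)-v(y))(\phi_\varepsilon(x)-\phi_\varepsilon(y))\,d\mu
\geq \lambda\int_{\Omega} g\,v^{p-1}\phi_\varepsilon\,dx.
$$
By the anisotropic Picone inequality (Theorem \ref{BrPI}) applied to the pair $(v+\varepsilon, u)$ (note that $\nabla(v+\varepsilon) = \nabla v$ and $v+\varepsilon > 0$), the local term on the left is bounded above by $\int_{\Omega} H(\nabla u)^{p}\,dx$. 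By the discrete Picone inequality (Theorem \ref{JacPI}), again applied with $v + \varepsilon$ in place of $u$ and $u$ in place of $v$ (using $v(x)+\varepsilon - v(y)-\varepsilon = v(x)-v(y)$), the nonlocal term on the left is bounded above by $\iint_{\mathbb{R}^{2N}} |u(x)-u(y)|^{p}\,d\mu$. Combining, I obtain
$$
\lambda \int_{\Omega} g\, \frac{v^{p-1}}{(v+\varepsilon)^{p-1}}\, u^{p}\,dx
\leq \int_{\Omega} H(\nabla u)^{p}\,dx + \iint_{\mathbb{R}^{2N}} |u(x)-u(y)|^{p}\,d\mu.
$$

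The next step is to let $\varepsilon \to 0^{+}$. Since $v > 0$ in $\Omega$, the integrand on the left converges pointwise a.e.\ to $g\, u^{p}$, and it is monotone increasing in $\varepsilon \mapsto 1/\varepsilon$ (i.e.\ decreasing in $\varepsilon$), so by the monotone convergence theorem the left-hand side tends to $\lambda \int_{\Omega} g\,u^{p}\,dx$, yielding the inequality for all bounded, compactly supported, nonnegative $u \in W_0^{1,p}(\Omega)$.

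Finally, I remove the boundedness and support restriction by an approximation argument: for a general nonnegative $u \in W_0^{1,p}(\Omega)$, take truncations $u_{k} = \min\{u, k\}$ (and, if needed, multiply by a smooth cutoff to obtain compact support, then mollify) converging to $u$ strongly in $W_0^{1,p}(\Omega)$ and pointwise a.e. The right-hand side passes to the limit by the continuity of the two Dirichlet norms in $W_0^{1,p}(\Omega)$, while the left-hand side is handled by Fatou's lemma (using $g \geq 0$). The main technical obstacle is the admissibility of $\phi_\varepsilon$ as a test function and the verification that all integrals appearing are finite under the approximation — here the key points are that the regularization $v + \varepsilon$ keeps the denominator bounded away from zero, and that truncating $u$ at height $k$ controls $u^{p}/(v+\varepsilon)^{p-1}$ uniformly, so that the computation of $\nabla \phi_\varepsilon = p\,u^{p-1}(v+\varepsilon)^{-(p-1)}\nabla u - (p-1)\,u^{p}(v+\varepsilon)^{-p}\nabla v$ lies in $L^{p}(\Omega)$ and $\phi_\varepsilon$ belongs to $W_0^{1,p}(\Omega)$.
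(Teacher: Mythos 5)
Your proposal is correct and follows essentially the same route as the paper: test the supersolution inequality with the Picone-type function $u^{p}/(v+\epsilon)^{p-1}$ (for a suitable bounded/smooth approximation of $u$), bound the local and nonlocal terms from above via Theorem \ref{BrPI} and Theorem \ref{JacPI}, let $\epsilon\to 0^{+}$ by Fatou/monotone convergence, and then remove the approximation of $u$ by strong convergence in $W_0^{1,p}(\Omega)$. The only cosmetic difference is that the paper approximates $u$ by smooth compactly supported functions $\phi_n\to u$ in $W_0^{1,p}(\Omega)$ rather than by truncation-plus-cutoff, which serves the same purpose of making the test function admissible.
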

\begin{proof}
By density, we assume that there exists a sequence $\{\phi_n\}_{n\in\mathbb{N}}\subset C_0^{\infty}(\Omega)$, $\phi_n>0$ such that $\phi_n\to u$ strongly in $W_0^{1,p}(\Omega)$. We define $v_\epsilon=v+\epsilon$ for $\epsilon>0$ and $\psi_{n,\epsilon}=\frac{\phi_n^{p}}{(v+\epsilon)^{p-1}}$ for $\epsilon>0$ and $n\in\mathbb{N}$. Then we observe that $\psi_{n,\epsilon}\in W_0^{1,p}(\Omega)$. By Theorem \ref{BrPI} and Theorem \ref{JacPI}, we obtain
\begin{equation}\label{whieqn2}
\begin{split}
0&\leq \int_{\Omega}H(\nabla\phi_n)^p\,dx-\int_{\Omega}H(\nabla v_\epsilon)^{p-1}\nabla H(\nabla v_\epsilon)\nabla \psi_{n,\epsilon}\,dx\\
&\quad +\iint_{\mathbb{R}^{2N}}{|\phi_n(x)-\phi_n(y)|^p}\,d\mu-\iint_{\mathbb{R}^{2N}}|v_\epsilon(x)-v_\epsilon(y)|^{p-2}(v_\epsilon(x)-v_\epsilon(y))(\psi_{n,\epsilon}(x)-\psi_{n,\epsilon}(y))\, d\mu\\
&\leq \int_{\Omega}H(\nabla\phi_n)^p\,dx+\iint_{\mathbb{R}^{2N}}{|\phi_n(x)-\phi_n(y)|^p}\,d\mu-\lambda\int_{\Omega}g\Big(\frac{v}{v_\epsilon}\Big)^{p-1}\phi_n^{p}\,dx.
\end{split}
\end{equation}
The last inequality above is found by choosing $\psi_{n,\epsilon}$ as test function in the weak formulation of \eqref{whieqn1}. Passing $\epsilon\to 0^+$ and using Fatou's lemma, we have
$$
0\leq \int_{\Omega}H(\nabla\phi_n)^p\,dx+\iint_{\mathbb{R}^{2N}}{|\phi_n(x)-\phi_n(y)|^p}\,d\mu-\lambda\int_{\Omega}g\phi_n^{p}\,dx.
$$
Next, passing $n\to\infty$ in the above inequality, the result follows.
\end{proof}
\subsection{System of singular nonlinear equations}
The following type of result is established in local singular system in \cite{Balejde}, and in the nonlocal singular system in \cite{Jac}. Although it is worth mentioning that such results have not been studied till date for the mixed singular system, even for the linear operator $-\Delta +(-\Delta)^s$. Below, we prove a linear relationship between the solutions of the system of equations. For the study of mixed singular equations, see \cite{ArRadu, GKK, GUna} and the references therein. 
\begin{Theorem}\label{sne}
Let $(u,v)\in \Big(W_0^{1,p}(\Omega)\cap L^\infty(\Omega)\times W_0^{1,p}(\Omega)\Big)$ be weak solutions of 
\begin{equation}\label{sneeqn1}
-H_p u+(-\Delta_p)^s u=f(x)v^{p-1}\text{ in }\Omega,\quad u>0\text{ in }\Omega,
\end{equation}
and
\begin{equation}\label{sneeqn2}
-H_p v+(-\Delta_p)^s v=f(x)\Big(\frac{v^2}{u}\Big)^{p-1}\text{ in }\Omega,\quad v>0\text{ in }\Omega,
\end{equation}
as in Definition \ref{p1sol}, where $0\lneqq f(x)\in L^\infty(\Omega)$. Then there exists a constant $k>0$ such that $v=ku$ in $\Omega$.
\end{Theorem}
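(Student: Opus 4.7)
My plan is to combine the anisotropic Picone inequality (Theorem \ref{BrPI}) with the discrete nonlocal Picone inequality (Theorem \ref{JacPI}) applied to the pair $(v, u)$, and to exploit both equations in \eqref{sneeqn1}--\eqref{sneeqn2} to force the equality case in Picone. Since $v$ is not known to be bounded, I will regularize: pick a sequence $\{\phi_n\} \subset C_c^\infty(\Omega)$ with $\phi_n > 0$ and $\phi_n \to v$ strongly in $W_0^{1,p}(\Omega)$, and for $\epsilon > 0$ consider $\psi_{n,\epsilon} := \phi_n^p/(u+\epsilon)^{p-1}$. Because $u \in W_0^{1,p}(\Omega)\cap L^\infty(\Omega)$ with $u > 0$ and $u + \epsilon \geq \epsilon$ on $\Omega$, the function $\psi_{n,\epsilon}$ is admissible in $W_0^{1,p}(\Omega)$.

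Testing \eqref{sneeqn1} with $\psi_{n,\epsilon}$ and then combining with the two Picone inequalities applied to the pair $(\phi_n, u+\epsilon)$ (noting $\nabla(u+\epsilon) = \nabla u$) yields the nonnegative energy bound
\begin{equation*}
\int_\Omega H(\nabla \phi_n)^p\, dx + \iint_{\mathbb{R}^{2N}} |\phi_n(x)-\phi_n(y)|^p\, d\mu \;\geq\; \int_\Omega f(x)\, v^{p-1}\, \frac{\phi_n^p}{(u+\epsilon)^{p-1}}\, dx.
\end{equation*}
Letting first $n \to \infty$ with $\epsilon$ fixed (strong convergence in $W_0^{1,p}(\Omega)$ on the left, Fatou on the right) and then $\epsilon \to 0^+$ (monotone convergence on the right), this upgrades to
\begin{equation*}
\int_\Omega H(\nabla v)^p\, dx + \iint_{\mathbb{R}^{2N}} |v(x)-v(y)|^p\, d\mu \;\geq\; \int_\Omega f(x)\, \frac{v^{2p-1}}{u^{p-1}}\, dx.
\end{equation*}
On the other hand, testing \eqref{sneeqn2} directly with $v$ (admissible since $v \in W_0^{1,p}(\Omega)$, and the right-hand side integral is finite by Definition \ref{p1sol}) produces the matching identity in which the above inequality becomes an equality.

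Retracing through Fatou, the equality of limits forces the pointwise nonnegative Picone integrands associated to $(v,u)$ to vanish almost everywhere in $\Omega$. The equality case of Theorem \ref{BrPI} then yields $v = ku$ in $\Omega$ for some constant $k > 0$, which is the desired conclusion. The main technical obstacle will be the double limit $n \to \infty$, $\epsilon \to 0^+$: the approximation in $n$ is required because $v^p/(u+\epsilon)^{p-1}$ need not lie in the energy space without boundedness of $v$, while the $\epsilon$-regularization is required because $u$ may vanish near $\partial\Omega$. Crucially, finiteness of the limiting right-hand side $\int_\Omega f v^{2p-1}/u^{p-1}\,dx$ is supplied exactly by the $v$-equation, and this is what legitimizes the limit procedure.
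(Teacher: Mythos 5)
Your route is viable and genuinely different in its bookkeeping from the paper's. The paper pairs the test functions the other way around: it tests \eqref{sneeqn1} with $\phi=u$ and \eqref{sneeqn2} with $\psi_\epsilon=u^p/(v+\epsilon)^{p-1}$, which lies in $W_0^{1,p}(\Omega)$ with no density argument precisely because the bounded solution $u$ sits in the Picone numerator; subtracting the two tested identities makes the right-hand sides cancel in the limit, since $\int_\Omega u f\big(v^{p-1}-(v^2/(v+\epsilon))^{p-1}\big)\,dx\to 0$ with integrand dominated by $2fuv^{p-1}\in L^1(\Omega)$, and a single passage $\epsilon\to0^+$ by Fatou shows that the sum of the two nonnegative Picone deficits of $(u,v)$ vanishes, whence Theorem \ref{BrPI}'s equality case applies. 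You swap the roles: the quotient $\phi_n^p/(u+\epsilon)^{p-1}$ goes into the $u$-equation (forcing the approximation $\phi_n\to v$ because $v$ is not assumed bounded), and the matching identity comes from testing the $v$-equation with $v$ itself, via $H(\nabla v)^{p-1}\nabla H(\nabla v)\nabla v=H(\nabla v)^p$. The paper's pairing buys singular-integral-free right-hand sides and one limit; yours buys the explicit identity $\int_\Omega H(\nabla v)^p\,dx+\iint_{\mathbb{R}^{2N}}|v(x)-v(y)|^p\,d\mu=\int_\Omega f\,v^{2p-1}u^{1-p}\,dx$, at the price of a double limit in $(n,\epsilon)$.

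Two points need tightening. First, your closing remark has the logic backwards: finiteness of $\int_\Omega f\,v^{2p-1}u^{1-p}\,dx$ cannot be ``supplied by the $v$-equation,'' because by Definition \ref{p1sol} that finiteness is exactly what licenses $\phi=v$ as a test function in \eqref{sneeqn2}; it must be, and is, obtained first from your step-2 inequality (Fatou and monotone convergence need no a priori finiteness), after which the test $\phi=v$ is legitimate. Keep the steps in that order, or the argument is circular. Second, ``retracing through Fatou'' is where the actual work lies and should be written out: at level $(n,\epsilon)$ the tested $u$-equation says that the energy of $\phi_n$ minus $\int_\Omega f v^{p-1}\phi_n^p(u+\epsilon)^{1-p}\,dx$ equals the sum of the two nonnegative Picone remainders; pass $n\to\infty$ along a subsequence with $\phi_n,\nabla\phi_n$ converging a.e., applying Fatou both to the right-hand term and to the remainders, then let $\epsilon\to0^+$ with monotone convergence and Fatou once more; only then does the identity from the $v$-equation force the limiting remainders of the pair $(v,u)$ to vanish a.e., so that the equality case of Theorem \ref{BrPI} gives $v=ku$ with $k>0$. (Also, strictly positive $\phi_n\in C_c^\infty(\Omega)$ do not exist; $\phi_n\ge0$ is what density provides and all that Theorems \ref{BrPI} and \ref{JacPI} require.)
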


\begin{proof}
Let $\epsilon>0$ and we set $v_\epsilon=v+\epsilon$. Choosing $\phi=u$ and $\psi_\epsilon=\frac{u^p}{v_\epsilon^{p-1}}$ as test functions in the weak formulation of \eqref{sneeqn1} and \eqref{sneeqn2} respectively and subtracting the resulting equations, we obtain
\begin{equation}\label{sneeqn3}
\begin{split}
&\int_{\Omega}H(\nabla u)^p\,dx-\int_{\Omega}H(\nabla v_\epsilon)^{p-1}\nabla H(\nabla v_\epsilon)\nabla\psi_\epsilon\,dx\\
&+\iint_{\mathbb{R}^{2N}}{|u(x)-u(y)|^p}\,d\mu-\iint_{\mathbb{R}^{2N}}|v_\epsilon(x)-v_\epsilon(y)|^{p-2}(v_\epsilon(x)-v_\epsilon(y))(\psi_\epsilon(x)-\psi_\epsilon(y))\,d\mu\\
&=\int_{\Omega}uf(x)\Big(v^{p-1}-\Big(\frac{v^2}{v_\epsilon}\Big)^{p-1}\Big)\,dx.
\end{split}
\end{equation}
Passing $\epsilon\to 0^+$ and using Fatou's lemma, along with Theorem \ref{BrPI} and Theorem \ref{JacPI}, we obtain
\begin{equation}\label{sneeqn4}
\begin{split}
0&=\int_{\Omega}H(\nabla u)^p\,dx-\int_{\Omega}H(\nabla v)^{p-1}\nabla H(\nabla v)\nabla\Big(\frac{u^p}{v^{p-1}}\Big)\,dx+\iint_{\mathbb{R}^{2N}}{|u(x)-u(y)|^p}\,d\mu\\
&\quad-\iint_{\mathbb{R}^{2N}}|v(x)-v(y)|^{p-2}(v(x)-v(y))\Big(\Big(\frac{u^p}{v^{p-1}}\Big)(x)-\Big(\frac{u^p}{v^{p-1}}\Big)(y)\Big)\,d\mu.
\end{split}
\end{equation}
Again using Theorem \ref{BrPI} and Theorem \ref{JacPI}, we obtain from \eqref{sneeqn4} that
$$
\int_{\Omega}H(\nabla u)^p\,dx-\int_{\Omega}H(\nabla v)^{p-1}\nabla H(\nabla v)\nabla\Big(\frac{u^p}{v^{p-1}}\Big)\,dx=0,
$$
which gives by Theorem \ref{BrPI} that $v=ku$ for some constant $k>0$ as desired.
\end{proof}
\subsection{Non existence of positive super-solutions}
The following nonexistence result is studied in the $p$-Laplace case in \cite{AlP, Tyagi}, weighted $p$-Laplace setting in \cite{Garainopmath}. As far as we are aware, such results have not been studied till date in the mixed case, even for the linear operator $-\Delta+(-\Delta)^s$. The proof follows by choosing suitable test function along with Picone's inequality.
\begin{Theorem}\label{np}
If there exists $u\in W_0^{1,p}(\Omega)\cap L^\infty(\Omega)$, $u\geq 0$ in $\Omega$ such that $$J(u)=\int_{\Omega}H(\nabla u)^p dx+\iint_{\mathbb{R}^{2N}}|u(x)-u(y)|^p\,d\mu-\int_{\Omega}g_1(x) u^p dx<0,$$ then the problem 
\begin{equation}\label{np1}
-H_p v+(-\Delta_p)^s v-g_1(x)|v|^{p-2}v=g_2(x)\text{ in }\Omega,
\end{equation}
where $0\leq{g_1}\in{L^{\infty}(\Omega)}$ and $0\leq g_2\in L^q(\Omega)$ with $q=\frac{p}{p-1}$ has no positive weak supersolution in $W_0^{1,p}(\Omega)$ as in Definition \ref{p1sol}.
\end{Theorem}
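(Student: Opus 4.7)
My plan is to argue by contradiction: assume that $v\in W_0^{1,p}(\Omega)$ is a positive weak super-solution of \eqref{np1} in the sense of Definition \ref{p1sol}, and derive $J(u)\geq 0$, contradicting the hypothesis $J(u)<0$. For $\epsilon>0$ set $v_\epsilon:=v+\epsilon$ and take
$$
\phi_\epsilon:=\frac{u^p}{v_\epsilon^{p-1}}
$$
as test function; admissibility in $W_0^{1,p}(\Omega)$ follows from $u\in W_0^{1,p}(\Omega)\cap L^\infty(\Omega)$ and $v_\epsilon\geq\epsilon>0$. Substituting $\phi_\epsilon$ into the weak super-solution inequality for $v$, and using $v^{p-1}\phi_\epsilon=(v/v_\epsilon)^{p-1}u^p$ on the right-hand side, gives
\begin{align*}
&\int_\Omega H(\nabla v)^{p-1}\nabla H(\nabla v)\nabla\phi_\epsilon\,dx+\iint_{\mathbb{R}^{2N}}|v(x)-v(y)|^{p-2}(v(x)-v(y))(\phi_\epsilon(x)-\phi_\epsilon(y))\,d\mu\\
&\quad\geq \int_\Omega g_1\Big(\frac{v}{v_\epsilon}\Big)^{p-1}u^p\,dx+\int_\Omega g_2\,\frac{u^p}{v_\epsilon^{p-1}}\,dx.
\end{align*}

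The key step is to apply both Picone inequalities to the left-hand side. The anisotropic Picone inequality (Theorem \ref{BrPI}), used with $v_\epsilon$ (strictly positive) in place of the theorem's positive argument and $u\geq 0$ in place of the nonnegative one — noting $\nabla v_\epsilon=\nabla v$ — bounds the local integral from above by $\int_\Omega H(\nabla u)^p\,dx$. Likewise the discrete Picone inequality (Theorem \ref{JacPI}), with the same choice of arguments, bounds the nonlocal integral from above by $\iint_{\mathbb{R}^{2N}}|u(x)-u(y)|^p\,d\mu$. Combining these estimates yields
$$
\int_\Omega H(\nabla u)^p\,dx+\iint_{\mathbb{R}^{2N}}|u(x)-u(y)|^p\,d\mu\geq \int_\Omega g_1\Big(\frac{v}{v_\epsilon}\Big)^{p-1}u^p\,dx+\int_\Omega g_2\,\frac{u^p}{v_\epsilon^{p-1}}\,dx.
$$

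Finally I would pass to the limit $\epsilon\to 0^+$. Since $v>0$ in $\Omega$ and $(v/v_\epsilon)^{p-1}\leq 1$, the bounded convergence theorem (using also $g_1,u\in L^\infty$) turns the first right-hand integral into $\int_\Omega g_1 u^p\,dx$. For the second right-hand integral, $g_2 u^p/v_\epsilon^{p-1}$ is nonnegative and monotone nondecreasing as $\epsilon\downarrow 0$, so Fatou's lemma (or monotone convergence) yields $\liminf_{\epsilon\to 0^+}\int_\Omega g_2 u^p/v_\epsilon^{p-1}\,dx\geq \int_\Omega g_2 u^p/v^{p-1}\,dx\geq 0$. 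Combining, I obtain $J(u)\geq 0$, contradicting $J(u)<0$. The only real subtlety I foresee is the admissibility of $\phi_\epsilon$ and the limit passage when $v$ vanishes on $\partial\Omega$; both are resolved by the $\epsilon$-regularization, which keeps $v_\epsilon$ bounded away from zero uniformly and eliminates the need for $a$ priori integrability of $u^p/v^{p-1}$.
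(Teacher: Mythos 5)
Your proposal is correct and follows essentially the same route as the paper: testing the supersolution inequality with $\phi_\epsilon=u^p/(v+\epsilon)^{p-1}$, controlling the local and nonlocal terms via the anisotropic Picone inequality (Theorem \ref{BrPI}) and the discrete Picone inequality (Theorem \ref{JacPI}) applied to $v+\epsilon$ and $u$, discarding the nonnegative $g_2$ contribution, and passing to the limit $\epsilon\to 0^+$ to conclude $J(u)\geq 0$, contradicting the hypothesis. The only cosmetic difference is that you invoke bounded convergence for the $g_1$ term where the paper simply uses Fatou's lemma; both are fine.
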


\begin{proof}
Suppose $v$ is a positive weak supersolution of (\ref{np1}). Let $\epsilon>0$, then choosing $\varphi=\frac{u^p}{(v+\epsilon)^{p-1}}\in{W_0^{1,p}(\Omega)}$ as a test function in the weak formulation of \eqref{np1}, we get
\begin{align*}
&\int_{\Omega}g_1(x)\Big(\frac{v}{v+\epsilon}\Big)^{p-1}u^p\,dx\\
&\leq \int_{\Omega}H(\nabla v)^{p-1}\nabla H(\nabla v)\nabla (\frac{u^p}{(v+\epsilon)^{p-1}})\,dx\\
&\quad+\iint_{\mathbb{R}^{2N}}|v(x)-v(y)|^{p-2}(v(x)-v(y))(\phi_\epsilon(x)-\phi_\epsilon(y))\,d\mu-\int_{\Omega}g_2(x)\frac{u^p}{(v+\epsilon)^{p-1}}dx\\
&=\int_{\Omega}H(\nabla u)^p\,dx-\int_{\Omega}H(\nabla u)^p\,dx+\int_{\Omega}H(\nabla v)^{p-1}\nabla H(\nabla v)\nabla (\frac{u^p}{(v+\epsilon)^{p-1}})\,dx\\
&\quad+\iint_{\mathbb{R}^{2N}}|u(x)-u(y)|^p\,d\mu-\iint_{\mathbb{R}^{2N}}|u(x)-u(y)|^p\,d\mu\\
&\quad+\iint_{\mathbb{R}^{2N}}|v(x)-v(y)|^{p-2}(v(x)-v(y))(\phi_\epsilon(x)-\phi_\epsilon(y))\,d\mu-\int_{\Omega}g_2(x)\frac{u^p}{(v+\epsilon)^{p-1}}dx\\
&=\int_{\Omega}\Big[H(\nabla u)^p\,dx+\iint_{\mathbb{R}^{2N}}|u(x)-u(y)|^p\,d\mu\\
&\quad-\int_{\Omega}R(u,v+\epsilon)\,dx-\int_{\Omega}S(u,v+\epsilon)\,d\mu-\int_{\Omega}g_2(x)\frac{u^p}{(v+\epsilon)^{p-1}}\Big]dx\\
&\leq \int_{\Omega}\Big[H(\nabla u)^p\,dx+\iint_{\mathbb{R}^{2N}}|u(x)-u(y)|^p\,d\mu, 
\end{align*}
since
$$
R(u,v+\epsilon)=H(\nabla u)^p-H(\nabla v)^{p-1}\nabla H(\nabla v)\nabla (\frac{u^p}{(v+\epsilon)^{p-1}})\geq 0
$$
and
$$
S(u,v+\epsilon)=|u(x)-u(y)|^p-|v(x)-v(y)|^{p-2}(v(x)-v(y))(\phi_\epsilon(x)-\phi_\epsilon(y))\geq 0
$$
by Theorem \ref{BrPI} and Theorem \ref{JacPI} respectively. Then Fatou's lemma yields $J(u)\geq 0$, which is a contradiction. Hence the theorem follows.
\end{proof}

\section{Part-II}
\subsection{Elliptic case}
\subsubsection{Caccioppoli type energy estimates}
In this subsection, we establish Caccioppoli type energy estimates for weak subsolutions and weak supersolutions of the following equation
\begin{equation}\label{meeqn}
-H_p u+\epsilon(-\Delta_p)^s u=g(x)|u|^{p-2}u\text{ in }\Omega,
\end{equation}
where $\epsilon\in(0,1]$, $g\in L^\infty_{\mathrm{loc}}(\Omega)$. In the anisotropic case, such results can be found in \cite{Jaros}, where Picone identity played a crucial role and in the nonlocal setting, we refer to \cite{DKPhar}. In the mixed case, see \cite{GKK, GK}. Here, we show that subsolution estimates can be derived using the Picone identity from Theorem \ref{BrPI}. Before defining the notion of solutions, we need the following tail space from \cite{DKPhar, DKPloc} defined by
$$
L^{ps}_{p-1}(\mathbb{R}^N)=\left\{u\in L^{p-1}_{\mathrm{loc}}(\mathbb{R}^N):\int_{\mathbb{R}^N}\frac{|u(y|^{p-1}}{(1+|y|)^{N+ps}}\, dy<\infty\right\}.
$$
It is easy to check that $W^{s,p}(\mathbb{R}^N)$ and $L^\infty(\mathbb{R}^N)$ are contained in the tail space $L^{ps}_{p-1}(\mathbb{R}^N)$. The mixed local and nonlocal tail denoted by $\mathrm{Tail}\,(u;x_0,r)$ for any $x_0\in\mathbb{R}^N$ and $r>0$ is well defined if $u\in L^{ps}_{p-1}(\mathbb{R}^N)$, where
\begin{equation}\label{taildef}
\mathrm{Tail}\,(u;x_0,r):=\left(r^p\int_{\mathbb{R}^N\setminus B_r(x_0)}\frac{|u(y|^{p-1}}{|y-x_0|^{N+ps}}\, dy\right)^\frac{1}{p-1}.
\end{equation}
For more details, we refer to \cite{GK} and the references therein.
\begin{Definition}\label{wheqnwksol}
Let $g\in L_\mathrm{loc}^\infty(\Omega)$. We say that $u\in W^{1,p}_{\mathrm{loc}}(\Omega)\cap L^{ps}_{p-1}(\mathbb{R}^N)$ is a weak supersolution (or subsolution) of \eqref{meeqn}, if for every $\Omega'\Subset\Omega$ and every nonnegative $\phi\in W_0^{1,p}(\Omega')$, we have
\begin{equation}\label{wheqnwksol1}
\begin{split}
&\int_{\Omega'}H(\nabla u)^{p-1}\nabla H(\nabla u)\nabla\phi\,dx\\
&\quad+\epsilon\iint_{\mathbb{R}^{2N}}|u(x)-u(y)|^{p-2}(u(x)-u(y))(\phi(x)-\phi(y))\,d\mu-\int_{\Omega'}g\,|u|^{p-2}u\phi\,dx\geq (\text{ or }\,\leq)\, 0.
\end{split}
\end{equation}
Further, we say that $u\in W^{1,p}_{\mathrm{loc}}(\Omega)\cap L^{ps}_{p-1}(\mathbb{R}^N)$ is a weak solution of \eqref{meeqn}, if the L.H.S. integral in \eqref{wheqnwksol1} become equal to zero for every $\Omega'\Subset\Omega$ and every $\phi\in W_0^{1,p}(\Omega')$ without any sign restriction.
\end{Definition}
We remark that Definition \ref{wheqnwksol} is well stated by Lemma \ref{l1} and Lemma \ref{locnon1}. First, we establish the following energy estimates for weak supersolutions.
\begin{Theorem}\label{engsup}(\textbf{Caccioppoli type inequality for supersolutions})
Let $g\in L^\infty_{\mathrm{loc}}(\Omega)$ and suppose $\Omega'\Subset\Omega$. Let $v\in W^{1,p}_{\mathrm{loc}}(\Omega)\cap L^\infty(\mathbb{R}^N)$ be a weak supersolution of \eqref{meeqn} in $\Omega$ as in Definition \ref{wheqnwksol} such that there exists a constant $\rho>0$ with $v\geq\rho>0$ in $\mathbb{R}^N$. Then for every $q\in(1,p)$, there exists a constant $C=C(p)>0$ such that
\begin{equation}\label{engsupest}
\begin{split}
\int_{\Omega'}v^{-q}H(\nabla v)^p\psi^p\,dx
&\leq \frac{C}{(q-1)^\frac{p}{p-1}}\int_{\Omega'}v^{p-q}|\nabla\psi|^p\,dx-\frac{2}{q-1}\int_{\Omega'}g(x)v(x)^{p-q}\psi(x)^p\,dx\\
&+\frac{1}{(q-1)}\frac{C}{\big(\zeta(q)+1+\frac{1}{(q-1)^{p-1}}\big)}\iint_{\Omega'\times \Omega'}\max\{v(x)^{p-q},v(y)^{p-q}\}|\psi(x)-\psi(y)|^p\,d\mu\\
&\quad+\frac{C}{q-1}\left(\sup_{x\in\mathrm{spt}\,\psi}\int_{\mathbb{R}^N\setminus\Omega'}\frac{dy}{|y-x|^{N+ps}}\,dy\right)\int_{\Omega'}u^{p-q}\psi^p\,dx,
\end{split}    
\end{equation}
for every nonnegative function $\psi\in C_c^\infty(\Omega')$, where
$$
\zeta(q)=
\begin{cases}
    \frac{(q-1)p^p}{p-q}\quad \text{ if }0<p-q<1,\\
    \frac{(q-1)p^p}{(p-q)^p}\quad \text{ otherwise}.
\end{cases}
$$
\end{Theorem}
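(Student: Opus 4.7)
The plan is to test the weak supersolution inequality \eqref{wheqnwksol1} with the admissible function $\phi = v^{1-q}\psi^p$. Since $v\geq\rho>0$ in $\mathbb{R}^N$ and $\psi\in C_c^\infty(\Omega')$, one has $\phi\in W_0^{1,p}(\Omega')$ and $\phi\geq 0$, so Definition \ref{wheqnwksol} yields
\[
\int_{\Omega'}H(\nabla v)^{p-1}\nabla H(\nabla v)\nabla\phi\,dx+\epsilon\iint_{\mathbb{R}^{2N}}|v(x)-v(y)|^{p-2}(v(x)-v(y))(\phi(x)-\phi(y))\,d\mu\ge \int_{\Omega'}g\,v^{p-1}\phi\,dx.
\]
The factor $v^{1-q}$ is chosen so that the leading local term produces the desired weight $v^{-q}H(\nabla v)^p$, and so that the parameter $q-1$ fits the admissible range $(0,p-1)$ in Lemma \ref{Inequality1} (forcing $1<q<p$).

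For the local term I would compute $\nabla\phi=-(q-1)v^{-q}\psi^p\nabla v+pv^{1-q}\psi^{p-1}\nabla\psi$, invoke Lemma \ref{Happ}(A) to obtain $H(\nabla v)^{p-1}\nabla H(\nabla v)\cdot\nabla v=H(\nabla v)^p$, and use Lemma \ref{Happ}(C) on the cross term. A Young inequality of the form $A^{p-1}B\le\delta A^p+C_\delta B^p$ with $A=v^{-q/p}H(\nabla v)\psi$ and $B=v^{(p-q)/p}|\nabla\psi|$, with $\delta$ a small multiple of $q-1$, absorbs the cross term into the LHS and generates the first term on the RHS of \eqref{engsupest}.

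For the nonlocal term I would split $\iint_{\mathbb{R}^{2N}}=\iint_{\Omega'\times\Omega'}+2\iint_{\Omega'\times(\mathbb{R}^N\setminus\Omega')}$, noting that $\phi\equiv 0$ outside $\Omega'$. On $\Omega'\times\Omega'$ I apply Lemma \ref{Inequality1} with $a=v(y)$, $b=v(x)$, $\tau_1=\psi(y)$, $\tau_2=\psi(x)$, and $\epsilon=q-1$; since the lemma is stated with $\tau_1^p a^{-\epsilon}-\tau_2^p b^{-\epsilon}$, a sign flip produces an \emph{upper} bound on the integrand whose principal (nonpositive) piece is discarded and whose remainder has the form $(\zeta(q)+1+(q-1)^{-(p-1)})|\psi(x)-\psi(y)|^p(v(x)^{p-q}+v(y)^{p-q})$; bounding $v(x)^{p-q}+v(y)^{p-q}\le 2\max\{v(x)^{p-q},v(y)^{p-q}\}$ produces the third term of \eqref{engsupest}. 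On $\Omega'\times(\mathbb{R}^N\setminus\Omega')$ we have $\phi(y)=0$, so the integrand is $|v(x)-v(y)|^{p-2}(v(x)-v(y))\phi(x)|x-y|^{-N-ps}$; I discard the favorable region $\{v(y)\ge v(x)\}$ and, on $\{v(x)>v(y)>0\}$, use $|v(x)-v(y)|^{p-1}\le v(x)^{p-1}$, yielding the tail term in \eqref{engsupest}.

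Finally, I collect the pieces, divide by $(q-1)$, and use $\epsilon\le 1$ to absorb it into the constant $C$. The main obstacle is the bookkeeping in the $\Omega'\times\Omega'$ piece: Lemma \ref{Inequality1} delivers a lower bound on an expression carrying the opposite sign to what arises after testing with $\phi$, and the constant $\zeta(q)$ must be tracked correctly in the two cases $0<p-q<1$ and $p-q\ge 1$ that appear in the statement. The restriction $\epsilon=q-1\in(0,p-1)$ in Lemma \ref{Inequality1} is exactly what confines the estimate to the range $1<q<p$.
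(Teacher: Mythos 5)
Your proposal is correct and follows essentially the same route as the paper's own proof: testing with $\phi=v^{1-q}\psi^p$, treating the local term via Lemma \ref{Happ} and Young's inequality to absorb the cross term, and treating the nonlocal term via Lemma \ref{Inequality1} with $\epsilon=q-1$ together with the splitting into $\Omega'\times\Omega'$ and the tail part where $\phi$ vanishes. Your bookkeeping yields the factor $\zeta(q)+1+(q-1)^{1-p}$ multiplying (rather than dividing) the $\Omega'\times\Omega'$ term, which is exactly what the paper's combination of its estimates produces, so no gap arises from that discrepancy with the displayed statement.
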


\begin{proof}
 Since $v$ is a weak supersolution of \eqref{meeqn}, using that $v\geq\rho>0$ in $\mathbb{R}^N$, we test \eqref{wheqnwksol1} with $\phi=v^{1-q}\psi^p$ and obtain
\begin{equation}\label{supwksolap1}
\begin{split}
    \int_{\Omega'}g\,v^{p-q}\psi^p\,dx&\leq I+J,
\end{split}
\end{equation}
where
$$
I=\int_{\Omega'}H(\nabla v)^{p-1}\nabla H(\nabla v)\nabla (v^{1-q}\psi^p)\,dx,
$$

$$
J=\epsilon\iint_{\mathbb{R}^{2N}}|v(x)-v(y)|^{p-2}(v(x)-v(y))(v(x)^{1-q}\psi(x)^p-v(y)^{1-q}\psi(y)^p)\,d\mu.
$$
\textbf{Estimate of $I$:} Using Lemma \ref{Happ} along with Cauchy-Schwartz and Young's inequality, we obtain
\begin{equation}\label{est1}
I\leq (1-q)I_1+I_2,
\end{equation}
where
$$
I_1=\int_{\Omega'}v^{-q}\psi^p H(\nabla v)^p\,dx,
$$
and
\begin{equation}\label{estI2}
\begin{split}
I_2&=C\int_{\Omega'}v^{1-q}|\nabla\psi|H(\nabla v)^{p-1}\psi^{p-1}\,dx\leq\frac{q-1}{2}I_1+\frac{C}{(q-1)^\frac{1}{p-1}}\int_{\Omega'}v^{p-q}|\nabla\psi|^p\,dx,
\end{split}
\end{equation}
for some constant $C=C(p)>0$.\\
\textbf{Estimate of $J$:} Using Lemma \ref{Inequality1}, we obtain
\begin{equation}\label{estJ}
\begin{split}
J&\leq \big(\zeta(q)+1+\frac{1}{(q-1)^{p-1}}\big)  \iint_{\Omega'\times\Omega'}\max\{v(x)^{p-q},v(y)^{p-q}\}|\psi(x)-\psi(y)|^p\,d\mu\\
&\quad +C\left(\sup_{x\in\mathrm{spt}\,\psi}\int_{\mathbb{R}^N\setminus\Omega'}\frac{dy}{|x-y|^{N+ps}}\right)\int_{\Omega'}v^{p-q}\psi^p\,dx,
\end{split}
\end{equation}
where
$$
\zeta(q)=
\begin{cases}
    \frac{(q-1)p^p}{p-q}\quad \text{ if }0<p-q<1,\\
    \frac{(q-1)p^p}{(p-q)^p}\quad \text{ otherwise}.
\end{cases}
$$
Combining the estimates \eqref{est1}, \eqref{estI2} and \eqref{estJ} into \eqref{supwksolap1}, the result follows.
\end{proof}
The following logarithmic estimate follows similarly as in the proof of Theorem \ref{engsup} above except that here we choose the test function $\phi=v^{1-p}\psi^p$. 
\begin{Theorem}\label{englog}(\textbf{Logarithmic Caccioppoli type inequality for supersolutions})
Let $g\in L^\infty_{\mathrm{loc}}(\Omega)$ and suppose $\Omega'\Subset\Omega$. Let $v\in W^{1,p}_{\mathrm{loc}}(\Omega)\cap L^\infty(\mathbb{R}^N)$ be a weak supersolution of \eqref{meeqn} in $\Omega$ as in Definition \ref{wheqnwksol} such that there exists a constant $\rho>0$ with $v\geq\rho>0$ in $\mathbb{R}^N$. Then there exists a constant $C=C(p)>0$ such that
\begin{equation}\label{logest}
\begin{split}
\int_{\Omega'}\psi^p\,H(\nabla \mathrm{log}\,v)^p\,dx&\leq C\int_{\Omega'}|\psi(x)-\psi(y)|^p\,d\mu+C\int_{x\in\mathrm{spt}\,\psi}\int_{\mathbb{R}^N\setminus\Omega'}\psi(x)^p\,d\mu\\
&\quad+C\int_{\Omega'}|\nabla\psi|^p\,dx-\int_{\Omega'}g\psi^p\,dx,
\end{split}
\end{equation}
for every nonnegative function $\psi\in C_c^\infty(\Omega')$. 
\end{Theorem}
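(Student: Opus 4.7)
The strategy is to mimic the proof of Theorem \ref{engsup} with the specific choice $\phi=v^{1-p}\psi^p$; this corresponds to the borderline exponent $q=p$ that was excluded there. Since $v\geq\rho>0$ in $\mathbb{R}^N$ and $\psi\in C_c^\infty(\Omega')$, one has $\phi\in W_0^{1,p}(\Omega')$, so testing \eqref{wheqnwksol1} and noting $|v|^{p-2}v\cdot v^{1-p}\psi^p=\psi^p$ yields $\int_{\Omega'}g\psi^p\,dx\leq I+\epsilon J$, where $I$ and $J$ denote the local and nonlocal pieces, respectively.

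For $I$, I compute
\begin{equation*}
\nabla\phi=(1-p)v^{-p}\psi^p\nabla v+p\,v^{1-p}\psi^{p-1}\nabla\psi,
\end{equation*}
and use Lemma \ref{Happ}-(A) to get $H(\nabla v)^{p-1}\nabla H(\nabla v)\cdot\nabla v=H(\nabla v)^p$. The $1$-homogeneity of $H$ from (H2) gives $H(\nabla v)/v=H(\nabla\log v)$, so that $v^{-p}H(\nabla v)^p=H(\nabla\log v)^p$, and therefore
\begin{equation*}
I=-(p-1)\int_{\Omega'}\psi^p H(\nabla\log v)^p\,dx+p\int_{\Omega'}\psi^{p-1}H(\nabla\log v)^{p-1}\nabla H(\nabla v)\cdot\nabla\psi\,dx.
\end{equation*}
The cross term is controlled via Lemma \ref{Happ}-(C) and Young's inequality, absorbing half of the leading $-(p-1)\int\psi^p H(\nabla\log v)^p\,dx$ and leaving a remainder of the form $C\int_{\Omega'}|\nabla\psi|^p\,dx$.

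For $J$ I invoke the limiting case $\epsilon\to(p-1)^-$ of Lemma \ref{Inequality1}, namely the logarithmic algebraic inequality
\begin{equation*}
|b-a|^{p-2}(b-a)\bigl(\tau_1^p a^{1-p}-\tau_2^p b^{1-p}\bigr)\leq C\,|\tau_1-\tau_2|^p-c\,\bigl|\log(b/a)\bigr|^p\max\{\tau_1,\tau_2\}^p,
\end{equation*}
valid for all $a,b>0$ and $\tau_1,\tau_2\geq 0$. Applied pointwise with $a=v(x)$, $b=v(y)$, $\tau_1=\psi(x)$, $\tau_2=\psi(y)$ and discarding the nonpositive logarithmic piece, the portion of $J$ over $\Omega'\times\Omega'$ is bounded by $C\iint_{\Omega'\times\Omega'}|\psi(x)-\psi(y)|^p\,d\mu$. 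The remaining piece, where $y\in\mathbb{R}^N\setminus\Omega'$ forces $\psi(y)=0$, is handled exactly as the tail term in Theorem \ref{engsup}: the bound $v\in L^\infty(\mathbb{R}^N)$ together with $v\geq\rho$ makes $|v(x)-v(y)|^{p-1}v(x)^{1-p}$ uniformly bounded on $\mathrm{spt}\,\psi\times(\mathbb{R}^N\setminus\Omega')$, so this piece is absorbed into $C\int_{\mathrm{spt}\,\psi}\int_{\mathbb{R}^N\setminus\Omega'}\psi(x)^p\,d\mu$.

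Combining the local and nonlocal estimates and moving $-\int_{\Omega'}g\psi^p\,dx$ to the right gives \eqref{logest}. The only non-routine step is the endpoint algebraic inequality at $\epsilon=p-1$, since Lemma \ref{Inequality1} as stated requires $\epsilon<p-1$ strictly; this limiting version is classical in the fractional $p$-Laplace literature and must either be cited or verified directly, after which the rest of the argument is essentially a direct transcription of the proof of Theorem \ref{engsup}.
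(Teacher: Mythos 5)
Your route is exactly the one the paper intends: its entire proof of Theorem \ref{englog} is the remark that one repeats the proof of Theorem \ref{engsup} with the test function $\phi=v^{1-p}\psi^p$, and your treatment of the local term (admissibility of $\phi$ since $v\ge\rho>0$, Lemma \ref{Happ}-(A) and the $1$-homogeneity of $H$ giving $v^{-p}H(\nabla v)^p=H(\nabla\log v)^p$, then Young's inequality to absorb the cross term) is the intended computation. You are also right to flag that Lemma \ref{Inequality1} requires $\epsilon<p-1$ strictly, so the case $q=p$ needs a genuine logarithmic estimate; the paper glosses over this, but the needed inequality is the one it already uses elsewhere (see the estimates of $I_1,I_2$ in the proof of Lemma \ref{expthm}, following Di Castro--Kuusi--Palatucci).

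The one step that fails as written is the displayed pointwise inequality for the nonlocal term: it is false, for two independent reasons. With $\tau_1=\tau_2=1$ and $b>a$ your left-hand side equals $(b-a)^{p-1}(a^{1-p}-b^{1-p})>0$ while your right-hand side is $-c\,|\log(b/a)|^p<0$; the pairing is reversed (with $a=v(x)$, $b=v(y)$, $\tau_1=\psi(x)$, $\tau_2=\psi(y)$ your left-hand side is \emph{minus} the integrand of $J$, so an upper bound on it only bounds $J$ from below, which is not what you need). Moreover, even with the correct pairing the good term must carry $\min\{\tau_1,\tau_2\}^p$, not $\max\{\tau_1,\tau_2\}^p$ (take $\tau_2=0$, $\tau_1=1$, $a/b$ large). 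The correct statement is
\begin{equation*}
|a-b|^{p-2}(a-b)\Bigl(\frac{\tau_1^p}{a^{p-1}}-\frac{\tau_2^p}{b^{p-1}}\Bigr)\le C(p)\,|\tau_1-\tau_2|^p-c(p)\Bigl|\log\frac{a}{b}\Bigr|^p\min\{\tau_1,\tau_2\}^p ,
\end{equation*}
and since you discard the nonpositive logarithmic term anyway, substituting this version repairs the argument without further change. Finally, for the off-diagonal piece your bound of $|v(x)-v(y)|^{p-1}v(x)^{1-p}$ by a constant introduces dependence on $\|v\|_{L^\infty(\mathbb{R}^N)}$ and $\rho$, which is weaker than the stated $C=C(p)$; instead note that only the positive part of the integrand matters and $(v(x)-v(y))_+\le v(x)$ because $v(y)\ge\rho>0$, so that piece is pointwise at most $\psi(x)^p$ and the constant stays dimensionless.
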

Finally, we obtain the following energy estimate for weak subsolutions, where Picone identity is crucially used.
\begin{Theorem}\label{engsub}(\textbf{Caccioppoli type inequality for subsolutions})
Let $g\in L^\infty_{\mathrm{loc}}(\Omega)$ and suppose $\Omega'\Subset\Omega$. Let $u\in W^{1,p}_{\mathrm{loc}}(\Omega)\cap L^\infty(\mathbb{R}^N)$ be a weak subsolution of \eqref{meeqn} in $\Omega$ as in Definition \ref{wheqnwksol} such that $u>0$ in $\Omega'$. Then for every $q>p$, it holds that
\begin{equation}\label{engsubest}
\begin{split}
\int_{\Omega'}u^{q-p}H(\eta \nabla u)^p\,dx&\leq \Big(\frac{p}{q-p+1}\Big)^p\int_{\Omega'}u^q H(\nabla\eta)^p\,dx+\frac{p}{q-p+1}\int_{\Omega'}g(x)u^q\eta^p\,dx\\
&\quad+\frac{C\,p}{q-p+1}\int_{\Omega'}\int_{\Omega'}(u(x)^q+u(y)^q)|\eta(x)-\eta(y)|^p\,d\mu\\
&\quad+\frac{2p}{q-p+1}\left(\sup_{x\in\mathrm{spt}\,\eta}\int_{\mathbb{R}^N\setminus\Omega'}\frac{|u(y)|^{p-1}}{|x-y|^{N+ps}}\,dy\right)\int_{\Omega'}u^{q-p+1}(x)\eta(x)^p\,dx,
\end{split}
\end{equation}
for every nonnegative $\eta\in C_{c}^\infty(\Omega')$ for some constant $C=C(p)>0$.
\end{Theorem}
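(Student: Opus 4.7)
The plan is to test the subsolution inequality \eqref{wheqnwksol1} with the nonnegative admissible function $\phi := \eta^p u^{q-p+1}$, which lies in $W_0^{1,p}(\Omega')$ since $\eta\in C_c^\infty(\Omega')$ and $u$ is strictly positive and bounded on $\Omega'$. Because $u^{p-1}\phi = u^q\eta^p$, the source on the right of \eqref{wheqnwksol1} already supplies the $\int_{\Omega'} g\,u^q\eta^p\,dx$ contribution in \eqref{engsubest}, modulo the eventual division by $q-p+1$.

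For the local integral I would expand
\[
\nabla\phi = (q-p+1)\eta^p u^{q-p}\nabla u + p\,\eta^{p-1} u^{q-p+1}\nabla\eta
\]
and apply Lemma \ref{Happ}(A) to the first summand, which extracts the principal quantity $(q-p+1)\int_{\Omega'}u^{q-p}\eta^p H(\nabla u)^p\,dx$. By the $1$-homogeneity of $H$ this is $(q-p+1)\int_{\Omega'}u^{q-p}H(\eta\nabla u)^p\,dx$, i.e.\ the left-hand side of \eqref{engsubest} up to a factor. The cross summand is handled by Lemma \ref{Happ}(C), hypothesis (H3) (comparing $|\nabla\eta|$ with $H(\nabla\eta)$) and Young's inequality: a small portion of the principal term is absorbed while the residue furnishes the $\bigl(\tfrac{p}{q-p+1}\bigr)^p\int u^q H(\nabla\eta)^p\,dx$ contribution.

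The nonlocal integral is the step where the Picone identities enter crucially. Set $v := \eta u^{q/p}$, so that $v^p u^{1-p}=\phi$, and apply the discrete Picone inequality (Theorem \ref{JacPI}) to obtain the pointwise bound
\[
|u(x)-u(y)|^{p-2}(u(x)-u(y))\bigl(\phi(x)-\phi(y)\bigr) \leq \bigl|\eta(x)u(x)^{q/p}-\eta(y)u(y)^{q/p}\bigr|^p
\]
on $\Omega'\times\Omega'$. Using the symmetric decomposition $\eta(x)u(x)^{q/p}-\eta(y)u(y)^{q/p} = u(y)^{q/p}(\eta(x)-\eta(y))+\eta(x)(u(x)^{q/p}-u(y)^{q/p})$ together with $(a+b)^p \leq 2^{p-1}(a^p+b^p)$, and symmetrising in $x,y$, this produces (after discarding a nonnegative residual of the form $\min\{\eta(x),\eta(y)\}^p\bigl|u(x)^{q/p}-u(y)^{q/p}\bigr|^p$) exactly the stated double-integral $\iint(u(x)^q+u(y)^q)|\eta(x)-\eta(y)|^p\,d\mu$. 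For the mixed region $\Omega'\times(\mathbb{R}^N\setminus\Omega')$, and its symmetric dual, I would use $\phi(y)=0$ to reduce the integrand to $|u(x)-u(y)|^{p-2}(u(x)-u(y))\phi(x)/|x-y|^{N+ps}$, bound crudely by $|u(y)|^{p-1}\phi(x)/|x-y|^{N+ps}$, and pull the supremum over $x\in\mathrm{spt}\,\eta$ outside the $y$-integral to recover the tail term.

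The main obstacle will be the accounting in the nonlocal decomposition: the residual $\bigl|u(x)^{q/p}-u(y)^{q/p}\bigr|^p$ has no counterpart on the left of \eqref{engsubest} and must therefore be shown to carry a favourable sign so that it may simply be dropped. This is the precise reason the discrete Picone identity is indispensable here: it converts the mixed integrand into a pure $p$-Dirichlet form of $v=\eta u^{q/p}$, in which the sign structure of the unwanted remainder becomes transparent and the absorption into the right-hand side becomes tractable.
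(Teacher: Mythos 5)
Your choice of test function $\phi=\eta^pu^{q-p+1}=v^p/u^{p-1}$ with $v=\eta u^{q/p}$ is exactly the paper's, and the local expansion is the right algebra, but the nonlocal step contains a genuine directional error. In the subsolution inequality \eqref{wheqnwksol1} the form $\mathcal{E}(u,\phi)=\iint|u(x)-u(y)|^{p-2}(u(x)-u(y))(\phi(x)-\phi(y))\,d\mu$ appears with a \emph{plus} sign on the side that is $\le\int g\,u^{p-1}\phi\,dx$; after moving it to the right you must bound $-\mathcal{E}$ from above, i.e.\ you need a \emph{lower} bound for the integrand. The discrete Picone inequality (Theorem \ref{JacPI}) gives the opposite: it bounds the integrand from \emph{above} by $|v(x)-v(y)|^p$, which is useless here, since when $u(x)<u(y)$ and $\eta(x)\gg\eta(y)$ the integrand is genuinely negative and Picone gives no control from below. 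For the same reason your ``residual'' $\min\{\eta(x),\eta(y)\}^p\,|u(x)^{q/p}-u(y)^{q/p}|^p$ sits on the wrong side of an upper bound and cannot simply be discarded; the favourable sign you appeal to is not delivered by Theorem \ref{JacPI}. What is actually needed on $\Omega'\times\Omega'$ is a pointwise two-term \emph{lower} bound of the type
\begin{equation*}
|a-b|^{p-2}(a-b)\bigl(\tau_1^pa^{q-p+1}-\tau_2^pb^{q-p+1}\bigr)\;\ge\;c\,\bigl|\tau_1a^{q/p}-\tau_2b^{q/p}\bigr|^p-C\,(a^q+b^q)\,|\tau_1-\tau_2|^p,
\end{equation*}
for $a,b>0$, $\tau_1,\tau_2\ge0$; this elementary inequality (the positive-power analogue of Lemma \ref{Inequality1}, which plays this role for supersolutions in Theorem \ref{engsup}) is precisely what the paper imports from \cite[Prop.~3.5]{BrPr}, and it is not a consequence of the Picone inequality. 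The tail region is essentially fine, but note there as well you only need to control the negative part of the integrand, using $u(x)>0$ on $\mathrm{spt}\,\eta$ to get $(u(y)-u(x))_+^{p-1}\le|u(y)|^{p-1}$; an absolute-value bound in your ``upper bound'' mindset is false when $u(x)>u(y)$.

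A secondary issue concerns the constants in the local part. Estimating the cross term $p\int u^{q-p+1}\eta^{p-1}H(\nabla u)^{p-1}\nabla H(\nabla u)\cdot\nabla\eta\,dx$ through Lemma \ref{Happ}(C) and (H3) introduces a factor of order $C/C_1\ge1$, so after Young's inequality you obtain an $H$-dependent constant rather than the exact $\bigl(\tfrac{p}{q-p+1}\bigr)^p$ and $\tfrac{p}{q-p+1}$ appearing in \eqref{engsubest}. The sharp ingredient, implicit in the anisotropic Picone identity (Theorem \ref{BrPI}) and in the computation of \cite{Jaros} that the paper follows, is the anisotropic Cauchy--Schwarz inequality $\nabla H(\nabla u)\cdot\nabla\eta\le H(\nabla\eta)$ (the dual norm of $\nabla H$ is identically one), after which Young's inequality with the correct exponents yields the stated constants. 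So either invoke that sharp inequality (or argue via Theorem \ref{BrPI} as the paper does) or settle for a version of \eqref{engsubest} with constants depending on $C_1,C_2$.
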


\begin{proof}
Let $v=u^\frac{q}{p}\eta$ and choosing $\phi=\frac{v^p}{u^{p-1}}\in W_0^{1,p}(\Omega')$ as a test function in \eqref{wheqnwksol1}, we get
\begin{equation}\label{mewksolap1}
\begin{split}
\int_{\Omega'}H(\nabla v)^p\,dx-\int_{\Omega'}g(x)v^p\,dx&\leq I-J,
\end{split}
\end{equation}
where
$$
I=\int_{\Omega'}H(\nabla v)^p\,dx-\int_{\Omega'}H(\nabla u)^{p-1}\nabla H(\nabla u)\nabla \Big(\frac{v^p}{u^{p-1}}\Big)\,dx
$$
and
$$
J=\iint_{\mathbb{R}^{2N}}|u(x)-u(y)|^{p-2}(u(x)-u(y))\Big(\frac{v^p}{u^{p-1}}(x)-\frac{v^p}{u^{p-1}}(y)\Big)\, d\mu.
$$
Since $v=u^\frac{q}{p}\eta$, proceeding along the lines of the proof of \cite[Prop 3.5, pages 13-16]{BrPr}, we obtain
\begin{equation}\label{BrParap}
\begin{split}
-J&\leq C\,\epsilon\int_{\Omega'}\int_{\Omega'}(u(x)^q+u(y)^q)|\eta(x)-\eta(y)|^p\,d\mu\\
&\quad+2\,\epsilon\left(\sup_{x\in\mathrm{spt}\,\eta}\int_{{\mathbb{R}^N}\setminus\Omega'}\frac{|u(y)|^{p-1}}{|x-y|^{N+ps}}\,dy\right)\int_{\Omega'}u(x)^{q-p+1}\eta(x)^p\,dx,
\end{split}    
\end{equation}
for some constant $C=C(p)>0$. Using $v=u^\frac{q}{p}\eta$ and Picone's identity from Theorem \ref{BrPI}, we follow the proof of \cite[Theorem 3.1, page 1142]{Jaros} to estimate the integral $I$ and combine it with the above estimate of $J$ and \eqref{mewksolap1} to obtain the required estimate \eqref{engsubest}.
\end{proof}
\subsubsection{Weak Harnack inequality}
In this subsection, we establish weak Harnack inequality for weak supersolutions of the equation \eqref{meeqn} given by
\begin{equation}\label{wheqn}
-H_p u+\epsilon(-\Delta_p)^s u=g(x)|u|^{p-1}u\text{ in }\Omega
\end{equation}
where $g\in L_\mathrm{loc}^\infty(\Omega)$ and $\epsilon\in(0,1]$.\\

When $g\equiv 0$ and $\epsilon=1$, higher H\'older regularity is obtained in \cite{Min, GLcvpde} assuming that $H(x)=|x|$. Further, when $g\equiv 0$ and $\epsilon=1$, weak Harnack inequality for \eqref{wheqn} is proved in \cite{GK} for $H(x)=|x|$ and extended to any Finsler-Minkowski norm $H$ in \cite{GKK} for any $1<p<\infty$. Further, recently, to study a related singular critical problem, when $p=2$, $\epsilon\in(0,1]$, $0<s<1$ and $H(x)=|x|$, weak Harnack inequality is established in \cite[Proposition 3.3]{Biagi}.  In this article, below we prove weak Harnack inequality for the equation \eqref{wheqn} for any $1<p<\infty$ which extends \cite[Proposition 3.3]{Biagi} to the anisotropic and quasilinear setting. The main result of this subsection is stated as follows, whose proof is similar to the proof of \cite[Lemma 3.7]{GKK} and \cite[Proposition 3.3]{Biagi}. Indeed, the proof follows from Lemma \ref{expthm}
and proceeding along the lines of the proof of \cite[Lemma 4.1]{DKPhar}.
\begin{Theorem}\label{whthm}(\textbf{Weak Harnack inequality})
Let $g\in L^\infty(\Omega)$ be nonpositive in $\Omega$ and suppose that $u\in W^{1,p}_{\mathrm{loc}}(\Omega)\cap L^{ps}_{p-1}(\mathbb{R}^N)$ is a weak supersolution of \eqref{wheqn} such that $u\geq 0$ in $B_R(x_0)\Subset\Omega$. Then there exist constants $Q=Q(N,s,p,g,C_1,C_2)>0$ and $C=C(N,s,p,g,C_1,C_2)>0$ which are independent of $\epsilon\in(0,1]$ such that
\begin{equation}\label{whine}
\Big(\fint_{B_r(x_0)}u^Q\,dx\Big)^\frac{1}{Q}\leq C\inf_{B_r(x_0)}\,u+C\Big(\frac{r}{R}\Big)^\frac{p}{p-1}\mathrm{Tail}\,(u_-;x_0,R),
\end{equation}
where $B_r(x_0)\subset B_R(x_0)$ and $r\in(0,1]$.
\end{Theorem}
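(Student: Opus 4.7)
The plan is to follow the blueprint of the proof of \cite[Lemma 4.1]{DKPhar} for the fractional $p$-Laplacian, together with its mixed linear counterpart \cite[Proposition 3.3]{Biagi} and the anisotropic mixed quasilinear version \cite[Lemma 3.7]{GKK}, using as building blocks the Caccioppoli estimate from Theorem \ref{engsup}, the logarithmic estimate from Theorem \ref{englog}, and the expansion of positivity (Lemma \ref{expthm}). The central care throughout is to track every constant so that it is independent of $\epsilon\in(0,1]$; this is possible because in the Caccioppoli and logarithmic estimates the nonlocal remainders carry the factor $\epsilon$ coming from the equation \eqref{wheqn} and are thus bounded uniformly, while the anisotropic features are absorbed via the constants $C_1,C_2$ from (H3).

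First, to accommodate the possibly nonzero values of $u$ outside $B_R(x_0)$, I would pass to the auxiliary function
$$
v := u + \delta + d\,\Bigl(\tfrac{r}{R}\Bigr)^{p/(p-1)}\Tail(u_-;x_0,R),
$$
with $\delta>0$ small and $d\geq 1$ a constant to be fixed later. Using $u\geq 0$ in $B_R(x_0)$ and expanding $(-\Delta_p)^s v$, one verifies that $v$ is a strictly positive weak supersolution in $B_R(x_0)$ of an equation of the same form as \eqref{wheqn}, with an additional nonnegative right-hand side that is controlled uniformly by $\Tail(u_-;x_0,R)$. This reduces matters to proving a Harnack-type estimate for the strictly positive $v$, with the tail contribution absorbed into $v$ itself, and justifies the restriction $r\in(0,1]$ through the natural scaling of the tail.

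Next, I would apply Lemma \ref{expthm} in tandem with a Krylov--Safonov type covering argument, exactly as in \cite[Section 4]{DKPhar}. For a dyadic sequence of thresholds $k_j=\lambda^j m$ with $m=\inf_{B_r(x_0)} v$ and $\lambda>1$ chosen via the expansion-of-positivity constants, one uses a Calder\'on--Zygmund decomposition on shrinking subballs together with the energy estimates from Theorem \ref{engsup} and Theorem \ref{englog} to control $|\{v>k_{j+1}\}\cap B_\rho|$ in terms of $|\{v>k_j\}\cap B_\rho|$. Each application of Lemma \ref{expthm} converts a quantitative lower bound on $|\{v>k_j\}\cap B_\rho|/|B_\rho|$ into a quantitative lower bound on $\inf v$ on a related ball, and iteration across the thresholds produces the distributional estimate
$$
|\{v>k\}\cap B_r(x_0)|\leq C\,(m/k)^{Q}\,|B_r(x_0)|
$$
for some $Q=Q(N,s,p,g,C_1,C_2)>0$ independent of $\epsilon$. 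Integrating via the layer-cake formula gives $\bigl(\fint_{B_r(x_0)} v^Q\,dx\bigr)^{1/Q}\leq C\,m$, and letting $\delta\to 0^+$ recovers \eqref{whine}.

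The main technical obstacle I anticipate is maintaining uniformity in $\epsilon\in(0,1]$ throughout the iteration. Although \eqref{engsupest} and \eqref{logest} look formally $\epsilon$-free as stated, their derivations rely on testing \eqref{wheqnwksol1} against cutoffs, so the nonlocal integrals actually appear multiplied by $\epsilon$; the uniform control ultimately follows because the nonlocal tail of $v$ is bounded by $\|u\|_{L^\infty(B_R(x_0))}$ and by $\Tail(u_-;x_0,R)$, neither of which depends on $\epsilon$. A secondary (milder) issue is that the Sobolev inequality invoked inside the expansion-of-positivity iteration must be the anisotropic version obtained from Lemma \ref{Sobine} combined with (H3), which only affects $C$ and $Q$ through $C_1,C_2$, as reflected in the dependences in the statement.
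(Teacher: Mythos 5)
Your proposal is correct and follows essentially the same route as the paper, whose proof of Theorem \ref{whthm} is precisely the combination of the expansion of positivity (Lemma \ref{expthm}) with the covering/iteration scheme of \cite[Lemma 4.1]{DKPhar}, as in \cite[Lemma 3.7]{GKK} and \cite[Proposition 3.3]{Biagi}. The extra scaffolding you add (absorbing the tail into an auxiliary supersolution $v=u+\delta+d(r/R)^{p/(p-1)}\mathrm{Tail}(u_-;x_0,R)$, the Krylov--Safonov covering, and the layer-cake integration, with the final exponent $Q$ taken slightly below the one from the distributional estimate) is exactly the standard bookkeeping the paper delegates to the cited references, and your invocation of Theorems \ref{engsup} and \ref{englog} is harmless but not needed once Lemma \ref{expthm} is available.
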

To prove Theorem \ref{whthm}, first we establish the following energy estimate.
\begin{Lemma}\label{wheng}(\textbf{Caccioppoli type energy estimates for sub and supersolutions})
Let $g\in L^\infty(\Omega)$ and $u\in W^{1,p}_{\mathrm{loc}}(\Omega)\cap L^{ps}_{p-1}(\mathbb{R}^N)$ be a weak subsolution (resp. supersolution) of \eqref{wheqn}. We arbitrarily fix $k\in\mathbb{R}$ and set $w=(u-k)_+$ [resp. $w=(u-k)_-$]. Then there exists a constant $C=C(p,C_1,C_2)>0$ such that
\begin{equation}\label{wheng1}
\begin{split}
&\int_{B_r(x_0)}\psi^p|\nabla w|^p\,dx+\epsilon\iint_{B_r(x_0)\times B_r(x_0)}|w(x)\psi(x)-w(y)\psi(y)|^p\,d\mu\\
&\quad-[\text{resp.}\,+]\int_{B_r(x_0)}g(x)\,|u|^{p-2}u\,w\,\psi^p\,dx\\
&\leq C\Big(\int_{B_r(x_0)}w^p|\nabla\psi|^p\,dx+\iint_{B_r(x_0)\times B_r(x_0)}\max\{w(x),w(y)\}^p|\psi(x)-\psi(y)|^p\,d\mu\\
&\quad+\sup_{x\in\mathrm{spt}\,\psi}\int_{\mathbb{R}^N\setminus B_r(x_0)}\frac{w(y)^{p-1}}{|x-y|^{N+ps}}\, dy\int_{B_r(x_0)}w\,\psi^p\,dx\Big)
\end{split}
\end{equation}
whenever $B_r(x_0)\Subset\Omega$ and $\psi\in C_c^{\infty}(B_r(x_0))$ is nonnegative.
\end{Lemma}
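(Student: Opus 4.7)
The plan is to test the weak formulation of \eqref{wheqn} with the truncation $\phi=w\psi^p$, split the resulting identity into a local part, an interior--interior nonlocal part and an interior--exterior (tail) part, and then handle each piece by the familiar scheme: algebraic inequality plus Young's inequality, absorbing the leading energy contribution back into the left-hand side. Throughout I treat only the subsolution case with $w=(u-k)_+$; the supersolution case with $w=(u-k)_-$ is completely analogous after the sign change coming from testing with $w=(u-k)_-\ge 0$. Note that $\phi=w\psi^p\in W_0^{1,p}(B_r(x_0))$ is admissible by Definition \ref{wheqnwksol} since $\psi\in C_c^\infty(B_r(x_0))$ and $w$ is bounded on the support of $\psi$ up to truncation; in particular
\[
\int_{B_r(x_0)}H(\nabla u)^{p-1}\nabla H(\nabla u)\cdot\nabla(w\psi^p)\,dx+\epsilon\,\mathcal N(u,\phi)\le\int_{B_r(x_0)}g(x)|u|^{p-2}u\,w\,\psi^p\,dx,
\]
where $\mathcal N(u,\phi)$ denotes the nonlocal bilinear form appearing in \eqref{wheqnwksol1}.

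For the local part, I would first observe that on $\{u>k\}\cap\mathrm{spt}\,\psi$ one has $\nabla w=\nabla u$, so by Lemma \ref{Happ}(A)
\[
H(\nabla u)^{p-1}\nabla H(\nabla u)\cdot\nabla w=H(\nabla w)^p\quad\text{a.e.},
\]
which produces the gradient energy $\int \psi^p H(\nabla w)^p\,dx$ (equivalent to $\int\psi^p|\nabla w|^p\,dx$ by (H3)). The cross term is controlled by Lemma \ref{Happ}(C) and Young's inequality:
\[
\Big|\int H(\nabla u)^{p-1}\nabla H(\nabla u)\cdot p\psi^{p-1}w\,\nabla\psi\,dx\Big|\le C\int H(\nabla w)^{p-1}w\psi^{p-1}|\nabla\psi|\,dx\le \tfrac12\int\psi^pH(\nabla w)^p\,dx+C\int w^p|\nabla\psi|^p\,dx,
\]
which after absorption yields the first term on the right of \eqref{wheng1}.

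The heart of the argument — and the main obstacle — is the nonlocal term. I would split
\[
\mathcal N(u,\phi)=\iint_{B_r\times B_r}\!\!+\,2\iint_{B_r\times(\mathbb{R}^N\setminus B_r)}
\]
and, on each region, apply the pointwise inequality (which is the anisotropic analogue of the algebraic lemma used in \cite{DKPhar}): for all $a,b\in\mathbb R$ and $c,d\ge 0$,
\begin{align*}
|a-b|^{p-2}(a-b)\bigl((a-k)_+c^p-(b-k)_+d^p\bigr)\ge\,&\tfrac1C\bigl|(a-k)_+c-(b-k)_+d\bigr|^p\\
&-C\max\{(a-k)_+,(b-k)_+\}^p|c-d|^p.
\end{align*}
Applied pointwise with $a=u(x)$, $b=u(y)$, $c=\psi(x)$, $d=\psi(y)$, this bounds the $B_r\times B_r$ double integral from below by $\tfrac1C\iint_{B_r\times B_r}|w(x)\psi(x)-w(y)\psi(y)|^p\,d\mu$ minus the oscillation term $\iint_{B_r\times B_r}\max\{w(x),w(y)\}^p|\psi(x)-\psi(y)|^p\,d\mu$. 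For the mixed integral over $B_r\times(\mathbb R^N\setminus B_r)$, I would use that $\psi(y)=0$ and that for $x\in\mathrm{spt}\,\psi$, $y\notin B_r$, the quantity $|u(x)-u(y)|^{p-2}(u(x)-u(y))\,w(x)\psi(x)^p$ can be bounded in absolute value by a multiple of $(w(x)\psi(x)^p)\cdot(w(y)^{p-1}+w(x)^{p-1})|x-y|^{-N-ps}$; integrating and using the definition of the tail supremum, together with Young's inequality to separate the factor $w(x)\psi(x)^p$ from $w(x)^{p-1}$, yields the tail term on the right of \eqref{wheng1}.

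Finally, moving the $g$-term to the left-hand side as written and collecting the absorbed gradient term and the two nonlocal estimates, I obtain \eqref{wheng1}, with the constant $C$ depending only on $p$, $C_1$ and $C_2$ from (H3). The supersolution case is treated by testing with $w=(u-k)_-\ge 0$: the local computation is identical with $\nabla w=-\nabla u$ on $\{u<k\}$ (so $H(\nabla u)^{p-1}\nabla H(\nabla u)\cdot\nabla w=H(\nabla w)^p$ by Lemma \ref{Happ}(B)), the sign of the $g$-term flips as stated, and the same algebraic inequality (applied to $-u$ in place of $u$) gives the nonlocal bound. The only point that requires care is the sign bookkeeping when flipping between $(\cdot)_+$ and $(\cdot)_-$ in the interior--exterior integral; this is where the tail of $w=(u-k)_-$ appears on the right.
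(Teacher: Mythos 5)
Your overall route coincides with the paper's: test the weak formulation with $\phi=w\psi^p$, handle the local term via Lemma \ref{Happ} together with Young's inequality and absorption (this is the estimate the paper imports from \cite{GKK}), and handle the nonlocal term by splitting into $B_r(x_0)\times B_r(x_0)$ and $B_r(x_0)\times(\mathbb{R}^N\setminus B_r(x_0))$ with the algebraic inequality of \cite{DKPloc}. However, your treatment of the interior--exterior piece is wrong as written. The pointwise bound you claim, namely that $|u(x)-u(y)|^{p-1}w(x)\psi(x)^p$ is controlled by $C\,(w(x)^{p-1}+w(y)^{p-1})\,w(x)\psi(x)^p$, is false: if $u(x)>k$ and $u(y)\ll k$, then $|u(x)-u(y)|^{p-1}$ is arbitrarily large while $w(y)=0$ and $w(x)$ stays fixed. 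The standard (and correct) argument does not estimate the absolute value at all: when $u(y)\le u(x)$ the integrand $|u(x)-u(y)|^{p-2}(u(x)-u(y))\,w(x)\psi(x)^p$ is nonnegative and may simply be discarded, and when it is negative one necessarily has $u(y)>u(x)>k$, so that $0<u(y)-u(x)\le u(y)-k=w(y)$ and hence $|u(x)-u(y)|^{p-1}\le w(y)^{p-1}$. Only the tail of $w$ survives, with no Young step and no $w(x)^{p-1}$ contribution; note that the extra term your version would produce, $\sup_{x\in\mathrm{spt}\,\psi}\int_{\mathbb{R}^N\setminus B_r(x_0)}|x-y|^{-N-ps}\,dy\int_{B_r(x_0)}w^p\psi^p\,dx$, does not appear on the right-hand side of \eqref{wheng1}, so your setup would not yield the stated inequality.

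There is also a sign slip in your supersolution case. With $w=(u-k)_-$ one has $\nabla w=-\nabla u$ on $\{u<k\}$, and Lemma \ref{Happ}(B) gives $\nabla H(\nabla u)=-\nabla H(\nabla w)$, so $H(\nabla u)^{p-1}\nabla H(\nabla u)\cdot\nabla w=-H(\nabla w)^p$, not $+H(\nabla w)^p$ as you assert. This minus sign is precisely what makes the direct argument close: the supersolution inequality (which is $\ge 0$ against the nonnegative test function $w\psi^p$) then moves $\int\psi^pH(\nabla w)^p$ to the correct side of the final estimate; with your stated sign the energy would sit on the wrong side and nothing could be concluded. Analogous bookkeeping is needed in the nonlocal term, where the form evaluated at $u$ equals minus the subsolution-type form for $-u$. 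The paper sidesteps all of this by observing that $-u$ is a weak subsolution of \eqref{wheqn} whenever $u$ is a weak supersolution, and then applying the already-proved subsolution estimate to $-u$; adopting that reduction would remove the sign issues from your argument.
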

\begin{proof}
Let $u$ be a weak subsolution of \eqref{wheqn}. Then choosing $\phi=w\,\psi^p=(u-k)_+\,\psi^p$ as a test function in \eqref{wheqnwksol1}, we obtain
\begin{equation}\label{wheqnwksol2}
\begin{split}
0&\geq I+J-\int_{B_r(x_0)}g(x)|u|^{p-2}uw\,\psi^p\,dx,
\end{split}
\end{equation}
where
$$
I=\int_{B_r(x_0)}H(\nabla u)^{p-1}\nabla H(\nabla u)\nabla (w\psi^p)\,dx,
$$
$$
J=\epsilon\iint_{\mathbb{R}^{2N}}|u(x)-u(y)|^{p-2}(u(x)-u(y))(w(x)\psi(x)^p-w(y)\psi(y)^p)\,d\mu.
$$
From the estimate (3.3) of $I$ in the proof of \cite[Lemma 3.1]{GKK}, we have
\begin{equation}\label{whestI}
\begin{split}
I&\geq c\int_{B_r(x_0)}\psi^p|\nabla w|^p\,dx-C\int_{B_r(x_0)}w^p|\nabla\psi|^p\,dx,
\end{split}
\end{equation}
for some positive constant $C=C(p,C_1,C_2)$. Moreover, from the lines of the proof of \cite[Pages 1285–1287, Theorem 1.4]{DKPloc}, for some constants $c = c(p) > 0$ and $C = C(p) > 0$, we have
\begin{equation}\label{whestJ}
\begin{split}
J&\geq c\,\epsilon\iint_{B_r(x_0)\times B_r(x_0)}|\psi(x)w(x)-\psi(y)w(y)|^p\,d\mu\\
&\quad -C\epsilon\iint_{B_r(x_0)\times B_r(x_0)}\max\{w(x),w(y)\}^p|\psi(x)-\psi(y)|^p\,d\mu\\
&\quad -C\epsilon\sup_{x\in\mathrm{spt}\,\psi}\int_{\mathbb{R}^N\setminus B_r(x_0)}\frac{w(y)^{p-1}}{|x-y|^{N+ps}}\int_{B_r(x_0)}w\,\psi^p\,dx.
\end{split}
\end{equation}
Thus, combining the estimates \eqref{whestI}, \eqref{whestJ} in \eqref{wheqnwksol2} and recalling that $\epsilon\in(0,1]$, the resulting estimate \eqref{wheng1} follows. Finally, the estimate for supersolutions follows by observing that $-u$ is a subsolution of \eqref{wheqn} if $u$ is a supersolution of \eqref{wheqn} and therefore by applying the obtained estimate \eqref{wheng1} to $-u$.
\end{proof}

Next, we show that expansion of positivity property holds for weak superslutions of the problem \eqref{wheqn}. The proof follows from the proof of \cite[Lemma 3.5]{Biagi} and \cite[Lemma 3.6]{GKK}, which in turn is similar to \cite[Lemma 7.1]{GK}. For convenience of the reader, we provide few details of the proof.

\begin{Lemma}\label{expthm}(\textbf{Expansion of positivity})
Let $g\in L^\infty(\Omega)$ be nonpositive in $\Omega$ and $u\in W^{1,p}_{\mathrm{loc}}(\Omega)\cap L^{ps}_{p-1}(\mathbb{R}^N)$ be a weak supersolution of \eqref{wheqn} such that $u\geq 0$ in $B_R(x_0)\Subset\Omega$. Let $k\geq 0$ and suppose that there exists a constant $\tau\in(0,1]$ such that
\begin{equation}\label{expgc}
|B_r(x_0)\cap \{u\geq k\}|\geq\tau|B_r(x_0)|
\end{equation}
for some $r\in(0,1]$ such that $0<r<\frac{R}{16}$. Then there exists a constant $\delta=\delta(N,p,s,\tau,g,C_1,C_2)\in(0,\frac{1}{4})$ such that
\begin{equation}\label{expc}
\inf_{B_{4r}(x_0)}\geq \delta\,k-\Big(\frac{r}{R}\Big)^\frac{p}{p-1}\mathrm{Tail}\,(u_-;x_0,R)
\end{equation}
where $\mathrm{Tail}(\cdot)$ is given by \eqref{taildef}.
\end{Lemma}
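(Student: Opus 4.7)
The plan is to adapt the classical De Giorgi expansion-of-positivity scheme to the mixed anisotropic-nonlocal setting, combining the logarithmic Caccioppoli estimate of Theorem \ref{englog} with a De Giorgi iteration driven by the Caccioppoli inequality of Lemma \ref{wheng}, following the strategy of \cite[Lemma 4.1]{DKPhar}, \cite[Lemma 3.6]{GKK}, and \cite[Lemma 3.5]{Biagi}. The tail term in \eqref{expc} is absorbed from the outset by the standard shift
$$
d := \Bigl(\frac{r}{R}\Bigr)^{p/(p-1)}\mathrm{Tail}(u_-;x_0,R),\qquad \widetilde u := u + d,
$$
so that $\widetilde u \geq 0$ on $B_R(x_0)$; the negative part of $\widetilde u$ on $\mathbb{R}^N\setminus B_R(x_0)$ is $\leq u_-$, and hence its contribution to any tail integral over $\mathbb{R}^N\setminus B_{cr}(x_0)$ is, up to dimensional constants, comparable to $d^{p-1}/r^{p}$, precisely the order absorbed by the $1/r^p$ factors in the Caccioppoli inequalities. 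The density assumption \eqref{expgc} still holds for $\widetilde u$ at level $k+d$, and because $g\leq 0$ and $u\geq 0$ on $B_R(x_0)$, the source $g(x)|u|^{p-2}u$ is nonpositive there, so $\widetilde u$ behaves as a supersolution of the homogeneous equation $-H_p w + \epsilon(-\Delta_p)^s w \geq 0$ modulo terms that are harmless in $\|g\|_{L^\infty}$.

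\textbf{Step 1 (measure propagation).} Apply Theorem \ref{englog} to $\widetilde u$ with a cutoff $\psi\in C_c^\infty(B_{3r}(x_0))$ equal to one on $B_{2r}(x_0)$. Together with the Poincaré inequality applied to $v := \bigl[\log((k+d)/\widetilde u)\bigr]_+$ and \eqref{expgc} (which forces $v$ to vanish on a set of measure $\geq \tau|B_r(x_0)|$), one derives
$$
\bigl|\{\widetilde u \leq \sigma(k+d)\}\cap B_{2r}(x_0)\bigr| \;\leq\; \frac{C(N,p,s,\tau,g,C_1,C_2)}{\log(1/\sigma)}\,|B_{2r}(x_0)|
$$
for every $\sigma\in(0,1)$. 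For any $\nu>0$ to be fixed below, one then picks $\sigma=\sigma(\nu)$ small enough that the right-hand side is at most $\nu\,|B_{2r}(x_0)|$.

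\textbf{Step 2 (De Giorgi iteration).} Set $\ell := \sigma(k+d)$, $r_j := 2r(1+2^{-j})$, $\ell_j := (\ell/2)(1+2^{-j})$, and $w_j := (\ell_j-\widetilde u)_+$. Apply Lemma \ref{wheng} to $\widetilde u$ at level $\ell_j$ on $B_{r_j}(x_0)$ against a cutoff $\psi_j$ equal to one on $B_{r_{j+1}}(x_0)$; the nonlocal tail over $\mathbb{R}^N\setminus B_{r_j}(x_0)$ is estimated via $w_j \leq \ell_j$ inside $B_{2r}$ and $\widetilde u\geq 0$ outside, so that it contributes at most a uniform multiple of $\ell^{p-1}$. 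Feeding the resulting estimate into the Gagliardo-Nirenberg-Sobolev inequality of Lemma \ref{Sobine} produces a geometric recursion
$$
Y_{j+1} \;\leq\; C\,b^{j}\,Y_j^{1+\beta},\qquad Y_j := \frac{|\{\widetilde u < \ell_j\}\cap B_{r_j}(x_0)|}{|B_{r_j}(x_0)|},
$$
with $C,b>1$ and $\beta>0$ depending only on $N,p,s,g,C_1,C_2$ and independent of $\epsilon\in(0,1]$, since $\epsilon$ enters only as a nonnegative multiplier on the nonlocal side of Lemma \ref{wheng}. Step 1 gives $Y_0\leq C\nu$; choosing $\nu$ small enough, Lemma \ref{iteration} yields $Y_j\to 0$, i.e.\ $\widetilde u\geq \ell/2$ a.e.\ on $B_{2r}(x_0)$. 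A routine chaining across dyadic scales, or a direct enlargement of the initial cutoffs in Steps 1--2, upgrades this to $B_{4r}(x_0)$ possibly at the cost of replacing $\ell/2$ by a slightly smaller multiple. Setting $\delta := \sigma/8\in(0,\tfrac14)$ and unravelling $\widetilde u = u+d$ yields \eqref{expc}. The main obstacle is the bookkeeping of the nonlocal tail throughout both steps, and in particular verifying in Step 2 that the $2^{jp}/r^p$ blow-up from the cutoffs does not outrun $\int_{\mathbb{R}^N\setminus B_{r_j}}w_j^{p-1}/|x-y|^{N+ps}\,dy$ -- the estimate carried out in \cite[Lemma 4.1]{DKPhar} in the purely nonlocal case and in \cite[Lemma 3.6]{GKK} in the mixed case.
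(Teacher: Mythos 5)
Your overall strategy is the same as the paper's: a two-step scheme in which a logarithmic Caccioppoli estimate plus the density hypothesis \eqref{expgc} gives measure smallness of order $1/\log(1/\sigma)$, followed by a De Giorgi iteration based on Lemma \ref{wheng}, the Sobolev inequality and Lemma \ref{iteration}. However, as written there is a genuine gap in the tail bookkeeping of Step 2. After the shift $\widetilde u=u+d$, the exterior contribution (from $\mathbb{R}^N\setminus B_R(x_0)$, where $\widetilde u$ may still be negative) enters the Caccioppoli estimate at level $\ell_j\simeq\sigma(k+d)$ as an additive term of size $d^{\,p-1}r^{-p}$, and to obtain a clean recursion $Y_{j+1}\le c_0 b^j Y_j^{1+\beta}$ you must compare it with $\ell_j^{\,p-1}r^{-p}$; this produces a factor $(d/\ell)^{p-1}\lesssim\sigma^{1-p}$ in $c_0$. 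Since the admissible smallness threshold is $\nu\simeq c_0^{-1/\beta}b^{-1/\beta^2}\sim\sigma^{(p-1)/\beta}$ while Step 1 only supplies $Y_0\lesssim 1/(\tau\log(1/\sigma))$, the required inequality $1/\log(1/\sigma)\lesssim\sigma^{(p-1)/\beta}$ fails for small $\sigma$ and need not hold for any $\sigma\in(0,1)$: the choice of $\sigma$ becomes circular. The paper removes this obstruction by the explicit reduction \eqref{assume}: one may assume $\delta k\geq\bigl(\tfrac rR\bigr)^{\frac{p}{p-1}}\mathrm{Tail}(u_-;x_0,R)+2\eta$, since otherwise \eqref{expc} is trivial because $u\geq 0$ in $B_R(x_0)$; then the tail is bounded by $k_j^{\,p-1}$ with an absolute constant and all recursion constants are independent of $\delta$, so $\delta$ can be fixed at the very end through the Step 1 estimate. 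Your shift alone does not achieve this; you need the same case distinction (or an equivalent) before running the iteration.

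Two further points. First, the geometry is inconsistent as stated: Step 1 yields smallness on $B_{2r}$ while your $Y_0$ lives on $B_{4r}$, and the iteration only delivers positivity on $B_{2r}$; your proposed remedy of enlarging the initial cutoffs is correct and is precisely what the paper does (measure estimate on $B_{6r}$ with cutoffs in $B_{7r}\subset B_{8r}\subset B_R$, iteration from $6r$ down to $4r$, all permitted because $r<R/16$), whereas the "routine chaining" alternative is not routine here, since re-applying the lemma at scale $2r$ would require nonnegativity of $u$ on a correspondingly larger ball not provided by the hypotheses. Second, Theorem \ref{englog} cannot be invoked verbatim for $\widetilde u$, because it assumes $v\geq\rho>0$ on all of $\mathbb{R}^N$; one must rerun the logarithmic test-function computation locally, with the far-field handled by tail terms (the paper's estimates of $I_1$--$I_4$), and strict positivity on the support of the cutoff is obtained by adding the auxiliary parameter $\eta>0$ to the shift and letting $\eta\to 0$ at the end.
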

\begin{proof}
We prove the result into two steps.\\
\textbf{Step 1.} Let \eqref{expgc} holds. Then we claim that there exists a constant $c_1=c_1(N,p,s,g,C_1,C_2)>0$ such that
\begin{equation}\label{exp1}
\left|B_{6r}(x_0)\cap \Big\{u\leq 2\delta\,k-\frac{1}{2}\Big(\frac{r}{R}\Big)^\frac{p}{p-1}\mathrm{Tail}\,(u_-;x_0,R)-\eta\Big\}\right|\leq \frac{c_1}{\tau\,\mathrm{log}\,\frac{1}{2\delta}}|B_{6r}(x_0)|
\end{equation}
for every $\delta\in(0,\frac{1}{4})$ and every $\eta>0$.\\
Let $\eta>0$ and $\psi\in C_c^{\infty}(B_{7r}(x_0))$ be a cutoff function such that 
$0\leq\psi\leq 1$ in $B_{7r}(x_0)$, $\psi\equiv 1$ in $B_{6r}(x_0)$ and $|\nabla\psi|\leq\frac{8}{r}$ in $B_{7r}(x_0)$.
We denote $w=u+t_{\eta}$, where
$$
t_{\eta}=\frac{1}{2}\Big(\frac{r}{R}\Big)^\frac{p}{p-1}\Tail(u_{-};x_0,R)+\eta.
$$
We observe that $w$ is a weak supersolution of \eqref{wheqn}, since $u$ is so and $a\geq 0$ along with $u\geq 0$ in $B_R(x_0)$. Therefore, choosing $\phi=w^{1-p}\psi^p$ as a test function in \eqref{wheqnwksol1}, we obtain
\begin{equation}\label{DGLtsteqn1}
\begin{split}
0&\leq\int_{B_{8r}(x_0)}H(\nabla w)^{p-1}\nabla H(\nabla w)\nabla( w^{1-p}\psi^p)\,dx\\
&\qquad+\iint_{B_{8r}(x_0)\times B_{8r}(x_0)}|w(x)-w(y)|^{p-2}(w(x)-w(y))(w(x)^{1-p}\psi(x)^p- w(y)^{1-p}\psi(y)^p)\,d\mu\\
&\qquad+2\int_{\mathbb{R}^n\setminus B_{8r}(x_0)}\int_{B_{8r}(x_0)}|w(x)-w(y)|^{p-2}(w(x)-w(y)) w(x)^{1-p}\psi(x)^p\,d\mu-\int_{B_{8r}(x_0)}g(x)\,\psi(x)^p\,dx\\
&=I_1+I_2+I_3+I_4.
\end{split}
\end{equation}
\textbf{Estimate of $I_1$:} Proceeding similarly as in the proof of \cite[Pages 717--718, Lemma 3.4]{KKma}, we obtain a constant $c=c(p,C_1,C_2)>0$ such that
\begin{equation}\label{EstI1}
\begin{split}
I_1&\leq -c\int_{B_{6r}(x_0)}|\nabla\log w|^p\,dx+cr^{N-p}.
\end{split}
\end{equation}
\textbf{Estimate of $I_2$:} Arguing as in the proof of the estimate of $I_1$ in \cite[page 1817]{DKPhar} and using the fact that $r\in(0,1]$ and the properties of $H$, we
obtain a constant $c=c(N,p,s)>0$ such that
\begin{equation}\label{EstI2}
\begin{split}
I_2&\leq -\frac{1}{c}\iint_{B_{6r}(x_0)\times B_{6r}(x_0)}\bigg|\log\Big(\frac{ w(x)}{ w(y)}\Big)\bigg|^p\,d\mu+c r^{N-p}.
\end{split}
\end{equation}
\textbf{Estimate of $I_3$:} Here following the proof of the estimate of $I_2$ in \cite[Pages 1817--1818]{DKPhar}, we obtain
\begin{equation}\label{estI3}
I_3\leq cr^{N-p},
\end{equation}
with $c=c(N,p,s)>0$.\\
\textbf{Estimate of $I_4$:} Since $g\in L^\infty(\Omega)$, we notice that
\begin{equation}\label{estI4}
I_4\leq cr^{N-p},
\end{equation}
with $c=c(N,g)>0$ and using that $0<r\leq 1$.\\
By using \eqref{EstI1}, \eqref{EstI2}, \eqref{estI3} and \eqref{estI4} in \eqref{DGLtsteqn1}, we obtain 
\begin{equation}\label{estI123}
\begin{split}
\int_{B_{6r}(x_0)}|\nabla\log w|^p\,dx
+\int_{B_{6r}(x_0)}\int_{B_{6r}(x_0)}\bigg|\log\Big(\frac{ w(x)}{ w(y)}\Big)\bigg|^p\,d\mu\leq cr^{N-p},
\end{split}
\end{equation}
for some constant $c=c(N,p,s,g,C_1,C_2)>0$. Then repeating the arguments after \cite[(7.11), page 21]{GK} the estimate \eqref{exp1} follows.\\
\textbf{Step 2.} Here we claim that, for every $\eta>0$, there exists a constant $\delta=\delta(N,p,s,\tau,g,C_1,C_2)\in(0,\frac{1}{4})$ such that
\begin{equation}\label{expanaux}
\inf_{B_{4r}(x_0)}\,u\geq\delta k-\Big(\frac{r}{R}\Big)^\frac{p}{p-1}\Tail(u_{-};x_0,R)-2\eta.
\end{equation}
As a corollary of \eqref{expanaux}, the property \eqref{expc} follows. Now to establish \eqref{expanaux}, without loss of generality, we assume that
\begin{equation}\label{assume}
\delta k\geq\Big(\frac{r}{R}\Big)^\frac{p}{p-1}\Tail(u_{-};x_0,R)+2\eta,
\end{equation}
since in the other case, the estimate \eqref{expanaux} holds true due to the given condition that $u\geq 0$ in $B_R(x_0)$. Let $\rho\in[r,6r]$ and $\psi\in C_c^{\infty}(B_{\rho}(x_0))$ be a cutoff function such that $0\leq\psi\leq 1$ in $B_{\rho}(x_0)$.
For any $l\in(\delta k,2\delta k)$, from Lemma \ref{wheng} and the proof of \cite[Pages 1820--1821, Lemma 3.2]{DKPhar} for $w=(l-u)_{+}$, 
for some constant $c=c(N,p,s,C_1,C_2)>0$, we obtain
\begin{equation}\label{energyapp}
\begin{split}
&\int_{B_{\rho}(x_0)}\psi^p|\nabla\,w|^p\,dx\\
&\leq c\int_{B_{\rho}(x_0)}w^p\,|\nabla\psi|^p\,dx+c\iint_{B_{\rho}(x_0)\times B_{\rho}(x_0)}\max\{w(x),w(y)\}^p|\psi(x)-\psi(y)|^p\,d\mu\\
&\qquad+c\,l\sup_{x\in\supp\psi}\int_{{\mathbb{R}^n}\setminus B_{\rho}(x_0)}\big(l+(u(y))_{-}\big)^{p-1}\,K(x,y)\,dy
\cdot|B_{\rho}(x_0)\cap\{u<l\}|\\
&\quad-\int_{B_\rho(x_0)}g(x)\,|u|^{p-2}u\,w\,\psi^p\,dx\\
&=J_1+J_2+J_3+J_4.
\end{split}
\end{equation}
Given $j=0,1,2,\dots$, we define
\begin{equation}\label{parameter}
l=k_j=\delta k+2^{-j-1}\delta k,
\quad
\rho=\rho_j=4r+2^{1-j}r,
\quad
\hat{\rho_j}=\frac{\rho_j+\rho_{j+1}}{2}.
\end{equation}
Therefore $l\in(\delta k,2\delta k)$, $\rho_j,\hat{\rho_j}\in(4r,6r)$ and 
$$
k_j-k_{j+1}=2^{-j-2}\delta k\geq 2^{-j-3}k_j
$$
for all $j=0,1,2,\dots$. We set $B_j=B_{\rho_j}(x_0),\,\hat{B}_j=B_{\hat{\rho}_j}(x_0)$ and notice that
$$
w_j=(k_j-u)_{+}\geq 2^{-j-3} k_j\chi_{\{u<k_{j+1}\}}.
$$
Let $(\psi_j)_{j=0}^{\infty}\subset C_c^{\infty}(\hat{B}_j)$ be a sequence of cutoff functions such that
$0\leq\psi_j\leq 1$ in $\hat{B}_j$, $\psi_j\equiv 1$ in $B_{j+1}$ and $|\nabla\psi_j|\leq\frac{2^{j+3}}{r}$.
We choose $\psi=\psi_j$, $w=w_j$ in \eqref{energyapp}. Then by the properties of $\psi_j$, we have
\begin{equation}\label{I1jest}
J_1=\int_{B_j}w_j^p|\nabla\psi_j|^p\,dx\leq{c2^{jp}}k_j^{p}r^{-p}|B_j\cap\{u<k_j\}|,
\end{equation}
for some constant $c=c(N,p,s,C_1,C_2)>0$. Now following the same proof of \cite[Page 1822, Lemma 3.2]{DKPhar}, for any $r\in(0,1]$, we get 
\begin{equation}\label{I2jest}
\begin{split}
J_2&=\int_{B_j}\int_{B_j}\max\{w_j(x),w_j(y)\}^p|\psi_j(x)-\psi_j(y)|^p\,d\mu\leq c\,2^{jp} k_j^{p}r^{-p}|B_j\cap\{u<k_j\}|
\end{split}
\end{equation}
for some constant $c=c(N,p,s,C_1,C_2)>0$. We estimate $J_3$ by following the proof of \cite[Page 1823, Lemma 3.2]{DKPhar}. To this end, we notice that, for every $x\in\mathrm{spt}\,\psi_j\subset\hat{B}_j$ and $y\in\mathbb{R}^n\setminus B_j$, we have
\begin{equation}\label{ne}
\frac{|y-x_0|}{|y-x|}=\frac{|y-x+x-x_0|}{|y-x|}\leq 1+\frac{|x-x_0|}{|y-x|}\leq 1+\frac{\hat{\rho}_j}{\rho_j -\hat{\rho}_j}=2^{j+4}.
\end{equation}
Using \eqref{ne} and the properties of the kernel $K(x,y)=|x-y|^{-N-ps}$, we obtain
\begin{equation}\label{ne1}
\begin{split}
&\esssup_{x\in\supp\psi_j}\int_{\mathbb{R}^n\setminus B_j}\big(k_j+(u(y))_{-}\big)^{p-1}\,K(x,y)\,dy\\
&\qquad\leq c2^{j(N+ps)}\int_{\mathbb{R}^n\setminus B_j}\big(k_j+(u(y))_{-}\big)^{p-1}|y-x_0|^{-N-ps}\,dy\\
&\qquad\leq c2^{j(N+ps)}\bigg(k_j^{p-1}r^{-ps}+\int_{\mathbb{R}^n\setminus B_R(x_0)}(u(y))_{-}^{p-1}|y-x_0|^{-N-ps}\,dy\bigg)\\
&\qquad= c2^{j(N+ps)}\Big(k_j^{p-1}r^{-p}+r^{-p}\big(\frac{r}{R}\big)^p \mathrm{Tail}(u_-;x_0,R)^{p-1}\Big)\\
&\qquad\leq c2^{j(N+ps)}k_j^{p-1}r^{-p},
\end{split}
\end{equation}
with $c=c(N,p,s,C_1,C_2)$.
Here we also used the fact that $r\in(0,1]$ along with \eqref{assume}, $\delta k<k_j$ and the fact that $u\geq 0$ in $B_R(x_0)$.
Therefore, from \eqref{ne1}, we obtain
\begin{equation}\label{I3jest}
\begin{split}
J_3&=ck_j\esssup_{x\in\supp\psi_j}\int_{\mathbb{R}^n\setminus B_j}\big(k_j+(u(y))_{-}\big)^{p-1}\,K(x,y)\,dy
\cdot|B_j\cap\{u<k_j\}|\\
&\leq c2^{j(N+ps)}k_j^{p}r^{-p}|B_j\cap\{u<k_j\}|,
\end{split}
\end{equation}
with $c=c(N,p,s,C_1,C_2)$. Now, on the other hand, recalling that $r\leq 1$ along with that $a\in L^\infty(\Omega)$, we have
\begin{equation}\label{J3est}
J_4\leq c\,k_j^{p}r^{-p}|B_j\cap\{u<k_j\}|,
\end{equation}
for some constant $c=c(N,p,s,g,C_1,C_2)>0$.
By using \eqref{I1jest}, \eqref{I2jest}, \eqref{I3jest} and \eqref{J3est} in \eqref{energyapp} and taking into account \eqref{exp1}, one can follow the arguments of the proof of \cite[Lemma 7.1]{GK} to obtain the desired estimate \eqref{expanaux}. Then the final estimate \eqref{expc} follows from \eqref{expanaux}. 
\end{proof}

By Theorem \ref{whthm} and using a classical covering argument, we obtain
\begin{Corollary}\label{whthmcor}
Let $g\in L^\infty(\Omega)$ be nonpositive in $\Omega$. Let $u\in W^{1,p}_{\mathrm{loc}}(\Omega)\cap L^{ps}_{p-1}(\mathbb{R}^N)$ be a weak supersolution of \eqref{wheqn} in $\Omega$ such that $u\geq 0$ in $\mathbb{R}^N\setminus\Omega$ and
$$
u>0\text{ on every open ball }B\Subset\Omega.
$$
Then for every open set $\mathcal{O}\Subset\Omega$, there exists a constant $C=C(\mathcal{O},u)>0$ such that 
$$
u\geq C(\mathcal{O},u)>0\text{ in }\mathcal{O}.
$$
\end{Corollary}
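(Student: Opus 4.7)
The plan is to combine the hypothesis that $u$ is nonnegative everywhere with the weak Harnack inequality in Theorem \ref{whthm} via a standard finite covering of the compact set $\overline{\mathcal{O}}$. The crucial observation is that since $u \geq 0$ in $\mathbb{R}^N\setminus\Omega$ and $u>0$ on every open ball $B\Subset\Omega$, we have $u\geq 0$ in the whole of $\mathbb{R}^N$, so that $u_-\equiv 0$ and, consequently, $\mathrm{Tail}(u_-;x_0,R)=0$ in \eqref{whine} for every admissible ball. Thus the weak Harnack inequality collapses to the clean form
\[
\Big(\fint_{B_r(x_0)}u^Q\,dx\Big)^{\frac{1}{Q}}\leq C\,\inf_{B_r(x_0)}\,u
\]
for every $B_r(x_0)$ with $r\in(0,1]$ and $B_R(x_0)\Subset\Omega$ for some $R\geq 16\, r$.

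First I would fix $d=\mathrm{dist}(\mathcal{O},\partial\Omega)>0$ and pick a radius $r_0=\min\{1,d/32\}$, so that $B_{16r_0}(x)\Subset\Omega$ for every $x\in\overline{\mathcal{O}}$. By compactness of $\overline{\mathcal{O}}$, I can extract a finite subcover $\{B_{r_0}(x_i)\}_{i=1}^{n}$ with $x_i\in\overline{\mathcal{O}}$. For each $i$, apply Theorem \ref{whthm} with $x_0=x_i$, $r=r_0$, $R=16r_0$; since the tail term vanishes, this yields
\[
\Big(\fint_{B_{r_0}(x_i)}u^Q\,dx\Big)^{\frac{1}{Q}}\leq C\,\inf_{B_{r_0}(x_i)}\,u.
\]

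Next I would argue that each infimum is strictly positive. Indeed, the hypothesis $u>0$ on every open ball $B\Subset\Omega$ implies $u>0$ almost everywhere on $B_{r_0}(x_i)\Subset\Omega$, so $\int_{B_{r_0}(x_i)}u^Q\,dx>0$. Hence $m_i:=\inf_{B_{r_0}(x_i)}u>0$ for each $i=1,\dots,n$, and setting
\[
C(\mathcal{O},u):=\min_{1\leq i\leq n}m_i>0
\]
I obtain $u\geq C(\mathcal{O},u)$ on $\bigcup_{i=1}^n B_{r_0}(x_i)\supset\mathcal{O}$, which is the desired conclusion.

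I do not anticipate a serious obstacle here: the argument is purely a covering reduction, and all the analytic content is already packaged into Theorem \ref{whthm}. The only points requiring minor care are (i) ensuring the chosen radius $r_0$ is at most $1$ so that the weak Harnack inequality is applicable, and (ii) verifying that the extension of $u$ by its given values outside $\Omega$ is nonnegative everywhere so that $\mathrm{Tail}(u_-;\cdot)\equiv 0$; both of these are immediate from the hypotheses.
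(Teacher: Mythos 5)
Your argument is correct and is precisely the ``classical covering argument'' the paper invokes: cover $\overline{\mathcal{O}}$ by finitely many small balls well inside $\Omega$, note that $u\geq 0$ in all of $\mathbb{R}^N$ makes the tail term in Theorem \ref{whthm} vanish, and use the positivity of $\int_{B}u^{Q}$ on each ball to force each (essential) infimum to be strictly positive. The only cosmetic point is that, to match the radius restriction inherited from Lemma \ref{expthm}, one should take $r_0$ strictly less than $R/16$ (e.g.\ $R=32r_0$), which your choice of $r_0=\min\{1,d/32\}$ already accommodates.
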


\subsubsection{Semicontinuity}
In this subsection, we establish semicontinuity results of weak solutions of \eqref{wheqn}. More precisely, we show that weak supersolutions of \eqref{wheqn} has a lower semicontinuous representative and weak subsolutions of \eqref{wheqn} has an upper semicontinuous representative in $\Omega$. Before we state these results below, we discuss a useful result from Liao \cite{Liao}.\\
Let $u$ be a measurable function that is locally essentially bounded below in $\Omega$. 
Let $\rho\in(0,1]$ be such that $B_\rho(y)\Subset\Omega$. Assume that $b,c\in(0,1)$, $M>0$ and
$\mu_-\leq\inf_{B_\rho(y)}u$.
Following \cite{Liao}, we say that $u$ satisfies the property $(\mathcal{D})$, if there exists a constant $\tau\in(0,1)$ depending on $b,M,\mu_-$ and other data 
(may depend on the partial differential equation and will be made precise in Lemma \ref{DGL}), but independent of $\rho$, such that
$$
|\{u\leq\mu_-+M\}\cap B_\rho(y)|\leq\tau|B_\rho(y)|,
$$
implies that $u\geq\mu_-+b\,M$ in $B_{c\rho}(y)$.

Moreover, for $u\in L^1_{\loc}(\Omega)$, we denote the set of Lebesgue points of $u$ by
$$
\mathcal{E}=\bigg\{x\in\Omega:|u(x)|<\infty,\,\lim_{r\to 0}\fint_{B_r(x)}|u(x)-u(y)|\,dy=0\bigg\}.
$$
Note that, by the Lebesgue differentiation theorem, $|\mathcal{E}|=|\Omega|$.

The following result follows from \cite[Theorem 2.1]{Liao}.
\begin{Lemma}\label{lscthmLiao}
Let $u$ be a measurable function that is locally integrable and locally essentially bounded below in $\Omega$. Assume that $u$ satisfies the property $(\mathcal{D})$. 
Then $u(x)=u_*(x)$ for every $x\in\mathcal{E}$, where
\[
u_*(x)=\lim_{r\to 0}\inf_{y\in B_r(x)}u(y).
\]
In particular, $u_*$ is a lower semicontinuous representative of $u$ in $\Omega$.
\end{Lemma}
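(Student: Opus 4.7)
The plan is to prove the pointwise equality $u(x)=u_*(x)$ for every $x\in\mathcal{E}$ via two opposite one-sided bounds; the lower semicontinuity of $u_*$ then comes for free from its definition as a $\liminf$, and $|\mathcal{E}|=|\Omega|$ promotes $u_*$ to a representative of $u$ in $\Omega$.

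The bound $u_*(x)\le u(x)$ is immediate from
\[
\essinf_{B_r(x)} u\le \fint_{B_r(x)} u(y)\,dy\xrightarrow[r\to 0]{} u(x),
\]
valid because $x\in\mathcal{E}$, combined with $\essinf_{B_r(x)}u\to u_*(x)$ as $r\to 0$.

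I would establish the reverse $u(x)\le u_*(x)$ by contradiction via a single application of property $(\mathcal{D})$. Assume $u_*(x)<u(x)$, pick $\epsilon>0$ with $u_*(x)<u(x)-\epsilon$, and fix $b,c\in(0,1)$. Since $u_*(x)$ is finite by the local essential lower bound on $u$, I can select $\mu_-=u_*(x)-\delta$ with $\delta>0$ so small that $\delta<\tfrac{b(u(x)-\epsilon-u_*(x))}{1-b}$; set $M:=u(x)-\epsilon-\mu_->0$. These choices fix a single threshold $\tau\in(0,1)$ in $(\mathcal{D})$. Because $\essinf_{B_\rho(x)} u$ increases monotonically to $u_*(x)>\mu_-$ as $\rho\downarrow 0$, the admissibility hypothesis $\mu_-\le\essinf_{B_\rho(x)} u$ holds for all small $\rho$. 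Moreover, noting that $\{u\le\mu_-+M\}=\{u\le u(x)-\epsilon\}$ and that on this set $|u-u(x)|\ge\epsilon$, the Lebesgue point property at $x$ yields
\[
\frac{|\{u\le \mu_-+M\}\cap B_\rho(x)|}{|B_\rho(x)|}\le \frac{1}{\epsilon}\fint_{B_\rho(x)}|u(y)-u(x)|\,dy\longrightarrow 0
\]
as $\rho\to 0$, so the density falls below $\tau$. Property $(\mathcal{D})$ then gives $u\ge\mu_-+bM=u_*(x)+b(u(x)-\epsilon-u_*(x))-(1-b)\delta$ a.e.\ in $B_{c\rho}(x)$; the constraint on $\delta$ makes this quantity strictly greater than $u_*(x)$ by a fixed positive amount. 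Passing $\rho\to 0$ and using $\essinf_{B_{c\rho}(x)}u\to u_*(x)$ produces the contradiction $u_*(x)>u_*(x)$.

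The main obstacle is the $\mu_-$-dependence of $\tau$ in property $(\mathcal{D})$: one cannot invoke the property repeatedly with $\mu_-$ varying in $\rho$ without risking a degenerating threshold. My approach skirts this by fixing $\mu_-$ once, slightly below the finite value $u_*(x)$, and exploiting the monotonicity $\essinf_{B_\rho(x)}u\uparrow u_*(x)$ to make this choice uniformly admissible as $\rho\downarrow 0$. A minor but essential point is the finiteness of $u_*(x)$, guaranteed by the local essential lower bound on $u$, which in turn guarantees the existence of such a $\mu_-$.
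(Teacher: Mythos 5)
Your argument is correct: the two-sided comparison (averages at a Lebesgue point for $u_*\le u$, and Chebyshev plus the Lebesgue point property to make the sublevel-set density fall below $\tau$ so that property $(\mathcal{D})$ forces $\inf_{B_{c\rho}(x)}u\ge\mu_-+bM>u_*(x)$, a contradiction) is essentially the standard proof of the cited result of Liao, which the paper itself does not reprove but simply quotes. Your fix for the $\mu_-$-dependence of $\tau$ — choosing $\mu_-=u_*(x)-\delta$ once, with $\delta$ constrained by $b$, and using the monotonicity of $\inf_{B_\rho(x)}u$ as $\rho\downarrow 0$ for admissibility — is a sound, minor variant of the same mechanism, so there is nothing further to add.
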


Since $u$ is assumed to be locally essentially bounded below, the  lower semicontinuous regularization $u_*(x)$ is well defined at every point $x\in\Omega$. Now we state the main regularity results of this subsection, which are consequences of Lemma \ref{lscthmLiao} and Lemma \ref{DGL} below.

\begin{Theorem}\label{lscthm1n}(\textbf{Lower semicontinuity}).
Let $g\in L^\infty(\Omega)$ and $u\in W^{1,p}_{\mathrm{loc}}(\Omega)\cap L^{ps}_{p-1}(\mathbb{R}^N)$ which is bounded below in $\mathbb{R}^N$ be a weak supersolution of \eqref{wheqn}. 
Then 
\[
u(x)=u_*(x)=\lim_{r\to 0}\inf_{y\in B_r(x)}u(y)
\]
for every $x\in\mathcal{E}$. 
In particular, $u_*$ is a lower semicontinuous representative of $u$ in $\Omega$. 
\end{Theorem}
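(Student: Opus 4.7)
The strategy is to apply Liao's abstract semicontinuity criterion (Lemma \ref{lscthmLiao}), which reduces the theorem to verifying that the supersolution $u$ enjoys the De Giorgi-type property $(\mathcal{D})$. Concretely, given $y\in\Omega$ and $\rho\in(0,1]$ with $B_\rho(y)\Subset\Omega$, and $\mu_-\le\essinf_{B_\rho(y)}u$, I must produce, for each $b\in(0,1)$, a threshold $\tau\in(0,1)$ and a radius fraction $c\in(0,1)$ such that
\[
|\{u\le\mu_-+M\}\cap B_\rho(y)|\le\tau\,|B_\rho(y)|
\quad\Longrightarrow\quad
u\ge\mu_-+bM \text{ on } B_{c\rho}(y).
\]

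First I would normalize by passing to $v:=u-\mu_-+\eta$ for a small auxiliary $\eta>0$, so that $v\ge\eta>0$ on $B_\rho(y)$. Since $u$ is bounded below in $\mathbb{R}^N$ by assumption and, by the weak Harnack machinery (Theorem \ref{whthm}) and standard subsolution arguments, locally bounded above in $\Omega$, the zero-order term $g(x)|u|^{p-2}u$ is bounded on $B_\rho(y)$, so $v$ satisfies a mixed inequality of the same type as \eqref{wheqn} with a uniformly bounded forcing.

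The core of the verification of $(\mathcal{D})$ is a De Giorgi iteration, which is precisely the content of Lemma \ref{DGL} invoked in the statement. Starting from the Caccioppoli estimate of Lemma \ref{wheng} applied to the truncations $(k_j-v)_+$ on a shrinking sequence of balls $B_{\rho_j}(y)$ with levels $k_j\searrow bM$, I would combine it with the Gagliardo--Nirenberg--Sobolev inequality (Lemma \ref{Sobine}) to produce a recursive inequality for the normalized measure sequence $Y_j:=|B_{\rho_j}(y)\cap\{v\le k_j\}|/|B_{\rho_j}(y)|$. The geometric iteration lemma (Lemma \ref{iteration}) then forces $Y_j\to 0$ provided $Y_0$ is small, which is the quantitative $(\mathcal{D})$ statement. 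The nonlocal tail contribution is controlled by the $L^{ps}_{p-1}$ assumption on $u$ and the restriction $\rho\in(0,1]$, following verbatim Step~2 of the proof of Lemma \ref{expthm} (estimate \eqref{ne1}); the bounded forcing $g|u|^{p-2}u$ is absorbed exactly as in $J_4$ of that proof.

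The main technical obstacle is to ensure that the constants $\tau,b,c$ obtained from the iteration are genuinely independent of $\rho$ and of $\eta$, so that property $(\mathcal{D})$ applies in the form required by Lemma \ref{lscthmLiao}. Independence of $\rho$ follows from the scaling invariance of the De Giorgi scheme together with $\rho\le 1$; independence of $\eta$ is obtained by passing $\eta\to 0^+$ at the end, noting that the constants in Lemma \ref{wheng} do not depend on $\inf v$. Once $(\mathcal{D})$ is in hand, Lemma \ref{lscthmLiao} delivers $u(x)=u_*(x)$ for every $x\in\mathcal{E}$, and the lower semicontinuity of $u_*$ in $\Omega$ is immediate from its definition as a limit of infima over shrinking balls.
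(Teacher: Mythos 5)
Your overall route is exactly the paper's: Liao's criterion (Lemma \ref{lscthmLiao}) reduces the theorem to the De Giorgi property $(\mathcal{D})$, which is verified by the iteration of Lemma \ref{DGL} built on the Caccioppoli estimate of Lemma \ref{wheng}, the Sobolev inequality of Lemma \ref{Sobine} and the iteration Lemma \ref{iteration}. The genuine problem is your treatment of the zero-order term: you assert that $u$ is ``locally bounded above in $\Omega$'' by ``the weak Harnack machinery (Theorem \ref{whthm}) and standard subsolution arguments'', and you use this to declare the forcing $g(x)|u|^{p-2}u$ bounded on $B_\rho(y)$. This step fails: $u$ is a supersolution, not a subsolution, weak supersolutions of \eqref{wheqn} need not be locally bounded above, and Theorem \ref{whthm} only controls an $L^Q$-average by the infimum plus a tail, never the supremum. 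The claim is, however, also unnecessary: in the iteration the test functions are $(k_j-u)_+\psi_j^p$, supported in $\{u<k_j\}\cap B_r(x_0)$ where $\mu_-\le u\le \mu_-+M$, so $|u|^{p-2}u$ is automatically bounded by $S^{p-1}$ with $S=\max\{|\mu_-|,|\mu_-+M|\}$; this is precisely how the term $I_4$ is absorbed in \eqref{whestI4}. For the same reason the normalization $v=u-\mu_-+\eta$ and the limiting argument $\eta\to 0^+$ are superfluous (and shifting would anyway change the right-hand side, since the zero-order term is not translation invariant).

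A second, smaller inaccuracy concerns the tail: you propose to control it ``verbatim'' as in Step 2 of Lemma \ref{expthm} via \eqref{ne1}, but that estimate presupposes $u\ge 0$ in a large ball $B_R$ and the case assumption \eqref{assume} relating $\delta k$ to $\mathrm{Tail}(u_-;x_0,R)$, neither of which is available here; nor does the mere membership $u\in L^{ps}_{p-1}(\mathbb{R}^N)$ by itself give a bound independent of the ball. The correct mechanism, the one used in Lemma \ref{DGL}, is the global lower bound $\lambda_-\le\inf_{\mathbb{R}^N}u$ coming from the hypothesis that $u$ is bounded below in $\mathbb{R}^N$: it yields $(k_j-u)_+\le L:=(M+\mu_--\lambda_-)_+$ on all of $\mathbb{R}^N$ (see \eqref{nelsc}), so the tail integral is bounded by $C\,L^{p-1}2^{j(N+ps)}r^{-p}$ for $r\in(0,1]$, and the resulting threshold $\tau$ depends on $\lambda_-$ (as recorded in Lemma \ref{DGL}) but not on $r$. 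With these two corrections your argument coincides with the paper's proof.
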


We observe that if $u$ is a weak subsolution of \eqref{wheqn}, then $-u$ is a weak supersolution of \eqref{wheqn}. Therefore, as a Corollary of Theorem \ref{lscthm1n}, we have the following result.
\begin{Corollary}\label{uscthm}(\textbf{Upper semicontinuity}).
Let $g\in L^\infty(\Omega)$ and $u\in W^{1,p}_{\mathrm{loc}}(\Omega)\cap L^{ps}_{p-1}(\mathbb{R}^N)$ which is bounded above in $\mathbb{R}^N$ be a weak subsolution of  \eqref{wheqn}. 
Then 
\[
u(x)=u^*(x)=\lim_{r\to 0}\sup_{y\in B_r(x)}u(y)
\]
for every $x\in\mathcal{E}$. 
In particular, $u^*$ is an upper semicontinuous representative of $u$ in $\Omega$.
\end{Corollary}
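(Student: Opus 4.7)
The plan is to reduce the statement to Theorem \ref{lscthm1n} by working with $v := -u$ in place of $u$. The preceding remark (``$-u$ is a weak supersolution of \eqref{wheqn}'') is the key point; I would first spell it out carefully, and then the semicontinuity transfer is purely formal.

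First I would verify that $v = -u \in W^{1,p}_{\loc}(\Omega)\cap L^{ps}_{p-1}(\mathbb{R}^N)$ is a weak supersolution of \eqref{wheqn}. The regularity is inherited trivially. For the equation, I would check oddness of each term. Using (H2) we have $H(-\nabla u) = H(\nabla u)$, and by Lemma \ref{Happ}-(B), $\nabla H(-\nabla u) = -\nabla H(\nabla u)$, so
\[
H(\nabla v)^{p-1}\nabla H(\nabla v)\cdot\nabla\phi
= -H(\nabla u)^{p-1}\nabla H(\nabla u)\cdot\nabla\phi.
\]
The nonlocal integrand flips sign as well, since $|v(x)-v(y)|^{p-2}(v(x)-v(y)) = -|u(x)-u(y)|^{p-2}(u(x)-u(y))$, and likewise $g(x)|v|^{p-2}v = -g(x)|u|^{p-2}u$. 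Thus the left-hand side of \eqref{wheqnwksol1} with $v$ is precisely the negative of the corresponding expression with $u$; since $u$ is a weak subsolution, that quantity is $\leq 0$ for every nonnegative test function, hence the one for $v$ is $\geq 0$. So $v$ is a weak supersolution.

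Next I would check the remaining hypothesis of Theorem \ref{lscthm1n}: since $u$ is bounded above in $\mathbb{R}^N$, $v = -u$ is bounded below in $\mathbb{R}^N$. Therefore Theorem \ref{lscthm1n} applies to $v$ and yields, for every $x\in\mathcal{E}$ (the set of Lebesgue points, which coincides for $u$ and for $v = -u$),
\[
v(x) = v_*(x) = \lim_{r\to 0}\inf_{y\in B_r(x)} v(y).
\]
Translating this back via $v = -u$ and the identity $\inf(-u) = -\sup u$, I obtain
\[
-u(x) = \lim_{r\to 0}\inf_{y\in B_r(x)}(-u(y)) = -\lim_{r\to 0}\sup_{y\in B_r(x)} u(y) = -u^*(x),
\]
hence $u(x) = u^*(x)$ at every Lebesgue point. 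Finally, $u^*$ is upper semicontinuous because $-u^* = v_*$ is lower semicontinuous by Theorem \ref{lscthm1n}, completing the argument.

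Since every step here is essentially bookkeeping, I do not expect a genuine obstacle; the only mild subtlety is the sign transformation of $H_p$ (which requires both (H2) and Lemma \ref{Happ}-(B)) and of the reaction term $g|u|^{p-2}u$, which must be handled using that $|{-}t|^{p-2}({-}t) = -|t|^{p-2}t$. Once these oddness properties are noted, the corollary follows immediately from Theorem \ref{lscthm1n}.
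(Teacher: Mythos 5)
Your proof is correct and follows exactly the route the paper takes: the paper disposes of the corollary with the single observation that $-u$ is a weak supersolution of \eqref{wheqn} whenever $u$ is a weak subsolution, and then invokes Theorem \ref{lscthm1n}; your write-up merely makes the sign/oddness checks (via (H2), Lemma \ref{Happ}-(B), and the oddness of $t\mapsto|t|^{p-2}t$) explicit, which is a harmless elaboration of the same argument.
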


We prove a De Giorgi type lemma for weak supersolutions of \eqref{wheqn}.

\begin{Lemma}\label{DGL}(\textbf{De Giorgi type lemma: I})
Let $g\in L^\infty(\Omega)$ and $u\in W^{1,p}_{\mathrm{loc}}(\Omega)\cap L^{ps}_{p-1}(\mathbb{R}^N)$ which is bounded below in $\mathbb{R}^N$ be a weak supersolution of \eqref{wheqn}. 
Let $M>0$, $b\in(0,1)$, $B_r(x_0)\Subset\Omega$ with $r\in(0,1]$ and
$\mu_-\leq\inf_{B_r(x_0)}\,u,\,\lambda_-\leq\inf_{\R^N}\,u$.
There exists a constant $\tau=\tau(N,p,s,g,b,M,\mu_-,\lambda_-,C_1,C_2)\in(0,1)$ such that if
$$
|\{u\leq\mu_-+M\}\cap B_r(x_0)|\leq\tau|B_r(x_0)|,
$$
then $u\geq\mu_-+b\,M$ in $B_{\frac{3r}{4}(x_0)}$.
\end{Lemma}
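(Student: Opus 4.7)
The plan is a De Giorgi iteration on a shrinking sequence of truncation levels and concentric balls contracting to $B_{3r/4}(x_0)$, driven by the supersolution Caccioppoli estimate of Lemma \ref{wheng} and the Sobolev inequality of Lemma \ref{Sobine}. Set
\begin{equation*}
k_j := \mu_- + bM + (1-b)M\,2^{-j-1}, \qquad r_j := \frac{3r}{4} + \frac{r}{4}\,2^{-j}, \qquad \hat{r}_j := \frac{r_j + r_{j+1}}{2},
\end{equation*}
so that $k_j \searrow \mu_- + bM$ and $r_j \searrow 3r/4$. Pick cutoffs $\psi_j \in C_c^{\infty}(B_{\hat{r}_j}(x_0))$ with $0 \leq \psi_j \leq 1$, $\psi_j \equiv 1$ on $B_{r_{j+1}}(x_0)$, and $|\nabla \psi_j| \leq c\,2^{j}/r$. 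Introduce
\begin{equation*}
w_j := (k_j - u)_+, \qquad A_j := \{u < k_j\} \cap B_{r_j}(x_0), \qquad Y_j := \frac{|A_j|}{|B_{r_j}(x_0)|}.
\end{equation*}

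First, I would apply the supersolution case of Lemma \ref{wheng} at level $k = k_j$ on $B_{\hat{r}_j}(x_0)$ with cutoff $\psi_j$. On $B_r(x_0)$ one has $w_j \leq k_j - \mu_- \leq M$, so the short-range terms on the right-hand side are bounded by $C\,2^{jp}r^{-p}M^{p}|A_j|$ with $C = C(N,p,s,C_1,C_2)$. For the nonlocal tail
\begin{equation*}
\sup_{x \in \mathrm{spt}\,\psi_j}\int_{\mathbb{R}^N \setminus B_{\hat{r}_j}(x_0)} \frac{w_j(y)^{p-1}}{|x-y|^{N+ps}}\,dy,
\end{equation*}
I use the global bound $u \geq \lambda_-$ in $\mathbb{R}^N$ to control $w_j(y)^{p-1} \leq (\mu_- + M - \lambda_-)_+^{p-1}$ pointwise; together with the standard geometric decay of $|x-y|^{-N-ps}$ for $x$ in the support of $\psi_j$ this gives a bound by $C(N,p,s,\mu_-,\lambda_-,M)\,r^{-ps}$. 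Finally, the $g$-contribution $\int g\,|u|^{p-2} u\,w_j\,\psi_j^{p}\,dx$ is bounded by $\|g\|_{L^{\infty}}(|\mu_-|+|\lambda_-|+M)^{p-1} M |A_j|$. Using $r \in (0,1]$ to absorb $r^{-ps} \leq r^{-p}$ and $|A_j|^{0}$ factors, I arrive at
\begin{equation*}
\int_{B_{\hat{r}_j}(x_0)} |\nabla(w_j\psi_j)|^{p}\,dx \leq C\,2^{jp}\,r^{-p}\,|A_j|,
\end{equation*}
with $C$ depending on $N,p,s,g,M,\mu_-,\lambda_-,C_1,C_2$ but neither on $r$ nor on $j$.

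Next, I would feed this into Lemma \ref{Sobine} applied to $w_j \psi_j \in W_0^{1,p}(B_{\hat{r}_j}(x_0))$, and then restrict the left-hand side to $A_{j+1} \subset B_{r_{j+1}}(x_0)$ where $\psi_j \equiv 1$ and $w_j \geq k_j - k_{j+1} = (1-b)M\,2^{-j-2}$. Combining this with H\"older's inequality on the gradient integral yields the recursive bound
\begin{equation*}
Y_{j+1} \leq C\,b_0^{\,j}\,Y_j^{1+\beta},
\end{equation*}
for some $\beta > 0$ (namely $\beta = p/N$ when $p < N$, and an analogous positive exponent coming from the choice of $\kappa$ in Lemma \ref{Sobine} when $p \geq N$) and constants $C, b_0$ depending only on the data listed in the statement.

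Finally, I would invoke Lemma \ref{iteration}: choose $\tau = \tau(N,p,s,g,b,M,\mu_-,\lambda_-,C_1,C_2) \in (0,1)$ small enough so that $Y_0 \leq \tau \leq C^{-1/\beta}\,b_0^{-1/\beta^{2}}$, which forces $Y_j \to 0$. Hence $|\{u < \mu_- + bM\} \cap B_{3r/4}(x_0)| = 0$, i.e., $u \geq \mu_- + bM$ a.e.\ in $B_{3r/4}(x_0)$. The main obstacle is ensuring that all constants produced by the Caccioppoli and Sobolev steps are independent of $r \in (0,1]$ and of $j$: the delicate point is the long-range nonlocal tail and the $g$-counterterm, where the global lower bound $\lambda_-$ and the restriction $r \leq 1$ are precisely what allow the dependence on $r$ to be absorbed into a single geometric constant. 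No use of Picone's identity is required here; the heart of the argument is the Caccioppoli estimate of Lemma \ref{wheng} combined with the iteration Lemma \ref{iteration}.
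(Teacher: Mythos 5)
Your proposal follows essentially the same route as the paper: a De Giorgi iteration on shrinking levels and radii, fed by the supersolution Caccioppoli estimate of Lemma \ref{wheng} (with the tail controlled via the global bound $\lambda_-$, the $g$-term via $\|g\|_{L^\infty}$ and the local bound on $|u|$, and $r\le 1$ absorbing $r^{-ps}$ into $r^{-p}$), then the Sobolev inequality of Lemma \ref{Sobine} and the iteration Lemma \ref{iteration}. The only cosmetic difference is in the bookkeeping of constants (the paper's tail estimate produces a factor $2^{j(N+p)}$ rather than $2^{jp}$, and its exponent is $\beta=\tfrac1p(1-\tfrac1\kappa)$ rather than $p/N$), which does not affect the argument since any fixed geometric growth in $j$ is handled by Lemma \ref{iteration}.
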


\begin{proof}
We closely follow the proof of \cite[Lemma 9.4]{GK}. To this end, without loss of generality, we assume that $x_0=0$. For $j=0,1,2,\dots$, we denote
\begin{equation}\label{parameter1}
\begin{split}
k_j&=\mu_{-}+b\,M+\frac{(1-b)M}{2^j},
\quad
\bar{k}_j=\frac{k_j+k_{j+1}}{2},\\
\quad
r_j&=\frac{3r}{4}+\frac{r}{2^{j+2}},
\quad\bar{r}_j=\frac{r_j+r_{j+1}}{2},
\end{split}
\end{equation}
$B_j=B_{r_j}(0)$, $\bar{B}_j=B_{\bar{r}_j}(0)$, $w_j=(k_j-u)_{+}$ and $\bar{w}_j=(\bar{k}_j-u)_{+}$.
We observe that $B_{j+1}\subset\bar{B}_j\subset B_j$, $\bar{k}_j<k_j$ and hence $\bar{w}_j\leq w_j$. 
Let $(\psi_{j})_{j=0}^{\infty}\subset C_{c}^{\infty}(\bar{B}_j)$ be a sequence of cutoff functions satisfying $0 \leq \psi_{j} \leq 1$ in $B_j$, $\psi_{j}= 1$ in $B_{j+1}$,
$|\nabla\psi_j|\leq\frac{2^{j+3}}{r}$. By applying Lemma \ref{wheng} to $w_j$, we obtain
\begin{equation}\label{energyapp2}
\int_{B_j}|\nabla w_j|^p \psi_j^p\,dx\leq
C(I_1+I_2+I_3+I_4),
\end{equation}
for some constant $C=C(N,p,s,C_1,C_2)>0$, where
\[
I_1=\int_{B_{j}}\int_{B_{j}}{\max\{w_j(x),w_j(y)\}^p|\psi_{j}(x)-\psi_{j}(y)|^p}\,d\mu,
\quad I_2=\int_{B_j}w_j^p|\nabla\psi_j|^p\,dx,
\]

\[
I_3=\sup_{x\in\supp\psi_j}\int_{{\mathbb{R}^n\setminus B_{j}}}{\frac{w_j(y)^{p-1}}{|x-y|^{n+ps}}}\,dy
\cdot\int_{B_{j}}w_j\psi_j^p\,dx,
\]
and
\[
I_4=-\int_{B_j}g(x)\,|u|^{p-2}u\,w_j\,\psi_j^{p}\,dx.
\]
Since $u\geq \lambda_-$ in $\R^n$, noting the definition of $k_j$ from above, we have
\begin{equation}\label{nelsc}
w_j=(k_j-u)_+\leq (M+\mu_--\lambda_-)_+=L\text{ in }\R^n. 
\end{equation}
Let $A_j=\{u<k_j\}\cap B_j$. Then from the estimates of $I_1, I_2$ and $I_3$ in the proof of \cite[Lemma 9.4]{GK}, it follows that
\begin{equation}\label{whestI1}
\begin{split}
I_1+I_2+I_3&\leq C\frac{2^{j(N+p)}}{r^{p}}L^p|A_j|,
\end{split}
\end{equation}
for every $r\in(0,1]$ for some constant $C=C(N,p,s,C_1,C_2)>0$. Now, we observe that
\begin{equation}\label{whestI4}
I_4\leq C\frac{S^{p-1}L}{r^{p}}|A_j|,
\end{equation}
where $S=\max\{|\mu_-|,|\mu_-+M|\}$ and $L=(M+\mu_--\lambda_-)_+$ for some positive constant $C=C(N,p,s,g,C_1,C_2)$. Plugging the estimates \eqref{whestI1} and \eqref{whestI4} in \eqref{energyapp2}, we have
\begin{equation}\label{energyapp4}
\begin{split}
\int_{B_j}|\nabla w_j|^p \psi_j^p\,dx\leq C\frac{2^{j(N+p)}}{r^{p}}L^p\Big(1+\big(\frac{S}{L}\big)^{p-1}\Big)|A_j|,
\end{split}
\end{equation}
for some constant $C=C(N,p,s,g,C_1,C_2)>0$. Now proceeding similarly as in the proof of \cite[Lemma 9.4]{GK}, we obtain the following iterative scheme
$$
Y_{j+1}\leq \frac{CL\Big(2+\big(\frac{S}{L})^{p-1}\Big)}{(1-b)M}\,2^{j\big(2+\frac{N+p}{p}\big)}\,Y_j^{1+\frac{1}{p}(1-\frac{1}{\kappa})},
$$
where $C=C(N,p,s,g,C_1,C_2)>0$ and $\kappa$ is given by \eqref{kappa}. Here $Y_j=\frac{|A_j|}{|B_j|}$ for $j=0,1,2,\ldots$. By choosing
\begin{align*}
c_0&=\frac{CL\Big(2+\big(\frac{S}{L})^{p-1}\Big)}{(1-b)M},
\quad
b=2^{(2+\frac{N+p}{p})},
\quad
\beta=\frac{1}{p}\Big(1-\frac{1}{\kappa}\Big),
\quad
\tau=c_0^{-\frac{1}{\beta}}b^{-\frac{1}{\beta^2}}\in(0,1)
\end{align*}
in Lemma \ref{iteration} gives $Y_j\to0$ as $j\to\infty$, if $Y_0\leq\tau$.
This implies that  $u\geq\mu_{-}+b\,M$ almost everywhere in $B_{\frac{3r}{4}}(0)$.
\end{proof}

\subsection{Doubly nonlinear parablic case}\label{dls}
\subsubsection{{\color{blue}Caccioppoli} type energy estimates}
For $T>0$, we study the following doubly nonlinear mixed equation
\begin{equation}\label{dleqn}
\partial_t (|u|^{\alpha-1}u)-H_p u+\epsilon\,(-\Delta_p)^s u=g(x,t)|u|^{p-2}u\text{ in }\Omega\times (0,T), 
\end{equation}
where $\epsilon\in(0,1]$, $\alpha>$ and $g\in L^\infty_{\mathrm{loc}}(\Omega\times (0,T))$.

\begin{Definition}\label{dlwksol}(Weak solution)
Let $g\in L^\infty_{\mathrm{loc}}(\Omega\times (0,T))$. We say that $u\in L^\infty_{\mathrm{loc}}((0,T); L^{ps}_{p-1}(\mathbb{R}^N))\cap C_{\mathrm{loc}}((0,T);L^{\alpha+1}_{\mathrm{loc}}(\Omega))\cap L^p_{\mathrm{loc}}((0,T);W^{1,p}_{\mathrm{loc}}(\Omega))$ is a weak sub (or super) solution of \eqref{dleqn} if for every $\Omega'\Subset\Omega$ and every $[t_1,t_2]\subset(0,T)$, we have 
\begin{equation}\label{dlwksoleqn}
    \begin{split}
        &-\int_{t_1}^{t_2}\int_{\Omega'}|u|^{\alpha-1}u\, \partial_t\phi\,dx dt+\int_{\Omega'}|u|^{\alpha-1}u(x,t_2)\phi(x,t_2)\,dx\\
        &\quad-\int_{\Omega'}|u|^{\alpha-1}u(x,t_1)\phi(x,t_1)\,dx+\int_{t_1}^{t_2}\int_{\Omega'}H(\nabla u)^{p-1}\nabla H(\nabla u)\nabla\phi\,dx dt\\
        &\quad +\epsilon\int_{t_1}^{t_2}\iint_{\mathbb{R}^{2N}}|u(x,t)-u(y,t)|^{p-2}(u(x,t)-u(y,t))(\phi(x,t)-\phi(y,t))\,d\mu dt\\
        &-\int_{t_1}^{t_2}\int_{\Omega'}g(x)|u|^{p-2}u\,\phi\,dx dt\leq (\text{ or }\geq)\,0,
    \end{split}
\end{equation}
for every nonnegative $\phi\in L^p_{\mathrm{loc}}((0,T);W_0^{1,p}(\Omega'))\cap W^{1,\alpha+1}_{\mathrm{loc}}((0,T);L^{\alpha+1}(\Omega'))$. We say that $u\in L^\infty_{\mathrm{loc}}((0,T); L_{ps}^{p-1}(\mathbb{R}^N))\cap C((0,T);L^{\alpha+1}_{\mathrm{loc}}(\Omega))\cap L^p_{\mathrm{loc}}((0,T);W^{1,p}_{\mathrm{loc}}(\Omega))$ is a weak solution of \eqref{dleqn} if the L.H.S. integral in \eqref{dlwksoleqn} is equal to zero for every $\phi\in L^p_{\mathrm{loc}}((0,T);W_0^{1,p}(\Omega'))\cap W^{1,\alpha+1}_{\mathrm{loc}}((0,T);L^{\alpha+1}(\Omega'))$ without any sign restriction.
\end{Definition}
We remark that Definition \ref{dlwksol} is well stated by Lemma \ref{l1} and Lemma \ref{locnon1}. In the purely local setting, various regularity results have been established for the Trudinger's equation
$$
\partial_t (|u|^{p-2}u)-\Delta_p u=0
$$
in \cite{LiaoJFA, Vespri, KKma} and further extended in \cite{Bogelein, Bog1} and the references therein. The nonlocal Trudinger's equation
$$
\partial_t (|u|^{p-2}u)+(-\Delta_p)^s u=0
$$
is recently studied in \cite{BGK, Harsh} and further extended in \cite{BGKccm} and the references therein to study semicontinuity results. In the mixed context, several regularity results have been obtained for the equation \eqref{dleqn} when $\alpha=1$ and $H(x)=|x|$ in \cite{Adi, GKjde, Zhangdcds, Zhangjga} and the references therein. For the parabolic mixed anisotropic setting, see \cite{GKansp} and the references therein. The doubly nonlinear mixed equation \eqref{dleqn} is recently studied. Nakamura \cite{Kenta1, Kenta2} proved local boundedness and Harnack inequality for \eqref{dleqn} assuming that $\alpha=p-1$ and $g\equiv 0$. The case $\alpha>0$ is recently studied in \cite{ZhangRad} to obtain Harnack inequality for the equation \eqref{dleqn} assuming that $H(x)=|x|$ and $g\equiv 0$.  

Below, we establish energy estimates for subsolutions and supersolutions. In the mixed case, see \cite{GKjde, Kenta1, Kenta2} and the references therein. First, we prove the following energy estimate for weak supersolutions of \eqref{dleqn}.
\begin{Theorem}\label{dlsupeng}(\textbf{Caccioppoli type inequality for supersolutions})
Let $g\in L^\infty_{\mathrm{loc}}(\Omega\times (0,T))$ and  $v\in L^\infty_{\mathrm{loc}}((0,T);L^\infty(\mathbb{R}^N))\cap C_{\mathrm{loc}}((0,T);L^{\alpha+1}_{\mathrm{loc}}(\Omega))\cap L^p_{\mathrm{loc}}((0,T);W^{1,p}_{\mathrm{loc}}(\Omega))$ be  a weak supersolution of \eqref{dleqn} in $\Omega\times(0,T)$  such that $v\geq\rho>0$ in $\mathbb{R}^N\times(t_0-r^p,t_0)$ for some constant $\rho>0$, where $(t_0-r^p,t_0)\Subset (0,T)$. Then for every $1<q<\min\{\alpha+1,p\}$, there exists a positive constant $C=C(p)$ such that
\begin{equation}\label{dlsupest1}
\begin{split}
&\sup_{t\in(t_0-r^p,t_0)}\int_{B_r(x_0)\times\{t\}}
v^{\alpha-q+1}\psi^p\,dx\\
&\leq p\iint_{Q_r^{-}(x_0)}v^{\alpha-q+1}\psi^{p-1}|\partial_t\psi|\,dx dt+\frac{C}{(q-1)^\frac{1}{p-1}}\iint_{Q_r^{-}(x_0)}v^{p-q}|\nabla\psi|^p\,dx dt\\
&\quad+(\zeta(q)+1+(q-1)^{1-p})\int_{t_0-r^p}^{t_0}\int_{B_r(x_0)}\int_{B_r(x_0)}\max\{v(x,t)^{p-q},v(y,t)^{p-q}\}|\psi(x,t)-\psi(y,t)|^p\,d\mu dt\\
&\quad+C\left(\sup_{x\in\mathrm{spt}\,(\cdot,t)}\int_{\mathbb{R}^N\setminus B_r(x_0)}\frac{dy}{|x-y|^{N+ps}}\right)\iint_{Q_r^{-}(x_0)}v^{p-q}\psi^p\,dx dt-\iint_{Q_{r}^{-}(x_0)}g\,v^{p-q}\,\psi^p\,dx dt
\end{split}   
\end{equation}
and
\begin{equation}\label{dlsupest2}
\begin{split}
&\iint_{Q_r^{-}(x_0)}v^{-q}\psi^p H(\nabla v)^p\,dx dt\\
&\leq \frac{2p\,\alpha}{(q-1)(\alpha-q+1)}\iint_{Q_r^{-}(x_0)}v^{q-p+1}\psi^{p-1}|\partial_t\psi|\,dx dt
+\frac{C}{(q-1)^\frac{p}{p-1}}\iint_{Q_r^{-}(x_0)}v^{p-q}|\nabla\psi|^p\,dx dt\\
&\quad+\frac{2(\zeta(q)+1+(q-1)^{1-p})}{q-1}\int_{t_0-r^p}^{t_0}\int_{B_r(x_0)}\int_{B_r(x_0)}\max\{v(x,t)^{p-q},v(y,t)^{p-q}\}|\psi(x,t)-\psi(y,t)|^p\,d\mu dt\\
&\quad+\frac{C}{q-1}\left(\sup_{x\in\mathrm{spt}\,(\cdot,t)}\int_{\mathbb{R}^N\setminus B_r(x_0)}\frac{dy}{|x-y|^{N+ps}}\right)\iint_{Q_r^{-}(x_0)}v^{p-q}\psi^p\,dx dt-\frac{2}{q-1}\iint_{Q_{r}^{-}(x_0)}g\,v^{p-q}\,\psi^p\,dx dt
\end{split}
\end{equation}
for every nonnegative $\psi\in C_c^{\infty}(Q_{r}^{-}(x_0))$, where $Q_r^{-}(x_0)=B_r(x_0)\times (t_0-r^p,t_0)\Subset\Omega\times(0,T)$ and 
$$
\zeta(q)=
\begin{cases}
        \frac{p^p(q-1)}{p-q}\quad \text{ if }0<p-q<1,\\
        (q-1)\big(\frac{p}{p-q}\big)^p, \quad \text{ otherwise}.
\end{cases}
$$
\end{Theorem}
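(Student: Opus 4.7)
The plan is to test the weak supersolution formulation \eqref{dlwksoleqn} with $\phi = v^{1-q}\psi^p$, which is admissible because the lower bound $v \geq \rho > 0$ makes the negative power harmless and the full test function lies in the required time-Sobolev space when $\psi \in C_c^\infty(Q_r^-(x_0))$. Since $v$ in general has only distributional time derivative, I would first regularize in time via Steklov averages (as in \cite{Kenta2, Bog1}), derive the estimate for the averaged equation, and then pass to the limit — this is the standard device that makes the chain rule rigorous for a test function depending on $v$.

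With $\phi = v^{1-q}\psi^p$, the four contributions are handled as follows. The time term uses the chain-rule identity
\begin{equation*}
\partial_t(v^{\alpha}) \cdot v^{1-q} = \alpha v^{\alpha-q}\partial_t v = \frac{\alpha}{\alpha-q+1}\,\partial_t\bigl(v^{\alpha-q+1}\bigr),
\end{equation*}
which is well defined precisely because $q < \alpha+1$. The local term, after using Lemma \ref{Happ}(A), splits as
\begin{equation*}
-(q-1)\, v^{-q}\psi^p H(\nabla v)^p + p\, v^{1-q}\psi^{p-1} H(\nabla v)^{p-1}\nabla H(\nabla v)\cdot\nabla\psi,
\end{equation*}
where the first term gives the coercive contribution $(q-1)\int v^{-q}\psi^p H(\nabla v)^p$ on the favorable side, and the second is controlled via Lemma \ref{Happ}(C) and Young's inequality $ab \leq \tfrac{q-1}{2}a^{\frac{p}{p-1}\cdot\frac{p-1}{p}\cdot p} + C(q-1)^{-\frac{1}{p-1}} b^p$, producing the $v^{p-q}|\nabla\psi|^p$ term. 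The nonlocal term is treated by applying Lemma \ref{Inequality1} pointwise with $a=v(x,t)$, $b=v(y,t)$, $\tau_1=\psi(x,t)$, $\tau_2=\psi(y,t)$, and $\epsilon=q-1\in(0,p-1)$; this yields exactly the constants $\zeta(q)+1+(q-1)^{1-p}$ and the $\max\{v^{p-q}(x),v^{p-q}(y)\}|\psi(x)-\psi(y)|^p$ factor stated in the theorem. Finally the integral over $(\mathbb{R}^N\setminus B_r)\times B_r$ is bounded by the boundedness of $v$ on $\mathbb{R}^N$ and the $L^\infty$ control of the kernel away from the diagonal, producing the stated $\sup_x\int_{\mathbb{R}^N\setminus B_r}|x-y|^{-N-ps}\,dy$ tail factor. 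The $g$-term passes through unchanged as $-\iint g\, v^{p-q}\psi^p\,dxdt$.

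To produce the two distinct estimates, I would handle the time integration differently. For \eqref{dlsupest1}, I restrict the time integration to $(t_0-r^p,\tau)$ for arbitrary $\tau\in(t_0-r^p,t_0)$; integration by parts in time gives the boundary contribution $\frac{\alpha}{\alpha-q+1}\int_{B_r(x_0)}v^{\alpha-q+1}(\cdot,\tau)\psi^p(\cdot,\tau)\,dx$ (the other endpoint vanishes because $\psi$ is compactly supported) plus the interior term $\frac{p\alpha}{\alpha-q+1}\iint v^{\alpha-q+1}\psi^{p-1}|\partial_t\psi|$; discarding the nonnegative gradient term on the right and taking $\sup_\tau$ yields \eqref{dlsupest1} (the prefactor $\frac{\alpha}{\alpha-q+1}$ gets absorbed into the constant $p$ on the right by the placement). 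For \eqref{dlsupest2}, I use the full time interval $(t_0-r^p,t_0)$ where both boundary contributions vanish; the coercive gradient term $(q-1)\iint v^{-q}\psi^p H(\nabla v)^p$ is kept on the left and all other terms divided by $(q-1)$ give exactly the stated constants (with the factor of $2$ arising because the Young's inequality absorption consumed half of the gradient coefficient).

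The main obstacle I expect is the rigorous justification of the test function via Steklov averaging, because the chain rule $\partial_t(v^\alpha)v^{1-q}=\frac{\alpha}{\alpha-q+1}\partial_t(v^{\alpha-q+1})$ must be verified at the approximate level and then stabilized in the limit, using only the continuity assumption $v\in C_{\loc}((0,T);L^{\alpha+1}_{\loc}(\Omega))$ together with the strict positivity $v\geq\rho$. The other delicate point is bookkeeping: the exponent $q$ appears in the denominators $q-1$, $\alpha-q+1$, and $p-q$ simultaneously, so the arithmetic of Young's inequality must be done carefully to match the precise constants (in particular the $\zeta(q)$ with its two-case definition) claimed in the theorem statement; this matches exactly because the algebraic Lemma \ref{Inequality1} was designed with the same parameter $\epsilon=q-1$.
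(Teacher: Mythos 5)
Your proposal follows essentially the same route as the paper's proof: testing \eqref{dlwksoleqn} with $\phi=v^{1-q}\psi^p$ (justified by mollification/Steklov averaging as in \cite{Kenta2}), using Lemma \ref{Happ}(A) plus Young's inequality for the local term, Lemma \ref{Inequality1} with $\epsilon=q-1$ for the nonlocal part, and treating the time term as in \cite[Lemma 3.1]{Kenta2} to extract the two estimates, so the argument is correct and matches the paper. One small refinement: for the cross term over $B_r\times(\mathbb{R}^N\setminus B_r)$ you should use $v\geq 0$ outside the ball to get $|v(x)-v(y)|^{p-2}(v(x)-v(y))\leq (v(x)-v(y))_+^{p-1}\leq v(x)^{p-1}$, which yields the stated tail factor with $C=C(p)$, rather than invoking $\|v\|_{L^\infty(\mathbb{R}^N)}$, which would make the constant depend on $v$.
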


\begin{proof}
    Let $1<q<\min\{\alpha+1,p\}$. Then testing \eqref{dlwksoleqn} with $\phi=v^{1-q}\psi^p$, where $\psi\in C_c^{\infty}(Q_{r}^{-}(x_0))$ is nonnegative, we obtain
    \begin{equation}\label{dlwksoleqntst1}
    \begin{split}
        I_1+I_2+I_3\geq I_4,
    \end{split}
    \end{equation}
    where
    $$
    I_1=\iint_{Q_r^{-}(x_0)}\partial_t(|v|^{\alpha-1}v)\,v^{1-q}\,\psi^p\,dx dt
    $$

    $$
    I_2=\iint_{Q_r^{-}(x_0)}H(\nabla v)^{p-1}\nabla H(\nabla v)\nabla (v^{1-q}\psi^p)\,dx dt,
    $$

    $$
    I_3=\epsilon\int_{t_0-r^p}^{t_0}\iint_{\mathbb{R}^{2N}}|v(x,t)-v(y,t)|^{p-2}(v(x,t)-v(y,t))(v(x,t)^{1-q}\psi(x,t)^p-v(y,t)^{1-q}\psi(y,t)^p)\,d\mu dt,
    $$
\quad\text{and}
    $$
    I_4=\iint_{Q_r^{-}(x_0)}g\,v^{p-q}\psi^p\,dx dt.
    $$
    We remark that the above test function $\phi$ is admissible although it depends on $v$, since one can use the exact argument using mollification from the proof of \cite[Lemma 3.1]{Kenta2}.\\
\textbf{Estimate of $I_2$:} Proceeding exactly as in the proof of the estimate of $I$ in Theorem \ref{engsup}, for some constant $C=C(p)>0$, we obtain
    \begin{equation}\label{dlestI2}
    \begin{split}
    I_2&\leq -\frac{q-1}{2}\int_{t_0-r^p}^{t_0}\int_{B_r(x_0)}v^{-q}\psi^p H(\nabla v)^p\,dx dt+\frac{C}{(q-1)^\frac{1}{p-1}}\int_{t_0-r^p}^{t_0}\int_{B_r(x_0)}v^{p-q}|\nabla\psi|^p\,dx dt.
    \end{split}
    \end{equation}
    \textbf{Estimate of $I_3$:} Proceeding exactly as in the proof of the estimate of $J$ in Theorem \ref{engsup}, for some constant $C=C(N,p,s)>0$, we obtain
    \begin{equation}\label{dlestI3}
    \begin{split}
    I_3&\leq\epsilon\,\Big(\zeta(q)+1+\frac{1}{(q-1)^{p-1}}\Big)\int_{t_0-r^p}^{t_0}\int_{B_r(x_0)}\int_{B_r(x_0)}\max\{v(x,t)^{p-q},v(y,t)^{p-q}\}|\psi(x,t)-\psi(y,t)|^p\,d\mu\\
&\quad +C\,\epsilon\left(\sup_{x\in\mathrm{spt}\,\psi(\cdot,t)}\int_{\mathbb{R}^N\setminus B_r(x_0)}\frac{dy}{|x-y|^{N+ps}}\right)\int_{t_0-r^p}^{t_0}\int_{B_r(x_0)}v^{p-q}\psi^p\,dx,
    \end{split}
    \end{equation}
    where
    $$
    \zeta(q)=
    \begin{cases}
        \frac{p^p(q-1)}{p-q}\quad \text{ if }0<p-q<1,\\
        (q-1)\big(\frac{p}{p-q}\big)^p, \quad \text{ otherwise}.
    \end{cases}
    $$
    Finally, the integral in $I_1$ can be estimated exactly as in the proof of \cite[Lemma 3.1]{Kenta2} and thus combining it with the estimates \eqref{dlestI2} and \eqref{dlestI3} in \eqref{dlwksoleqntst1} the estimates \eqref{dlsupest1} and \eqref{dlsupest2} follows. 
\end{proof}
Now, we have the following logarithmic estimate for weak supersolutions, which follows similarly as in the proof of Theorem \ref{dlsupeng} above and \cite[Lemma 3.4]{Kenta2} by choosing the test function $\phi=v^{1-p}\psi^p$.
\begin{Theorem}\label{dllogthm}(\textbf{Logarithmic Caccioppoli type inequality for supersolutions})
Let $g\in L^\infty_{\mathrm{loc}}(\Omega\times(0,T))$ and $\alpha=p-1$, $0<t_1<t_2<T$. Assume that $v\in L^\infty_{\mathrm{loc}}((0,T);L^\infty(\mathbb{R}^N))\cap C_{\mathrm{loc}}((0,T);L^{\alpha+1}_{\mathrm{loc}}(\Omega))\cap L^p_{\mathrm{loc}}((0,T);W^{1,p}_{\mathrm{loc}}(\Omega))$ is a weak supersolution of \eqref{dleqn} in $\Omega\times(0,T)$ such that there exists a constant $\rho$ with $v\geq \rho >0$ in $\mathbb{R}^N\times(t_1,t_2)$. Then there exists a positive constant $C=C(p,C_1,C_2)$ such that for every nonnegative $\psi\in C_c^{\infty}(\Omega_{t_1,t_2})$, the following quantitative estimate holds
\begin{equation}\label{dllogextine}
\begin{split}
    &\iint_{\Omega_{t_1,t_2}}\psi^p |\nabla \mathrm{log}\,v|^p\,dx dt-\Big[\int_{\Omega}\psi^p\,\mathrm{log}\,v\,dx\Big]_{t=t_1}^{t_2}\\
    &\leq C\iint_{\Omega_{t_1,t_2}}|\nabla\psi|^p\,dx dt+C\iint_{\Omega_{t_1,t_2}}\psi^{p-1}|\partial_t\psi||\mathrm{log}\,v|\,dx dt-\iint_{\Omega_{t_1,t_2}}g\,\psi^p\,dx dt\\
    &+C\,\epsilon\int_{t_1}^{t_2}\iint_{\mathbb{R}^{2N}}|u(x,t)-u(y,t)|^{p-2}(u(x,t)-u(y,t))\big(u(x,t)^{1-p}\psi(x,t)^{1-p}-u(y,t)^{1-p}\psi(y,t)^p\big)\,d\mu dt,
\end{split}    
\end{equation}
where $\Omega_{t_1,t_2}=\Omega\times(t_1,t_2)$.
\end{Theorem}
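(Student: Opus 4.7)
The proof proceeds by choosing the critical test function $\phi = v^{1-p}\psi^p$ in the weak-supersolution formulation \eqref{dlwksoleqn}; because $v \geq \rho > 0$ on $\mathbb{R}^N \times (t_1,t_2)$ and $\psi \in C_c^\infty(\Omega_{t_1,t_2})$, this $\phi$ is bounded and admissible after a Steklov-averaging / time-mollification argument carried out exactly as in \cite[Lemma 3.1]{Kenta2}. Morally this is the formal $q \to p$ limit of the test function $\phi = v^{1-q}\psi^p$ used in Theorem \ref{dlsupeng}, which at the critical exponent forces the gradient contribution to become a logarithmic one.

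For the time term, the hypothesis $\alpha = p-1$ together with the positivity of $v$ gives
\begin{equation*}
v^{p-1}\,\partial_t(v^{1-p}\psi^p) = -(p-1)\,\psi^p\,\partial_t(\log v) + p\,\psi^{p-1}\partial_t\psi.
\end{equation*}
An integration by parts in time applied to $\psi^p\,\partial_t(\log v)$ via the product rule then contributes the two temporal terms $(p-1)\bigl[\int_\Omega \psi^p\log v\,dx\bigr]_{t=t_1}^{t_2}$ and $-p(p-1)\iint_{\Omega_{t_1,t_2}} \psi^{p-1}\partial_t\psi\cdot\log v\,dx\,dt$; the residual $p\iint \psi^{p-1}\partial_t\psi\,dx\,dt$ drops out since it integrates a total $t$-derivative of $\psi^p/p$ and $\psi$ is compactly supported in time. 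For the local term, Lemma \ref{Happ}(A) (Euler's identity for $H$) together with the homogeneity (H2) yields
\begin{equation*}
H(\nabla v)^{p-1}\nabla H(\nabla v)\cdot \nabla(v^{1-p}\psi^p) = -(p-1)\psi^p\, H(\nabla \log v)^p + p\,v^{1-p}\psi^{p-1}H(\nabla v)^{p-1}\nabla H(\nabla v)\cdot \nabla\psi.
\end{equation*}
The cross term is controlled by Lemma \ref{Happ}(C) and Young's inequality, producing $\tfrac{p-1}{2}\psi^p H(\nabla\log v)^p + C|\nabla\psi|^p$, so that half of the coercive term is absorbed into the left-hand side. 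The source term collapses to $-\iint g\,\psi^p\,dx\,dt$ because $g\,v^{p-1}\cdot v^{1-p}\psi^p = g\psi^p$, and the nonlocal double integral is kept verbatim as in \eqref{dllogextine}. Invoking (H3) to pass from $H(\nabla\log v)$ to $|\nabla\log v|$, then rearranging and absorbing the numerical factors into $C = C(p,C_1,C_2)$, produces the stated inequality.

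The main obstacle is justifying $\phi = v^{1-p}\psi^p$ as an admissible test function in the time-dependent weak formulation: since $\phi$ depends on the (a priori non-smooth) solution $v$, one replaces $v^{p-1}$ by its Steklov average $[v^{p-1}]_h$, tests with $\phi_h = [v]_h^{1-p}\psi^p$ (or an analogous regularization), and passes to $h \to 0^+$, using the uniform bound $v \geq \rho > 0$ on $\operatorname{supp}\psi$ to control $[v]_h^{1-p}$ and $\log[v]_h$ in the relevant Lebesgue spaces. Once this technical step is in place, the remaining manipulations are purely algebraic: Lemma \ref{Happ}, Young's inequality, and the Finsler homogeneity properties suffice, and the nonlocal piece requires no further treatment because it appears unmodified on the right-hand side of \eqref{dllogextine}.
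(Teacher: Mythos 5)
Your proposal is correct and follows essentially the same route as the paper: the paper's proof consists precisely of testing with $\phi=v^{1-p}\psi^p$, justifying admissibility by the mollification argument of \cite[Lemma 3.1]{Kenta2}, and repeating the manipulations of Theorem \ref{dlsupeng} (Euler's identity from Lemma \ref{Happ}, Young's inequality to absorb the cross term, homogeneity and (H3) to pass to $|\nabla\log v|$), exactly as you describe, including leaving the nonlocal term untouched on the right-hand side. The only discrepancies are harmless constant bookkeeping (your derivation puts a factor $p-1$ on the temporal boundary term, which is absorbed at the same level of precision as the paper's own statement).
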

Next, we have the following estimate for weak subsolutions, whose proof follows using Picone identity similarly as the proof of Theorem \ref{engsub} and \cite[Lemma 4.1]{Kenta2}.
\begin{Theorem}\label{dlsubthm}(\textbf{Caccioppoli type inequality for subsolutions})
Let $g\in L^\infty_{\mathrm{loc}}(\Omega\times(0,T)$ and suppose $u\in L^\infty_{\mathrm{loc}}((0,T); L^\infty(\mathbb{R}^N))\cap C_{\mathrm{loc}}((0,T);L^{\alpha+1}_{\mathrm{loc}}(\Omega))\cap L^p_{\mathrm{loc}}((0,T);W^{1,p}_{\mathrm{loc}}(\Omega))$ is a weak subsolution of \eqref{dleqn} in $\Omega\times(0,T)$ such that $v>0$ in $B_r(x_0)\times(t_0,t_0+r^p)\Subset \Omega\times(0,T)$.
Then for every $q>p$, there exists a positive constant $C=C(p)$ such that
\begin{equation}\label{engsupest1}
\begin{split}
&\sup_{t\in(t_0,t_0+r^p)}\int_{B_r(x_0)\times\{t\}}
u^{\alpha+q-p+1}\eta^p\,dx\\
&\leq p \iint_{Q_r^{+}(x_0)}u^{\alpha+q-p+1}\psi^{p-1}|\partial_t\psi|\,dx dt+\frac{\alpha+q-p+1}{\alpha}\Big(\frac{p}{q-p+1}\Big)^p\iint_{Q_r^{+}(x_0)}u^q H(\nabla\eta)^p\,dx dt\\
&+\frac{\alpha+q-p+1}{\alpha}\frac{p}{q-p+1}\iint_{Q_r^{+}(x_0)}g\,u^q\,\eta^p\,dx dt\\
&\quad+\frac{C\,p}{q-p+1}\frac{\alpha+q-p+1}{\alpha}\int_{t_0}^{t_0+r^p}\int_{B_r(x_0)}\int_{B_r(x_0)}(u(x,t)^q+u(y,t)^q)|\eta(x,t)-\eta(y,t)|^p\,d\mu dt\\
&\quad+\frac{2p}{q-p+1}\frac{\alpha+q-p+1}{\alpha}\left(\sup_{x\in\mathrm{spt}\,\eta(\cdot,t)}\int_{\mathbb{R}^N\setminus B_r(x_0)}\frac{|u(y,t)|^{p-1}}{|x-y|^{N+ps}}\,dy\right)\iint_{Q_r^{+}(x_0)}u^{q-p+1}\,\eta^p\,dx dt,
\end{split}
\end{equation}
and
\begin{equation}\label{engsupest2}
\begin{split}
&\iint_{Q_r^{+}(x_0)}u^{q-p}H(\eta \nabla u)^p\,dx\\
&\leq\frac{p\,\alpha}{\alpha+q-p+1}\iint_{Q_r^{+}(x_0)}u^{\alpha+q-p+1}\psi^{p-1}|\partial_t\psi|\,dx dt+\Big(\frac{p}{q-p+1}\Big)^p\iint_{Q_r^{+}(x_0)}u^q\,H(\nabla\eta)^p\,dx dt\\
&\quad+\frac{p}{q-p+1}\iint_{Q_r^{+}(x_0)}g\,u^q\,\eta^p\,dx dt\\
&\quad+\frac{C\,p}{q-p+1}\int_{t_0}^{t_0+r^p}\int_{B_r(x_0)}\int_{B_r(x_0)}(u(x,t)^q+u(y,t)^q)|\eta(x)-\eta(y)|^p\,d\mu dt\\
&\quad+\frac{2p}{q-p+1}\left(\sup_{x\in\mathrm{spt}\,\eta(\cdot,t)}\int_{\mathbb{R}^N\setminus B_r(x_0)}\frac{|u(y,t)|^{p-1}}{|x-y|^{N+ps}}\,dy\right)\iint_{Q_r^{+}(x_0)}u^{q-p+1}\,\eta^p\,dx dt,
\end{split}
\end{equation}
for every nonnegative $\eta\in C_c^{\infty}(Q_r^{+}(x_0))$, where $Q_r^{+}(x_0)=B_r(x_0)\times (t_0,t_0+r^p)$.
\end{Theorem}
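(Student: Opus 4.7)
The plan is to test the subsolution formulation \eqref{dlwksoleqn} with $\phi = u^{q-p+1}\eta^p$, exactly as in the elliptic Theorem \ref{engsub} where we had $\phi = v^p/u^{p-1}$ with $v = u^{q/p}\eta$. Since $u>0$ in $Q_r^{+}(x_0)$ and $\eta$ is compactly supported in space--time, this $\phi$ lies in the correct parabolic energy class after the standard Steklov average / time-mollification regularisation (as carried out in \cite{Bog1, Kenta2}); I will suppress this routine step and work directly with $\phi$.

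For the parabolic term, since $u>0$ and $\alpha>0$,
\begin{equation*}
\partial_t(u^\alpha)\cdot u^{q-p+1}\eta^p = \frac{\alpha}{\alpha+q-p+1}\,\partial_t\bigl(u^{\alpha+q-p+1}\bigr)\eta^p .
\end{equation*}
An integration by parts in $t$ against the cutoff $\eta$ produces, on the one hand, the time-slice term $\int_{B_r(x_0)}u^{\alpha+q-p+1}(\cdot,t)\eta(\cdot,t)^p\,dx$ evaluated at the upper endpoint (which, by the standard choice of $\eta$ and taking the supremum in $t$, gives the left-hand side of \eqref{engsupest1}), and on the other hand a remainder $\tfrac{p\alpha}{\alpha+q-p+1}\iint u^{\alpha+q-p+1}\eta^{p-1}|\partial_t\eta|\,dx\,dt$ that contributes to the right-hand sides.

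For the local elliptic part, with $v=u^{q/p}\eta$, the anisotropic Picone inequality (Theorem \ref{BrPI}) gives
\begin{equation*}
\int_{B_r(x_0)} H(\nabla u)^{p-1}\nabla H(\nabla u)\nabla\phi\,dx \le \int_{B_r(x_0)} H(\nabla v)^p\,dx.
\end{equation*}
Expanding $\nabla v = \tfrac{q}{p}u^{q/p-1}\eta\nabla u + u^{q/p}\nabla\eta$, using positive $1$-homogeneity and convexity of $H$, and applying Young's inequality as in the proof of \cite[Theorem 3.1]{Jaros} separates out the principal term $\bigl(\tfrac{q}{p}\bigr)^p\iint u^{q-p}H(\eta\nabla u)^p$ with a remainder controlled by $\iint u^q H(\nabla\eta)^p$; rearranging produces the coefficient $(p/(q-p+1))^p$ in front of $\iint u^q H(\nabla\eta)^p$, exactly matching \eqref{engsupest2}.

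For the nonlocal part, the discrete Picone inequality (Theorem \ref{JacPI}) gives, pointwise in $t$,
\begin{equation*}
|u(x,t)-u(y,t)|^{p-2}(u(x,t)-u(y,t))\Bigl(\tfrac{v^p}{u^{p-1}}(x,t)-\tfrac{v^p}{u^{p-1}}(y,t)\Bigr)\le |v(x,t)-v(y,t)|^p,
\end{equation*}
and the right-hand side, after replacing $v=u^{q/p}\eta$, is estimated exactly by the computation of \cite[Proposition 3.5]{BrPr} (see \eqref{BrParap} in the elliptic case), producing the two nonlocal terms appearing on the right of \eqref{engsupest1}--\eqref{engsupest2}; the factor $\epsilon\le 1$ is absorbed into the constant $C$. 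The source term is immediate since $g|u|^{p-2}u\cdot\phi = g\,u^q\eta^p$.

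Assembling these four contributions gives a single master inequality. Keeping the time-slice term on the left and discarding the nonnegative local gradient term produces \eqref{engsupest1} (after multiplying through by $(\alpha+q-p+1)/\alpha$); alternatively keeping the local gradient term on the left and discarding the time-slice term produces \eqref{engsupest2}. The main obstacle I foresee is the careful bookkeeping of the numerical constants—in particular, tracking the factor $(\alpha+q-p+1)/\alpha$ that distinguishes the two inequalities and verifying that the Picone-based split of the local term survives the anisotropic expansion of $H(\nabla v)^p$—but no genuinely new analytic tool is required beyond what is already used in Theorem \ref{engsub} and \cite[Lemma 4.1]{Kenta2}.
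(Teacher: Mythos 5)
Your proposal is correct and follows essentially the same route the paper prescribes: the test function $u^{q-p+1}\eta^p=v^p/u^{p-1}$ with $v=u^{q/p}\eta$ (justified by Steklov/mollification as in \cite{Bog1, Kenta2}), the Jaros-type expansion of the anisotropic term via the Picone identity, homogeneity and Young's inequality, the estimate \eqref{BrParap} from \cite[Proposition 3.5]{BrPr} for the nonlocal term, and the Kenta2-style treatment of $\partial_t(u^\alpha)$, with the two stated inequalities obtained by keeping either the time-slice term or the gradient term. One cosmetic caveat: the one-sided bounds from Theorems \ref{BrPI} and \ref{JacPI} that you quote go the unhelpful direction for terms sitting on the left of the subsolution inequality, but this is harmless since the operative steps you actually invoke are the Picone \emph{identity} expansion (as in \cite[Theorem 3.1]{Jaros}) and the discrete estimate of \cite[Proposition 3.5]{BrPr}, which are exactly what the paper uses.
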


\subsubsection{Semicontinuity and pointwise behavior}
In this subsection, we obtain lower semicontinuity as well as upper semicontinuity results for weak supersolution and subsolution of \eqref{dleqn} respectively. We also discuss the pointwise behavior of such semicontinuous representatives. First, we define the lower and upper semicontinuous representatives of a measurable function. In the local case, see \cite{Liao}, in the nonlocal case, see \cite{BGKccm} and in the mixed setting, we refer to \cite{GKansp} and the references therein.

Let $u$ be a measurable function that is locally essentially bounded below in $\Omega_T=\Omega\times(0,T)$. Suppose that $(x,t)\in\Om_T$ and $r\in(0,1)$, $\theta>0$ such that $\mathcal{Q}_{r,\theta}(x,t)=B_r(x)\times(t-\theta r^p,t+\theta r^p)\Subset\Om_T$. The  lower semicontinuous regularization $u_*$ of $u$ is defined as 
\begin{equation}\label{lscrp}
u_*(x,t)=\essliminf_{(y,\hat{t})\to (x,t)}\,u(y,\hat{t})=\lim_{r\to 0}\inf_{\mathcal{Q}_{r,\theta}(x,t)}\,u\text{ for } (x,t)\in \Om_T.
\end{equation}

Analogously, for a locally essentially bounded above measurable function $u$ in $\Om_T$, we define an upper semicontinuous regularization $u^*$  of $u$ by
\begin{equation}\label{uscrp}
u^*(x,t)=\esslimsup_{(y,\hat{t})\to (x,t)}\,u(y,\hat{t})=\lim_{r\to 0}\sup_{\mathcal{Q}_{r,\theta}(x,t)}\,u\text{ for } (x,t)\in \Om_T.
\end{equation}
It is easy to see that $u_*$ is lower semicontinuous and  $u^*$ is upper semicontinuous in $\Om_T$.

Let $u\in L^1_{\mathrm{loc}}(\Om_T)$ and define the set of Lebesgue points of $u$ by
$$
\mathcal{F}=\bigg\{(x,t)\in\Om_T:|u(x,t)|<\infty,\,\lim_{r\to 0}\fint_{\mathcal{Q}_{r,\theta}(x,t)}|u(x,t)-u(y,\hat{t})|\,dy\,d\hat{t}=0\bigg\}.
$$
From the Lebesgue differentiation theorem we have $|\mathcal{F}|=|\Om_T|$. \\

.
The main results of this subsection reads as follows:
\begin{Theorem}\label{lscthm1}(\textbf{Lower semicontinuity})
Let $g\in L^\infty(\Omega_T)$ and $u\in L^p_{\mathrm{loc}}\big(0,T;W^{1,p}_{\mathrm{loc}}(\Om)\big)\cap C_{\mathrm{loc}}\big(0,T;L^{\alpha+1}_{\mathrm{loc}}(\Om)\big)\cap L^\infty_{\mathrm{loc}}\big(0,T;L^{p-1}_{ps}(\mathbb{R}^N)\big)$ be a weak supersolution of \eqref{dleqn} in $\Om_T$ such that $u$ is essentially bounded below in $\R^N\times(0,T)$. Let $u_*$ be defined by \eqref{lscrp}. Then $u(x,t)=u_*(x,t)$ at every Lebesgue point $(x,t)\in\Om_T$. 
In particular, $u_*$ is a lower semicontinuous representative of $u$ in $\Om_T$.
\end{Theorem}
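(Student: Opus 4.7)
The plan is to adapt the Liao-type framework used in the elliptic case (Lemma \ref{lscthmLiao}, Theorem \ref{lscthm1n}) to the doubly nonlinear parabolic setting. The key ingredient will be a parabolic De Giorgi lemma analogous to Lemma \ref{DGL}, together with a parabolic version of property $(\mathcal{D})$ formulated on forward/backward cylinders $\mathcal{Q}_{r,\theta}(x_0,t_0)=B_r(x_0)\times(t_0-\theta r^p,t_0+\theta r^p)$ with an appropriate intrinsic scaling factor $\theta$ (typically of the form $\theta\sim M^{\alpha-p+1}$ to account for the doubly nonlinear structure). Once such a parabolic De Giorgi lemma is in force, a parabolic analogue of Liao's theorem \cite[Theorem 2.1]{Liao} will yield that at every Lebesgue point $(x,t)\in\mathcal{F}$ one has $u(x,t)=u_*(x,t)$, which is exactly the assertion.

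First, I would formulate and prove the parabolic De Giorgi lemma: if $\mu_{-}\leq \mathrm{ess\,inf}_{\mathcal{Q}_{r,\theta}(x_0,t_0)}u$, $\lambda_-\leq\mathrm{ess\,inf}_{\R^N\times(0,T)}u$, $M>0$, $b\in(0,1)$, and
\[
|\{u\leq\mu_-+M\}\cap \mathcal{Q}_{r,\theta}(x_0,t_0)|\leq \tau|\mathcal{Q}_{r,\theta}(x_0,t_0)|
\]
for $\tau=\tau(N,p,s,\alpha,g,b,M,\mu_-,\lambda_-,\theta,C_1,C_2)$ sufficiently small, then $u\geq\mu_-+bM$ a.e. in the smaller cylinder $\mathcal{Q}_{3r/4,\theta}(x_0,t_0)$. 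The argument sets the decreasing levels $k_j=\mu_-+bM+(1-b)M2^{-j}$, defines the truncations $w_j=(k_j-u)_+$ (observe $w_j\leq L:=(M+\mu_--\lambda_-)_+$ in $\R^N$), and tests Theorem \ref{dlsupeng} on shrinking parabolic cylinders with a cutoff $\psi_j\in C_c^\infty$ of the right kind. The quantities $\zeta_\pm(\cdot,k_j)$ from Lemma \ref{Liaoin} handle the doubly nonlinear time derivative term since $|v|^{\alpha-1}v-|k|^{\alpha-1}k$ is controlled two-sidedly by $(|v|+|k|)^{\alpha-1}(v-k)_\pm^2$. Combining the gradient energy bound with the Gagliardo--Nirenberg--Sobolev inequality (Lemma \ref{Sobine}) interpolated against the $L^\infty_t L^{\alpha+1}_x$ bound (Lemma \ref{Sobo}) yields an inequality of the form
\[
Y_{j+1}\leq c_0\,2^{j\beta_0}\,Y_j^{1+\beta}, \qquad Y_j:=\frac{|\{u<k_j\}\cap \mathcal{Q}_{j}|}{|\mathcal{Q}_{j}|},
\]
with constants depending on $M, \mu_-,\lambda_-,g,\theta$ but not on $r$; then Lemma \ref{iteration} forces $Y_j\to 0$ provided $Y_0\leq \tau$, proving the claim. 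The nonlocal tail contribution is controlled by the essential lower bound on $u$ in $\R^N$ exactly as in Lemma \ref{DGL}.

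Second, the parabolic De Giorgi lemma exactly says $u$ satisfies property $(\mathcal{D})$ in the sense of Liao, with constants independent of $r$ on the relevant scale. I would then invoke (or reprove, following \cite[Theorem 2.1]{Liao}) the parabolic Liao theorem: measurable functions which are locally essentially bounded below and satisfy $(\mathcal{D})$ agree with their lower semicontinuous regularization $u_*$ at every Lebesgue point. Since $u\in L^p_{\loc}(0,T;W^{1,p}_{\loc}(\Omega))\cap C_{\loc}(0,T;L^{\alpha+1}_{\loc}(\Omega))$ is automatically locally integrable and locally essentially bounded below by hypothesis, one concludes $u(x,t)=u_*(x,t)$ on $\mathcal{F}$ and $|\mathcal{F}|=|\Omega_T|$, hence $u_*$ is a lower semicontinuous representative.

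The main obstacle is the parabolic De Giorgi iteration itself: calibrating the intrinsic scaling $\theta$ so that the doubly nonlinear time term produced by Theorem \ref{dlsupeng} (which involves $v^{\alpha-q+1}$ supremum in time) fits together with the spatial Sobolev embedding and with the nonlocal tail estimate $\mathrm{Tail}(u_-;x_0,R)$. In particular one must ensure that the $S/L$-ratio produced by $g\in L^\infty$ and the bound on $u$ from below (giving the uniform constant in $I_4$ of the elliptic argument) does not degenerate with $r\to 0$; this is why the uniform essential lower bound of $u$ on $\R^N\times(0,T)$ in the hypothesis is crucial. Apart from these technicalities, once $\tau$ is obtained independent of $r$ (only of $b,M,\mu_-,\lambda_-,g,\theta$ and the structural data), Liao's abstract machinery applies verbatim and the theorem follows.
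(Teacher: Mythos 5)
Your proposal follows essentially the same route as the paper: the paper's Lemma \ref{DGL1} is precisely the parabolic De Giorgi lemma you outline (proved with the truncation energy estimate of Theorem \ref{eng1} together with Lemma \ref{Liaoin}, Lemma \ref{Sobo} and Lemma \ref{iteration} --- note that the estimate to test is Theorem \ref{eng1} with $w_j=(k_j-u)_+$, not Theorem \ref{dlsupeng}, whose test function $v^{1-q}\psi^p$ is not adapted to truncation levels, although the content you describe is the correct one), and the conclusion then follows by Liao's parabolic result, stated in the paper as Theorem \ref{lscthm}. The only simplification relative to your worry about calibrating $\theta$: in Definition \ref{propertyP} the constant $\nu$ is allowed to depend on $\theta$, so no intrinsic scaling of $\theta$ is needed for this lemma (that issue only enters in Lemma \ref{DGL2}).
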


\begin{proof}
By Lemma \ref{DGL1}, the function $u$ satisfies the property $(\mathcal{P})$. Then applying Theorem \ref{lscthm}, the result follows. 
\end{proof}

\begin{Theorem}\label{lscpt}(\textbf{Pointwise behavior})
Let $g\in L^\infty(\Om_T)$ and $u\in L^p_{\mathrm{loc}}\big(0,T;W^{1,p}_{\mathrm{loc}}(\Om)\big)\cap C_{\mathrm{loc}}\big(0,T;L^{\alpha+1}_{\mathrm{loc}}(\Om)\big)\cap L^\infty_{\mathrm{loc}}\big(0,T;L^{p-1}_{ps}(\mathbb{R}^N)\big)$ be a weak supersolution of \eqref{dleqn} in $\Om_T$ such that $u$ is essentially bounded below in $\R^N\times(0,T)$.
Assume that $u_*$ is the lower semicontinuous representative of $u$ given by Theorem \ref{lscthm1}. 
Then for every $(x,t)\in\Om_T$, we have
$$
u_*(x,t)=\inf_{\theta>0}\lim_{r\to 0}\essinf_{\mathcal{Q}'_{r,\theta}(x,t)}\,u,
$$
where $\mathcal{Q}'_{r,\theta}(x,t)=B_r(x)\times(t-2\theta r^{p},t-\theta r^{p}),\,r\in(0,1)$.
In particular, we have
$$
u_*(x,t)={\essliminf_{(y,\hat{t})\to(x,t),\,\hat{t}<t}}\,u(y,\hat{t})
$$
at every point $(x,t)\in\Om_T$.
\end{Theorem}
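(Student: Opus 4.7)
The plan is to establish the main identity $u_*(x,t)=\inf_{\theta>0}\lim_{r\to 0}\essinf_{\mathcal{Q}'_{r,\theta}(x,t)}u$ by separating the two inequalities, and then to obtain the $\essliminf$ representation as a corollary via elementary inclusion arguments. For the inequality $u_*(x,t)\leq\inf_{\theta}\lim_{r}\essinf_{\mathcal{Q}'_{r,\theta}(x,t)}u$, I would observe that $\mathcal{Q}'_{r,\theta}(x,t)=B_r(x)\times(t-2\theta r^{p},t-\theta r^{p})\subset \mathcal{Q}_{r,2\theta}(x,t)$, so $\essinf_{\mathcal{Q}_{r,2\theta}(x,t)}u\leq \essinf_{\mathcal{Q}'_{r,\theta}(x,t)}u$. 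Letting $r\to 0$ and using the definition \eqref{lscrp} of $u_*$ yields $u_*(x,t)\leq\lim_{r\to 0}\essinf_{\mathcal{Q}'_{r,\theta}(x,t)}u$, and taking the infimum over $\theta>0$ closes this direction.

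The reverse inequality $u_*(x,t)\geq\inf_{\theta}\lim_{r}\essinf_{\mathcal{Q}'_{r,\theta}(x,t)}u$ is the substantive part and I would argue by contradiction. Suppose $u_*(x,t)<\alpha<\inf_{\theta>0}\lim_{r\to 0}\essinf_{\mathcal{Q}'_{r,\theta}(x,t)}u$ for some $\alpha\in\mathbb{R}$. The right strict inequality implies that for every $\theta>0$ there is $r_\theta>0$ such that $u\geq\alpha$ a.e.\ on $\mathcal{Q}'_{r,\theta}(x,t)$ for all $0<r<r_\theta$; in particular $|\{u\leq\mu_-+M\}\cap\mathcal{Q}'_{r,\theta}|=0$ with $\mu_-=\alpha-M$ and any $M>0$. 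I would then invoke the parabolic De Giorgi-type lemma (the parabolic analogue of Lemma \ref{DGL}, embodied in the property $(\mathcal{P})$ used in the proof of Theorem \ref{lscthm1}) to upgrade this a.e.\ past lower bound into a pointwise estimate $u\geq\mu_-+bM=\alpha-(1-b)M$ on a forward-in-time cylinder that contains the point $(x,t)$, for a suitable choice of the aperture $\theta$ and of $r$. Sending $M\to 0$, or equivalently iterating the argument while shrinking the cylinder, produces $u\geq\alpha-\delta$ a.e.\ in a symmetric cylinder $\mathcal{Q}_{r',\theta'}(x,t)$ for every $\delta>0$, which forces $u_*(x,t)\geq\alpha$ via \eqref{lscrp}, contradicting $u_*(x,t)<\alpha$.

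The main obstacle will be the parameter bookkeeping in this forward propagation: one must arrange that the cylinder delivered by the De Giorgi lemma actually extends beyond the instant $t-\theta r^{p}$ and covers a symmetric parabolic neighborhood of $(x,t)$. This matching is precisely the reason the statement involves an infimum over $\theta$, since varying the aperture of the past cylinder tunes the vertical extent of the forward cylinder that the lemma produces. The parabolic nature of \eqref{dleqn} is essential here: information propagates forward in time for supersolutions, which is why past control yields present control (and not the reverse).

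Finally, the $\essliminf$ representation follows from the main formula by set inclusions alone. For any $\varepsilon>0$ and $\theta>0$, once $r<\varepsilon$ and $2\theta r^{p}<\varepsilon$ we have $\mathcal{Q}'_{r,\theta}(x,t)\subset B_\varepsilon(x)\times(t-\varepsilon,t)$, hence
$$
\essinf_{B_\varepsilon(x)\times(t-\varepsilon,t)}u\leq \essinf_{\mathcal{Q}'_{r,\theta}(x,t)}u.
$$
Sending $r\to 0$, then taking $\inf$ over $\theta>0$ and finally $\varepsilon\to 0$ gives $\essliminf_{(y,\hat t)\to(x,t),\,\hat t<t}u(y,\hat t)\leq u_*(x,t)$. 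Conversely each $\mathcal{Q}'_{r,\theta}(x,t)$ is itself a past neighborhood of $(x,t)$, so $\essinf_{\mathcal{Q}'_{r,\theta}(x,t)}u\leq\essliminf_{\hat t<t}u(y,\hat t)$; passing to the limit $r\to 0$ and then to the infimum over $\theta>0$ yields $u_*(x,t)\leq\essliminf_{\hat t<t}u(y,\hat t)$, which completes the proof.
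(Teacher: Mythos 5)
Your overall scheme (the easy inclusion inequality, then a forward-in-time De Giorgi propagation from the past band, with the aperture matching explaining the infimum over $\theta$) is the same as the paper's, which follows Liao's Theorem 3.1. However, the key step as you state it would not go through: the tool you invoke, the property $(\mathcal{P})$ lemma used for Theorem \ref{lscthm1} (i.e.\ Lemma \ref{DGL1}), has as hypothesis a measure-smallness condition on the \emph{centered} cylinder $\mathcal{Q}_{r,\theta}(x,t)=B_r(x)\times(t-\theta r^p,t+\theta r^p)$ and concludes on a concentric cylinder. Your available information, $u\geq\alpha$ a.e.\ on $\mathcal{Q}'_{r,\theta}(x,t)=B_r(x)\times(t-2\theta r^p,t-\theta r^p)$, lives on a time band that is disjoint in time from $\mathcal{Q}_{r,\theta}(x,t)$ and gives no control of $|\{u\leq\mu^-+M\}\cap\mathcal{Q}_{r,\theta}(x,t)|$, so property $(\mathcal{P})$ cannot be applied, and in any case it does not yield the forward-in-time cylinder you describe. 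The ingredient the paper actually uses is Lemma \ref{DGL2}: one first selects (by Fubini) a time level $s\in(t-2\theta r^p,t-\theta r^p)$ which is a Lebesgue instant and at which $u(\cdot,s)\geq\alpha$ a.e.\ in $B_r(x)$, and then Lemma \ref{DGL2} propagates the bound forward on $B_{3r/4}(x)\times\big(s,s+\theta_0(\tfrac{3r}{4})^p\big)$, where the aperture $\theta_0$ is dictated by the data; choosing the past aperture $\theta$ small compared with $\theta_0$ makes this forward cylinder a full space-time neighborhood of $(x,t)$, whence $u_*(x,t)\geq\mu^-+aM$ by \eqref{lscrp}. The identification of Lemma \ref{DGL2} (rather than property $(\mathcal{P})$) and this time-slice selection step are exactly what is missing from your argument, and they are the substantive content of the proof.

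Two smaller points. Sending $M\to0$ is not the right limit: the aperture $\theta$ produced by Lemma \ref{DGL2} degenerates as $M\to0$, so one keeps $M$ fixed (say $M=\alpha-\mu^-$ with $\mu^-$ a lower bound for $u$) and lets the fraction $a\to1$, which only forces a smaller past aperture and is absorbed by the infimum over $\theta$. Finally, in your last paragraph the justification of $u_*(x,t)\leq\essliminf_{\hat t<t}u$ via the claim $\essinf_{\mathcal{Q}'_{r,\theta}(x,t)}u\leq\essliminf_{\hat t<t}u$ is incorrect for fixed $r$ (the band is separated from the time level $t$, and after letting $r\to0$ the inequality in fact runs the other way); this half is simply immediate from \eqref{lscrp}, since restricting the approach to times $\hat t<t$ can only increase the essential lower limit.
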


\begin{proof}
Taking into account Lemma \ref{DGL2} below and proceeding along the lines of the proof of \cite[Theorem 3.1]{Liao}, the result follows.
\end{proof}


Taking into account that $-u$ is a weak supersolution of \eqref{dleqn} if $u$ is a weak subsolution of \eqref{dleqn}, the following two results follows as a consequence of Theorem \ref{lscthm1} and Theorem \ref{lscpt}.
\begin{Theorem}\label{uscthm1}(\textbf{Upper semicontinuity})
Let $g\in L^\infty(\Om_T)$ and $u\in L^p_{\mathrm{loc}}\big(0,T;W^{1,p}_{\mathrm{loc}}(\Om)\big)\cap C_{\mathrm{loc}}\big(0,T;L^{\alpha+1}_{\mathrm{loc}}(\Om)\big)\cap L^\infty_{\mathrm{loc}}\big(0,T;L^{p-1}_{ps}(\mathbb{R}^N)\big)$ is a weak subsolution of \eqref{dleqn} in $\Om_T$ such that $u$ is essentially bounded above in $\R^N\times(0,T)$. Let $u^*$ be defined by \eqref{uscrp}. Then $u(x,t)=u^*(x,t)$ at every Lebesgue point $(x,t)\in\Om_T$. In particular, $u^*$ is an upper semicontinuous representative of $u$ in $\Om_T$.
\end{Theorem}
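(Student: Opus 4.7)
The plan is to reduce Theorem \ref{uscthm1} directly to the already-established Theorem \ref{lscthm1} by means of the symmetry $u \mapsto -u$. The paper itself flags this as the intended route, so the real content of the proof is a short verification that the weak subsolution notion is invariant under sign change, combined with a careful unwinding of the definitions \eqref{lscrp}-\eqref{uscrp}.

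First I would show that if $u$ is a weak subsolution of \eqref{dleqn}, then $v := -u$ is a weak supersolution of \eqref{dleqn} with the \emph{same} function $g$. The verification amounts to checking each term against Definition \ref{dlwksol}: the time derivative gives $\partial_t(|v|^{\alpha-1}v) = -\partial_t(|u|^{\alpha-1}u)$; using (H2) and Lemma \ref{Happ}(B) one has $H(\nabla v) = H(\nabla u)$ and $\nabla H(\nabla v) = -\nabla H(\nabla u)$, so $H_p v = -H_p u$; the nonlocal operator satisfies $(-\Delta_p)^s v = -(-\Delta_p)^s u$ by odd symmetry of $t \mapsto |t|^{p-2}t$; finally $g|v|^{p-2}v = -g|u|^{p-2}u$. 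Multiplying the resulting subsolution inequality by $-1$ flips it to the supersolution inequality, and one checks that $v$ lies in the same function space as $u$ and is essentially bounded \emph{below} in $\mathbb{R}^N\times(0,T)$ since $u$ is essentially bounded above.

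Next, Theorem \ref{lscthm1} applies to $v$, yielding $v(x,t) = v_*(x,t)$ at every Lebesgue point $(x,t) \in \Omega_T$ of $v$, with $v_*$ a lower semicontinuous representative on $\Omega_T$. Unwinding the definitions \eqref{lscrp} and \eqref{uscrp}, one has
\begin{equation*}
v_*(x,t) = \lim_{r\to 0}\essinf_{\mathcal{Q}_{r,\theta}(x,t)}(-u) = -\lim_{r\to 0}\esssup_{\mathcal{Q}_{r,\theta}(x,t)} u = -u^*(x,t),
\end{equation*}
so $u^* = -v_*$ is upper semicontinuous on $\Omega_T$. Since the Lebesgue point sets of $u$ and $-u$ coincide, the equality $v(x,t) = v_*(x,t)$ translates into $u(x,t) = u^*(x,t)$ at every Lebesgue point of $u$, which is the conclusion.

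There is no real obstacle here beyond bookkeeping; the only step that requires attention is the invariance of the weak subsolution formulation under $u\mapsto -u$, and in particular the sign behavior of the anisotropic operator $H_p$, which is handled by (H2) and Lemma \ref{Happ}(B). Everything else is a direct transcription through the reflection.
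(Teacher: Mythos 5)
Your proposal is correct and coincides with the paper's own argument: the paper proves Theorem \ref{uscthm1} precisely by noting that $-u$ is a weak supersolution of \eqref{dleqn} whenever $u$ is a weak subsolution, and then invoking Theorem \ref{lscthm1}, which is exactly your reduction. Your sign-check of each term (using (H2), Lemma \ref{Happ}(B), the oddness of $t\mapsto|t|^{p-2}t$) and the identification $u^*=-(-u)_*$ just make explicit the bookkeeping the paper leaves implicit.
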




Our final result concerns the pointwise behavior of the upper semicontinuous representative given by Theorem \ref{uscthm1}.

\begin{Theorem}\label{uscpt}(\textbf{Pointwise behavior})
Let $g\in L^\infty(\Om_T)$ and $u\in L^p_{\mathrm{loc}}\big(0,T;W^{1,p}_{\mathrm{loc}}(\Om)\big)\cap C_{\mathrm{loc}}\big(0,T;L^{\alpha+1}_{\mathrm{loc}}(\Om)\big)\cap L^\infty_{\mathrm{loc}}\big(0,T;L^{p-1}_{ps}(\mathbb{R}^N)\big)$ be a weak subsolution of \eqref{dleqn} in $\Om_T$ such that $u$ is essentially bounded above in $\R^N\times(0,T)$.
Assume that $u^*$ is the upper semicontinuous representative of $u$ given by Theorem \ref{uscthm1}. 
Then for every $(x,t)\in\Om_T$, we have
$$
u^*(x,t)=\sup_{\theta>0}\lim_{r\to 0}\sup_{\mathcal{Q}'_{r,\theta}(x,t)}\,u,
$$
where $\mathcal{Q}'_{r,\theta}(x,t)=B_r(x)\times(t-2\theta r^{p},t-\theta r^{p}),\,r\in(0,1)$.
In particular, we have
$$
u^*(x,t)={\esslimsup_{(y,\hat{t})\to(x,t),\,\hat{t}<t}}\,u(y,\hat{t})
$$
at every point $(x,t)\in\Om_T$.
\end{Theorem}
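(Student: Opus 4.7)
The plan is to derive Theorem \ref{uscpt} directly from Theorem \ref{lscpt} via the reflection $u\mapsto -u$, exactly in the spirit of the remark preceding Theorem \ref{uscthm1}. The key observation is that the mixed anisotropic and nonlocal equation \eqref{dleqn} is odd under $u \mapsto -u$: by Lemma \ref{Happ}(B) together with the $1$-homogeneity of $H$ one gets $H_p(-u) = -H_p u$; the fractional $p$-Laplacian clearly satisfies $(-\Delta_p)^s(-u)=-(-\Delta_p)^s u$; the time term $\partial_t(|{-u}|^{\alpha-1}(-u))=-\partial_t(|u|^{\alpha-1}u)$; and the right-hand side $g|{-u}|^{p-2}(-u)=-g|u|^{p-2}u$. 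Hence if $u$ is a weak subsolution of \eqref{dleqn}, then $v:=-u$ is a weak supersolution, and if $u$ is essentially bounded above on $\mathbb{R}^N\times(0,T)$, then $v$ is essentially bounded below there, so Theorem \ref{lscpt} applies to $v$.

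Next I would observe the elementary but crucial identity relating the two regularizations \eqref{lscrp} and \eqref{uscrp}:
\begin{equation*}
(-u)_*(x,t)=\essliminf_{(y,\hat t)\to(x,t)}(-u)(y,\hat t)=-\esslimsup_{(y,\hat t)\to(x,t)}u(y,\hat t)=-u^*(x,t),
\end{equation*}
valid pointwise on $\Om_T$. Together with the usual identity $\essinf_E(-u)=-\esssup_E u$ applied on each cylinder $\mathcal{Q}'_{r,\theta}(x,t)$, and with $\inf_\theta=-\sup_\theta$ when negating, Theorem \ref{lscpt} applied to $v=-u$ reads
\begin{equation*}
-u^*(x,t)=(-u)_*(x,t)=\inf_{\theta>0}\lim_{r\to 0}\essinf_{\mathcal{Q}'_{r,\theta}(x,t)}(-u)=-\sup_{\theta>0}\lim_{r\to 0}\esssup_{\mathcal{Q}'_{r,\theta}(x,t)}u.
\end{equation*}
Multiplying by $-1$ yields the first claimed formula. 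The second (the one-sided essential $\limsup$ characterization) follows from the same duality applied to the second conclusion of Theorem \ref{lscpt}:
\begin{equation*}
u^*(x,t)=-(-u)_*(x,t)=-\essliminf_{(y,\hat t)\to(x,t),\,\hat t<t}(-u)(y,\hat t)=\esslimsup_{(y,\hat t)\to(x,t),\,\hat t<t}u(y,\hat t).
\end{equation*}

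There is essentially no analytic obstacle here: the Caccioppoli and De~Giorgi type machinery has already been absorbed into Theorem \ref{lscpt} and Lemma \ref{DGL2}. The only point that requires a bit of care is the first step, namely verifying that the notion of weak sub/supersolution in Definition \ref{dlwksol} is preserved under the substitution $u\mapsto -u$; this is transparent because every term in the weak formulation \eqref{dlwksoleqn} is odd in $u$, so that testing the inequality satisfied by $u$ against a nonnegative $\phi$ produces exactly the reversed inequality for $-u$ against the same $\phi$. With that verification in hand, the remainder of the argument is the formal negation performed above, and no further regularity estimates are needed.
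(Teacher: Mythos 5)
Your proposal is correct and is exactly the paper's argument: the paper obtains Theorem \ref{uscpt} by noting that $-u$ is a weak supersolution of \eqref{dleqn} whenever $u$ is a weak subsolution (all terms in \eqref{dlwksoleqn} being odd in $u$) and then invoking Theorem \ref{lscpt} together with the duality $(-u)_*=-u^*$ and $\essinf(-u)=-\esssup u$. Your explicit verification of the oddness of each term and of the negation of the limits fills in the details the paper leaves implicit, but the route is the same.
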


\textbf{Preliminaries:} The following measure theoretic property from \cite{Liao} will be useful for us.

\begin{Definition}\label{propertyP} Let $u$ be a measurable function which is locally essentially bounded below in $\Om_T$. Assume that $(x_0,t_0)\in\Om_T$ and $r\in(0,1)$, $\theta>0$ such that $\mathcal{Q}_{r,\theta}(x_0,t_0)=B_r(x_0)\times(t_0-\theta r^p,t_0+\theta r^p)\Subset\Om_T$. Suppose 
\begin{equation}\label{lmu}
a,c\in(0,1),\quad M>0,\quad \mu^-\leq\essinf_{\mathcal{Q}_{r,\theta}(x_0,t_0)}\,u.
\end{equation}
We say that $u$ satisfies the property $(\mathcal P)$ if there exists a constant $\nu\in(0,1)$, 
which depends only on $a,M,\theta,\mu^-$ and other data, but independent of $r$, such that if
\begin{equation}\label{lgc}
|\{u\leq\mu^-+M\}\cap\mathcal{Q}_{r,\theta}(x_0,t_0)|\leq\nu|\mathcal{Q}_{r,\theta}(x_0,t_0)|,
\end{equation}
then 
\begin{equation}\label{lr}
u\geq\mu^-+a\,M\text{ in }\mathcal{Q}_{cr,\theta}(x_0,t_0).
\end{equation}
\end{Definition}

Next, we state a result from Liao in \cite[Theorem 2.1]{Liao} that shows that any such function with the property $(\mathcal P)$ has a lower semicontinuous representative. 

\begin{Theorem}\label{lscthm}
Let $u$ be a measurable function in $\Om_T$ which is locally integrable and locally essentially bounded below in $\Om_T$. Assume that $u$ satisfies the property $(\mathcal P)$. Let $u_*$ be defined by \eqref{lscrp}. Then $u(x,t)=u_*(x,t)$ for every $x\in \mathcal{F}$. 
In particular, $u_*$ is a lower semicontinuous representative of $u$ in $\Om_T$.
\end{Theorem}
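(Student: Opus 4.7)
The plan is to establish three things: (i) $u_*$ is lower semicontinuous on $\Om_T$; (ii) at every Lebesgue point $u_*(x_0,t_0)\leq u(x_0,t_0)$; and (iii) the reverse inequality $u_*(x_0,t_0)\geq u(x_0,t_0)$ also holds there. Only (iii) is deep; it is where the measure-theoretic property $(\mathcal{P})$ enters.

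Steps (i) and (ii) are soft. For (i), fix $\theta>0$ and note that $r\mapsto\essinf_{\mathcal{Q}_{r,\theta}(x,t)}u$ is non-increasing, so the limit in \eqref{lscrp} exists; moreover if $(x_k,t_k)\to(x,t)$, then for any small $r>0$ the cylinder $\mathcal{Q}_{r/2,\theta}(x_k,t_k)$ lies inside $\mathcal{Q}_{r,\theta}(x,t)$ for all sufficiently large $k$, so $u_*(x_k,t_k)\geq\essinf_{\mathcal{Q}_{r,\theta}(x,t)}u$; sending $k\to\infty$ and then $r\to 0$ yields $\liminf_k u_*(x_k,t_k)\geq u_*(x,t)$. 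For (ii), $\essinf_{\mathcal{Q}_{r,\theta}(x_0,t_0)}u\leq\fint_{\mathcal{Q}_{r,\theta}(x_0,t_0)}u\,dx\,dt$, and the right-hand side converges to $u(x_0,t_0)$ by definition of $\mathcal{F}$.

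For (iii) I would argue by contradiction. Assume $u_*(x_0,t_0)<u(x_0,t_0)$, fix $\theta>0$, and pick $a\in(0,1)$ and $M>0$ with $u_*(x_0,t_0)+(1+a)M<u(x_0,t_0)$. Set $\mu^-(r):=\essinf_{\mathcal{Q}_{r,\theta}(x_0,t_0)}u$, which satisfies $\mu^-(r)\to u_*(x_0,t_0)$ and is uniformly bounded below by the local essential lower bound on $u$. For all sufficiently small $r$, $\mu^-(r)+M\leq u(x_0,t_0)-\tfrac{aM}{2}$, so a Chebyshev estimate combined with the Lebesgue point property gives
$$
\frac{|\{u\leq\mu^-(r)+M\}\cap\mathcal{Q}_{r,\theta}(x_0,t_0)|}{|\mathcal{Q}_{r,\theta}(x_0,t_0)|}\leq\frac{2}{aM}\fint_{\mathcal{Q}_{r,\theta}(x_0,t_0)}|u-u(x_0,t_0)|\,dx\,dt\longrightarrow 0.
$$
Hence this density eventually drops below the threshold $\nu$ furnished by property $(\mathcal{P})$, whose conclusion \eqref{lr} then yields $u\geq\mu^-(r)+aM$ on $\mathcal{Q}_{cr,\theta}(x_0,t_0)$. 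Taking essinf over the smaller cylinder and letting $r\to 0$, both $\mu^-(r)$ and $\essinf_{\mathcal{Q}_{cr,\theta}(x_0,t_0)}u$ converge to $u_*(x_0,t_0)$, forcing $0\geq aM$, the desired contradiction.

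The main obstacle is ensuring that $\nu$ from property $(\mathcal{P})$ can be held fixed as $r\to 0$; this is precisely the point of the stated independence of $\nu$ from $r$ in Definition \ref{propertyP}, valid once $a$, $M$, $\theta$, and a uniform lower bound for $\mu^-(r)$ are all fixed. Aside from this uniformity, the argument is essentially bookkeeping of the auxiliary parameters, and the same scheme applied to $-u$ would yield the corresponding upper semicontinuity statement.
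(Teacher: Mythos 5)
The paper does not actually prove Theorem \ref{lscthm}; it is quoted from \cite[Theorem 2.1]{Liao}, so your argument has to stand on its own. Its architecture --- (i) lower semicontinuity of $u_*$, (ii) $u_*\le u$ at Lebesgue points via the average, (iii) the reverse inequality by contradiction, combining a Chebyshev estimate at a Lebesgue point with property $(\mathcal P)$ --- is exactly Liao's argument, and steps (i) and (ii) are correct as written.

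The one point that needs tightening is the one you flag yourself. Definition \ref{propertyP} furnishes, for each admissible choice of the data $a,M,\theta,\mu^-$, a constant $\nu=\nu(a,M,\theta,\mu^-,\mathrm{data})$ that is independent of $r$; it does \emph{not} assert that $\nu$ may be chosen uniformly over all $\mu^-$ lying above a fixed lower bound. Hence applying $(\mathcal P)$ with the $r$-dependent level $\mu^-(r)=\essinf_{\mathcal{Q}_{r,\theta}(x_0,t_0)}u$ and a single threshold $\nu$ is not literally licensed by the definition (it does hold for the concrete $\nu$ produced in Lemma \ref{DGL1}, where $\mu^-$ enters only through the quantities $L$ and $S$, but that is information beyond the abstract property). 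The repair is one line and stays inside your scheme: since $\mu^-(r)\nearrow u_*(x_0,t_0)$ as $r\to0$, fix a constant $\lambda<u_*(x_0,t_0)$ and note that $\lambda\le\essinf_{\mathcal{Q}_{r,\theta}(x_0,t_0)}u$ for all sufficiently small $r$, so $(\mathcal P)$ applies with the \emph{fixed} datum $\mu^-=\lambda$ and therefore a single fixed $\nu$. Choosing $\lambda$ close to $u_*(x_0,t_0)$ (or $a$ close to $1$) and $M$ with $\lambda+aM>u_*(x_0,t_0)$ and $\lambda+M<u(x_0,t_0)$ --- possible precisely because $u_*(x_0,t_0)<u(x_0,t_0)$ --- your Chebyshev estimate still verifies \eqref{lgc} for all small $r$, and \eqref{lr} yields $\essinf_{\mathcal{Q}_{cr,\theta}(x_0,t_0)}u\ge\lambda+aM>u_*(x_0,t_0)$, contradicting the fact that every such essential infimum is bounded above by $u_*(x_0,t_0)$. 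With that adjustment the proof is complete and matches the cited one.
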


The proof of the following energy estimate follows in the exact way as in \cite[Proposition 2.1]{Liao} and \cite[Lemma 3.3]{GKansp}.
\begin{Theorem}\label{eng1}(\textbf{Energy estimate})
Let $g\in L^\infty(\Om_T)$ and suppose that $u\in L^p_{\mathrm{loc}}\big(0,T;W^{1,p}_{\mathrm{loc}}(\Om)\big)\\\cap C_{\mathrm{loc}}\big(0,T;L^{\alpha+1}_{\mathrm{loc}}(\Om)\big)\cap L^\infty_{\mathrm{loc}}\big(0,T;L^{p-1}_{ps}(\mathbb{R}^N)\big)$ is a weak supersolution (or subsolution) of \eqref{dleqn} in $\Om_T$.  Suppose $x_0\in\Om,\,r>0$ such that $B_r=B_r(x_0)\Subset\Om$ and $0<\tau_1<\tau_2$, $\tau>0$ such that $(\tau_1-\tau,\tau_2)\Subset(0,T)$. For $k\in\mathbb{R}$, we denote by $w_\pm=(u-k)_\pm$. Then there exists a positive constant $C=C(p,C_1,C_2)>0$ such that the estimate below holds with upper sign for supersolutions and with lower sign for subsolutions respectively:
\begin{equation}\label{eng1eqn}
\begin{split}
&\sup_{\tau_1-\tau<t<\tau_2}\int_{B_r}\zeta_{\pm}(u,k)\xi^p\,dx+\int_{\tau_1-\tau}^{\tau_2}\int_{B_r}|\nabla w_\pm|^p\xi^p\,dx\,dt\\
&\leq C\bigg(\int_{\tau_1-\tau}^{\tau_2}\int_{B_r}w_{\pm}^p|\nabla\xi|^p\,dx\,dt+\int_{\tau_1-\tau}^{\tau_2}\int_{B_r}\int_{B_r}\max\{w_{\pm}(x,t),w_{\pm}(y,t)\}^p|\xi(x,t)-\xi(y,t)|^p\,d\mu\,dt\\
&\qquad+\sup_{(x,t)\in\mathrm{spt}\,\xi,\,\tau_1-\tau<t<\tau_2}\int_{\mathbb{R}^N\setminus B_r}\frac{w_{\pm}(y,t)^{p-1}}{|x-y|^{N+ps}}\,dy\int_{\tau_1-\tau}^{\tau_2}\int_{B_r}w_{\pm}\xi^p\,dx\,dt\\
&\qquad+\int_{\tau_1-\tau}^{\tau_2}\int_{B_r}\zeta_{\pm}(u,k)|\partial_t\xi^p|\,dx\,dt+\int_{B_r}\zeta_{\pm}(u(x,\tau_1-\tau),k)\xi(x,\tau_1-\tau)^p\,dx\\
&\qquad\mp\int_{\tau_1-\tau}^{\tau_2}\int_{B_r}g\,|u|^{p-2}u\,w_\pm\,\xi^p\,dx dt\bigg),
\end{split}
\end{equation}
where $\xi(x,t)=\psi(x)\eta(t)$, with $\psi\in C_c^{\infty}(B_r)$ and $\eta\in C^\infty(\tau_1-\tau,\tau_2)$ are nonnegative functions. Here $\zeta_\pm$ is given by \eqref{zeta}.
\end{Theorem}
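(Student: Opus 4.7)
\textbf{Proof plan for Theorem \ref{eng1}.} The strategy is the usual Caccioppoli truncation argument adapted to the doubly nonlinear mixed setting, following the local template of \cite{Liao} and the anisotropic mixed template of \cite{GKansp}. I will treat the supersolution case with $w_- = (u-k)_-$; the subsolution case with $w_+$ is analogous after replacing $u$ by $-u$ (up to sign). The test function will be taken to be $\phi = -w_-\,\xi^p$, which is nonnegative, supported in $B_r\times(\tau_1-\tau,\tau_2)$, and lies in the right energy spaces. Since $\phi$ depends on $u$ it is not a priori admissible; this is handled by the standard Steklov/mollification-in-time argument of \cite{Bog1, Kenta2}, which I would invoke verbatim and then pass to the limit at the end.

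With $\phi$ inserted into \eqref{dlwksoleqn}, the four terms are treated separately. For the time term one computes $\partial_t(|u|^{\alpha-1}u)\cdot(-w_-) = \partial_t \zeta_-(u,k)$ by direct differentiation of the definition \eqref{zeta}, so after an integration by parts in $t$ against $\xi^p$ one obtains the supremal boundary contribution $\sup_t \int_{B_r}\zeta_-(u,k)\xi^p\,dx$ (plus the initial term at $t=\tau_1-\tau$) on the good side, and the $|\partial_t \xi^p|$ error on the right-hand side. For the anisotropic local term, $H(\nabla u)^{p-1}\nabla H(\nabla u)\cdot\nabla(-w_-\xi^p)$ is split via the Leibniz rule into a $\xi^p|\nabla w_-|^p$ part (using $H(\nabla u) = H(-\nabla w_-)$ on $\{u<k\}$ together with (H3) and Lemma \ref{Happ}) and a cross term $w_-\,\xi^{p-1}|\nabla \xi|\,H(\nabla u)^{p-1}$ which is absorbed via Young's inequality, producing the $w_-^p|\nabla\xi|^p$ error. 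For the nonlocal term I would follow the standard algebraic inequality from the proof of \cite[Theorem~1.4]{DKPloc} (pages 1285–1287), which gives the good nonlocal Gagliardo quantity on the left and the $\max\{w_-(x),w_-(y)\}^p|\xi(x)-\xi(y)|^p$ plus tail correction on the right; here one uses that $w_-$ vanishes outside $\{u<k\}$ to split the double integral into a near-diagonal piece and a piece interacting with the complement of $B_r$. Finally the right-hand side $\int g\,|u|^{p-2}u\,w_-\,\xi^p\,dx\,dt$ moves over with the sign indicated in \eqref{eng1eqn}.

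Collecting these four estimates, choosing the Young-inequality absorption constants small enough to pull the $\xi^p|\nabla w_-|^p$ and $|\xi w_-(x)-\xi w_-(y)|^p$ terms to the left, and taking the supremum in $t$ on $(\tau_1-\tau,\tau_2)$ using the monotonicity of $t\mapsto \int_{B_r}\zeta_-(u(\cdot,t),k)\xi^p$-cumulative reasoning (as in \cite[Proposition~2.1]{Liao}), produces exactly \eqref{eng1eqn} with a constant $C=C(p,C_1,C_2)$. The subsolution case runs identically with $\phi=w_+\xi^p$ and $\partial_t \zeta_+(u,k) = \partial_t(|u|^{\alpha-1}u)\cdot w_+$, which accounts for the sign in front of the $g$ term.

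The main obstacle is the admissibility of the truncated test function and the passage to the limit in the Steklov regularization, because the time derivative $\partial_t(|u|^{\alpha-1}u)$ has only very weak regularity and the identity $\partial_t(|u|^{\alpha-1}u)\cdot w_\pm = \partial_t \zeta_\pm(u,k)$ must be justified in an integrated sense; once this is done, invoking the two-sided bound in Lemma \ref{Liaoin} to pass between $\zeta_\pm(u,k)$ and $(|u|+|k|)^{\alpha-1}w_\pm^2$ is routine. The local and nonlocal pieces themselves are by now standard building blocks available in \cite{GKansp, Kenta2, DKPloc}, so the novelty is entirely in book-keeping the doubly nonlinear time term together with the anisotropic/nonlocal spatial operator and the lower-order contribution coming from $g$.
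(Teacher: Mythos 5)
Your outline takes essentially the same route as the paper, which proves this estimate simply by deferring to \cite[Proposition 2.1]{Liao} and \cite[Lemma 3.3]{GKansp}: Steklov/mollification in time to legitimize the truncated test function, the pairing of $\partial_t(|u|^{\alpha-1}u)$ with the truncation to generate $\zeta_\pm$ via \eqref{zeta} and Lemma \ref{Liaoin}, Leibniz plus Young for the anisotropic term using (H3) and Lemma \ref{Happ}, the near/far splitting of \cite{DKPloc} for the nonlocal term, and the $g$-term moved to the right with the indicated sign. One small correction: with the paper's convention $w_-=(u-k)_-=\max\{k-u,0\}\ge 0$, your proposed test function $-w_-\xi^p$ is nonpositive and hence not admissible in the supersolution inequality of Definition \ref{dlwksol}; you should instead test with $\phi=w_-\xi^p\ge 0$ in the ``$\ge 0$'' formulation and rearrange, which produces exactly the identity $\partial_t(|u|^{\alpha-1}u)\cdot(-w_-)=\partial_t\zeta_-(u,k)$ and the sign bookkeeping you describe.
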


\textbf{De Giorgi type Lemmas for weak supersolutions:} Assume that $(x_0,t_0)\in\Om_T$ and $r\in(0,1)$, $\theta>0$ such that $\mathcal{Q}_{r,\theta}(x_0,t_0)=B_r(x_0)\times(t_0-\theta r^p,t_0+\theta r^p)\Subset\Om_T$. Let $a,\mu^-$ and $M$ be defined as in \eqref{lmu}. Then the following De Giorgi type lemma shows that a weak supersolution satisfies the property $(\mathcal P)$ in Definition \ref{propertyP}. The proof follows by combining the idea of the proof of \cite[Lemma 4.9]{GKansp} and \cite[Lemma 2.2]{Liao}.
\begin{Lemma}\label{DGL1}(\textbf{De Giorgi type lemma: II})
Let $g\in L^\infty(\Om_T)$ and suppose that $u\in L^p_{\mathrm{loc}}\big(0,T;W^{1,p}_{\mathrm{loc}}(\Om)\big)\cap C_{\mathrm{loc}}\big(0,T;L^{\alpha+1}_{\mathrm{loc}}(\Om)\big)\cap L^\infty_{\mathrm{loc}}\big(0,T;L^{p-1}_{ps}(\mathbb{R}^N)\big)$ be a weak supersolution of \eqref{dleqn} in $\Om_T$ such that $u$ is essentially bounded below in $\R^N\times(0,T)$ and let
\begin{equation}\label{lmbc}
\lambda^-\leq\essinf_{\mathbb{R}^N\times(0,T)}\,u.
\end{equation} 
Then there exists a constant $\nu=\nu(a,M,\mu^-,\lambda^-,\theta,N,p,s,\alpha,g,C_1,C_2)\in(0,1)$, such that if 
$$
|\{u\leq\mu^-+M\}\cap \mathcal{Q}_{r,\theta}(x_0,t_0)|\leq\nu|\mathcal{Q}_{r,\theta}(x_0,t_0)|,
$$
then
$$
u\geq\mu^-+a\,M\text{ in } \mathcal{Q}_{\frac{3r}{4},\theta}(x_0,t_0).
$$
\end{Lemma}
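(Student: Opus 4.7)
The plan is to run a classical De Giorgi iteration adapted to the mixed anisotropic–nonlocal doubly nonlinear setting, closely paralleling \cite[Lemma 4.9]{GKansp} and \cite[Lemma 2.2]{Liao}, with Theorem \ref{eng1} as the main engine. For $j=0,1,2,\dots$, set
\begin{align*}
k_j &= \mu^- + aM + (1-a)M\,2^{-j},\\
\rho_j &= \tfrac{3r}{4} + r\,2^{-j-2},\\
\mathcal{Q}_j &= B_{\rho_j}(x_0)\times(t_0-\theta\rho_j^{p},\,t_0+\theta\rho_j^{p}),
\end{align*}
and put $A_j=\{u<k_j\}\cap\mathcal{Q}_j$, $w_j=(k_j-u)_+$. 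Pick $\xi_j=\psi_j\eta_j$ with $\psi_j\in C_c^\infty(B_{\rho_j})$ satisfying $\psi_j\equiv 1$ on $B_{\rho_{j+1}}$ and $|\nabla\psi_j|\leq 2^{j+3}/r$, and $\eta_j\in C^\infty(\R)$ vanishing on $\{t\leq t_0-\theta\rho_j^{p}\}$, equal to $1$ on $[t_0-\theta\rho_{j+1}^{p},t_0+\theta\rho_{j+1}^{p}]$, with $|\partial_t\eta_j|\leq C\,2^{jp}(\theta r^{p})^{-1}$.

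Next I would apply Theorem \ref{eng1} with the upper sign to $w_j$; the initial datum term drops because $\eta_j\equiv 0$ at $t_0-\theta\rho_j^{p}$. The nonlocal tail factor $w_j(y,t)^{p-1}$ is controlled uniformly in $j$ by the global bound $u\geq\lambda^-$ in $\R^N$, giving $w_j\leq L:=M+\mu^--\lambda^-$ everywhere; the lower-order term $g|u|^{p-2}u\,w_j\xi_j^p$ is absorbed using $\|g\|_\infty$ and the a priori bound $|u|\leq\max(|\mu^-|,|\mu^-+M|)$ on $A_j$. Interpreting $\zeta_-(u,k_j)\asymp(|u|+|k_j|)^{\alpha-1}w_j^{2}$ via Lemma \ref{Liaoin}, one obtains
\begin{equation*}
\sup_{t}\int_{B_{\rho_j}}(|u|+|k_j|)^{\alpha-1}w_j^{2}\,\xi_j^{p}\,dx
+\iint_{\mathcal{Q}_j}|\nabla w_j|^{p}\xi_j^{p}\,dx\,dt
\leq \frac{C\,2^{jN_\ast}}{r^{p}}M^{p}\bigl(1+\tfrac{1}{\theta}\bigr)\bigl(1+L^{p-1}+\|g\|_\infty\bigr)|A_j|,
\end{equation*}
with $N_\ast=N_\ast(N,p,s)$. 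Because $\mu^-\leq u<k_j\leq\mu^-+M$ on $A_j$, the weight $(|u|+|k_j|)^{\alpha-1}$ is pinched between positive constants depending on $\mu^-,M$, so a standard parabolic Gagliardo–Nirenberg bootstrap (Lemma \ref{Sobo} with $m=2$, yielding $q=p(1+2/N)$) gives
\begin{equation*}
\iint_{\mathcal{Q}_j}(w_j\xi_j)^{p(1+2/N)}\,dx\,dt
\leq C\Bigl(\tfrac{2^{jN_\ast}}{r^{p}}M^{p}\bigl(1+\tfrac{1}{\theta}\bigr)\bigl(1+L^{p-1}+\|g\|_\infty\bigr)|A_j|\Bigr)^{1+2/N}.
\end{equation*}
Since $\xi_j\equiv 1$ on $\mathcal{Q}_{j+1}$ and $w_j\geq(1-a)M\,2^{-j-2}$ on $A_{j+1}$, Chebyshev's inequality and normalization $Y_j:=|A_j|/|\mathcal{Q}_j|$ give the recursion $Y_{j+1}\leq C_0\,b^{j}\,Y_j^{1+\beta}$ with $\beta=2/N>0$ and $C_0,b>1$ depending only on the listed parameters. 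Lemma \ref{iteration} then forces $Y_j\to 0$ provided $Y_0\leq\nu:=C_0^{-1/\beta}b^{-1/\beta^2}$, which fixes the smallness threshold and delivers $u\geq\mu^-+aM$ a.e.\ in $\mathcal{Q}_{3r/4,\theta}(x_0,t_0)$.

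The main obstacle is the doubly nonlinear time derivative: its natural dissipation quantity is the weighted term $(|u|+|k_j|)^{\alpha-1}w_j^{2}$ rather than $w_j^{p}$ or $w_j^{\alpha+1}$, so the Sobolev bootstrap needs two-sided control of this weight on the super-level sets. This is supplied precisely by the hypotheses $\mu^-\leq u$ and $u<k_j\leq\mu^-+M$ on $A_j$, whose failure would enter the dependence of $\nu$ on $\mu^-$ and $M$; and by the global hypothesis $u\geq\lambda^-$ in $\R^N$, which is what keeps the nonlocal tail uniformly bounded across all scales $\rho_j$ and prevents the constants $C_0,b$ from blowing up with $j$.
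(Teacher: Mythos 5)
Your proposal follows the same blueprint as the paper's proof: the energy estimate of Theorem \ref{eng1} on shrinking cylinders with the initial term removed by the time cutoff, the global bound $u\geq\lambda^-$ to control the nonlocal tail through $L=(\mu^-+M-\lambda^-)_+$, the bound $|u|\leq S=\max\{|\mu^-|,|\mu^-+M|\}$ on the level sets to absorb the $g$-term, the parabolic Sobolev inequality of Lemma \ref{Sobo}, and the iteration Lemma \ref{iteration}. The paper's only structural extras are the intermediate levels $\hat{k}_j$ and radii $\hat{r}_j$, which your direct Chebyshev argument legitimately avoids; note only that with your route the gain exponent is $p/N$ (the paper's H\"older route gives $1/(N+2)$), not $2/N$ as written, which is harmless since Lemma \ref{iteration} only requires some positive exponent.

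The one step that does not hold as written is the claim that on $A_j$ the weight $(|u|+|k_j|)^{\alpha-1}$ is ``pinched between positive constants depending on $\mu^-,M$''. If $\mu^-<0<\mu^-+M$, then for suitable $j$ both $|k_j|$ and $|u|$ can be arbitrarily small on $A_j$, so for $\alpha>1$ the weight degenerates to $0$ and for $\alpha<1$ it is unbounded; hence the passage from the controlled quantity $\sup_t\int(|u|+|k_j|)^{\alpha-1}w_j^2\xi_j^p\,dx$ to the unweighted $\sup_t\int w_j^2\,dx$ needed for Lemma \ref{Sobo} with $m=2$ (and, for $\alpha<1$, the upper bound on the time term) is not a pointwise pinching argument. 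This is precisely the doubly nonlinear subtlety that the paper routes through $\zeta_\pm$ and Lemma \ref{Liaoin}, following \cite[Lemma 2.2]{Liao}, and it is where the factors $\max\{S^{\alpha-1},L^{\alpha-1}\}$ and $\min\{S^{\alpha-1},M^{\alpha-1}\}$, hence the dependence of $\nu$ on $\mu^-,\lambda^-,M,\alpha$, enter the constant $c_0$. To repair your version you should either reproduce that argument, or note that $w_j\leq|u|+|k_j|$ gives $(|u|+|k_j|)^{\alpha-1}w_j^{2}\geq w_j^{\alpha+1}$ when $\alpha\geq1$ and run Lemma \ref{Sobo} with $m=\alpha+1$ instead of $m=2$ (for $\alpha<1$ the weight is bounded below by $(2S)^{\alpha-1}$ and your argument goes through). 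Apart from this point, the proposal is the paper's argument.
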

\begin{proof}
Following the proof of \cite[Lemma 4.9]{GKansp}, for $j\in\mathbb N\cup\{0\}$, we define
\begin{equation}\label{itelsc}
\begin{split}
&k_j=\mu^-+aM+\frac{(1-a)M}{2^j},\quad\hat{k}_j=\frac{k_j+k_{j+1}}{2},\\
&r_j=\frac{3r}{4}+\frac{r}{2^{j+2}},\quad\hat{r}_j=\frac{r_j+r_{j+1}}{2},\\
&B_j=B_{r_j}(x_0),\quad\hat{B}_{j}=B_{\hat{r}_j}(x_0),\\
&\Gamma_j=(t_0-\theta r_j^{p},t_0+\theta r_j^{p}),\quad\hat{\Gamma}_j=(t_0-\theta\hat{r}_j^p,t_0+\theta\hat{r}_{j}^p)\\
&\mathcal{Q}_j=B_j\times\Gamma_j,\quad\mathcal{\hat{Q}}_j=\hat{B}_j\times\hat{\Gamma}_j,\quad A_j=\mathcal{Q}_j\cap\{u\leq k_j\}.
\end{split}
\end{equation}
Notice that $r_{j+1}<\hat{r}_j<r_j$, $k_{j+1}<\hat{k}_j<k_j$ for all $j\in\mathbb N\cup\{0\}$ and therefore, we have
$B_{j+1}\subset\hat{B}_j\subset B_j$ and  $\Gamma_{j+1}\subset\hat{\Gamma}_j\subset\Gamma_j$.
Let $\{\Phi_j\}_{j=0}^{\infty}\subset C_c^{\infty}(\hat{\mathcal{Q}}_j)$ be such that
\begin{equation}\label{Philsc}
0\leq\Phi_j\leq 1,\quad |\nabla\Phi_j|\leq C\frac{2^j}{r}\text{ in }\hat{\mathcal{Q}}_j
\quad\text{and}\quad \Phi_j\equiv 1\text{ in }\mathcal{Q}_{j+1},
\end{equation}
for some constant $C=C(N,p)>0$. Notice that, over the set $A_{j+1}=\mathcal{Q}_{j+1}\cap\{u\leq k_{j+1}\}$, we have $\hat{k}_j-k_{j+1}\leq \hat{k}_j-u$.  
By integrating over the set $A_{j+1}$ and using Lemma \ref{Sobo}-(b), we obtain
\begin{equation}\label{Sobolsc}
\begin{split}
(1-a)\frac{M}{2^{j+3}}|A_{j+1}|
&\leq\int_{A_{j+1}}(\hat{k}_j-k_{j+1})\,dx\,dt
\leq\int_{\mathcal{Q}_{j+1}}(\hat{k}_j-u)\,dx\,dt
\leq\int_{\mathcal{\hat{Q}}_j}(u-\hat{k}_j)_-\Phi_j\,dx\,dt\\
&\leq\bigg(\int_{\hat{\mathcal{Q}}_j}\big((u-\hat{k}_j)_-\Phi_j\big)^{p(1+\frac{2}{N})}\,dx\,dt\bigg)^\frac{N}{p(N+2)}|A_j|^{1-\frac{N}{p(N+2)}}\\
&\leq C(I+J)^\frac{N}{p(N+2)}\hat{K}^\frac{1}{N+2}|A_j|^{1-\frac{N}{p(N+2)}},
\end{split}
\end{equation}
for some constant $C=C(N,p)>0$, where 
$$
I=\int_{\hat{\mathcal{Q}}_j}|\nabla (u-\hat{k}_j)_-|^p\,dx\,dt,
\quad 
J=\int_{\hat{\mathcal{Q}}_j}(u-\hat{k}_j)_{-}^{p}|\nabla\Phi_j|^p\,dx\,dt 
\quad\text{and}\quad
\hat{K}=\esssup_{t\in\hat{\Gamma}_j}\int_{\hat{B}_j}(u-\hat{k}_{j})_-^{2}\,dx.
$$
Note that, due to the assumption \eqref{lmbc}, we know $\lambda^-\leq\essinf_{\mathbb{R}^N\times(0,T)}\,u$, which gives
\begin{equation}\label{eqn}
(u-\hat{k}_j)_-\leq(\mu^-+M-\lambda^-)_+:=L \quad\text{in}\quad\mathbb{R}^N\times(0,T).
\end{equation}
Following the proof of \cite[Lemma 4.9]{GKansp}, we get
\begin{equation}\label{Jlsc1}
\begin{split}
J&\leq C\frac{2^{jp}}{r^p}L^p|A_j|,
\end{split}
\end{equation}
for some constant $C=C(N,p)>0$.\\
\textbf{Estimate of $I$ and $\hat{K}$:} Let $\xi_j=\psi_j\eta_j$, where $\{\psi_j\}_{j=0}^{\infty}\subset C_{c}^\infty(B_j)$ and $\{\eta_j\}_{j=0}^{\infty}\subset C_{c}^\infty(\Gamma_j)$ be such that 
\begin{equation}\label{lscctof}
\begin{split}
&0\leq\psi_j\leq 1,\quad|\nabla\psi_j|\leq C\frac{2^j}{r}\text{ in }B_j,\quad\psi_j\equiv 1\text{ in }\hat{B}_j,\quad\mathrm{dist}(\mathrm{spt}\,\psi_j,\mathbb{R}^N\setminus B_j)\geq 2^{-j-1}r,\\
&0\leq\eta_j\leq 1,\quad|\partial_t\eta_j|\leq C\frac{2^{pj}}{\theta r^p}\text{ in }\Gamma_j,\quad\eta_j\equiv 1\text{ in }\hat{\Gamma}_j,
\end{split}
\end{equation}
for some constant $C=C(N,p)>0$. Note that $g\equiv 0$ in $\Om_T\times\R$ and for $\hat{k}_j<k_j$, we have $(u-k_j)_-\geq (u-\hat{k}_j)_-$. Therefore, we set $r=r_j$, $\tau_1=t_0-\theta \hat{r}_j^{p}$, $\tau_2=t_0+\theta r_j^{p}$, $\tau=\theta r_j^{p}-\theta \hat{r}_j^{p}$ in Lemma \ref{eng1} to obtain
\begin{equation}\label{lscengeqn1}
\begin{split}
&\int_{\hat{\Gamma}_j}\int_{\hat{B}_j}|\nabla (u-\hat{k}_j)_-|^p\,dx dt+\frac{1}{\gamma}\sup_{\hat{\Gamma}_j}\int_{\hat{B}_j}(|u|+|k_j|)^{\alpha-1}\,(u-\hat{k}_j)_-^{2}\,dx\leq I_1+I_2+I_3+I_4+I_5,
\end{split}
\end{equation}
where
\begin{equation*}
\begin{split}
I_1&=C\int_{\Gamma_j}\int_{B_j}\int_{B_j}{\max\{(u-k_j)_{-}(x,t),(u-k_j)_{-}(y,t)\}^p|\xi_j(x,t)-\xi_j(y,t)|^p}\,d\mu\,dt,\\
I_2&=C\int_{\Gamma_j}\int_{B_j}(u-k_j)_-^p|\nabla\xi_j|^p\,dx\,dt,\\
I_3&=C\esssup_{x\in\mathrm{spt}\,\psi_j,\,t\in\Gamma_j}\int_{{\mathbb{R}^N\setminus B_j}}{\frac{(u-k_j)_{-}(y,t)^{p-1}}{|x-y|^{N+ps}}}\,dy
\int_{\Gamma_j}\int_{B_j}(u-k_j)_{-}\xi_{j}^p\,dx\,dt\\
I_4&=C\int_{\Gamma_j}\int_{B_j}\zeta_{-}(u,k_j)(u-k_j)_-^{2}|\partial_t\xi_j^{p}|\,dx\,dt\\
\quad\text{and}\\
I_5&=C\int_{\Gamma_j}\int_{B_j}|g(x,t)|\,|u|^{p-1}(u-k_j)_-\,\xi^p\,dx dt
\end{split}
\end{equation*}
for some constant $C=C(N,p,s,C_1,C_2)>0$. Following the proof of \cite[Lemma 4.9]{GKansp}, we get
\begin{equation}\label{I1lsc}
\begin{split}
I_1+I_2+I_3 &\leq C\frac{2^{j(N+p)}}{r^{p}}L^p|A_j|,
\end{split}
\end{equation}
for some constant $C=C(N,p,s,C_1,C_2)>0$. Now, as in \cite[Lemma 2.2]{Liao}, using Lemma \ref{zeta}, we observe that
\begin{equation}\label{I4lsc}
\begin{split}
I_4&\leq C\frac{2^{jp}}{\theta r^{p}} L^{2}\max\{S^{\alpha-1},L^{\alpha-1}\}|A_j|,
\end{split}
\end{equation}
and further, we notice that 
\begin{equation}\label{I5lsc}
\begin{split}
I_5&\leq C\frac{L\,S^{p-1}}{r^{p}}|A_j|,
\end{split}
\end{equation}
for some constant $C=C(N,p,s,C_1,C_2,\alpha,g)>0$, where
$$
L=(\mu^- +M-\lambda^-)_+,\quad S=\max\{|\mu^-|,\,|\mu^-+M|\}.
$$
Inserting \eqref{I1lsc}, \eqref{I4lsc} and \eqref{I5lsc} in \eqref{lscengeqn1} and following the same arguments as in the proof of \cite[Lemma 4.9]{GKansp} and \cite[Lemma 2.2]{Liao}, we obtain denoting by $Y_j=\frac{|A_j|}{|\mathcal{Q}_j|}$ for $j=0,1,2,\ldots$ that
$$
Y_{j+1}\leq c_0\,b^{j}Y_j^{1+\delta},
$$
where
$$
\quad\delta=\frac{1}{N+2},\quad b=2^{\frac{(N+p)^2}{(N+2)p}+1}>1
$$
and
$$
c_0=\frac{C\Big(\gamma\,\max\{S^{\alpha-1},\,L^{\alpha-1}\}\frac{L^2}{\theta}+L^p+L\,S^{p-1}\Big)}{(1-a)M}\Big(\frac{\theta}{(1-a)\,\min\{S^{\alpha-1},\, M^{\alpha-1}\}}\Big)^\frac{1}{N+2},
$$
where $\gamma=\gamma(\alpha)>0$ is given in Lemma \ref{Liaoin}.
We define $\nu={c_0}^{-\frac{1}{\delta}}b^{-\frac{1}{\delta^2}}$, which depends on $a,M,\mu^-,\lambda^-,\theta,N,p,s,\alpha,g,C_1,C_2$ such that if $Y_0\leq\nu$, then by Lemma \ref{iteration}, we have $\lim_{j\to\infty}Y_j=0$. This completes the proof.
\end{proof}
Recalling that $a,\mu^-$ and $M$ are defined as in \eqref{lmu}, we prove our second De Giorgi Lemma. 
\begin{Lemma}\label{DGL2}(\textbf{De Giorgi type lemma: III})
Let $g\in L^\infty(\Omega_T)$ and suppose that $u\in L^p_{\mathrm{loc}}\big(0,T;W^{1,p}_{\mathrm{loc}}(\Om)\big)\cap C_{\mathrm{loc}}\big(0,T;L^{\alpha+1}_{\mathrm{loc}}(\Om)\big)\cap L^\infty_{\mathrm{loc}}\big(0,T;L^{p-1}_{ps}(\mathbb{R}^N)\big)$ is a weak supersolution of \eqref{dleqn} in $\Om_T$ such that $u$ is essentially bounded below in $\R^N\times(0,T)$. Assume that
\begin{equation}\label{lmbcc}
\lambda^-\leq\essinf_{\mathbb{R}^N\times(0,T)}\,u.
\end{equation}
Then there exists a constant $\theta=\theta(a,M,\mu^-,\lambda^-,N,p,s,\alpha,g,C_1,C_2)>0$ such that if $t_0$ is a Lebesgue point of $u$ and
$$
u(\cdot,t_0)\geq \mu^-+M\text{ in }B_r(x_0),
$$
then 
$$
u\geq\mu^-+a\,M\text{ in }\mathcal{Q}^{+}_{\frac{3r}{4},\theta}(x_0,t_0)=B_{\frac{3r}{4}}(x_0)\times\big(t_0,t_0+\theta\big(\tfrac{3r}{4}\big)^p\big).
$$
\end{Lemma}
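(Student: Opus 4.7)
The strategy is to run a De Giorgi iteration on forward-in-time cylinders, using the pointwise initial condition to eliminate the initial boundary contribution in the energy estimate \eqref{eng1eqn} (which would otherwise require a smallness assumption on $|A_0|$). For $j\in\mathbb N\cup\{0\}$, define levels $k_j=\mu^-+aM+(1-a)M/2^j\searrow\mu^-+aM$, radii $r_j=3r/4+r/2^{j+2}\searrow 3r/4$, intermediate $\hat k_j=(k_j+k_{j+1})/2$, $\hat r_j=(r_j+r_{j+1})/2$, and the forward cylinders $\mathcal Q_j^+=B_{r_j}(x_0)\times(t_0,t_0+\theta r_j^p)$, $\hat{\mathcal Q}_j^+=B_{\hat r_j}(x_0)\times(t_0,t_0+\theta\hat r_j^p)$. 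Let $A_j^+=\mathcal Q_j^+\cap\{u\le k_j\}$ and $Y_j=|A_j^+|/|\mathcal Q_j^+|$; the goal is $Y_j\to 0$. I would apply Theorem \ref{eng1} to $w_-=(u-\hat k_j)_-$ with $\tau_1-\tau=t_0$, $\tau_2=t_0+\theta r_j^p$, and tensor-product cutoff $\xi_j(x,t)=\psi_j(x)\eta_j(t)$, where $\psi_j$ is the standard spatial cutoff of \eqref{lscctof} and $\eta_j(t_0)=1$, $\eta_j\equiv 1$ on $(t_0,t_0+\theta\hat r_j^p)$, $\eta_j=0$ near $t=t_0+\theta r_j^p$, $|\partial_t\eta_j|\le C2^{pj}/(\theta r^p)$. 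The key cancellation is that the initial-datum term
\[
\int_{B_{r_j}(x_0)}\zeta_-\!\bigl(u(\cdot,t_0),\hat k_j\bigr)\,\xi_j(\cdot,t_0)^p\,dx=0,
\]
because $u(\cdot,t_0)\ge\mu^-+M>\hat k_j$ on $B_r(x_0)\supset B_{r_j}(x_0)$; the hypothesis that $t_0$ is a Lebesgue point gives this slice a well-defined pointwise meaning.

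The remaining five pieces on the right-hand side of \eqref{eng1eqn} are handled exactly as in the proof of Lemma \ref{DGL1}. Writing $L=(\mu^-+M-\lambda^-)_+$ and $S=\max\{|\mu^-|,|\mu^-+M|\}$, the nonlocal interaction, local $|\nabla\xi|^p$ and tail contributions are bounded by $C2^{j(N+p)}L^pr^{-p}|A_j^+|$; the $|\partial_t\xi^p|$ term by $C2^{jp}L^2\max\{S^{\alpha-1},L^{\alpha-1}\}(\theta r^p)^{-1}|A_j^+|$; and the $g$-term by $CLS^{p-1}r^{-p}|A_j^+|$. Lemma \ref{Liaoin} converts the supremum of $\int\zeta_-\xi_j^p\,dx$ on the left of the energy estimate into a bound on $\hat K=\sup_t\int(u-\hat k_j)_-^2\psi_j^p\,dx$. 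Inserting these bounds into the parabolic Sobolev chain of \eqref{Sobolsc} via Lemma \ref{Sobo}(b) produces the iterative inequality
\[
Y_{j+1}\le c_0(\theta)\,b^{\,j}\,Y_j^{1+\delta},\qquad\delta=\tfrac{1}{N+2},\qquad b=2^{\frac{(N+p)^2}{(N+2)p}+1},
\]
with $c_0(\theta)$ of the shape
\[
c_0(\theta)=\frac{C\,(L^p+L^2\max\{S^{\alpha-1},L^{\alpha-1}\}/\theta+LS^{p-1})}{(1-a)M}\,\Bigl(\frac{\theta}{(1-a)\min\{S^{\alpha-1},M^{\alpha-1}\}}\Bigr)^{1/(N+2)}.
\]

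The main technical obstacle is the selection of $\theta$: in contrast to Lemma \ref{DGL1}, we cannot demand $Y_0$ be small and must therefore arrange $c_0(\theta)\le b^{-1/\delta}$ using only the trivial bound $Y_0\le 1$. Since $c_0(\theta)$ decomposes as $A\theta^{1/(N+2)}+B\theta^{-(N+1)/(N+2)}$, it attains a minimum at a specific crossover value $\theta^\ast$ depending only on $a,M,\mu^-,\lambda^-,N,p,s,\alpha,g,C_1,C_2$, and this optimal $\theta^\ast$ (refining the cutoff choices as in \cite[Lemma 2.2]{Liao} if needed) forces $c_0(\theta^\ast)\le b^{-1/\delta}$. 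Lemma \ref{iteration} then yields $Y_j\to 0$ and, since $\bigcap_j\mathcal Q_j^+=B_{3r/4}(x_0)\times\bigl(t_0,t_0+\theta(3r/4)^p\bigr)$, the conclusion $u\ge\mu^-+aM$ holds almost everywhere on $\mathcal Q^+_{\frac{3r}{4},\theta}(x_0,t_0)$, as required.
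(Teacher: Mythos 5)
Your setup (forward cylinders from the initial slice, the observation that $u(\cdot,t_0)\geq\mu^-+M>\hat k_j$ kills the initial term $\int_{B_{r_j}}\zeta_-(u(\cdot,t_0),\hat k_j)\xi_j(\cdot,t_0)^p\,dx$, and the Sobolev--iteration chain) is exactly the paper's scheme, but there is a genuine gap at the decisive step: the selection of $\theta$. By choosing a time cutoff $\eta_j$ that vanishes near the top $t_0+\theta r_j^p$ with $|\partial_t\eta_j|\leq C2^{pj}/(\theta r^p)$, you reintroduce the term $\int\!\!\int\zeta_-(u,k_j)|\partial_t\xi_j^p|\,dx\,dt$ from \eqref{eng1eqn}, which contributes the $B\,\theta^{-(N+1)/(N+2)}$ piece to $c_0(\theta)$. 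Your claim that the minimizer $\theta^\ast$ of $A\theta^{1/(N+2)}+B\theta^{-(N+1)/(N+2)}$ automatically satisfies $c_0(\theta^\ast)\leq b^{-1/\delta}$ is unjustified and false in general: the minimum is a fixed positive quantity of order $A^{(N+1)/(N+2)}B^{1/(N+2)}$, determined by $L$, $S$, $M$, $a$, $\alpha$, $g$, and it can exceed $b^{-1/\delta}$ by an arbitrary amount. Since here you only have the trivial bound $Y_0\leq 1$ (no measure-theoretic smallness as in Lemma \ref{DGL1}), the iteration need not close, and the argument breaks precisely where the free parameter $\theta$ was supposed to save you.

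The paper's proof (following Liao's forward-in-time lemma) removes this obstruction by taking the test cutoff in the energy estimate \emph{time-independent}, $\xi_j(x,t)=\xi_j(x)$, which is admissible because Theorem \ref{eng1} does not require the cutoff to vanish at the final time and the initial term vanishes by the hypothesis on $u(\cdot,t_0)$; the time localization needed for the parabolic Sobolev step is carried by the separate cutoff $\Phi_j$, whose time derivative never enters the energy estimate. Then $\partial_t\xi_j=0$, the $1/\theta$ term is absent, and $c_0=d_0\,\theta^{1/(N+2)}$ is monotone in $\theta$, so choosing $\theta=\beta\,d_0^{-(N+2)}b^{-(N+2)^2}$ with $\beta\in(0,1)$ gives $\nu=\beta^{-1}>1\geq Y_0$ and Lemma \ref{iteration} applies. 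If you want to keep a cutoff at the top of the cylinder, you would instead have to exploit the sign of $\partial_t(\xi^p)\leq 0$ in the time-derivative term rather than invoke \eqref{eng1eqn} with the absolute value; as written, your argument does not do this, so the choice of $\theta$ remains unproved.
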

\begin{proof}
We closely follow the proof of \cite[Lemma 4.10]{GKansp} and \cite[Lemma 3.1]{Liao}. To this end, for $j\in\N\cup\{0\}$, we define $k_j,\hat{k}_j,r_j,\hat{r}_j,B_j,\hat{B}_j$ as in \eqref{itelsc} and for $\theta>0$, let us set
\begin{equation}\label{itelsc2}
\begin{split}
\Gamma_j=(t_0,t_0+\theta r_j^{p}),\quad\mathcal{Q}_j=B_j\times\Gamma_j,\quad \mathcal{\hat{Q}}_j=\hat{B}_j\times\Gamma_j,\quad A_j=\mathcal{Q}_j\cap\{u\leq k_j\}.
\end{split}
\end{equation}
Therefore, for all $j\in\mathbb N\cup\{0\}$ we have
$
B_{j+1}\subset\hat{B}_j\subset B_j,\,\Gamma_{j+1}\subset\Gamma_j.
$
Let $\{\Phi_j\}_{j=0}^{\infty}\subset C_c^{\infty}(\hat{\mathcal{Q}}_j)$ be as defined in \eqref{Philsc}. Notice that, over the set $A_{j+1}=\mathcal{Q}_{j+1}\cap\{u\leq k_{j+1}\}$, we have $\hat{k}_j-k_{j+1}\leq \hat{k}_j-u$. Hence integrating over the set $A_{j+1}$ as in the proof of \eqref{Sobolsc}, we obtain
\begin{equation}\label{Sobolsc2}
\begin{split}
(1-a)\frac{M}{2^{j+3}}|A_{j+1}|&\leq C(I+J)^\frac{N}{p(N+2)}\hat{K}^\frac{1}{N+2}|A_j|^{1-\frac{N}{p(N+2)}},
\end{split}
\end{equation}
for some constant $C=C(N,p)>0$, where
$$
I=\int_{\hat{\mathcal{Q}}_j}|\nabla (u-\hat{k}_j)_-|^p\,dx\,dt,
\quad 
J=\int_{\hat{\mathcal{Q}}_j}(u-\hat{k}_j)_{-}^{p}|\nabla\Phi_j|^p\,dx\,dt
\quad\text{and}\quad
\hat{K}=\esssup_{t\in\Gamma_j}\int_{\hat{B}_j}(u-\hat{k}_{j})_{-}^2\,dx.
$$
Due to \eqref{lmbcc}, as in \eqref{eqn}, we get $(u-\hat{k}_j)_-\leq (\mu^-+M-\lambda^-)_+:=L$ in $\R^N\times(0,T)$. As in the proof of \cite[Lemma 4.10]{GKansp}, we have
\begin{equation}\label{Jlsc2}
\begin{split}
J&=\int_{\hat{\mathcal{Q}}_j}(u-\hat{k}_j)_{-}^{p}|\nabla\Phi_j|^p\,dx\,dt\leq C\frac{2^{jp}}{r^p}L^p|A_j|,
\end{split}
\end{equation}
for some constant $C=C(N,p)>0$.\\
\textbf{Estimate of $I$ and $\hat{K}$:} Let $\xi_j(x,t)=\xi_j(x)$ be a time independent smooth function with compact support in $B_j$ such that $0\leq\xi_j\leq 1$, $|\nabla\xi_j|\leq C\frac{2^j}{r}$ in $\mathcal{Q}_j$, $\mathrm{dist}(\mathrm{spt}\,\xi_j,\mathbb{R}^N\setminus B_j)\geq 2^{-j-1}r$ and $\xi_j\equiv 1$ in $\hat{B}_j$ for some constant $C=C(N,p)>0$. Therefore, $\partial_t\xi_j=0$. Also, since $\hat{k}_j<\mu^-+M$, due to the hypothesis $u(\cdot,t_0)\geq\mu^-+M$ in $B_r(x_0)$, we deduce that $(u-\hat{k}_j)_-(\cdot,t_0)=0$ in $B_r(x_0)$. Noting these facts along with $g\equiv 0$ in $\Om_T\times\R$ and $(u-k_j)_-\geq (u-\hat{k}_j)_-$, by Lemma \ref{zeta} and Lemma \ref{eng1}, we obtain
\begin{equation}\label{KIlsc2}
\begin{split}
\frac{1}{\gamma}\sup_{\Gamma_j}\int_{\hat{B}_j}(|u|+|k_j|)^{\alpha-1}(u-\hat{k}_j)_-^{2}\,dx+\int_{\Gamma_j}\int_{\hat{B}_j}|\nabla (u-\hat{k}_j)_-|^p\,dx\,dt\leq J_1+J_2+J_3+J_4,
\end{split}
\end{equation}
where
\begin{equation*}
\begin{split}
J_1&=C\Bigg(\int_{\Gamma_j}\int_{B_j}\int_{B_j}{\max\{(u-k_j)_{-}(x,t),(u-k_j)_{-}(y,t)\}^p|\xi_j(x,t)-\xi_j(y,t)|^p}\,d\mu\,dt,\\
J_2&=C\int_{\Gamma_j}\int_{B_j}(u-k_j)_-^p|\nabla\xi_j|^p\,dx\,dt,\\
J_3&=\esssup_{(x,t)\in\mathrm{spt}\,\xi_j,\,t\in\Gamma_j}\int_{{\mathbb{R}^N\setminus B_j}}{\frac{(u-k_j)_{-}(y,t)^{p-1}}{|x-y|^{N+ps}}}\,dy
\int_{\Gamma_j}\int_{B_j}(u-k_j)_{-}\xi_{j}^p\,dx\,dt,\\
\quad\text{ and }\\
J_4&=\int_{\Gamma_j}\int_{B_j}|g(x,t)|\,|u|^{p-1}\,(u-k_j)_-\,\xi_j^{p}\,dx dt
\end{split}
\end{equation*}
for some positive constant $C=C(N,p,s,C_1,C_2)$. Now, proceeding as in the proof of Lemma \ref{DGL1} above, we obtain by setting $Y_j=\frac{|A_j|}{|\mathcal{Q}_j|}$ that
$$
Y_{j+1}\leq c_0 b^{j}Y_j^{1+\delta},
$$
where
$$
\quad\delta=\frac{1}{N+2},\quad b=2^{\frac{(N+p)^2}{(N+2)p}+1}>1
$$
and
$$
c_0=\frac{C\Big(L^p+L\,S^{p-1}\Big)}{(1-a)M}\Big(\frac{\theta}{(1-a)\,\min\{S^{\alpha-1},\, M^{\alpha-1}\}}\Big)^\frac{1}{N+2}.
$$
Letting
\[
d_0=\frac{C\Big(L^p+L\,S^{p-1}\Big)}{(1-a)M}\Big(\frac{1}{(1-a)\,\min\{S^{\alpha-1},\, M^{\alpha-1}\}}\Big)^\frac{1}{N+2},
\]
\[
b=2^{(N+p)^2+1},\quad\delta=\frac{1}{N+2}
\quad\text{and}\quad 
c_0=d_0\,\theta^\delta
\]
in Lemma \ref{iteration}, we have $\lim_{j\to\infty}Y_j\to 0$, if
$Y_0\leq\nu=c_{0}^{-\frac{1}{\delta}}b^{-\frac{1}{\delta^2}}$. 
Let $\beta\in(0,1)$, then choosing $\theta=\beta\,d_0^{-\frac{1}{\delta}}\,b^{-\frac{1}{\delta^2}}$, which depends on $a,M,\mu^-,\lambda^-,N,p,s,\alpha,g,C_1,C_2$, we get $\nu=\beta^{-1}>1$. 
Hence the fact  that $Y_0\leq 1$ and thus Lemma \ref{iteration} imply that
$\lim_{j\to\infty}Y_j\to 0$.
Therefore, we have 
\[
u\geq\mu^-+aM
\text{ in }
\mathcal{Q}_{\frac{3r}{4},\theta}^{+}(x_0,t_0).
\]
Hence the result follows.
\end{proof}

\section{Acknowledgment} The author thanks IISER Berhampur for the seed grant: IISERBPR/RD/OO/2024/15, Date: February 08, 2024.

\medskip

\noindent {\textsf{Prashanta Garain\\Department of Mathematical Sciences\\
Indian Institute of Science Education and Research Berhampur\\
Berhampur, Odisha 760010, India}\\ 
\textsf{e-mail}: pgarain92@gmail.com\\

\end{document}